\documentclass{amsart}

\usepackage{amssymb, amsmath}

\usepackage{xcolor}
\usepackage{enumitem}
\usepackage{verbatim}
\usepackage{hyperref}

\usepackage{fontenc}
\usepackage{mathtools}

\mathtoolsset{showonlyrefs=true}

\newtheorem{theorem}{Theorem}
\newtheorem{prop}[theorem]{Proposition}
\newtheorem{lemma}[theorem]{Lemma}
\newtheorem{cor}[theorem]{Corollary}

\theoremstyle{definition}
\newtheorem{defn}[theorem]{Definition}
\newtheorem{remark}[theorem]{Remark}

\renewcommand{\[}{\begin{equation}\begin{aligned}}
      \renewcommand{\]}{\end{aligned}\end{equation}}

\author{J.M. Daniels-Holgate}
\address{School of Mathematical Sciences, Queen Mary University of London, London, UK}
\email{j.danielsholgate@qmul.ac.uk}

\author{Or Hershkovits}
\address{Department of Mathematics, University of Maryland 
  MD, USA \newline \hspace*{3mm} Einstein institute of Mathematics, The Hebrew University of Jerusalem, Israel}
\email{orher@umd.edu}

\author{G\'abor Sz\'ekelyhidi}
\address{Department of Mathematics, Northwestern University, Evanston,
  IL, USA}
\email{gaborsz@northwestern.edu}

\title{Generic Mean Curvature Flow has finitely many singularities}
\date{}

\begin{document}

\begin{abstract}
We show that for a generic compact initial surface in $\mathbb{R}^3$, and any generic compact initial mean convex hypersurface in higher dimensions, the mean curvature flow encounters only finitely many singularities. This resolves the full generic Morse regularity conjectures of Ilmanen and
Colding-Minicozzi-Pedersen in the above two cases. More generally, we show that if a mean curvature flow of a compact hypersurface has only multiplicity one spherical and cylindrical singularities, then after a small
perturbation of the initial condition only non-degenerate cylindrical and spherical singularities appear.  
\end{abstract}

\maketitle

\section{Introduction}
A Mean Curvature Flow (MCF) of hypersurfaces in $\mathbb{R}^{n+1}$ is a
one parameter family $\{N_t\}_{t\in[0,T)}$, $T\in(0,\infty]$, of hypersurfaces $N_t \subset \mathbb{R}^{n+1}$ evolving according to the
equation
\begin{equation}\label{flow_eq}
    \left(\frac{\partial x}{\partial t}\right)^\perp = \mathbf{H}(x,t).
\end{equation}  
That is, points move in the direction of their mean curvature vector, $\mathbf{H}$. The mean curvature flow can be thought of as the (negative)
$L^2$-gradient flow of the area functional. It is a natural extrinsic analog of the Ricci flow, and it is perhaps the most natural evolution equation for submanifolds of Riemannian manifolds. Geometric evolution equations have had many important applications to geometry and topology, most famously the resolution of the Poincar\'e conjecture using the Ricci flow~\cite{Perelman}. For a selection of other applications of Ricci flow, mean curvature flow, and its variants, see for instance \cite{BrendleSchoen, BK23b, HI01, HK19, LM23, FS08}.

Every MCF starting from a closed, compact hypersurface encounters a singularity in finite time, at which point the flow need not disappear altogether, but at which \eqref{flow_eq} becomes ill-defined in a smooth sense. To address that, notions of weak solution of \eqref{flow_eq} were developed in \cite{Brakke, ES91, CGG91, Ilmanen94}. The analysis of the singular behavior, and the regularity theory of that weak flow has been studied extensively in the last 4 decades \cite{White00,White02,White15,BK23,Hui90,Wang16,Wang2014,Wang16b,CS,CM15,CM15_1, Brendle16, CHH22,BL25,HS99,HK16,HW20,SWX25_2,CHN13,ADS,BC,CDHS}, the basic question being ``how similar is such weak MCF to an honest MCF''. Simple examples suggest that at the very least one has to contend with the presence of singularities that are modelled on collapsing spheres $\mathbb{S}^n$ or generalised cylinders $\mathbb{S}^{k}\times \mathbb{R}^{n-k}, 1\leq k<n$ (from now on, we drop the `generalised'). Indeed, Huisken predicted that for a generic initial data, only multiplicity one spherical and cylindrical
singularities appear, see Ilmanen~\cite[Problem 8]{IlmanenProb}.

In some important cases this prediction has now been justified.  Under the assumption of mean-convexity ($H>0$) on the initial data, Huisken--Sinestrari~\cite{HS99} showed that at the first singular time, all singularities are weakly convex, and therefore can only be spherical or cylindrical. Later, under the same mean-convex assumption on the initial data, White~\cite{White02, White15} showed the flow through singularities encounters only multiplicity one spherical and cylindrical singularities. A recent breakthrough, Bamler and Kleiner's resolution of the multiplicity one conjecture, \cite{BK23}, along with Brendle's classification of genus zero self-shrinkers, \cite{Brendle16}, implies that the same holds for flows of embedded spheres in $\mathbb{R}^3$. 

More generally, in \cite{CM12}  Colding and Minicozzi laid out a program suggesting that, while arbitrary MCF in all dimensions might be wild, it should be the case that for generic initial data, all singularities are spherical or cylindrical. For generic MCF in $\mathbb{R}^3$ this was shown by \cite{BK23}, relying on the seminal work of Chodosh-Choi-Mantoulidis-Schulze~\cite{CCMS24, CCMS24_2} which was conditioned on the multiplicity one conjecture. Assuming a strong form of the  multiplicity one conjecture in higher dimensions, this result was also extended to dimensions 4 and 5 in \cite{CMS}.  Note that from Angenent's construction of a genus one self-shrinker, \cite{Ang92}, already in $\mathbb{R}^3$ there do exist MCF which encounter singularities that are neither cylindrical nor spherical.

The appearance of only cylindrical and spherical singularities for generic (or constrained) initial data has thus been justified in many cases, however, it was
conjectured by Ilmanen~\cite[Problem 9]{IlmanenProb} (see also
Colding-Minicozzi-Pedersen~\cite{CMP15} or Sun-Xue~\cite[Conjecture
1.1]{SX22}), that for a generic initial condition the singularities are also isolated in spacetime. The so-called `marriage ring' example indicates that cylindrical singularities need not be isolated in general. It is worth noting, that by contrast, compact and asymptotically conical singularities are isolated in space-time\footnote{Asymptotically conical singularities are  known to be isolated for the `canonical' Brakke flows (formally, the inner and outer flows of Hershkovits--White \cite{HW20}), and work of the first-named author shows that in cases with non-unique evolution, there can be a `persistence of singularities': see the examples in \cite{DH25b}.}, see the work of Chodosh--Schulze and their work with the first-named author \cite{CS, CDHS}.

One heuristic reason which made Ilmanen's conjecture plausible from its inception is the following: MCF of smooth hypersurfaces is a (quasilinear) heat equation, so Ilmanen's conjecture can be thought of as being  
 analogous to the generic Morse regularity of
the (linear) heat equation. Indeed it is known that, for generic initial data, the singular set, $\{(x,t)\in\mathbb{R}^{n}\times (0,\infty) \ | \  u(x,t)=|\nabla u(x,t)|=0\}$, of solutions to the heat equation is a collection of isolated points, at which the flow is locally modelled on non-degenerate, second order, homogeneous, caloric polynomials.  This follows from the Thom transversality theorem and the Sard--Smale theorem, see \cite{Damon95, Damon97}. Analogous results have been obtained for elliptic problems \cite{SautTemam, DuboisDufour}. One new difficulty in extending this to the case of MCF is that the evolving hypersurface can change topology when passing through singularities. Nevertheless, Bamler-Schulze-Wang~\cite{BSW26} showed that the space-time track of any MCF with only multiplicity one spherical and cylindrical singularities can be perturbed (no longer satisfying an equation) to have isolated singularities with Morse type behavior.

The last 4 years has seen substantial progress on the generic isolatedness conjecture. A key step was obtained by Sun-Xue ~\cite{SX22},
Sun-Wang-Xue~\cite{SWX25}, where it was shown that non-degenerate cylindrical singularities (which were first introduced by Angenent-Vel\'azquez~\cite{AV97}) are isolated in space-time. In \cite{Sz26, Sz26_2}, the third author was able to obtain the genericity of non-degenerate cylindrical singularities \textit{at the first singular time}, whenever multiplicity one cylindrical singularities are generic.  One can then perturb later time slices to ensure that
all of the singularities that appear are non-degenerate, leading to the
notion of ``almost mean curvature flow'' used in \cite{Sz26_2}. Note that Colding-Minicozzi~\cite{CM12} introduced a similar notion of ``piece-wise'' mean curvature flow, and showed that it can be used to perturb away singularities modelled on non-spherical compact shrinkers.

While the results of \cite{Sz26_2} and \cite{BSW26} are probably sufficient for many potential applications of MCF to geometry and topology, the basic question of whether a generic MCF has only isolated non-degenerate singularities has remained open, even for flows in $\mathbb{R}^3$, and it was only known that the singular set has parabolic Hausdorff dimension  at most $(n-1)$, due to White~\cite{White97}.  Addressing this question is the main contribution of this work.  

\bigskip

To state the main result, we let $N_t$ be a weak mean curvature flow emanating from $N_0$. It follows from \cite{BL25} (see also \cite{HW20}) that if one such flow emanating from $N_0$ encounters only multiplicity one cylindrical and spherical singularities, then this flow is unique. Our main result is the following.

\begin{theorem}\label{thm:main}
Let $N_t\subset\mathbb{R}^{n+1}$ be a mean curvature flow of
compact hypersurfaces, with only multiplicity one spherical and cylindrical singularities. Then 
  there are arbitrarily small $C^2$ perturbations $\tilde{N}_0$ of the initial
  condition $N_0$ such that the resulting flows $\tilde{N}_t$ only have
  non-degenerate cylindrical and spherical singularities. In particular, $\tilde{N}_t$ has only finitely many singular points.
\end{theorem}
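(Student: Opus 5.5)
We plan to perturb the initial condition within a finite-dimensional family and apply a Sard-type argument. We show that degenerate singularities occur only for a measure-zero set of parameters, so that almost every parameter gives a flow with only non-degenerate singularities.

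\textbf{Setup and stability.} First, by \cite{BL25, HW20} the flow $N_t$ is unique. Its singular set is compact, and every singular point has a multiplicity one spherical or cylindrical tangent flow. By the rigidity of such tangent flows (Colding--Ilmanen--Minicozzi, together with the uniqueness of cylindrical blow-ups of Colding--Minicozzi), small $C^2$ perturbations $\tilde N_0$ still produce flows with only multiplicity one spherical and cylindrical singularities. Moreover, these singularities lie near the singular set of $N_t$, and the flows depend continuously on $\tilde N_0$ away from the singularities. Spherical singularities are automatically non-degenerate and isolated. So the issue is the cylindrical points $\mathbb{S}^k\times\mathbb{R}^{n-k}$.

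\textbf{Finite-dimensional family.} Cover the compact cylindrical part of the singular set by finitely many small parabolic neighbourhoods. Near each cylindrical singular point, degeneracy is detected by the leading-order behaviour of the rescaled flow, namely the projection onto the neutral eigenfunctions of the Jacobi operator on the cylinder. These are quadratic polynomials $x_ix_j - 2\delta_{ij}$ in the axial variables. Following \cite{Sz26, Sz26_2}, I would build a finite-dimensional family of initial perturbations $N_0^{s}$, $s\in B\subset\mathbb{R}^m$, whose variations, transported along the flow, span all these neutral directions, and also translations in space and time. The key intermediate claim is a quantitative statement for each singular point $(x_0,t_0)$ of $N_t^s$ that is degenerate, i.e.\ whose quadratic form has a zero eigenvalue. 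It should say that the set $D$ of parameters with a degenerate singular point is covered, at scale $r$, by $O(r^{-(m-1-\epsilon)})$ balls of radius $r$. Equivalently, $D$ has Hausdorff dimension at most $m-1$, and in particular measure zero. The heuristic is Thom transversality: degeneracy is a codimension-one condition on the quadratic form, while the singular set has parabolic dimension at most $n-1$ \cite{White97}. The family is chosen so that the map from (parameter, singular point) to the quadratic form is a submersion with enough spare dimensions.

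\textbf{Main obstacle and conclusion.} The hard step is making this transversality rigorous when the singular set moves with the parameter and can be non-isolated, as in the marriage ring. We need a Lipschitz/Hölder control of how singular points and their asymptotic quadratic data vary with $s$, uniformly near degenerate points. I would try to prove a monotonicity/dichotomy lemma. At scale $r$ around a singular point, either the rescaled flow is close to a non-degenerate model, and then it is stable under perturbation; or the neutral mode is small, and then perturbing $s$ in a direction transverse to the corresponding eigenvector, by an amount $\sim r^{2}$ relative to the scale, removes the singularity from that neighbourhood or makes it non-degenerate. Combining this with a covering argument in parameter space, in the style of the Sard--Smale estimates \cite{Damon95}, gives the dimension bound on $D$. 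Any $s\notin D$ yields $\tilde N_t$ with only non-degenerate singularities. By \cite{SX22, SWX25} these are isolated in spacetime, and by compactness there are only finitely many of them.
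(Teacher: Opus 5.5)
There is a genuine gap, and it is exactly where this paper does its main work. You assume, ``following \cite{Sz26, Sz26_2},'' that initial perturbations can be chosen whose variations, transported along the flow, produce prescribed perturbations near every cylindrical singular point. Those papers do this only at the \emph{first} singular time. There the flow is smooth from $t=0$ up to the slice $t_0-e^{-T_1-1}$ where the perturbation is prescribed, so controllability follows from classical backwards uniqueness for the conjugate linearized equation.

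For a degenerate singularity at a later time, the initial variation must first pass through all earlier singularities, and your proposal says nothing about this. If an earlier singularity is degenerate, or the singular set is not isolated (marriage ring), you have no reason to expect the linearized solution map to have dense range.

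The paper handles this in four steps.
\begin{enumerate}
\item It constructs admissible solutions of the linearized equation through a \emph{non-degenerate} neck-pinch, using barriers $\Phi\sim 1+|H|$ and $\Psi\sim H\log H$.
\item It proves by duality that the solution map past the singularity has dense range (Proposition~\ref{prop:11}).
\item The duality step rests on the new backwards uniqueness Theorem~\ref{thm:linearbackwarduniqueness} for $(\partial_t+\Delta+|A|^2-H^2)w=0$. Its proof needs the whole post-singularity analysis: Theorem~\ref{metric_changed_upper_bound}, Lemma~\ref{lem:almostconical}, tip curvature decay, a Carleman estimate, and iterated propagation of smallness.
\item It upgrades this to the nonlinear controllability of Proposition~\ref{prop:initperturb11}.
\end{enumerate}

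Because controllability is available only through non-degenerate singularities, the argument cannot be one simultaneous finite-dimensional family with a single global Sard statement. It has to be an induction on the hierarchy of Definition~\ref{defn:singorder}, combined with an iteration in time:
\begin{enumerate}
\item Make all singularities non-degenerate up to just before the first time $s_1$ at which an $\ell$-degenerate $(n,k)$-singularity appears.
\item Use controllability through those singularities to remove the $\ell$-degenerate ones in a window of fixed size around $s_1$ (Proposition~\ref{prop:perturb23}).
\item Repeat finitely many times.
\end{enumerate}

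Your local mechanism is also not the right one. By \cite{SX22} the neutral profile is $\frac{\sqrt{2(n-k)}}{4\tau}\sum_{i\le\ell}(y_i^2-2)+o(\tau^{-1})$, whose coefficient is universal and forced by the nonlinear dynamics. So degeneracy is the discrete condition $\ell<k$, not a codimension-one condition on a continuously varying quadratic form. There is no submersion from (parameter, singular point) to quadratic forms on which to run Thom transversality.

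What works, following \cite{Sz26_2}, is a $(k-\ell)$-parameter family of \emph{linear} (unstable-mode) perturbations $\mathbf{a}\cdot y$ along the degenerate axial directions. These are realized only approximately, as a $\kappa$-good family, and are controlled via the barriers of Proposition~\ref{prop:barrier} since mean convexity is not assumed. The family gives the separation estimate $|y_1''-y_2''|\ge r_2|\mathbf{a}_1-\mathbf{a}_2|^{1-\kappa_1}$ of Proposition~\ref{prop:perturb22}. A covering argument then makes the bad parameters a null set (Proposition~\ref{prop:genericperturb}). Your dichotomy lemma is unproved and is not this mechanism.

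Finally, the fact that small perturbations still have only cylindrical and spherical singularities is \cite[Theorem 1.15]{BL25}, which relies on mean convex neighbourhoods. Rigidity of cylinders alone does not give it, because closeness to a cylinder at a fixed scale does not control tangent flows of nearby flows at smaller scales.
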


Using this, the results of \cite{BK23} and \cite{CCMS24} discussed above imply the following. 

\begin{cor}
  For a generic initial compact surface in $\mathbb{R}^3$ the mean
  curvature flow  has only isolated singularities in space-time. 
\end{cor}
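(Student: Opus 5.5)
The statement to prove is the Corollary: for a generic compact initial surface in $\mathbb{R}^3$, the flow has only isolated singularities in space-time. The plan is to combine the genericity of cylindrical and spherical singularities in $\mathbb{R}^3$ with Theorem~\ref{thm:main}, and then upgrade ``arbitrarily small perturbation'' to ``generic'' by an openness argument.

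\textbf{Step 1 (reduction to cylinders and spheres).} By Chodosh--Choi--Mantoulidis--Schulze \cite{CCMS24, CCMS24_2}, combined with Bamler--Kleiner's multiplicity one theorem \cite{BK23}, a generic (open dense, or at least dense) set of initial surfaces $N_0\subset\mathbb{R}^3$ has a flow with only multiplicity one spherical and neck singularities. These are exactly the spherical $\mathbb{S}^2$ and cylindrical $\mathbb{S}^1\times\mathbb{R}$ singularities. For such flows the weak flow is unique by \cite{BL25}, so Theorem~\ref{thm:main} applies.

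\textbf{Step 2 (density).} Fix any compact $N_0$ and any $\epsilon>0$. Step 1 gives an $\epsilon/2$-close $N_0'$ whose flow has only multiplicity one spherical and cylindrical singularities. Theorem~\ref{thm:main} then gives an $\epsilon/2$-close $C^2$ perturbation $\tilde N_0$ whose flow has only non-degenerate cylindrical and spherical singularities. By the Sun--Xue and Sun--Wang--Xue results \cite{SX22, SWX25}, non-degenerate cylindrical singularities are isolated in space-time, and spherical singularities are isolated trivially, since the component disappears. By compactness of the singular set, these singularities are finite in number. Hence surfaces with isolated singularities are dense.

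\textbf{Step 3 (openness), the main obstacle.} To call this ``generic'' in the Baire or open-dense sense, we want the property ``only non-degenerate neck and spherical singularities'' to be open. I would argue by contradiction. Suppose $N_0^j\to\tilde N_0$ and each $N_0^j$ has a degenerate or non-generic singularity at $(x_j,t_j)$. Pass to a limit point $(x,t)$. Brakke-flow compactness and the uniqueness of the limit flow show that the limiting singularity at $(x,t)$ is a non-degenerate neck or a sphere. The local stability of non-degenerate necks, namely that nearby flows have only nearby non-degenerate necks (the Angenent--Vel\'azquez / Sun--Xue neighbourhood analysis), then yields a contradiction. If this stability is not available, I would fall back on ``generic'' meaning dense. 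Alternatively, I would intersect countably many open dense sets, indexed by a quantitative non-degeneracy scale, to obtain a residual set.
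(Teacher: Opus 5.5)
Your Steps 1--2 are exactly the paper's one-line argument. Bamler--Kleiner together with Chodosh--Choi--Mantoulidis--Schulze reduce to flows with only multiplicity-one spheres and necks; Theorem~\ref{thm:main} perturbs these to flows with only non-degenerate necks and spheres; and Sun--Wang--Xue's isolatedness plus compactness finishes, with ``generic'' read, as in the paper, as ``attainable by arbitrarily small perturbations.''

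Your Step 3 goes beyond what the paper does. The stability you were unsure of is already available as Theorem~\ref{thm:isolated} (due to Sun--Wang--Xue), so together with the standard stability of spherical singularities your openness sketch does go through.
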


Similarly, the flow of a generic mean convex hypersurface encounter only finitely many singularities, and the same would be true for a generic hypersurface in ambient dimensions $4$ and $5$, assuming a strong form of the multiplicity one conjecture (see \cite[Def A.1 and Theorem A.6]{CMS}). An interesting question, going further than these results, is what we could say about a generic $k$-parameter family of MCFs.    

\subsection*{Outline of the proof}
We now give a brief outline of the proof, focusing on the new ingredients. As mentioned above, in \cite{Sz26,Sz26_2} the third author showed that for generic flows with multiplicity one cylindrical and spherical singularities, the singularities are non-degenerate at the first singular time. The basic mechanism for this is a result that says the following: suppose that $N_t$ is a mean curvature flow of hypersurfaces, smooth for $t\in [0,T)$, with a cylindrical singularity at $(x_0, T)$. Then there exist arbitrarily small perturbations $\tilde{N}_0$ of the initial condition, such that the perturbed flow $\tilde{N}_t$ only has non-degenerate cylindrical singularities in a parabolic neighborhood $B_\delta(x_0) \times (T-\delta^2, T+\delta^2)$, where $\delta$ may depend on the flow $N_t$, but crucially does not depend on the size of the perturbation. 

In proving this result it is important to be able to construct perturbations $\tilde{N}_0$ of $N_0$ which lead to some predetermined perturbations $\tilde{N}_{T'}$ of $N_{T'}$ at some $T' < T$ slightly before the singularity. This in turn boils down to a controllability result for the linearized mean curvature flow equation along $N_t$ for $t\in [0,T']$, which follows from the backwards uniqueness property of the conjugate linearized equation (see \cite[Lemma 5.3]{SX22}, \cite[Lemma 5.2]{HV92}). 

In this paper the main difficulty is to extend this controllability result, as well as its nonlinear version along the flow, to the setting where the flow $N_t$ encounters some non-degenerate cylindrical, and spherical singularities in the interval $(0, T)$. At the heart of this is a new backwards uniqueness theorem for
the conjugate linearized equation at a time slice with only non-degenerate
cylindrical singularities. 
\begin{theorem}\label{thm:linearbackwarduniqueness}
  Suppose that $N_t\subset \mathbb{R}^{n+1}$ is a mean curvature flow of compact hypersurfaces
  that has only non-degenerate cylindrical singularities at time
  $t=t_0$, and which is smooth for $t\in (t_0, t_1]$. Suppose that
  $w(x,t)$ solves the conjugate linearized equation
  \[ (\partial_t + \Delta + |A|^2 - H^2) w = 0, \]
  for $t\in (t_0, t_1]$, and as $t\to t_0$ we have $w(x,t)\to 0$
  locally smoothly away from the singular set. Then $w\equiv 0$. 
\end{theorem}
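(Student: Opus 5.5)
The natural approach is to show that $w$ also vanishes in a neighbourhood of each singular point at time $t_0$, and then apply the standard backward uniqueness for the conjugate equation on the smooth part of the flow, as in \cite[Lemma 5.3]{SX22}.

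Away from the singular set, $N_t$ converges smoothly as $t\to t_0$. There the equation, with $\tau=t_1-t$, is a forward linear parabolic equation with bounded smooth coefficients on a smooth family of hypersurfaces. Suppose we knew that $w(\cdot,t)\to 0$ in a suitable weighted sense on all of $N_t$, including the necks. Then a Carleman/log-convexity argument on the compact smooth flow (Poon/Escauriaza--Seregin--\v{S}ver\'ak type) gives $w\equiv 0$. By non-degeneracy and \cite{SX22,SWX25}, the singular set at time $t_0$ is finite. So the proof reduces to a local statement near each singular point $(x_0,t_0)$.

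For the local statement I would pass to rescaled coordinates. Set $s=-\log(t-t_0)$ with $s\to\infty$ as $t\downarrow t_0$, and consider the rescaled hypersurface $M_s=(N_t-x_0)/\sqrt{t-t_0}$ for $t>t_0$. Non-degeneracy means that after the singularity the rescaled flow is modelled on the Angenent--Vel\'azquez profile, in which the neck pinches off in the non-degenerate quadratic manner. Concretely, $M_s$ converges to the expanding model: a smooth graph over a cone-like region, with the neck region of scale $\sqrt{t-t_0}$ opening up. In these variables the conjugate equation becomes an Ornstein--Uhlenbeck type equation, $\partial_s w = L w$ with $L=\Delta - \tfrac12 x\cdot\nabla + \dots$ (an expander-type drift), and the hypothesis becomes decay of $w$ at spatial infinity of $M_s$, uniformly as $s\to\infty$.

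The key step, which I expect to be the main obstacle, is a frequency or monotonicity estimate for the Gaussian-weighted norm. Define
\[
N(s)=\frac{\int |\nabla w|^2 \rho}{\int w^2\rho},
\]
with the appropriate expander/shrinker weight. I would show that $N$ is almost monotone along the rescaled flow, with error terms controlled by the quantitative convergence rate to the non-degenerate model, using the explicit polynomial rate available from non-degeneracy. An almost monotone bounded frequency gives a doubling estimate: $\|w(s)\|$ can decay at most exponentially in $s$, i.e.\ polynomially in $t-t_0$. On the other hand, smooth vanishing away from the singular set, combined with interior parabolic estimates propagated inward from the annular region where the flow is smooth, should force faster-than-any-polynomial decay in $t-t_0$. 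The two bounds together force $w\equiv0$ near $x_0$.

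The delicate points are the following. The geometry of $M_s$ is not static: the neck opens on the scale $\sqrt{t-t_0}\,|\log(t-t_0)|^{1/2}$ rather than exactly parabolically, so the frequency error terms involve $\log$ corrections that must be shown to be integrable in $s$. This is where non-degeneracy, as opposed to mere cylindricality, is essential. The potential $|A|^2-H^2$ is of size $(t-t_0)^{-1}$ near the neck, which is scale-invariant and therefore harmless after rescaling. The infinite-order vanishing also needs care: I would obtain it by a Carleman estimate on the smooth annulus $\{r\le |x-x_0|\le 2r\}$, where $w\to0$ smoothly and the coefficients extend smoothly to $t=t_0$, and then transfer it inward using the frequency bound. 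Once local vanishing near all singular points is established, the global backward uniqueness on the smooth region completes the proof.
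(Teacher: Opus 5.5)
There is a genuine gap, and it starts with the geometric premise. After a non-degenerate cylindrical singularity the flow is neither Type I nor asymptotically self-similar. The time-$t_0$ profile has radius $\sim r/|\log r|^{1/2}$, so the opening angle tends to zero and it is not conical. The tips sit at $|x-x_0|\approx\sqrt{2(t-t_0)|\log(t-t_0)|}$ (Proposition~\ref{positionest}), and there $H\sim\sqrt{|\log(t-t_0)|/(t-t_0)}$ (Corollary~\ref{tipcurvest}). In your variables this means the following:
\begin{itemize}
\item $M_s$ is empty on $B_{c\sqrt{s}}$, and the tips escape to infinity like $\sqrt{2s}$.
\item The rescaled curvature at the tips is $\sim\sqrt{s}$.
\item The potential $|A|^2-H^2$ is of size $s$ after rescaling. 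It is not scale-invariant or harmless.
\end{itemize}
So $M_s$ converges to no expander or Angenent--Vel\'azquez-type model, and there is no convergence rate to feed into $N'(s)$. The error terms grow in $s$ rather than being integrable. The Gaussian weight also concentrates on the tip region, which is exactly where control is worst.

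You also never address the metric-change term $-2HA(\nabla w,\nabla w)$ in $\frac{d}{dt}\int|\nabla w|^2$. The paper can only handle it by combining the sharp bound $AH\geq-(1/2+\varepsilon)/t$ (Theorem~\ref{metric_changed_upper_bound}) with a weight $e^{\alpha t^{-\beta}}$. The time derivative of that weight produces a factor $t^{-\beta}$ that beats the logarithmic non-Type-I losses (Theorem~\ref{main_uniqueness_thm}). This is also why the paper needs decay of order $e^{-t^{-\beta}}$, not merely super-polynomial decay.

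The second missing piece is the upper bound near the tips. Interior parabolic estimates cannot carry smallness from the smooth annulus into the neck. They bound $w$ by its size on a larger set, and near the tips you only have the a priori growth bound $|w|\le C\exp(\log^4|x|)$. A frequency bound does not help either, since it gives doubling in time, not spatial transfer of smallness.

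Transferring smallness requires quantitative unique continuation, and each step costs a H\"older exponent $\theta<1$. This is the bulk of the paper's proof:
\begin{enumerate}
\item An Escauriaza--Fern\'andez estimate at the regularity scale (Theorem~\ref{rate_smooth}).
\item A Carleman estimate using the almost-conical bound $|\langle\nu,x\rangle A|\le\varepsilon$ on $|x|\ge K\sqrt{t|\log t|}$ (Lemma~\ref{lem:almostconical}, Theorems~\ref{main_car} and \ref{con_cone}). This gives decay $e^{-t^{-\beta}}$ there.
\item $I\lesssim\sqrt{|\log t|}$ iterations of Vessella's propagation of smallness (Theorem~\ref{Vessella}). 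The number of balls is kept this small by the improved curvature decay away from the tip (Lemma~\ref{Decay_lemma_real}).
\end{enumerate}
The iteration closes only because $\theta^{D\sqrt{|\log t|}}\,t^{-\beta}\geq t^{-\beta/2}$. Starting from polynomial, or even $e^{-|\log t|^m}$, smallness, an unbounded number of iterations would wipe the estimate out.

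Your plan contains no substitute for these steps. So neither side of your intended contradiction is established: not the almost-monotone frequency, and not the super-polynomial decay at the tips.
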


Somewhat related singular backwards uniqueness theorems have been shown by the first two authors \cite{DH25} (which itself relies on ideas from \cite{Wang2014}). A key new
difficulty in the present setting is that the flow does not satisfy
Type I curvature bounds past the singularity. Another challenge is that in lieu of conical behavior past the singularity, one either has to find a regime in which a remnant of conicality exists (as we eventually do), or one has to work with a more degenerate Carleman estimate, suitable for cylindrical singularities (which we could do to treat neck singularities, but not more generally). Compare \cite{Wang2014,Wang16}. Given this result, we
are able to show that initial perturbations of the flow can give rise
to desired perturbations of the flow even past non-degenerate
cylindrical singularities. Combining this with the techniques of
\cite{Sz26, Sz26_2} then leads to Theorem~\ref{thm:main}.  

\bigskip

 As the paper naturally decomposes into three parts: estimates, perturbations and backwards uniqueness, the relevant proof ideas and comparative discussions appear in the subsequent sections.

\subsection*{AI Disclosure}
We  used ChatGPT to help derive some of the estimates regarding the post singularity behavior of the MCF. Explicitly, we  used it to help derive Theorem \ref{metric_changed_upper_bound} (sharp metric change upper bound) and Lemma \ref{lem:almostconical} (almost conicality away from the tip). The almost conicality estimate, Lemma \ref{lem:almostconical}, which is key for our ability to treat non-degenerate cylindrical singularities in arbitrary dimension (and not just the neck case), was one we did not expect. ChatGPT also suggested the estimate in Lemma~\ref{lem:31} which we could not find in the literature. The contribution of ChatGPT was not merely a proof, but also the statements and an explanation of their relevance to the problem.

None of the text in this article was written by generative AI.

\subsection*{Acknowledgments}
We thank Ao Sun for useful discussions. JMDH acknowledges support from the EPSRC through the grant EP/Y017862/1, Geometric Flows and the Dynamics of Phase Transitions. OH acknowledges that this project has received funding from the European Research Council (ERC) under the European Union’s Horizon 2020 research and innovation programme, grant agreement No. 101116390. GSz was supported in part by NSF grant DMS-2506325.

\section{Preliminaries}
In this section we will recall some preliminary results and
notation. To fix the notation, we will reserve $N_t$ for the original
mean curvature flow that we started with. We will need to consider
various small perturbations of this flow, as well as rescalings, and
we will typically use $L_t$ or $S_t$ for such flows. In analyzing
singularities it is useful to consider the corresponding rescaled mean
curvature flows (see below), which we will primarily denote by
$M_\tau$, using the rescaled time parameter $\tau$. Various estimates
in the paper will depend, even if not stated explicitly, on an entropy
bound for $N_t$, which equivalently gives area ratio bounds for balls
in any rescalings of the $N_t$. 

Suppose that $L_t$ is a mean curvature flow with only cylindrical and
spherical singularities. More precisely, $L_t$ is a unit-regular
cyclic integral Brakke flow, all of whose tangent flows at singular
points are multiplicity one shrinking spheres or cylinders. 

We write
\[ \mathcal{C}_{n,k} = \mathbb{R}^k \times S^{n-k}(\sqrt{2(n-k)})
  \subset \mathbb{R}^{n+1} \]
for the standard cylinder, so that $\sqrt{-t}\mathcal{C}_{n,k}$ is a
self shrinking solution of the mean curvature flow. We will use $y_i$
for coordinates on $\mathbb{R}^k$, $z_\alpha$ for coordinates on the
remaining $R^{n+1-k}$ factor, and $x$ for the position in
$\mathbb{R}^{n+1}$.

If the flow has a cylindrical singularity at a point $X_0 = (x_0,
t_0)$, then we consider the rescaled flow $M_\tau$ centered at $X_0$,
given by
\[ M_\tau = e^{\tau/2} (L_{t_0 - e^{-\tau}} - x_0). \]
Having a cylindrical singularity means that along a sequence of times,
and up to a rotation, we have $M_\tau\to \mathcal{C}_{n,k}$ for some
$k$. It was shown by Colding-Minicozzi~\cite{CM15_1} that in this case
the convergence $M_\tau \to Q\mathcal{C}_{n,k}$ holds for a fixed
rotation $Q$, even without choosing a sequence. We will say in this
case that $M_\tau$ has an $(n,k)$-cylindrical singularity at $\infty$,
and $L_t$ has an $(n,k)$-cylindrical singularity at $X_0$. If we
actually have $M_\tau\to \mathcal{C}_{n,k}$, then we will say that the
flows have $\mathcal{C}_{n,k}$-singularities at infinity, or at $X_0$
respectively.

It was shown by Sun-Xue~\cite{SX22} that if $M_\tau \to
\mathcal{C}_{n,k}$, then after rotating the coordinates we can write $M_\tau$ as the graph of a function $u$ over
$\mathcal{C}_{n,k}$ on expanding balls, where
\[ u(y, z, \tau) = \frac{\sqrt{2(n-k)}}{4\tau} \sum_{i=1}^\ell (y_i^2
  - 2) + o(\tau^{-1}), \]
for a unique $0\leq \ell \leq k$. We say that the singularity is non-degenerate  if
$\ell=k$, and it is $\ell$-degenerate if $\ell < k$. An important
property of non-degenerate singularities is the following stability and
isolatedness property, due to
Sun-Wang-Xue~\cite{SWX25}. 

\begin{theorem}\label{thm:isolated}
  Suppose that $L_t$ has a non-degenerate cylindrical singularity at
  $(x_0, t_0)$. There exists a $\delta_0 > 0$, such that for all $\delta \in (0, \delta_0)$ if $\tilde{L}_t$ is a sufficiently small
  perturbation of $L_t$, then $\tilde{L}_t$ has a non-degenerate
  cylindrical singularity at a point $(x_0', t_0')$ in the parabolic
  $\delta$-neighborhood of $(x_0,t_0)$, and no other singularities
  there. 
\end{theorem}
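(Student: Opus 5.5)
This is a stability statement about the rescaled flow near the singularity. The plan is to run the Sun--Wang--Xue argument in the rescaled frame and to show that the quantitative non-degeneracy of $L_t$ survives small perturbations.

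\emph{Step 1: a well-approximated time for $L_t$.} Write $M_\tau$ for the rescaled flow of $L_t$ at $(x_0,t_0)$ and rotate so that $M_\tau\to\mathcal{C}_{n,k}$. By the Sun--Xue expansion, given $\varepsilon>0$ we can choose $\tau_0$ large with the following property: on a ball of radius $R(\tau_0)\gg1$, the graph function $u$ at $\tau_0$ satisfies
\[ \Big\|u(\cdot,\tau_0) - \tfrac{\sqrt{2(n-k)}}{4\tau_0}\textstyle\sum_{i=1}^k (y_i^2-2)\Big\| \le \varepsilon\tau_0^{-1}. \]
Here we use non-degeneracy, $\ell=k$, so every $y$-direction carries the quadratic profile. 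This fixes a scale $\delta_0\sim e^{-\tau_0/2}$.

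\emph{Step 2: transfer to the perturbation by continuity.} For $\tilde L_t$ sufficiently close to $L_t$, smooth dependence on initial data on compact smooth regions, together with the fact that $L_t$ is smooth on the annular region around $(x_0,t_0)$ at scales between $\delta$ and $\delta_0$, gives the following. For any candidate centre $(x',t')$ near $(x_0,t_0)$, the rescaled flow $\tilde M_\tau$ about $(x',t')$ satisfies the same expansion at $\tau_0$ up to $2\varepsilon\tau_0^{-1}$, plus linear and constant error terms coming from the shift of centre. Here the centre shift is a spatial translation $a$ and a time shift $s$ with $|a|,|s|$ small.

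\emph{Step 3: choose the centre and show it is non-degenerate.} Consider the finite-dimensional unstable and neutral modes of the linearised rescaled operator $\Delta-\tfrac12 x\cdot\nabla+\tfrac12+|A|^2$ on $\mathcal{C}_{n,k}$. These are the constants, the $y_i$, the $z_\alpha$, and, for the neutral modes, the $y_i^2-2$ and $y_iy_j$. The parameters $(a,s)$ act on the constant and linear modes through an invertible linear map, so a degree/fixed-point argument selects $(x_0',t_0')$ at which these modes remain small for all $\tau\ge\tau_0$. The neutral quadratic modes then evolve by the ODE $\dot\beta \approx -c\beta^2$, which is the Sun--Xue dynamics. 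Since $\beta(\tau_0)$ is close to $\tfrac{\sqrt{2(n-k)}}{4\tau_0}\mathrm{Id}$, which is positive definite, all $k$ eigenvalues decay like $1/\tau$. Hence $(x_0',t_0')$ is a $\mathcal{C}_{n,k}$-singularity with $\ell=k$, i.e.\ non-degenerate.

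\emph{Step 4: no other singularities.} Any other singular point in the $\delta$-neighbourhood would lie in the region where, by Steps 2--3, the flow $\tilde M_\tau$ about $(x_0',t_0')$ stays close to the cylinder with uniformly non-degenerate quadratic bending. The mean-convex neck/cap structure then forces the flow to be smooth away from the single vanishing point. Concretely, as $\tau\to\infty$, the profile $1-\tfrac{c}{\tau}\sum y_i^2$ shows that at $t_0'$ the region $|y|\gtrsim\sqrt\tau$ is smooth and almost conical. This pinches only at the origin, so points at other times or positions are regular by pseudolocality and uniqueness of the centre.

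The main obstacle is Step 3. One must show that the $C^0$-small perturbation of a non-degenerate profile cannot leak into the degenerate directions over infinitely many rescaled times. This needs an a priori estimate showing that positive-definite quadratic data is an open condition for the nonlinear neutral-mode ODE, with errors controlled uniformly on the growing domains where the graph representation holds.
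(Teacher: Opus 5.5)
The paper gives no proof of this statement; it imports it from Sun--Wang--Xue \cite{SWX25}. The main thing that citation supplies is control of the flow \emph{after} the singular time, and that is where your proposal has a genuine gap, in Step 4.

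The rescaled flow about $(x_0',t_0')$ only describes times $t<t_0'$, so for $t>t_0'$ you fall back on pseudolocality from the slice $\tilde L_{t_0'}$. Grant Steps 1--3. At axial distance $r$ from $x_0'$ that slice is a neck of radius $\approx\sqrt{2(n-k)}\,r/(2|\log r|^{1/2})$, so pseudolocality gives regularity there only for $0<t-t_0'\lesssim r^2/|\log r|$. The region $\{t>t_0',\ |x-x_0'|\lesssim((t-t_0')|\log(t-t_0')|)^{1/2}\}$ is therefore not covered at all. This is exactly where the neck has pinched off and the caps are moving outward (compare Proposition~\ref{positionest}), and ``uniqueness of the centre'' says nothing about it.

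Excluding further singularities there, uniformly over the perturbations, requires the post-singular analysis of \cite{SWX25}. That means mean convexity and noncollapsing near the singular point, plus the classification of blow-ups at points approaching it from $t\geq t_0'$ as cylinders or $\mathrm{bowl}\times\mathbb{R}^{k-1}$, combined with local regularity. Step 2 uses the same input silently: smoothness of $L_t$ on the annular region for $t>t_0$ is the isolatedness of $L$'s own singularity, i.e.\ the case $\tilde L=L$ of the statement. Also, your Step 4 profile has the wrong sign. The neck opens up, with $u\approx+\frac{\sqrt{2(n-k)}}{4\tau}\sum_i(y_i^2-2)$ as in your Step 1, whereas $1-\frac{c}{\tau}\sum_i y_i^2$ would describe a neck closing away from the origin.

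Step 3 is, as you acknowledge, a programme rather than a proof. The shooting strategy is reasonable: the $n+2$ unstable modes (the constant, the $y_i$ and the $z_\alpha$) match the $n+2$ parameters $(a,s)$. Three points are missing, however.
\begin{enumerate}
\item The neutral space of $\Delta-\frac12x\cdot\nabla+\frac12+|A|^2$ on $\mathcal{C}_{n,k}$ also contains the rotation modes $y_iz_\alpha$. These are not governed by the Riccati ODE and must be controlled separately, for instance by showing that the axis converges, as in \cite{CM15_1}.
\item A translation in the $y$-directions reaches the $y_i$-modes only through the quadratic bending, with coefficient of order $\tau^{-1}e^{\tau/2}$ times the shift. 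This is where non-degeneracy enters, and this weak coupling must dominate every error term.
\item Graphicality of $\tilde M_\tau$ on $B_{K\sqrt\tau}$ for all $\tau\geq\tau_0$ must come out of the trapping argument itself. The Sun--Xue expansion cannot supply it, since it presupposes a cylindrical singularity at the chosen centre.
\end{enumerate}
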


Going further, Sun-Wang-Xue~\cite{SWX25} proved that as the flow
passes through a non-degenerate $(n,k)$-cylindrical singularity, the
hypersurfaces go through an $(n-k)$-surgery. More precisely we have the
following.
\begin{theorem}\label{thm:nondegsurgery}
  Suppose that the flow $L_t$ has a non-degenerate
  $(n,k)$-cylindrical singularity at the point $(x_0, t_0)$. Then
  there are nearby times $t_- < t_0 < t_+$ and a small $r > 0$ such
  that $L_{t_+}\cap B_r(x_0)$ is obtained by removing a region
  diffeomorphic to $S^{n-k}\times B^k$ from $L_{t_-}\cap B_r(x_0)$,
  and gluing in $B^{n-k+1}\times S^{k-1}$ along the boundary. Here
  $B^k$ is the $k$-dimensional ball.
\end{theorem}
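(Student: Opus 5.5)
The plan is to combine the asymptotic expansion of Sun--Xue with a mean convex neighborhood argument. This reduces the topology change to comparing two explicitly described regions, $K_{t_-}\supset K_{t_+}$, on either side of the singular time. Throughout I rotate coordinates so that $M_\tau\to\mathcal{C}_{n,k}$. I write $y\in\mathbb{R}^k$ for the axial directions and $z\in\mathbb{R}^{n+1-k}$ for the radial ones.

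\emph{Step 1: pre-singular description.} By the Sun--Xue expansion with $\ell=k$, on expanding balls $|y|\le R(\tau)$, $R(\tau)\to\infty$, the rescaled surface $M_\tau$ is the radial graph
\[ |z| = \sqrt{2(n-k)}\Big(1+\frac{1}{4\tau}\sum_{i=1}^k (y_i^2-2)\Big)+o(\tau^{-1}). \]
I will upgrade this to an intermediate region $R(\tau)\le |y|\le \epsilon e^{\tau/2}$, i.e. unscaled $|y|\le\epsilon$. The tool is the standard Angenent--Vel\'azquez barrier/ODE argument, also used in \cite{SWX25}. The conclusion to reach is that for $t$ slightly less than $t_0$, $L_t\cap B_r(x_0)=\{|z|=\rho(y,t)\}$ with $\rho>0$ and $\partial_{|y|}\rho>0$ for $|y|$ not too small. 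Together with the non-degenerate quadratic term, this says $L_{t_-}\cap B_r$ bounds a solid tube $K_{t_-}\cap B_r\cong B^{n-k+1}\times B^k$ (radial ball times axial ball), whose boundary inside the ball is the region $S^{n-k}\times B^k$. In the same region I expect the expansion, plus closeness to the cylinder, to give $H>0$ and second fundamental form of a definite sign in the radial directions.

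\emph{Step 2: mean convex neighborhood and smoothness away from $x_0$.} Since $H>0$ on $L_t\cap B_r$ for $t\in[t_-,t_0)$, the flow is mean convex in a spacetime neighborhood of $(x_0,t_0)$, by the mean convex neighborhood theorem (Choi--Haslhofer--Hershkovits, and in general dimension the results recalled around \cite{HW20}). Hence $L_t\cap B_r=\partial K_t$ with $K_t$ shrinking in $t$, and the flow is unique there. Theorem~\ref{thm:isolated} applied to $L_t$ itself shows $(x_0,t_0)$ is the only singular point in a small parabolic neighborhood. Pseudolocality then gives smooth convergence on $\{|y|\ge\epsilon'\}\cap B_r$ across $t_0$, so on the annular axial region the surface remains the graph $|z|=\rho(y,t)$ for $t\in[t_-,t_+]$.

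\emph{Step 3: post-singular description (the main obstacle).} The crux is to show that for $t\in(t_0,t_+]$ the set $K_t\cap B_r$ is exactly $\{|z|\le\rho(y,t),\ |y|\ge\sigma(t)\}$ with a smooth cap, i.e. diffeomorphic to $B^{n-k+1}\times(B^k\setminus B^k_{\sigma})$. My first attempt would be the following. At time $t_0$, Step~1 gives the profile $\rho(y,t_0)\sim c|y|/\sqrt{\log(1/|y|)}$, so $L_{t_0}$ is pinched only at $x_0$. Monotonicity of $K_t$ together with the radial graph structure for $|y|\ge\epsilon'$ forces $K_t\cap\{|y|<\sigma\}=\emptyset$ for some $\sigma(t)>0$: any point of $K_t$ near the axis would, by the avoidance principle against small shrinking spheres centered near $x_0$, have to lie in $K_{t_0}$ near $x_0$, where the set is thinner than any such sphere. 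To get a genuine manifold-with-boundary structure, I would rescale at scale $\sqrt{t-t_0}$. Then I would use the mean convex classification of ancient/blow-up limits (only spheres, cylinders, bowls, ovals) to show the cap region is a graph over $|y|=\sigma(t)$, modelled on $S^{k-1}$ times a bowl-type cap in the $(|y|,z)$ directions. Hence the boundary of the new region is $S^{n-k}\times S^{k-1}$, filled in by $B^{n-k+1}\times S^{k-1}$.

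\emph{Step 4: conclusion.} Comparing $\partial K_{t_-}\cap B_r\cong S^{n-k}\times B^k$ with $\partial K_{t_+}\cap B_r$ yields the claim. The latter agrees with the former on $|y|\ge\sigma$, with the piece $S^{n-k}\times B^k_\sigma$ removed and $B^{n-k+1}\times S^{k-1}$ glued along $S^{n-k}\times S^{k-1}$. I expect Step~3, controlling the geometry immediately after $t_0$ where Type~I bounds fail, to be the real difficulty. Steps~1, 2 and 4 are comparatively routine given the earlier expansions and the mean convex theory.
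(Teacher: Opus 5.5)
Your outline (a radial graph over $S^{n-k}\times B^k$ before $t_0$, a mean convex neighborhood, and afterwards a graph over the dual cylinder $\mathbb{R}^{n-k+1}\times S^{k-1}$) is the right picture. It is the route of Sun--Wang--Xue; the paper does not prove this theorem but quotes it from them. The post-singular description is \cite[Theorem 1.1(vii)]{SWX25}, which rests on the blow-up classification \cite[Theorem 4.2]{SWX25}. However, your Step 3, which carries essentially all of the content, does not work as sketched.

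\textbf{The scale $\sqrt{t-t_0}$ is the wrong one.} The flow clears out faster than parabolically (\cite[Theorem 6.1]{CM15}, \cite[Lemma 4.1]{SWX25}). So for positive rescaled times, the parabolic rescalings about $(x_0,t_0)$ converge to the empty flow on compact sets. Meanwhile the tips sit at distance $\approx\sqrt{2(t-t_0)|\log(t-t_0)|}$ with curvature of order $\sqrt{|\log(t-t_0)|/(t-t_0)}$ (Proposition~\ref{positionest}, Corollary~\ref{tipcurvest}). Hence rescalings by $\sqrt{t-t_0}$ about the tips have unbounded curvature. There is no self-similar structure at that scale. You have to blow up at points near the tips at their own curvature scale, which is smaller than $\sqrt{t-t_0}$ by a factor $\sqrt{|\log(t-t_0)|}$.

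\textbf{Mean convexity does not supply the classification you invoke.} The list ``spheres, cylinders, bowls, ovals'' is a theorem only in the neck case $k=1$. For $k\ge 2$ there are many more noncollapsed ancient flows asymptotic to $S^{n-k}\times\mathbb{R}^k$, for instance $\mathbb{R}^{k-1}\times$ ancient ovals, bubble-sheet ovals, and non-product translators. Even for $k=1$, your argument does not decide which model occurs where.

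What forces every blow-up limit to be a cylinder or $\mathbb{R}^{k-1}\times\mathrm{Bowl}^{n-k+1}$ translating radially in $y$ is the non-degeneracy itself. It enters through the profile $\rho(y)\approx\sqrt{2(n-k)}\,|y|/(2|\log|y||^{1/2})$ at time $t_0$, valid in every axial direction. The neck at axial distance $s$ pinches at time $\approx t_0+\rho(s)^2/(2(n-k))$. So necks pinch in order of increasing $|y|$, and each direction of $S^{k-1}$ carries a single outward-moving tip; this is the mechanism in the proof of Proposition~\ref{positionest}.

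\textbf{The global patching is not addressed.} Even granting these local models, to get the diffeomorphism type you still need:
\begin{itemize}
\item exactly one tip per direction;
\item no additional components;
\item graphicality over $\mathbb{R}^{n-k+1}\times S^{k-1}$ out to the region where the surface is again cylindrical.
\end{itemize}

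\textbf{Two smaller points.}
\begin{itemize}
\item $H>0$ before $t_0$ does not by itself give a spacetime mean convex neighborhood. For general $k$ that is Bamler--Lai \cite{BL25}; \cite{HW20} is nonfattening given such a neighborhood.
\item Avoidance with shrinking spheres cannot clear out the thin region near the axis. $K_{t_0}$ lies in no small ball, and balls in its complement only shrink; this is exactly what the clearing-out lemma is for.
\end{itemize}
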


In addition we will need to use the recent resolution of the generalised mean convex
neighborhood conjecture by Bamler-Lai~\cite{BL25} (see also the work of Choi--Haslhofer--Hershkovits \cite{CHH22} for
the neck-pinch in $\mathbb{R}^3$, and the same authors with White \cite{CHHW} for neck-pinches in higher dimensions), which states that if a mean
curvature flow $L_t$ has a cylindrical singularity at $X$, then in a
parabolic neighborhood of $X$ the flow is mean convex (for a suitable
orientation). In particular this has the following consequence (see
\cite[Theorem 1.15]{BL25}).
\begin{theorem}
  Suppose that the flow $N_t$ has only cylindrical and spherical
  singularities, and $L_t^i$ is a sequence of flows converging to
  $N_t$ in the Brakke sense, with support contained in a common compact set. Then for sufficiently large $i$ the flows
  $L_t^i$ also only have cylindrical and spherical singularities.
\end{theorem}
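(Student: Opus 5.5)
We argue by contradiction, using compactness to localise the problem near a single point of $N_t$ and then quoting the local structure of $N_t$ at that point. Suppose that, after passing to a subsequence, each $L^i_t$ has a singular point $X_i=(x_i,t_i)$ at which some tangent flow is neither a multiplicity one shrinking sphere nor a multiplicity one shrinking cylinder. The supports lie in a common compact set, so the $X_i$ lie in a compact region of spacetime. Passing to a further subsequence, $X_i\to X=(x,t)$. By upper semicontinuity of Gaussian density under Brakke convergence we get $\limsup_i\Theta(L^i,X_i)\le \Theta(N,X)$. We then split into three cases according to the nature of $X$ for $N_t$.

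\emph{Case 1: $X$ is a regular point of $N_t$.} Near $X$ the flow $N_t$ is a smooth multiplicity one flow, so its Gaussian density ratios at scales $\le r$ around points near $X$ are close to $1$. Brakke convergence passes these density ratio bounds to $L^i_t$. White's local regularity theorem then shows that $L^i_t$ is smooth in a fixed parabolic neighbourhood of $X$ for large $i$. This contradicts $X_i$ being singular. The same argument handles points just past the extinction of a component of $N_t$, where the limit is empty and the density is $0$.

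\emph{Case 2: $X$ is a cylindrical singular point of $N_t$.} Here we use the generalised mean convex neighbourhood theorem of Bamler--Lai~\cite{BL25} in its quantitative, scale-invariant form. The hypothesis is that on some parabolic ball $P(X,r)$ the flow is sufficiently close, after rescaling by $r$, to a round shrinking cylinder on a long time interval. The conclusion is that on $P(X,r/2)$ the flow is mean convex with respect to a suitable orientation. Since the tangent flow of $N_t$ at $X$ is a multiplicity one cylinder, $N_t$ satisfies this hypothesis for some small $r$. Away from the singular point, the convergence $L^i_t\to N_t$ is smooth by Case 1. Hence $L^i_t$ also satisfies the closeness hypothesis at the same scale for large $i$. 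Bamler--Lai then makes $L^i_t$ mean convex on $P(X,r/2)$, a region containing $X_i$ for large $i$. By White's theory of mean convex flows, every singularity of $L^i_t$ in this region, $X_i$ included, has a multiplicity one spherical or cylindrical tangent flow. This is a contradiction.

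\emph{Case 3: $X$ is a spherical singular point of $N_t$.} Shortly before the singular time, the component of $N_t$ near $x$ is a small, nearly round, uniformly convex sphere, and it is isolated from the rest of $N_t$. By Case 1 the convergence is smooth there, so $L^i_t$ contains a nearby uniformly convex component, isolated in the same way. Huisken's theorem says this component shrinks to a round point. Hence the only singularity of $L^i_t$ near $X$ is spherical, again a contradiction.

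The step I expect to be the main obstacle is Case 2. It requires Bamler--Lai's result in its quantitative form rather than as a statement about a single flow: the mean convex neighbourhood must have a size controlled only by closeness to the cylinder at one scale. It also requires that closeness at that scale genuinely transfers from $N_t$ to $L^i_t$, which rests on multiplicity one and local regularity. This is exactly the content of \cite[Theorem 1.15]{BL25}, so in practice the proof is the compactness argument above together with that theorem.
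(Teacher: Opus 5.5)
Your proposal is correct and follows the paper's route: the paper proves this statement only by citing \cite[Theorem 1.15]{BL25}, and your compactness argument is the standard unpacking of that theorem (White's local regularity near regular points, Bamler--Lai's mean convex neighbourhood plus White's theory near cylindrical points, isolation plus Huisken near spherical points). One thing to make explicit is that Case 1 needs a backward parabolic neighbourhood of $X$, so limit points at the initial time require smooth (e.g.\ $C^2$) convergence of the initial data, which holds in all of the paper's applications; under mere Brakke convergence the statement fails there, as shown by adding to $N_0$ a self-similarly shrinking Angenent doughnut of size $1/i$ at a fixed positive distance from $N_0$.
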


\subsection{Refined normal forms}
In order to perturb away degenerate cylindrical singularities,
\cite{Sz26_2} developed quantitative results for the normal forms of
such singularities, building on work of Sun-Xue~\cite{SX22}. 

The normal form is stated relative to certain model flows. In order to
write these down, we first let $\mathcal{M}^{n', k'}_\tau$ be rescaled
mean curvature flows with non-degenerate $\mathcal{C}_{n',
  k'}$-singularities at infinity (these exists by \cite[Proposition
45]{Sz26_2}). Then we let 
\[ \mathcal{M}^{n,k,\ell}_\tau = \mathbb{R}^{k-\ell}\times
  \mathcal{M}^{n-(k-\ell), \ell}_\tau,  \]
which has an $\ell$-degenerate
$\mathcal{C}_{n,k}$-singularity at infinity. 

It was shown in \cite{Sz26_2} that any rescaled flow $M_\tau$ with an
$\ell$-degenerate $\mathcal{C}_{n,k}$-singularity is graphical on
balls  $B_{K_0\tau^{1/2}}$ over $\mathcal{M}^{n,k,\ell}_\tau$
for any $K_0$, and sufficiently large $\tau$ (depending on $K_0$). The
following definition and result quantify this. The constants $\epsilon_0,
\kappa_0, \gamma_0 > 0$ in the definition can be chosen arbitrarily close to
zero. The $L^2$-norms here, and below, always refer to the Gaussian weighted norm
\[ \Vert u\Vert_{L^2(M)}^2 = \int_M u^2\, e^{-|x|^2/4}\, dV_M, \]
unless otherwise stated. 

\begin{defn}\label{defn:goodasympt}
  Suppose that $M_\tau$ is a rescaled mean curvature flow with an
  $\ell$-degenerate $\mathcal{C}_{n,k}$-singularity at infinity. We
  say that $M_\tau$ has good asymptotics at time $T > 0$, if for all
  $\tau \geq T$, $M_\tau$ can be written as the graph of $u(\cdot,
  \tau)$ over the model flow $\mathcal{M}^{n,k, \ell}_\tau$ on
  $B_{10\tau^{1/2}}$, satisfying:
  \begin{itemize}
    \item $\Vert u(\cdot, \tau)\Vert_{C^4} < \epsilon_0$ on
      $B_{10\tau^{1/2}}$, while $\Vert u(\cdot, \tau)\Vert_{C^4} <
      \tau^{-3/2}$ on the $\tau^{\gamma_0}$-ball.
    \item $\Vert u(\cdot, \tau)\Vert_{L^2(B_{10\tau^{1/2}})} < \tau^{-19/10}$,
    \item $\Vert\partial_{y_q} u(\cdot, \tau)\Vert_{L^2(B_{10\tau^{1/2}})}
      < e^{-\kappa_0 \tau}$ for the degenerate directions $q=\ell+1,
      \ldots, k$.
    \end{itemize}

    We define a corresponding notion for a mean curvature flow $L_t$ as
well: if $L_t$ has an $\ell$-degenerate cylindrical singularity at
$(x_0, t_0)$, then we say that this singularity is $\ell$-degenerate
at scale $r > 0$, if the corresponding rescaled flow $M_\tau$,
centered at $(x_0, t_0)$, has good asymptotics at time $-2\ln r$ after
a suitable rotation.  
 \end{defn}

 We have the following, from \cite{Sz26_2}.
 \begin{prop}
   Suppose that $M_\tau$ is a rescaled mean curvature flow with an
   $\ell$-degenerate $\mathcal{C}_{n,k}$-singularity at infinity. Then
   there exists a $T > 0$ such that $M_\tau$ has good asymptotics at
   $T$. 
\end{prop}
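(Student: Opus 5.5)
The statement is quoted from \cite{Sz26_2}, and the proof follows the normal form analysis of Sun--Xue~\cite{SX22} refined against the model flows $\mathcal{M}^{n,k,\ell}_\tau$. Rotate coordinates so that $M_\tau\to\mathcal{C}_{n,k}$. Then use the Sun--Xue expansion on expanding balls,
\[ u = \frac{\sqrt{2(n-k)}}{4\tau}\sum_{i=1}^\ell (y_i^2-2) + o(\tau^{-1}), \]
and compare $M_\tau$ with the model flow $\mathcal{M}^{n,k,\ell}_\tau = \mathbb{R}^{k-\ell}\times\mathcal{M}^{n-(k-\ell),\ell}_\tau$, which has the same leading order term. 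The plan has three steps.

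\textbf{Step 1: graphicality over the model.} Show that $M_\tau$ is a graph over $\mathcal{M}^{n,k,\ell}_\tau$ on $B_{10\tau^{1/2}}$ for large $\tau$, with $C^4$ norm below $\epsilon_0$. On the ball of radius $\tau^{\gamma_0}$ the bound improves to $\tau^{-3/2}$. On compact sets this follows from smooth convergence to the cylinder. To extend to scale $\tau^{1/2}$, use the pseudolocality and graph-extension argument of \cite{SX22}: the quadratic profile $\tau^{-1}y^2$ stays of size $O(1)$ exactly up to $|y|\sim\tau^{1/2}$, and there both flows are modelled on the same ODE shrinking profile. Interior parabolic estimates then upgrade the $C^0$ closeness to $C^4$.

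\textbf{Step 2: $L^2$ decay of the difference.} Write the difference $u$ as a solution of the linearized rescaled MCF operator on the cylinder, $L = \Delta - \tfrac12 x\cdot\nabla + 1$, plus quadratic errors. Decompose into eigenmodes.
\begin{itemize}
\item The unstable and neutral modes in the nondegenerate directions are matched by the model, since both flows share the same $\tau^{-1}$ coefficients.
\item The next terms in the expansion are of order $\tau^{-2}\log\tau$, which is below $\tau^{-19/10}$.
\end{itemize}
A Merle--Zaag type ODE lemma for the mode amplitudes, together with the cutoff error at radius $\sim\tau^{1/2}$ (exponentially small in the Gaussian weight), gives $\Vert u\Vert_{L^2}<\tau^{-19/10}$.

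\textbf{Step 3: exponential decay of $\partial_{y_q}u$ in the degenerate directions.} The model is translation invariant in $y_q$ for $q>\ell$, so $\partial_{y_q}u = \partial_{y_q}$ of the graph function of $M_\tau$ itself. This function satisfies a linear equation. Commuting $\partial_{y_q}$ with $L$ shifts eigenvalues by $-1/2$. Its neutral modes would correspond to quadratic terms in $y_q$, which are absent by $\ell$-degeneracy. Hence the surviving modes are strictly stable, up to errors quadratic in $u$, which are of size $\tau^{-3/2}$ and act as small perturbations. A Gr\"onwall/ODE argument then gives $\Vert\partial_{y_q}u\Vert_{L^2}<e^{-\kappa_0\tau}$.

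\textbf{Main obstacle.} Step 3 is the hardest. Justifying that the neutral $y_q$-modes vanish at an exponential rate, not just a polynomial one, is delicate. The nonlinear coupling with the nondegenerate $\tau^{-1}$ terms could feed polynomial terms into $\partial_{y_q}u$. To rule this out I would exploit the exact translation invariance of the model and the structure of the quadratic terms, which are products involving $\partial_{y_q}u$ itself. Once the equation for $\partial_{y_q}u$ is seen to be linear and homogeneous in $\partial_{y_q}u$, the spectral gap gives exponential decay. Finally, choose $T$ as the largest of the thresholds from the three steps.
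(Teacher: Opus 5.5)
The paper does not prove this proposition; it is quoted from \cite{Sz26_2}. Your argument therefore has to stand on its own, and Step 3 (on which Step 2 also depends) has a genuine gap.

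\textbf{The spectral gap in Step 3 is not there.} For $w=\partial_{y_q}u$ you do get a linear homogeneous equation, but its model operator $L-\tfrac12$ is not strictly stable. It has eigenvalue $+\tfrac12$ on constants, which come from the translation mode $y_q$ of $u$. Its kernel is spanned by $y_1,\dots,y_k$ and the restrictions of the $z_\alpha$ to $\mathcal{C}_{n,k}$, i.e.\ by $\partial_{y_q}$ of the components $y_qy_j$ for \emph{every} $j$ and of $y_qz_\alpha$. $\ell$-degeneracy does not remove these components; it only says that, in the chosen frame, their coefficients are $o(\tau^{-1})$.

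Moreover, the equation for $w$ is the linearization along $M_\tau$ (or along the model). This differs from $L-\tfrac12$ by $O(\tau^{-1})$ terms that are \emph{linear} in $w$, coming from the profile $\frac{\sqrt{2(n-k)}}{4\tau}\sum_{i\le\ell}(y_i^2-2)$. They are not integrable quadratic errors of size $\tau^{-3/2}$. This borderline coupling produces genuine polynomial dynamics on the kernel. If $A$ is the matrix of $y_iy_j$-coefficients, the neutral ODE has the form $A'=-\gamma A^2+\dots$. Linearizing at $A=\frac{1}{\gamma\tau}P$, with $P$ the projection onto the nondegenerate directions, gives $B'=-\tau^{-1}(PB+BP)+\dots$. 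That is:
\begin{itemize}
\item damping $2/\tau$ on the nondegenerate block;
\item damping $1/\tau$ on the mixed block $y_qy_j$, $j\le\ell$ (the infinitesimal rotation of the frame);
\item damping $0$ on the degenerate block.
\end{itemize}
The $y_qz_\alpha$ rotation modes are also neutral to this order. So ``linear, homogeneous, spectral gap'' cannot give exponential decay.

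\textbf{What is missing.} You need a Merle--Zaag dichotomy for $w$ itself, with $\epsilon(\tau)=C/\tau$; the constant mode is slaved because $w\to 0$. Either the stable part dominates, which gives $\Vert w\Vert\lesssim e^{-(1/2-o(1))\tau}$ up to the exponentially small cutoff error, or the kernel component $b$ dominates. In the latter case you must derive $b'=-\tau^{-1}Sb+O(\tau^{-1-\delta})|b|$, with $S$ having eigenvalues $0$ and $1$ as above. Here $\delta>0$ requires a quantitative $O(\tau^{-1-\delta})$ form of the expansion of \cite{SX22}. A nonzero $b$ then either fails to tend to zero, contradicting $w\to0$, or has a $y_qy_j$-coefficient comparable to $\tau^{-1}$, contradicting $\ell$-degeneracy in the chosen frame. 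This is exactly where the alignment of coordinates enters, and it rules out the neutral branch.

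\textbf{Consequences for Steps 1 and 2.}
\begin{itemize}
\item The order of your steps must change. The $\tau^{-19/10}$ bound concerns the whole graph over the model, including its $y_q$-dependent part, which \cite{SX22} does not control at that order. So Step 3 must come first.
\item In the nondegenerate block, matching the $\tau^{-1}$ coefficients only gives $o(\tau^{-1})$. The improvement to $O(\tau^{-2}\log\tau)$ comes from the $2/\tau$ damping above, which you assert rather than derive.
\item The pointwise $\tau^{-3/2}$ bound on the $\tau^{\gamma_0}$-ball in Step 1 does not follow from Gaussian $L^2$ bounds, which lose a factor $e^{|x|^2/8}$ there. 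It needs a separate barrier or drift argument.
\end{itemize}
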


The importance of this notion is the following, showing that if an
$(n,k)$-cylindrical 
singularity is $\ell$-degenerate at some scale $r > 0$, then nearby
$\ell$-degenerate $(n,k)$-cylindrical singularities must also be
$\ell$-degenerate at a controlled scale. See \cite[Proposition
35]{Sz26_2}. 
\begin{prop}\label{prop:nearbydegen}
  For any $r > 0$ there exist $r', \delta > 0$ satisfying the
  following. Suppose that $L_t$ has an $\ell$-degenerate
  $(n,k)$-cylindrical singularity at $X_0 = (x_0, t_0)$, which is
  $\ell$-degenerate at scale $r$. Let $L'_t$ be another mean curvature
  flow, which is $\delta$-close to $L_t$ in a metrization of Brakke
  convergence, and suppose that $L'_t$ has an $\ell$-degenerate
  $(n,k)$-cylindrical singularity at $X_0'$ in the parabolic
  $\delta$-neighborhood of $X_0$. Then $X_0'$ is $\ell$-degenerate at
  scale $r'$. 
\end{prop}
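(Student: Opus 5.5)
The plan is to argue by contradiction and compactness, using the good asymptotics of Definition~\ref{defn:goodasympt} as a quantitative, stable certificate of $\ell$-degeneracy. Suppose the statement fails for some $r>0$. Then there are flows $L'^i_t$ that are $\delta_i$-close to $L_t$ in the Brakke metric, with $\delta_i\to 0$, each having an $\ell$-degenerate $(n,k)$-cylindrical singularity at a point $X^i_0\to X_0$, but such that $X^i_0$ is not $\ell$-degenerate at any scale $r_i'\to 0$. We take the rescaled flows $M^i_\tau$ centered at $X^i_0$. By Brakke closeness and the local regularity theorem (multiplicity one cylinders are entropy-isolated), for each fixed compact $\tau$-interval and each ball, $M^i_\tau$ converges smoothly to $M_\tau$, the rescaled flow of $L_t$ at $X_0$.

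The key step is to show that good asymptotics at a fixed large time $T_1=-2\ln r+C$ propagate forward to all $\tau\ge T_1$ for $M^i$, once we know that $M^i_\tau$ has an $\ell$-degenerate $\mathcal C_{n,k}$ singularity at infinity (after rotation). At time $T_1$ the three conditions in Definition~\ref{defn:goodasympt} hold for $M^i$ with slightly worse constants, by smooth closeness to $M$ on the compact set $B_{10T_1^{1/2}}$. The forward propagation is the argument of \cite{Sz26_2}: a barrier, or continuity, argument in $\tau$ based on the linearized operator on the cylinder. Along the model $\mathcal M^{n,k,\ell}_\tau$, the $L^2$ norm obeys a differential inequality; the degenerate directions $\partial_{y_q}u$ decay exponentially; and the quadratic modes $y_q^2-2$ for $q>\ell$ are excluded by the assumption that the limiting degeneracy is exactly $\ell$. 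This last input is essential. Without it, the unstable or neutral mode could grow and produce a different degeneracy index, whereas the hypothesis that $X_0'$ is $\ell$-degenerate pins the neutral mode to the model profile. We then obtain good asymptotics at time $-2\ln r'$ with $r'$ depending only on $r$ and the entropy, which gives a contradiction. Equivalently, the result can be written directly with explicit $\delta$ instead of by contradiction.

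The main obstacle is this forward propagation step. We have to exclude the possibility that $M^i_\tau$ leaves the good-asymptotics regime at some intermediate time $\tau_i\to\infty$ and later returns to an $\ell$-degenerate cylinder. My first attempt would be a Merle--Zaag type ODE lemma for the projections onto the unstable, neutral and stable modes. The unstable modes are controlled because the singularity is located at $X^i_0$ (by recentering). The neutral modes are controlled by the normal-form dichotomy: either $|\text{neutral}|\sim \tau^{-1}$ with coefficient $\sqrt{2(n-k)}/4$ in exactly $\ell$ directions, or the stable part dominates. The $\ell$-degeneracy hypothesis forces the former. The remaining estimates, namely the $C^4$ bounds via pseudolocality and interior estimates and the extension of graphicality to radius $10\tau^{1/2}$, then follow as in \cite[Proposition 35]{Sz26_2}.
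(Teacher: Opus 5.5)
In the paper this proposition is not proved: it is quoted from \cite[Proposition 35]{Sz26_2}. Your outline also hands its only substantive step, the forward propagation of good asymptotics, to \cite{Sz26_2}, so it is not an independent argument. The parts you do supply have two concrete problems.

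First, the quantifiers. In the statement, $r'$ and $\delta$ are produced from $r$ alone (and the dimension and entropy bound) before $L_t$ and $X_0$ are chosen. Your negation keeps $L_t$ and $X_0$ fixed, varies only the perturbed flows $L'_t$, and uses smooth convergence of the recentered rescaled flows to the fixed $M_\tau$. At best this gives $r'$ and $\delta$ depending on $L_t$ and $X_0$.

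That weaker version does not match how the result is used. The uniformity is what gives $\epsilon=\epsilon(r)$ in Proposition~\ref{prop:genericperturb}, and a $\Phi(r)$ depending only on dimension and entropy in Proposition~\ref{prop:perturb23}. It is also what keeps $\rho_0$ fixed in the final induction, where the flow being perturbed is replaced at every step.

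To repair this, you have to let $(L^i_t,X^i_0)$ vary as well. You may then use only what ``$\ell$-degenerate at scale $r$'' provides: the bounds of Definition~\ref{defn:goodasympt} relative to $\mathcal{M}^{n,k,\ell}_\tau$ for $\tau\geq -2\ln r$. These bounds are uniform over the family and closed under limits; the $\tau^{-3/2}$ bound on the $\tau^{\gamma_0}$-ball guarantees that a limit is again exactly $\ell$-degenerate.

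Second, the forward step carries all the content, and your sketch does not supply it. The hypothesis on $X_0'$ is purely asymptotic. What you need are the three bounds of Definition~\ref{defn:goodasympt} for all $\tau\geq T'$, with $T'$ depending only on $r$ and relative to one fixed rotation.

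The Merle--Zaag dichotomy you invoke does not separate the degenerate directions. For $\ell>0$ the neutral projection always dominates, because it contains the nondegenerate $\tau^{-1}$ modes. So the dichotomy gives nothing about the $y_q$-components beyond $o(\tau^{-1})$, and in particular not $\Vert\partial_{y_q}u\Vert_{L^2}<e^{-\kappa_0\tau}$.

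To see that a quantitative input is really needed, take a degenerate coefficient with $a_q(T)=\eta>0$. Heuristically it evolves by $a_q'\approx -c\,a_q^2$. It therefore only produces $1/\tau$ behaviour, which would contradict exact $\ell$-degeneracy, after a time of order $1/\eta$. The exponential bound on $\partial_{y_q}u$, however, already fails at $\tau\approx \kappa_0^{-1}\ln(1/\eta)$.

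You therefore have to show, uniformly over the family, that exact $\ell$-degeneracy at infinity forces such coefficients below the error terms at time $T$. The same applies to the neutral modes mixing $y_q$ with the nondegenerate directions, which a single rotation must absorb. Observing that one must rule out the flow leaving the good-asymptotics regime and later returning names this problem rather than solving it. That quantitative stability is precisely what \cite[Proposition 35]{Sz26_2} provides.
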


While from Theorem~\ref{thm:isolated} we know that non-degenerate
cylindrical singularities are isolated and cannot be perturbed away, it is certainly
possible (and expected) that after a perturbation, a degenerate cylindrical
singularity becomes a different type of cylindrical or spherical
singularity, or even many such singularities. Estimates for
the entropy restrict which types of singularities can appear after a
perturbation, and even for cylindrical singularities of the same type,
a degenerate singularity cannot be perturbed into a more degenerate
one. To state this precisely, we use the following order on
the types of degenerate singularities:

\begin{defn}\label{defn:singorder}
Let $(k,  \ell)$ and $(k', \ell')$ be types of degenerate cylindrical
singularities, i.e. $0 < k, k' < n$ and $0 \leq \ell < k$, $0 \leq
\ell' < k'$. We say that $(k', \ell') \succ (k, \ell)$
if either $k' > k$, or $k'=k$ and $\ell' < \ell$. In this case we
think of an $\ell'$-degenerate $(n,k')$-cylindrical singularity as
``worse'' than an $\ell$-degenerate $(n,k)$-cylindrical singularity.

We also write $\Sigma_{k,\ell}(L_t)$ for the set of $\ell$-degenerate
$(n,k)$-cylindrical singularities of the flow $L_t$, and let
$\Sigma^>_{k, \ell}(L_t)$ denote the union of all $\Sigma_{k',
  \ell'}(L_t)$ with $(k', \ell') \succ (k, \ell)$. We emphasize that
non-degenerate cylindrical and spherical singularities are not included
in this notation. 
\end{defn}

We
have the following, from \cite{Sz26_2}.
\begin{prop} \label{prop:singularityorder}
  Suppose that $L^\infty_t$ has only cylindrical and spherical singularities,
  and $L^i_t$ is a sequence of flows converging to $L_t^\infty$ in the Brakke
  sense. Let $X_i$ be $\ell_i$-degenerate $(n, k_i)$-cylindrical
  singularities of the flows $L^i_t$, such that $X_i \to
  X_\infty$. Then $X_\infty$ is an $\ell_\infty$-degenerate $(n,
  k_\infty)$-cylindrical singularity of the flow $L^\infty_t$, where
  $k_\infty \geq \lim\sup_i k_i$. If in addition $k_i=k_\infty$ for
  all (sufficiently large) $i$, then we also have $\ell_\infty \leq
  \lim\inf_i \ell_i$. 
\end{prop}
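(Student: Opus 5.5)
The plan is to combine upper semicontinuity of Gaussian density under Brakke convergence, which controls $k$, with a quantitative stability argument for the quadratic modes in the normal form, which controls $\ell$.

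\emph{Step 1: the limit point is singular and $k_\infty \geq \limsup_i k_i$.} By Huisken's monotonicity formula, Gaussian density is upper semicontinuous under Brakke convergence of flows and convergence of base points. Hence
\[ \Theta(L^\infty_t, X_\infty) \geq \limsup_i \Theta(L^i_t, X_i) = \limsup_i \lambda(\mathcal{C}_{n,k_i}), \]
since at a multiplicity one cylindrical singularity the density equals the entropy of the cylinder. By Stone's computation, $\lambda(S^m)$ is strictly decreasing in $m$. Since $\lambda(\mathcal{C}_{n,k}) = \lambda(S^{n-k})$, these entropies are strictly increasing in $k$, and all of them exceed $\lambda(S^n)$. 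By hypothesis every singular point of $L^\infty_t$ is spherical or cylindrical, so its density is $\lambda(S^n)$ or some $\lambda(\mathcal{C}_{n,k})$; regular points have density $1 < \lambda(S^n)$. The inequality therefore forces $X_\infty$ to be an $(n,k_\infty)$-cylindrical singularity with $k_\infty \geq \limsup_i k_i$.

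\emph{Step 2: the degeneracy index when $k_i = k_\infty = k$.} First suppose $X_\infty$ were non-degenerate. By the mean convex neighbourhood theorem of Bamler–Lai, Brakke convergence upgrades to smooth convergence in a parabolic neighbourhood away from singularities. Theorem~\ref{thm:isolated} then says that for large $i$ the only singularity of $L^i_t$ near $X_\infty$ is non-degenerate, contradicting $X_i$ being degenerate. So $X_\infty$ is $\ell_\infty$-degenerate with $\ell_\infty < k$.

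Now suppose, after passing to a subsequence, that $\ell_i = \ell < \ell_\infty$. By the proposition above, the rescaled flow $M^\infty_\tau$ of $L^\infty$ around $X_\infty$ has good asymptotics from some time $T$ on. In particular, at $\tau = T$ it is a graph over $\mathcal{M}^{n,k,\ell_\infty}_T$, and its projection onto the quadratic modes $y_q^2 - 2$, for $q \leq \ell_\infty$ in suitable coordinates, has size comparable to $1/T$. The rescaled flows $M^i_\tau$ of $L^i$ around $X_i$ converge smoothly to $M^\infty_\tau$ on compact subsets of $B_{10T^{1/2}} \times [T, T+1]$. So for large $i$ the flows $M^i_\tau$ have the same quantitative picture at time $T$.

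\emph{Step 3: forward stability of the quadratic modes (the main obstacle).} I would reproduce the mode analysis of Sun–Xue and \cite{Sz26_2} for the flows $M^i_\tau$. The coefficient matrix $a(\tau)$ of the quadratic modes satisfies, up to errors controlled by the graphical estimates,
\[ a' = -c\, a^2 + \text{(error)}. \]
Once an eigenvalue of $a(T)$ is positive and of order $1/T$, it must stay of order $1/\tau$ for all $\tau \geq T$, and in particular cannot become exponentially small. On the other hand, $X_i$ being $\ell$-degenerate means that in $k - \ell > k - \ell_\infty$ directions the derivatives $\partial_{y_q} u$ eventually decay exponentially. That requires the corresponding eigenvalues of $a$ to vanish asymptotically, which is impossible by dimension counting. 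This gives a contradiction, so $\ell_\infty \leq \liminf_i \ell_i$.

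The hard part is making this ODE comparison uniform in $i$. One must propagate the graphical control of $M^i_\tau$ from time $T$ to all later times, so that the nonlinear errors stay $o(\tau^{-1})$ independently of $i$. I would try to do this with the barrier and pseudolocality arguments that \cite{Sz26_2} uses to obtain good asymptotics, applied uniformly along the sequence.
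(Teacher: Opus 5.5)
Your Step 1 is correct. Upper semicontinuity of Gaussian density, together with the fact that $\lambda(\mathcal{C}_{n,k})=\lambda(S^{n-k})$ is strictly increasing in $k$ and exceeds $\lambda(S^n)$, shows that $X_\infty$ is an $(n,k_\infty)$-cylindrical singularity with $k_\infty\geq\limsup_i k_i$. Ruling out a non-degenerate limit via Theorem~\ref{thm:isolated} is also correct. However, you should run that argument in general, not only when $k_i=k_\infty$. The statement asserts that $X_\infty$ is degenerate also when $k_\infty>k_i$, which is what is used to show that $\Sigma_{k,\ell}$ is closed when $\Sigma^>_{k,\ell}=\emptyset$, and the same argument covers that case. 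Note that the paper does not prove this proposition itself; it quotes it from \cite{Sz26_2}, so there is no in-paper argument to compare against.

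The genuine gap is Step 3. It carries the entire content of $\ell_\infty\leq\liminf_i\ell_i$, and it is asserted rather than proved.

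\emph{Why the finite window is not enough.} What you transfer from $L^\infty$ to $L^i$ is closeness of $M^i_\tau$ to $M^\infty_\tau$ on a fixed finite window of rescaled time. But $\ell_i$ is an invariant of the behaviour of $M^i_\tau$ as $\tau\to\infty$, and the time at which $M^i$ acquires good asymptotics may go to infinity with $i$. The two limits do not commute. Proposition~\ref{prop:nearbydegen} cannot bridge them, because it presupposes that the nearby singularity has the same index.

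\emph{What is actually needed.} You need a stability statement with constants independent of the flow: if a rescaled flow converges to a rotation of $\mathcal{C}_{n,k}$, and at a single time $T\geq T_0$ it is $o(T^{-1})$-close to $\mathcal{M}^{n,k,\ell}_T$ (in Gaussian $L^2$ on $B_{10T^{1/2}}$, and small in $C^4$), then its index is at least $\ell$.

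\emph{Why the tools you name do not give it.} Barriers built from time translates, as in Proposition~\ref{prop:barrier}, only control $M^i_\tau$ while $e^{\tau}$ times the distance between $L^i$ and $L^\infty$ stays small. Pseudolocality only propagates control over bounded rescaled time intervals. Long-time control near the cylinder, uniform in $i$, has to come from elsewhere. One natural source is monotonicity of the Gaussian area combined with a \L{}ojasiewicz inequality of Colding--Minicozzi type, using that $F(M^i_T)-\lambda(\mathcal{C}_{n,k})\geq 0$ is uniformly small.

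\emph{The remaining ingredients.} On top of that long-time control you need:
\begin{enumerate}
\item uniform gauge fixing of the rotation, i.e.\ the neutral $y_iz_\alpha$ modes, meaning the rotations $Q_i$ of the tangent cylinders must be shown to be uniformly close to the identity;
\item uniform control of the unstable constant and linear modes, which are pinned only by convergence at $\tau=\infty$;
\item a bound $|E|\leq\eta\tau^{-2}$, uniform in $i$, for the error in the quadratic-mode equation.
\end{enumerate}

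Only then does your ODE argument close. With $s=\log\tau$ and $\mu=\tau\sigma$ for an eigenvalue $\sigma$ of $a(\tau)$, one has $d\mu/ds=\mu(1-c\mu)+\tau^2E$. So $\mu\approx 1/c$ is attracting, $\ell_\infty$ eigenvalues stay $\gtrsim\tau^{-1}$, and your dimension count then contradicts $\ell_i<\ell_\infty$. The overall structure of your argument is the right one, but as written Step 3 restates the claim rather than proving it.
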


This result can be applied in particular to translations of
$L^\infty_t$ in space-time. In that case an important consequence is that if
$\Sigma^>_{k, \ell}(L_t) = \emptyset$, then $\Sigma_{k, \ell}(L_t)$ is
closed. 

\subsection{Perturbing away degenerate singularities locally}\label{sec:perturb}
Next we will recall the results from \cite{Sz26_2} that we
will use in order to perturb away degenerate singularities.

Let us assume that $M_\tau$ is a rescaled mean curvature flow with an
$\ell$-degenerate $\mathcal{C}_{n,k}$-singularity at infinity. Suppose
that $M_\tau$ has good asymptotics at time $T_0$ according to
Definition~\ref{defn:goodasympt}. The following is \cite[Proposition
39]{Sz26_2}.

\begin{prop}\label{prop:perturb1}
Let $c_1\in (0,1)$, and let $\kappa_1 = \kappa_0/4$ for the $\kappa_0$
in Definition~\ref{defn:goodasympt}. Suppose that $\epsilon_0,
T_0^{-1}$ are sufficiently small. For any $T_1 \geq
  T_0$ there exists $r_0=r_0(c_1, T_1)$ with the following property. 
  Suppose that the rescaled flow $N_\tau$ satisfies the following conditions
  for some $\gamma > 0$: 
  \begin{itemize}
     \item[(i)] For $\tau \geq T_1$, as long as $c_1^{-1}\gamma T_1 e^{\tau-T_1} < \epsilon_0$,
       $N_\tau$ is the graph of $v$ over $M_\tau$ on the
       $10\tau^{1/2}$-balls, and $\Vert v\Vert_{C^4} \leq
       c_1^{-1} T_1\gamma e^{\tau-T_1}$. Suppose that $\gamma$ is small
       enough so that this holds in
       particular for $\tau\in [T_1, 3T_1]$, i.e. $\gamma < \epsilon_0
       c_1 T_1^{-1} e^{-2T_1}$. 
     \item[(ii)] $\Vert v(\cdot, T_1)\Vert_{10T_1^{1/2}} \geq c_1 \gamma$.
     \item[(iii)] $\Vert v(\cdot, T_1+ 1)\Vert_{10(T_1+1)^{1/2}} \geq
         e^{\frac{1}{2}-\kappa_1} \Vert v(\cdot, T_1)\Vert_{10T_1^{1/2}}$.
   \end{itemize}
  Let $x_0\in \mathbb{R}^{n+1}$ be a vector with no $z_\alpha$ or
  $y_i$ component for $i\leq \ell$ (i.e. it is only nonzero in the
  degenerate directions) such that $|x_0| < r_0 \gamma^{1-\kappa_1}$.
  Then the translated flow $N'_\tau = N_\tau + e^{(\tau-T_1)/2}x_0$ does not
  have an $(n,k)$-cylindrical singularity at infinity (i.e. it does
  not converge to a rotation of $\mathcal{C}_{n,k}$). 
\end{prop}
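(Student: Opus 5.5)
The plan is a contradiction argument that compares growth rates of modes. Assume the translated flow $N'_\tau = N_\tau + e^{(\tau-T_1)/2}x_0$ does converge to a rotation of $\mathcal{C}_{n,k}$. I would then show that this is incompatible with the growing deviation of $N_\tau$ from $M_\tau$ forced by (ii) and (iii), because a translation in the degenerate directions is too weak to cancel that deviation.

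The key point is that $x_0$ has components only in the directions $y_q$, $q>\ell$, and the model $\mathcal{M}^{n,k,\ell}_\tau = \mathbb{R}^{k-\ell}\times\mathcal{M}^{n-(k-\ell),\ell}_\tau$ is exactly invariant under such translations. So translating $M_\tau$ by $e^{(\tau-T_1)/2}x_0$ changes its graph function $u$ only by roughly $e^{(\tau-T_1)/2}|x_0|\,\Vert\partial_{y_q}u\Vert$. By Definition~\ref{defn:goodasympt} this is at most $e^{(\tau-T_1)/2}|x_0|e^{-\kappa_0\tau}$, plus quadratic terms.

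\textbf{Step 1 (linearize).} I write $N'_\tau$ as a graph $v'$ over $M_\tau$ on $B_{10\tau^{1/2}}$, using hypothesis (i), as long as $c_1^{-1}\gamma T_1e^{\tau-T_1}<\epsilon_0$. Then $v' = v + w$, where $w$ is the contribution of the translation. Since $M_\tau$ is $C^4$-close to the model and the model is $y_q$-invariant, I expect $\Vert w\Vert \lesssim |x_0|e^{(\tau-T_1)/2}(e^{-\kappa_0\tau} + \epsilon_0|x_0|e^{(\tau-T_1)/2})$. The function $v$ satisfies the linearized rescaled flow $\partial_\tau v = L_{M_\tau}v + Q(v)$, with $L$ close to the cylinder operator $\Delta - \tfrac12 x\cdot\nabla + |A|^2 + \tfrac12$.

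\textbf{Step 2 (mode analysis for $v$).} Conditions (ii) and (iii) say that $v$ grows at rate at least $\tfrac12-\kappa_1$ on the unit interval $[T_1,T_1+1]$. By a standard growth-rate (discrete Łojasiewicz/ODE, Merle--Zaag type) lemma, this growth persists, so $\Vert v(\tau)\Vert \gtrsim c_1\gamma e^{(1/2-2\kappa_1)(\tau-T_1)}$ on the whole graphical range. Moreover, $v$ is then dominated by its projection onto the eigenspaces of $L$ with eigenvalue $\ge \tfrac12$. These are the constant and the linear functions $y_i,z_\alpha$, corresponding to dilation and translation modes.

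\textbf{Step 3 (incompatibility).} If $N'_\tau$ converges to a cylinder, then by the Colding--Minicozzi and Sun--Xue analysis its graph over the cylinder, and hence over $M_\tau$, must have vanishing unstable part in the limit. Concretely, the projection of $v'$ onto the modes with eigenvalue $\ge\tfrac12$ must stay $\ll \Vert v'\Vert$, with $\Vert v'\Vert$ non-increasing up to polynomial factors. I follow the flow up to the time $\tau^*$ at which $\gamma e^{\tau^*-T_1}\approx c_1\epsilon_0/T_1$. At that time $e^{(\tau^*-T_1)/2}\approx(\epsilon_0/\gamma)^{1/2}$, so $|x_0|<r_0\gamma^{1-\kappa_1}$ gives $\Vert w(\tau^*)\Vert \lesssim r_0\gamma^{1/2-\kappa_1}e^{-\kappa_0\tau^*}$ together with a quadratic term of size $r_0^2\gamma^{1-2\kappa_1}$. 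Meanwhile $\Vert v(\tau^*)\Vert\gtrsim\gamma^{2\kappa_1}$ up to constants depending on $T_1$ and $\epsilon_0$. Using $\kappa_1=\kappa_0/4$ and choosing $r_0=r_0(c_1,T_1)$ small, $w$ cannot cancel the unstable part of $v$. Hence $v'$ has a macroscopic unstable component at $\tau^*$, and from then on it pushes $N'_\tau$ a definite distance away from every rotation of $\mathcal{C}_{n,k}$. This contradicts convergence.

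\textbf{Main obstacle.} The hardest step is Step 2 with the required uniformity. One must show that the discrete growth in (iii) propagates over the whole range $[T_1,\tau^*]$ with losses controlled only by $\kappa_1$. One must also rule out that the growing part of $v$ is a pure rotation or a $y_q$-translation mode, since those could be absorbed by the rotation allowed in the limit or by $x_0$ itself. I would first try a frequency-function (Almgren--Poon type) argument for $L_{M_\tau}$, using that $M_\tau$ is $\tau^{-3/2}$-close to the model on $\tau^{\gamma_0}$-balls. I would then exclude the translation modes in the $y_q$ directions using the $e^{-\kappa_0\tau}$ bound on $\partial_{y_q}u$, since that bound shows those modes act on $M_\tau$ only at exponentially small order.
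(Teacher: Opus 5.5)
The paper does not prove this statement; it quotes it from \cite[Proposition 39]{Sz26_2}. Your overall strategy is reasonable: propagate the rate-$\tfrac12$ growth of $v$, and use the $e^{-\kappa_0\tau}$ bound on $\partial_{y_q}u$ to show a degenerate translation is too weak to cancel it. Steps 1--2 and the exponent balance $\kappa_1=\kappa_0/4$ are essentially right. Two small corrections there: $v'$ should also include the shift term $v(\cdot-b)-v$ with $b=e^{(\tau-T_1)/2}x_0$, though its relative size is $|b|$ and so it is harmless; and rotations are eigenvalue-$0$ modes, so they cannot carry rate-$\tfrac12$ growth. The genuine gap is in Step 3, where growth is supposed to contradict convergence.

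\textbf{The claim at $\tau^*$.} You assert that convergence of $N'_\tau$ to $Q\mathcal{C}_{n,k}$ forces the projection of $v'$ onto the eigenvalue $\geq\tfrac12$ modes to be $\ll\Vert v'\Vert$ at $\tau^*$. This is a stable-manifold statement: those components are recovered by integrating the equation for $v'$ from $\tau^*$ to $\infty$. It therefore requires $N'_\tau$ to stay a small graph over $M_\tau$, with uniform constants, for all $\tau\geq\tau^*$. Hypothesis (i) ends exactly at $\tau^*$. Mere convergence gives closeness to the limit only after a time depending on $N'$ that you cannot bound, and nothing controls the flow in between.

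\textbf{The fallback.} The alternative, that the unstable part ``pushes $N'_\tau$ a definite distance away'', also fails.
\begin{itemize}
\item Your lower bound is miscomputed. With $e^{\tau^*-T_1}\approx c_1\epsilon_0/(T_1\gamma)$, the quantity $c_1\gamma e^{(1/2-2\kappa_1)(\tau^*-T_1)}$ is of order $\gamma^{1/2+2\kappa_1}$, not $\gamma^{2\kappa_1}$.
\item So the unstable part still dominates $w$ (the ratio is about $r_0\gamma^{\kappa_0-3\kappa_1}$), but it is absolutely tiny.
\item Making it macroscopic needs roughly another $\log(1/\gamma)$ of rescaled time, beyond the range where anything controls $N'$.
\item Being far from every rotated cylinder at one time does not contradict later convergence unless you have a no-return mechanism, and you give none.
\end{itemize}

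\textbf{What is missing.} You need uniform a priori control of the putative singular flow $N'_\tau$ for all later times. In this framework that is what good asymptotics (Definition~\ref{defn:goodasympt}) provides. The paper notes before Proposition~\ref{prop:perturb22} that, by Proposition~\ref{prop:nearbydegen} and the choice of $T_1$, every relevant rescaled flow with an $\ell$-degenerate singularity at infinity has good asymptotics at $T_1$, after a small rotation. With this in hand, the argument closes as follows.
\begin{itemize}
\item $M_\tau$ and $N'_\tau$ are small graphs over essentially the same model for every $\tau\geq T_1$. So $v'$ is defined and bounded on $[T_1,\infty)$, and the growth dichotomy applies to $v'$ on an infinite interval.
\item You then do not need to go to $\tau^*$. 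Choose $\tau_1$ with $\tfrac12\log(1/\gamma)\lesssim\tau_1-T_1<\tau^*-T_1-1$.
\item At $\tau_1$ the translation error is already a small fraction of $\Vert v(\tau_1)\Vert$; this is where $\kappa_1=\kappa_0/4$ enters. Hence $v'$ inherits growth at a rate close to $\tfrac12$ on $[\tau_1,\tau_1+1]$.
\item The dichotomy, now valid for all $\tau\geq\tau_1$, forces $\Vert v'(\tau)\Vert$ to grow exponentially forever, contradicting its boundedness.
\end{itemize}
Without this uniform normal form for $N'$ your argument does not close.
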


We briefly recall how this result was used in \cite[Proposition
40]{Sz26_2} to perturb away degenerate cylindrical singularities in a
parabolic neighborhood. Suppose that $L_t$ is a mean curvature flow
with an $\ell$-degenerate $\mathcal{C}_{n,k}$-singularity at $(0,0)$.
In practice this flow is a small perturbation and translation and
rotation of the original mean curvature flow $N_t$ that we
started with. Suppose that the singularity at $(0,0)$ is $\ell$-degenerate at
scale $r > 0$. By Proposition~\ref{prop:nearbydegen}, there are
$\delta, r' > 0$ such that if $L_t'$ is $\delta$-close to $L_t$, and
has an $\ell$-degenerate $(n,k)$-cylindrical singularity in the
$\delta$-neighborhood of $(0,0)$, then this singularity is 
$\ell$-degenerate at scale $r'$.  We let $T_1 = -2\ln r'$, so that the
rescaled flows corresponding to all of these nearby $\ell$-degenerate
singularities have good asymptotics at time $T_1$.

Let $M_\tau$ denote the rescaled flow corresponding to $L_t$, centered
at the origin. For sufficiently small $\mathbf{a}\in
\mathbb{R}^{k-\ell}$, in \cite{Sz26_2} we considered perturbations
$M^{\mathbf{a}}_{T_1+1}$ of $M_{T_1+1}$ by taking the graph of the
function $\chi(|x| T_1^{-\beta/2}) \mathbf{a}\cdot y$. Here $\beta >
0$ is a small constant (can be chosen arbitrarily small), $\chi$ is a
standard cutoff function such that $\chi(s)=0$ for $s > 3/2$ and
$\chi(s)=1$ for $s < 1$, and we write
\[ \mathbf{a}\cdot y = \sum_{j = \ell+1}^k a_{j-\ell} y_j, \]
in terms of the components $a_i$ of $\mathbf{a}$. We then let
$M^{\mathbf{a}}_\tau$ be the rescaled flows, for $\tau\geq T_1 + 1$,
with initial condition $M^{\mathbf{a}}_{T_1+1}$. This defines a
$(k-\ell)$-dimensional family of rescaled flows, and we write
$L^{\mathbf{a}}_t$ for the corresponding (unrescaled) mean curvature flows, that are
defined for $t \geq -e^{-T_1-1}$.
 In \cite{Sz26_2},  Proposition~\ref{prop:perturb1} was used to show that if
  $L^{\mathbf{a}_1}_t$ and $L^{\mathbf{a}_2}_t$ both have
  $\ell$-degenerate $(n,k)$-cylindrical singularities in a small
  neighborhood of $(0,0)$, then the degenerate $y$-components  of
  these singularities must be at least $r_0|\mathbf{a}_1 -
  \mathbf{a}_2|^{1-\kappa_1}$ apart, for the $r_0, \kappa_1$ in that
  proposition. With a simple covering argument (see \cite[Proposition 41]{Sz26_2}) this implies that for
  almost every value of the perturbation parameter $\mathbf{a}$ the
  flow $L^{\mathbf{a}}_t$ has no $\ell$-degenerate $(n,k)$-cylindrical
  singularities in a fixed small neighborhood of $(0,0)$. 

In this paper our goal is to perturb away degenerate singularities
using perturbations of the \emph{initial condition} of our original flow,
rather than a time slice such as $M_{T_1+1}$. There are two aspects of
this:
\begin{enumerate}
  \item It is not possible to find initial perturbations of the flow
    that will lead to perturbations $M^{\mathbf{a}}_{T_1+1}$ of
    $M_{T_1+1}$ that have an exact graphical representation as above,
    supported on a ball where $M_{T_1+1}$ is cylindrical. Indeed, any
    initial perturbation will lead to a global perturbation at any
    later time. This difficulty was also encountered in \cite{Sz26},
    and as in that paper, it is enough to find initial perturbations
    that at time $T_1+1$ along the rescaled flow behave approximately
    like the exact graphical perturbations described above. 
  \item The much more substantial difficulty is that in the setting of
    \cite{Sz26} we were only concerned with the first time singularity
    of the flow. In other words the flow that we were perturbing was
    smooth from the initial time until the time slice corresponding to
    $M_{T_1+1}$. In the current paper this is not the case, and we
    will need to construct initial perturbations that have the desired
    effect at a later time, even after passing through
    singularities. This will be the content of Section~\ref{sec:initialperturb}, using the backwards uniqueness property shown in Sections~\ref{sec:backwards1} and \ref{sec:backwardsunique}. 
  \end{enumerate}
  
In this section we consider perturbations of $M_{T_1+1}$
that are approximately of the form as the exact graphical perturbation
above, and show how these can still be used to perturb away degenerate
singularities. These ideas are mostly already contained in
\cite{Sz26}. We summarize the types of perturbations of $M_{T_1+1}$
that we consider in the following definition. 

\begin{defn}\label{defn:kappagood}
  Suppose that, as above, $L_t$ is  a MCF encountering only spherical and cylindrical singularities, that has an $\ell$-degenerate
  $\mathcal{C}_{n,k}$-singularity at $(0,0)$. 
  Let $\kappa > 0$, and suppose that $T_1$ is defined as above. 

  We say that  a continuous  family of flows (in the Brakke flow topology), $L^{\mathbf{a}}_t$
  is a $\kappa$-good family of perturbations of $L_t$ at $(0,0)$, if $L^0_t=L_t$ and if it satisfies the
  following. Let $M^{\mathbf{a}}_\tau$ be the corresponding rescaled
  flows, centered at the origin, and suppose that $M^{\mathbf{a}}_{T_1+1}$
  is smooth for $|\mathbf{a}| < \kappa$. We
  require that $M^{\mathbf{a}_2}_{T_1+1}$ is the graph of a function
  $u$ over $M^{\mathbf{a}_1}_{T_1+1}$ (globally, not just on a ball),
  such that
  \[ \left| u - \chi(|x| T_1^{-\beta/2})
      (\mathbf{a}_2-\mathbf{a}_1)\cdot y\right| \leq \kappa
    |\mathbf{a}_2-\mathbf{a}_1|,\]
  for  all $|\mathbf{a}_1|, |\mathbf{a}_2| < \kappa$.
\end{defn}

Our goal in this section is to prove that if $\kappa$ is sufficiently
small (depending on $L_t$), then outside of a set of $\mathbf{a}$ with
measure zero, the flows $L^{\mathbf{a}}_t$ have no $\ell$-degenerate
$(n,k)$-cylindrical singularities near $(0,0)$. To set this up more
precisely, we follow the notation of \cite{Sz26_2}. Let  $\beta\in
\mathbb{R}^{k-\ell}, \alpha_1\in \mathbb{R}^\ell, \alpha_2\in
\mathbb{R}^{n-k+1}$ and $s\in \mathbb{R}$. Write $\Phi = (\alpha_1,
\alpha_2, s)$, and define
\[ L^{\mathbf{a}, \Phi, \beta}_t = L^{\mathbf{a}}_{t+s} + (\alpha_1,
  \beta, \alpha_2). \]
Note that $\beta$ encodes the translation in the degenerate $y$-directions. Note further that by definition and Proposition \ref{prop:nearbydegen}, all the rescaled flows around singularities of the $L^{\mathbf{a}, \Phi, \beta}_t$ in a neighborhood of $(0,0)$ that we encounter, have good asymptotics at time
  $T_1$, if they have an $\ell$-degenerate
  $\mathcal{C}_{n,k}$-singularity at infinity.

The key result is the  following, similar to
\cite[Proposition 40]{Sz26_2}.

\begin{prop}\label{prop:perturb22}
  Suppose that $T_1$ is sufficiently large (depending on the dimension
  and entropy bound), $\kappa$ is sufficiently small, depending on
  $L_t$, and $L^{\mathbf{a}}_t$ is a $\kappa$-good family of
  perturbations.

  There is an $r_2=r_2(T_1) > 0$, with the
  following property. Suppose that $|\mathbf{a}_1|, |\mathbf{a}_2| <
  \kappa$, and $L^{\mathbf{a}_1}_t$, $L^{\mathbf{a}_2}_t$ have
  $\ell$-degenerate $(n,k)$-cylindrical singularities at the points
  $(y_i, z_i, t_i)$, in the $r_2$-neighborhood of
  $(0,0,0)$. Decompose $y_i = (y_i', y_i'')$ into the non-degenerate and
  degenerate components according to $\mathbb{R}^k = \mathbb{R}^\ell
  \times \mathbb{R}^{k-\ell}$. Then the degenerate components satisfy
  $|y_1'' - y_2''| \geq r_2 |\mathbf{a}_1 -
  \mathbf{a}_2|^{1-\kappa_1}$, for the $\kappa_1$ in Proposition~\ref{prop:perturb1}. 
\end{prop}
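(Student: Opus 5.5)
We will reduce the statement to Proposition~\ref{prop:perturb1}, arguing by contradiction in the same way as \cite[Proposition 40]{Sz26_2}. Suppose $L^{\mathbf{a}_1}_t$ and $L^{\mathbf{a}_2}_t$ have $\ell$-degenerate $(n,k)$-cylindrical singularities at $X_1=(y_1,z_1,t_1)$ and $X_2=(y_2,z_2,t_2)$ in the $r_2$-neighborhood of the origin, but $|y_1''-y_2''|<r_2|\mathbf{a}_1-\mathbf{a}_2|^{1-\kappa_1}$.

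The first step is to recenter. Choose $\Phi=(\alpha_1,\alpha_2,s)$ and $\beta$ so that $L^{\mathbf{a}_1,\Phi,\beta}_t$ has its singularity at $(0,0)$, and let $M_\tau$ be the corresponding rescaled flow. By the choice of $T_1$ via Proposition~\ref{prop:nearbydegen}, $M_\tau$ has good asymptotics at $T_1$ after a small rotation. Take $N_\tau$ to be the rescaled flow of $L^{\mathbf{a}_2}_t$ centred at $(y_2',y_1'',z_2,t_2)$. This centre differs from $X_2$ only in the degenerate directions, by $x_0:=(0,y_2''-y_1'',0)$. Then $N_\tau + e^{(\tau-T_1)/2}x_0'$, with $x_0'=e^{-T_1/2}x_0$ after rescaling, is the rescaled flow centred exactly at $X_2$. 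That flow converges to a $\mathcal{C}_{n,k}$ at infinity, which is what we want to contradict.

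Next we verify hypotheses (i)--(iii) of Proposition~\ref{prop:perturb1} for $N_\tau$ relative to $M_\tau$, with $\gamma\sim|\mathbf{a}_1-\mathbf{a}_2|$.

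\emph{Step 1 (time $T_1+1$).} Since $X_1,X_2$ are $r_2$-close to the origin and $r_2\ll e^{-T_1}$, the recentred time slices at $T_1+1$ are small smooth perturbations of $M^{\mathbf{a}_i}_{T_1+1}$. By Definition~\ref{defn:kappagood}, $N_{T_1+1}$ is then a graph over $M_{T_1+1}$ of
\[
v=\chi(|x|T_1^{-\beta/2})(\mathbf{a}_2-\mathbf{a}_1)\cdot y+O(\kappa|\mathbf{a}_2-\mathbf{a}_1|)+O(r_2e^{T_1}),
\]
where the last error comes from the centre shifts. The shifts in the non-degenerate directions and in time are of order $r_2$, and they must be absorbed.

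\emph{Step 2 (hypotheses (ii) and (iii)).} Rather than matching at $T_1$ and $T_1+1$ literally, we shift the indexing by one and apply Proposition~\ref{prop:perturb1} with starting time $T_1+1$; this is harmless since good asymptotics persist for later times. On the cylinder the linear function $\mathbf{a}\cdot y$ is an eigenfunction of the linearized rescaled operator $L=\Delta-\tfrac12 x\cdot\nabla+|A|^2+\tfrac12$ with eigenvalue $1/2$, so it grows like $e^{\tau/2}$. The cutoff and the $\kappa$-error contribute relative errors $O(\kappa)+O(e^{-cT_1^{\beta}})$. Using the good asymptotics of $M_\tau$ to compare the linearization along $M_\tau$ with that on $\mathcal{C}_{n,k}$, the Gaussian $L^2$ norm grows by at least $e^{1/2-\kappa_1}$ over unit time, which gives (iii). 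Hypothesis (ii) holds with $c_1$ a dimensional constant, since $\|\mathbf{a}\cdot y\|_{L^2}\approx c|\mathbf{a}|$.

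\emph{Step 3 (hypothesis (i)).} The bound $\|v\|_{C^4}\le c_1^{-1}T_1\gamma e^{\tau-T_1}$ follows from continuous dependence and pseudolocality over the finite range of rescaled times, as in \cite{Sz26_2}. The factor $T_1$ accounts for the linear growth of $y$ on $B_{10\tau^{1/2}}$.

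Conclusion: with $r_2\le r_0(c_1,T_1)$ (accounting for rescaling factors $e^{T_1/2}$), the assumed bound $|x_0|<r_0\gamma^{1-\kappa_1}$ lets Proposition~\ref{prop:perturb1} rule out a $\mathcal{C}_{n,k}$ singularity at infinity for the translated flow, a contradiction.

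The main obstacle is the bookkeeping in Step 1. The non-degenerate and time components of the recentring, of size up to $r_2$, must not swamp the signal $\gamma\sim|\mathbf{a}_1-\mathbf{a}_2|$, which may be much smaller than $r_2$. I would first try to handle this by observing that shifts in the non-degenerate $y$, $z$ and $t$ directions correspond to translations of $M_\tau$ along directions orthogonal to the $\mathbf{a}\cdot y$ modes. Since both flows have their singularity exactly at the respective centres, the components of $v$ in these unstable directions are forced to be small by the singularity being at the origin. Making this precise, as in \cite[Proposition 40]{Sz26_2}, is the delicate point.
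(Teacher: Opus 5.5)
There is a genuine gap, at exactly the step you flag as delicate, and a second one in Step~3.

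\emph{The choice of $\gamma$.} Taking $\gamma\sim|\mathbf{a}_1-\mathbf{a}_2|$ cannot work. The non-degenerate, transverse and time components $(y_0',z_0,s_0)$ of $X_2-X_1$ are only known to be $O(r_2)$. Nothing ties them to $|\mathbf{a}_1-\mathbf{a}_2|$, which can be much smaller.

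\begin{itemize}
\item In that case the difference $v$ contains pieces of size $T_1^{-1}e^{T_1/2}|y_0'|$, $e^{T_1/2}|z_0|$ and $e^{T_1}|s_0|$. Hypothesis (i) of Proposition~\ref{prop:perturb1} is then simply false for your $\gamma$.
\item Your suggested remedy, that these components are forced to be small because each singularity sits at its centre, is not justified. It is also not needed.
\item The fix, as in \cite{Sz26_2}, is to absorb the shifts into $\gamma$: take $\gamma=T_1^{-1}e^{T_1/2}|y_0'|+|\mathbf{a}_2-\mathbf{a}_1|+e^{T_1/2}|z_0|+e^{T_1}|s_0|$.
\item These shifts excite expanding modes (linear in $y'$, linear in $\theta$, and constant, with rates $1/2$, $1/2$, $1$). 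So (ii) and (iii) still hold for $\kappa$ small, and (i) becomes provable.
\item Since $\gamma\geq|\mathbf{a}_2-\mathbf{a}_1|$, the conclusion $e^{(T_1+2)/2}|y_0''|\geq r_0\gamma^{1-\kappa_1}$ still gives the claim with $r_2=e^{-(T_1+2)/2}r_0$.
\end{itemize}

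A smaller point: the translation to feed into Proposition~\ref{prop:perturb1} is $-e^{T/2}x_0$, where $T$ is the starting time used there, not $e^{-T_1/2}x_0$. That is where the exponential factor in $r_2$ comes from.

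\emph{Hypothesis (i) is not a finite-time statement.} It must hold for all $\tau$ with $c_1^{-1}\gamma T_1e^{\tau-T_1}<\epsilon_0$. This is a range of length about $\log(1/\gamma)$, unbounded as $\gamma\to 0$, and the bound must have the sharp rate $e^{\tau-T_1}$ of the fastest mode while $M_\tau$ approaches its singularity. Continuous dependence and pseudolocality on a bounded window cannot give this.

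The paper's route is as follows.
\begin{itemize}
\item Trap $N_\tau$, the rescaling of $L^{\mathbf{a}_2}_t$ itself, between the time-translates $\widetilde{M}^{\pm\bar s}_\tau$ of $M_\tau$, with $\bar s\sim T_1^{\beta/2}\gamma$. Their distance to $M_\tau$ grows like $e^{\tau-T_1}\bar s$.
\item Only afterwards compare $\widetilde{N}_\tau$ with $N_\tau$.
\end{itemize}

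Since $L_t$ is not assumed mean convex, time-translates are not global barriers. This is precisely the one new point relative to \cite{Sz26_2}. One applies Proposition~\ref{prop:barrier} with $S'=L^{\mathbf{a}_1}$, $S''=L^{\mathbf{a}_2}$ and $\epsilon=\bar s$, at the time $t_0$ corresponding to $\tau=T_1+1$. Using the $\kappa$-good graph bound, with $\kappa$ small depending on $T_1$, one checks two things:
\begin{itemize}
\item outside $U_1$, $L^{\mathbf{a}_2}_{t_0}$ lies within $\sigma\epsilon$ of $L^{\mathbf{a}_1}_{t_0}$;
\item on the mean convex set $U_2$, it lies between $L^{\mathbf{a}_1}_{t_0\pm\epsilon}$.
\end{itemize}
The conclusion on $U_1$, which contains the rescaled $10\tau^{1/2}$-balls, is the bound needed for (i). Your proposal does not contain this step.
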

  The proof will follow the proof in \cite{Sz26_2} closely, with one
  important difference. In the current setting we are not assuming
  that our flows are mean convex, so we cannot simply use time
  translated copies of our flow to obtain barriers. Instead we use the 
  more general barrier construction from \cite{Sz26}. In order to
  state the result, suppose that $S_t$ is a mean curvature flow with
  only cylindrical and spherical singularities. By the work of
  Bamler-Lai~\cite{BL25} (see also \cite{CHH22}), each such
  singularity has a mean convex neighborhood. By a covering argument
  we can find open sets $\mathcal{U}_1\subset\subset
  \mathcal{U}_2\subset \mathbb{R}^{n+1}\times[0,\infty)$ such that the flow is mean convex
  (with suitable orientations) on $\mathcal{U}_2$, and is smooth
  outside of $\mathcal{U}_1$. From \cite[Proposition 14]{Sz26} we have
  the following. 

  \begin{prop}\label{prop:barrier}
 There are $\xi, \sigma > 0$ depending on the flow $S_t$, such
  that for all $T > 0$ we have the following. We can choose open sets
  $U_1 \subset\subset U_2\subset \mathbb{R}^{n+1}$ such that on the time interval
  $I := [T-2\xi, T+2\xi]$ we have
  \[\label{eq:Ucontain} \mathcal{U}_1 \cap (\mathbb{R}^{n+1}\times I)\subset U_1 \times I \subset U_2 \times I\subset
    \mathcal{U}_2. \]
  Suppose that $t_0\in
  [T-\xi, T+\xi]$, $t_0' < t_0$, and $S'_t$ is a mean curvature
  flow on $[t_0', T+2\xi]$ that is
  sufficiently close to $S_t$. Suppose in addition that $S''_t$ is
  a mean curvature flow on $[t_0, T+2\xi]$ satisfying that for some
  $\epsilon < \min\{\sigma, t_0-t_0'\}$ we have
  \begin{enumerate}
    \item Outside of $U_1$, the surface $S''_{t_0}$ is in the
      $\sigma\epsilon$-neighborhood of $S'_{t_0}$,
    \item In $U_2$, the surface $S''_{t_0}$ lies between $S'_{t_0 \pm
        \epsilon}$.
    \end{enumerate}
  Then for $t\in [t_0, T+\xi]$, on $U_1$, the surface $S''_t$ lies
  between $S'_{t\pm \epsilon}$. 
\end{prop}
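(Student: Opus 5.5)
The plan is to combine two ingredients. In the mean convex region the time translates $S'_{t\pm\epsilon}$ act as barriers on either side of $S'_t$, and outside $U_1$ the flows are smooth, so $S''_t$ can be controlled there by smooth stability. The avoidance principle, applied on the region between $\partial U_1$ and $\partial U_2$, then transfers the control into $U_1$.

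\emph{Step 1: choosing $U_1$, $U_2$ and $\xi$.} Since $\mathcal{U}_1\subset\subset\mathcal{U}_2$ are open in spacetime, compactness gives a fixed $\xi>0$ with the following property: for every $T$, the spatial projection of $\mathcal{U}_1\cap(\mathbb{R}^{n+1}\times[T-2\xi,T+2\xi])$ can be enclosed in an open $U_1$, and $U_1$ in an open $U_2$, so that \eqref{eq:Ucontain} holds. On the compact collar $\overline{U_2}\setminus U_1$, over the time interval $I$, the flow $S_t$ is then smooth and strictly mean convex, with $H\geq h_0>0$ and $|A|+|\nabla A|\leq C_0$. All constants are uniform in $T$. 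If $S'_t$ is sufficiently close to $S_t$ (in the Brakke sense, hence smoothly on this collar by Brakke/White regularity), the same bounds hold for $S'_t$ with constants $h_0/2$ and $2C_0$, and $S'_t$ is mean convex, in the weak sense, on $U_2\times I$.

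\emph{Step 2: separation of the barriers and stability outside $U_1$.}
\begin{itemize}
\item Because $H\geq h_0/2$ on the collar, the surfaces $S'_{t+\epsilon}$ and $S'_{t-\epsilon}$ lie on opposite sides of $S'_t$ there. Each is at normal distance at least $c h_0\epsilon$ from $S'_t$, for $\epsilon<\sigma$ small.
\item Outside $U_1$, $S'_t$ is smooth with bounded geometry on $[t_0,T+2\xi]$. By hypothesis (1), $S''_{t_0}$ is a graph of size $\sigma\epsilon$ over $S'_{t_0}$ there.
\item Standard interior estimates for the graphical MCF equation, together with pseudolocality to keep $S''_t$ smooth and graphical near the collar, show that $S''_t$ remains a graph of size at most $C_1\sigma\epsilon$ over $S'_t$ on a slightly smaller region containing the collar, for $t\in[t_0,T+\xi]$.
\item Here $C_1$ depends only on the geometry of $S$ on the collar and on the length $3\xi$ of the time interval.
\end{itemize}
Now choose $\sigma$ so small that $C_1\sigma<ch_0/2$. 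Then on the collar $S''_t$ lies strictly between $S'_{t-\epsilon}$ and $S'_{t+\epsilon}$ for all $t\in[t_0,T+\xi]$.

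\emph{Step 3: avoidance on $U_2$.} Hypothesis (2) says that at time $t_0$, inside $U_2$, $S''_{t_0}$ lies between $S'_{t_0\pm\epsilon}$. Here $S'_{t-\epsilon}$ is defined because $\epsilon<t_0-t_0'$. Step 2 shows that on the parabolic boundary $\partial U_2\times[t_0,T+\xi]$, or more precisely on the collar, $S''_t$ stays strictly between the two barriers. Apply the localized avoidance principle for Brakke flows / level set flows with boundary to the pairs $(S''_t,S'_{t+\epsilon})$ and $(S''_t,S'_{t-\epsilon})$ on the region $U_2$. The principle states that disjoint flows remain disjoint as long as they are disjoint initially and on the spatial boundary. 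Using the consistent orientation coming from mean convexity, this gives that $S''_t$ lies between $S'_{t\pm\epsilon}$ in $U_1$ for all $t\in[t_0,T+\xi]$.

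\emph{Main obstacle.} I expect Step 2 to be the delicate part. There one must make $\sigma$ independent of $\epsilon$ and of $T$, and guarantee that the weak flow $S''_t$ is genuinely smooth and graphical on the collar for the whole interval. This rests on uniform pseudolocality/regularity near a smooth flow together with the linear-in-$\epsilon$ separation provided by the lower bound on $H$. A secondary technical point is to formulate the avoidance principle with boundary for possibly singular flows inside $U_1$. I would handle it by working with the level set flow, or the inner/outer flows, of the mean convex $S'$ and invoking the avoidance principle for weak set flows.
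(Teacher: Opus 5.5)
The paper does not prove this statement; it quotes it from \cite[Proposition 14]{Sz26}, so your argument has to stand on its own. Your ingredients are the right ones: time translates of the mean convex flow $S'$ as barriers in $U_2$, smooth control outside $U_1$, and a hand-off across the collar. However, Step 2 fails as stated, and it is where the content of the proposition lies. You claim that $\sigma\epsilon$-closeness at time $t_0$ outside $U_1$, plus interior estimates, keeps $S''_t$ within $C_1\sigma\epsilon$ of $S'_t$ on a region containing the collar for all $t\in[t_0,T+\xi]$. But the region outside $U_1$ has the inner boundary $\partial U_1$. Interior estimates convert height bounds into derivative bounds; they give no height bound at later times without control on the parabolic boundary. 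Near $\partial U_1$ the only information is hypothesis (2), which allows a deviation of order $\epsilon H$, i.e.\ up to about $C_0\epsilon$, not $\sigma\epsilon$. In fact the claim is false. Take $S''_{t_0}$ to be the graph of about $0.9\,\epsilon H$ just inside $\partial U_1$ and of $\sigma\epsilon$ outside $U_1$, which is compatible with (1) and (2). After any fixed short time, the deviation at collar points adjacent to $\partial U_1$ is a definite fraction of $\epsilon h_0$, far above $C_1\sigma\epsilon$ once $\sigma$ is small.

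Two things are missing.

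\emph{First, Steps 2 and 3 cannot be done in sequence.} They must be run together in a continuity (first failure time) argument. The barriers $S'_{t\pm\epsilon}$ in $U_2$ supply $O(C_0\epsilon)$ boundary data on $\partial U_1$ for an exterior barrier on the smooth region. The exterior barrier in turn supplies the boundary data on $\partial U_2$ for the avoidance argument in $U_2$.

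\emph{Second, the loop only closes because the time interval is short.} The exterior bound must improve from about $C_0\epsilon$ on $\partial U_1$ to less than $ch_0\epsilon$ on $\partial U_2$. For instance, take as exterior barriers the graphs over $S'_t$ of $\pm\bigl(\sigma\epsilon e^{\Lambda(t-t_0)} + 2C_0\epsilon\, e^{\lambda(t-t_0)-\mu d(x)}\bigr)$. Here $d$ is a smoothed distance to $\partial U_1$, $\lambda$ is comparable to $\mu^2$, $\mu$ is comparable to $d_0/\xi$, and $d_0$ is the distance from $U_1$ to $\partial U_2$. Since $t-t_0\leq 2\xi$, on $\partial U_2$ the second term is at most $C\epsilon\, e^{-c\,d_0^2/\xi}$. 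So $\xi$ must first be chosen small in terms of $d_0$, $h_0$, $C_0$, and only then $\sigma$. This, and not only the nesting of $U_1$, $U_2$, is why $\xi$ depends on the flow. In your Step 1, $\xi$ is chosen only to make the inclusions possible, and nothing in your argument uses that the interval is short. That signals the decay mechanism is absent.

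A smaller point: hypothesis (1) only puts $S''_{t_0}$ in a $\sigma\epsilon$-neighborhood of $S'_{t_0}$, not as a graph. So graphical estimates and pseudolocality for $S''$ are not available. Set-theoretic barriers (tubes around $S'_t$ as above), together with the avoidance principle on bounded domains, avoid this issue.
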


To be a little more precise about $S''_t$ lying between $S'_{t\pm
  \epsilon}$ note that by the mean convexity property due to
Bamler-Lai~\cite{BL25}, we can find domains $\Omega_t'$ such that
$S'_t\cap U_2 = \partial \Omega'_t \cap U_2$, and $\Omega'_{t_1}
\subset \Omega'_{t_2}$ for $t_1 < t_2$. We say that $S''_t$ lies
between $S'_{t\pm \epsilon}$ in $U_2$ if $S''_t \cap U_2\subset
\Omega'_{t+\epsilon}$ and $S''_t\cap U_2$ is disjoint from the interior
of $\Omega'_{t-\epsilon}$. 

We can now prove Proposition~\ref{prop:perturb22}, mostly following the
proof of \cite[Proposition 40]{Sz26_2}. 
\begin{proof}[Proof of Proposition~\ref{prop:perturb22}]
 Recall that $L_t$ has an $\ell$-degenerate
 $\mathcal{C}_{n,k}$-singularity at the origin. As in the proof of
 \cite[Proposition 40]{Sz26_2} we know that if $L^{\mathbf{a}}_t$ has
 an $\ell$-degenerate $(n,k)$-cylindrical singularity at $(x_0, t_0)$
 with $|x_0| + |t_0|^{1/2} < r_2$, then there is a rotation $Q$ with
 $|Q - Id| < \Psi(r_2, |\mathbf{a}|\, |\, T_1)$ such that
 $QL^{\mathbf{a}}_t$ has an $\ell$-degenerate
 $\mathcal{C}_{n,k}$-singularity (i.e. the tangent flow is actually
 $\mathcal{C}_{n,k}$, not just a rotation). So, once $\kappa, r_2$ are
 small enough, the $\mathbb{R}^{k-\ell}$ subspaces corresponding to the
 degenerate directions of the $\ell$-degenerate singularities that
 appear are all small perturbations of each other.

 After a $\Psi(r_2, \kappa\,|\, T_1)$ rotation and translation we can
 assume that $L^{\mathbf{a}_1}_t$ has an $\ell$-degenerate
 $\mathcal{C}_{n,k}$-singularity at $(0,0)$. Suppose that
 $L^{\mathbf{a}_2}_t$ also has an $\ell$-degenerate
   $(n,k)$-cylindrical singularity at a
   point $(y_0', y_0'', z_0, s_0)$ with 
   \[ |y_0'| + |y_0''| + |z_0| + |s_0|^{1/2} < 2r_2. \]
  Our goal is to show that $|y_0''| \geq r_2 |\mathbf{a}_2 -
  \mathbf{a}_1|^{1-\kappa_1}$. For this, we define the translated flow
  \[ \widetilde{L}^{\mathbf{a}_2}_t = L^{\mathbf{a}_2}_{t+s_0} -
    (y_0', 0, z_0), \]
  which has an $(n,k)$-cylindrical singularity at $(0, y_0'',
  0,0)$. In order to use Proposition~\ref{prop:perturb1}, let us
  denote by $M_\tau$ the rescaled flow corresponding to
  $L^{\mathbf{a}_1}_t$, and $\widetilde{N}_\tau$ the rescaled flow
  corresponding to $\widetilde{L}^{\mathbf{a}_2}_t$ (both centered at
  the origin). We will show that the hypotheses of
  Proposition~\ref{prop:perturb1} hold for $\widetilde{N}_\tau$ as a
  graph over $M_\tau$. In
  applying Proposition~\ref{prop:perturb1} we will shift time by 2. So
  for instance conditions (ii), (iii)  will be verified at times
  $T_1+2$ and $T_1+3$ rather than at times $T_1, T_1+1$.

  For the parameter $\gamma$ in Proposition~\ref{prop:perturb1} we
  will choose
  \[ \gamma = T_1^{-1}e^{T_1/2}|y_0'| + |\mathbf{a}_2-\mathbf{a}_1| +
     e^{T_1/2}|z_0| + e^{T_1} |s_0|, \] 
    as in \cite{Sz26_2}. Let us consider the condition (i), which is
    where we need to use the barrier construction of
    Proposition~\ref{prop:barrier} rather than using the mean
    convexity as in \cite{Sz26_2}.

  First we compare $L^{\mathbf{a}_1}_t$ to $L^{\mathbf{a}_2}_t$, and
  we denote by $N_\tau$ the rescaled flow (centered at the origin)
  corresponding to $L^{\mathbf{a}_2}_t$. By the assumption, and
  Definition~\ref{defn:kappagood} we know that $N_{T_1+1}$ is the
  graph of a function $u$ over $M_{T_1+1}$, where
  \[ \label{eq:graph10} \left| u - \chi(|x| T_1^{-\beta/2})
      (\mathbf{a}_2-\mathbf{a}_1)\cdot y\right| &\leq \kappa  |\mathbf{a}_2-\mathbf{a}_1|.
  \]

  We will apply Proposition~\ref{prop:barrier} with $L^{\mathbf{a}_1}$
  being $S'_t$ (i.e. a small perturbation of our original flow), and
  $L^{\mathbf{a}_2}$ being $S''_t$. The time $t_0$ in the Proposition
  corresponds to $T_1+1$ along our rescaled flows here. Let us first
  consider the condition (2), on the open set $U_2$ (where the flow is
  mean convex). For small $s$, consider the flow
  $\widetilde{M}^s_\tau$ defined by
  \[\label{eq:Mtildedefn10} \widetilde{M}^s_\tau = (1 - e^{\tau-T_1-1}s)^{1/2} M_{\tau -
      \log(1 - e^{\tau-T_1-1}s)}, \]
    which is the rescaled flow corresponding to the translation of
    $L^{\mathbf{a}_1}_t$ in time by $e^{-T_1-1}s$. 
  We know from \cite[Proposition 13]{Sz26_2} that $M_\tau$ is
  $\tau^{-\beta}$-graphical over $\mathcal{C}_{n,k}$ on the
  $\tau^\beta$-ball. From this it follows that if $|s|$ is small, for $\tau\in [T_1,
  T_1+3]$, on $B_{\tau^\beta}$, the flow $\widetilde{M}^s_\tau$ is on
  the negative side of the graph of $-c_n |s|$ over $M_\tau$ if $s > 0$, and on the
  positive side of the graph of $c_n |s|$ if $s < 0$. Here $c_n > 0$
  is  a dimensional constant. 

  Note that we have
  \[ |\chi(|x| T_1^{-\beta/2})(\mathbf{a}_2 - \mathbf{a}_1) \cdot y |
    \leq 2T_1^{\beta/2} |\mathbf{a}_2 - \mathbf{a}_1| \leq
    2T_1^{\beta/2} \gamma,  \]
  and this function is supported inside the
  $T_1^\beta$-ball. Using \eqref{eq:graph10}, if we choose $\kappa$
  sufficiently small (depending on $T_1$) and we set $\overline{s} =
  c_n^{-1} 3T_1^{\beta/2}\gamma$, then it follows that on the $T_1^\beta$-ball,
  the hypersurface $N_{T_1+1}$ is contained between the two
  hypersurfaces $\widetilde{M}^{\pm \overline{s}}_{T_1+1}$.
  In Proposition~\ref{prop:barrier} we therefore set $\epsilon =
  \overline{s}$. Note that the set $U_2$ in condition (2) will be larger than the
  $T_1^{\beta}$-ball, but since the flow is mean convex on $U_2$ it
  follows from \eqref{eq:graph10} that if we choose $\kappa$
  sufficiently small (depending on the flow), the hypersurface
  $N_{T_1+1}$ will be contained between $\widetilde{M}^{\pm
    \overline{s}}_{T_1+1}$ on all of $U_2$.

  Regarding condition (1) from Proposition~\ref{prop:barrier}, outside
  of the set $U_1$ (this will be outside of the $T_1^\beta$-ball), by
  \eqref{eq:graph10}, the hypersurface $N_{T_1+1}$ is in the
  $\kappa\gamma$-neighborhood of $M_{T_1+1}$. After scaling back to
  the unrescaled flows, this implies that $L^{\mathbf{a}_2}_{t_0}$ is
  in the $e^{-(T_1+1)/2} \kappa\gamma$-neighborhood of
  $L^{\mathbf{a}_1}_{t_0}$ (outside of $U_1$). By choosing $\kappa$
  sufficiently small (depending on $T_1$), we can ensure that
  $e^{-(T_1+1)/2}\kappa\gamma < \sigma\epsilon$ in terms of the
  $\sigma$ in Proposition~\ref{prop:barrier}, and our choice of
  $\epsilon = \overline{s} = c_n^{-1}3T_1^{\beta/2}\gamma$.

  We can therefore arrange that the conditions (1), (2) both hold. The
  conclusion of Proposition~\ref{prop:barrier} is that at a later time
  $t$ we also have that on the set $U_1$, $L^{\mathbf{a}_2}_t$ lies
  between the time translated flows $L^{\mathbf{a}_1}_{t\pm
    \overline{s}}$. In terms of our rescaled flows $M_\tau, N_\tau$
  the set $U_1$ contains the $10\tau^{1/2}$-balls for $\tau\geq T_1$,
  and so on these balls we have that $N_\tau$ is contained between
  $\widetilde{M}^{\pm \overline{s}}_\tau$. From \eqref{eq:Mtildedefn10}
  we know that as long as $e^{\tau-T_1-1}\overline{s}$ is sufficiently
  small, $\widetilde{M}^{\pm \overline{s}}_\tau$ lie between the
  graphs of $\pm c_n^{-1} e^{\tau-T_1}\overline{s}$ over $M_\tau$,
  decreasing $c_n$ if needed. Therefore, for this range of $\tau$, the
  hypersurface $N_\tau$ is the graph of $v$ over $M_\tau$ on the
  $10\tau^{1/2}$-ball, where
  \[ |v| \leq c_n^{-1} e^{\tau-T_1} \overline{s} =
    3c_n^{-2}T_1^{\beta/2} e^{\tau-T_1}\gamma. \]
  We can now proceed as in \cite{Sz26_2}, comparing
  $\widetilde{N}_\tau$ to $N_\tau$. In conclusion we find that as long
  as $c_n^{-2} T_1 e^{\tau-T_1} \gamma$ is sufficiently small, on the
  $10\tau^{1/2}$-ball, $\widetilde{N}_\tau$ is the graph of
  $\tilde{v}$ over $M_\tau$, where
  \[ |\tilde{v}| \leq 2c_n^{-2} T_1 e^{\tau-T_1}\gamma, \]
  as required by condition (i) of Proposition~\ref{prop:perturb1}.

  The conditions (ii), (iii) follow exactly as in \cite{Sz26_2}, as
  long as we choose $\kappa$ sufficiently small. The conclusion of
  Proposition~\ref{prop:perturb1} is then that the translated flow
  $\widetilde{N}'_\tau = \widetilde{N}_\tau + e^{(\tau-T_1-2)/2} x_0$
  does not have an $(n,k)$-cylindrical singularity at infinity if
  $x_0$ is only nonzero in the degenerate $y$-directions, and $|x_0| <
  r_0 \gamma^{1-\kappa_1}$. By our assumption the flow
  $\widetilde{L}^{\mathbf{a}_2}_t$ (which corresponds to the rescaled
  flow $\widetilde{N}_\tau$) has an $(n,k)$-cylindrical singularity at
  $(0,y_0'', 0, 0)$, so $\widetilde{N}_\tau - e^{\tau/2}(0, y_0'', 0)$
  has an $(n,k)$-cylindrical singularity at infinity. The conclusion
  is that
  \[ e^{(T_1+2)/2} |y_0''| \geq r_0 \gamma^{1-\kappa_1} \geq r_0
    |\mathbf{a}_2 - \mathbf{a}_1|^{1-\kappa_1}. \]
  By choosing $r_2 = e^{-(T_1+2)/2}r_0$, it follows that $|y_0''| \geq
  r_2 |\mathbf{a}_2 - \mathbf{a}_1|^{1-\kappa_1}$, as required. 
\end{proof}

As a consequence we have the following result, whose proof is identical to \cite[Proposition 41]{Sz26_2}. 

\begin{prop}\label{prop:genericperturb}
    There exists $\underline{r}_0 > 0$ depending on the dimension, and entropy bounds, satisfying the following. Suppose that $L_t$ has an $\ell$-degenerate $\mathcal{C}_{n,k}$-singularity at the origin, which is $\ell$-degenerate at scale $r$ for some $r < \underline{r}_0$. There exists an $\epsilon = \epsilon(r)$, and for every $\kappa$-good family of perturbations $L^{\mathbf{a}}_t$ at $(0,0)$, there exists an $a_0 > 0$ (depending on $L^{\mathbf{a}}_t$), such that for almost every $\mathbf{a}\in \mathbb{R}^{k-\ell}$ with $|\mathbf{a}| < a_0$ the perturbed flow $L^{\mathbf{a}}_t$ has no $\ell$-degenerate $(n,k)$-cylindrical singularity in the $\epsilon$-neighborhood of the origin. 
\end{prop}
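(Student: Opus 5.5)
The plan is to derive this from Proposition~\ref{prop:perturb22} by a measure-theoretic covering argument in the parameter $\mathbf{a}$, as in \cite[Proposition 41]{Sz26_2}.

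\emph{Choice of parameters.} Take $\underline{r}_0$ small enough that, once the singularity is $\ell$-degenerate at scale $r<\underline{r}_0$, Proposition~\ref{prop:nearbydegen} yields $T_1=-2\ln r'$ large enough for Proposition~\ref{prop:perturb22} to apply. Set $\epsilon=\epsilon(r)$ smaller than the $r_2=r_2(T_1)$ of that proposition, and smaller than the $\delta$ of Proposition~\ref{prop:nearbydegen}. Given a $\kappa$-good family with $\kappa$ small depending on $L_t$, choose $a_0\le\kappa$. Using continuity of $\mathbf{a}\mapsto L^{\mathbf{a}}_t$ in the Brakke topology, shrink $a_0$ so that every $L^{\mathbf{a}}_t$ with $|\mathbf{a}|<a_0$ is $\delta$-close to $L_t$. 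Then every $\ell$-degenerate $(n,k)$-cylindrical singularity of $L^{\mathbf{a}}_t$ in the $\epsilon$-neighborhood of the origin is $\ell$-degenerate at scale $r'$, which is the hypothesis under which Proposition~\ref{prop:perturb22} was proved.

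\emph{The bad set and its fibers.} Let $B\subset\{|\mathbf{a}|<a_0\}$ be the set of $\mathbf{a}$ for which $L^{\mathbf{a}}_t$ has such a singularity. For each $\mathbf{a}\in B$, choose a singular point and record its degenerate component $y''(\mathbf{a})\in\mathbb{R}^{k-\ell}$. This component satisfies $|y''(\mathbf{a})|<\epsilon$. Proposition~\ref{prop:perturb22} gives
\[ |y''(\mathbf{a}_1)-y''(\mathbf{a}_2)|\ge r_2|\mathbf{a}_1-\mathbf{a}_2|^{1-\kappa_1} \]
for all $\mathbf{a}_1,\mathbf{a}_2\in B$. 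Hence $y''$ is injective on $B$, and its inverse $F:y''(B)\to B$ is Hölder continuous with exponent $1/(1-\kappa_1)>1$:
\[ |F(p)-F(q)|\le C|p-q|^{1/(1-\kappa_1)}. \]

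\emph{The covering argument.} Cover the ball of radius $\epsilon$ in $\mathbb{R}^{k-\ell}$ by about $C\rho^{-(k-\ell)}$ balls of radius $\rho$. The $F$-image of each such ball meeting $y''(B)$ has diameter at most $C\rho^{1/(1-\kappa_1)}$. So $B$ is covered by about $C\rho^{-(k-\ell)}$ sets of diameter $C\rho^{1+\kappa_1/(1-\kappa_1)}$, and its outer measure is at most
\[ C\rho^{(k-\ell)\kappa_1/(1-\kappa_1)}\to0 \quad\text{as } \rho\to0. \]
Thus $B$ is null, with no measurability issues since we bounded outer measure directly.

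\emph{Main obstacle.} The delicate step is ensuring that the constants are uniform in $\mathbf{a}$. Proposition~\ref{prop:perturb22} must apply to every pair in $B$ with the same $r_2$. This requires $\epsilon\le r_2$ to depend only on $r$ through $T_1$, while $\kappa$ and $a_0$ may depend on the family. It also requires the singular points of the various $L^{\mathbf{a}}_t$ to be compared after the small rotations already handled inside that proof. The rest is routine.
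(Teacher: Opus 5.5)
Your proposal is correct and follows the same route as the paper, which defers to the covering argument of \cite[Proposition 41]{Sz26_2}: Proposition~\ref{prop:perturb22} makes the inverse of $\mathbf{a}\mapsto y''(\mathbf{a})$ H\"older with exponent $1/(1-\kappa_1)>1$, and covering the $\epsilon$-ball in $\mathbb{R}^{k-\ell}$ by $\rho$-balls bounds the outer measure of the bad parameter set by $O(\rho^{(k-\ell)\kappa_1/(1-\kappa_1)})\to 0$. Your bookkeeping also matches the hypotheses under which Proposition~\ref{prop:perturb22} is stated: $\epsilon\le\min\{r_2(T_1),\delta\}$ depends only on $r$, $\kappa$ is taken small, and $a_0\le\kappa$ is shrunk via Brakke continuity of the family.
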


\section{Geometry of the flow after a singularity}
In this section we will prove some geometric properties of a mean
curvature flow passing through a non-degenerate cylindrical
singularity. Several important properties were obtained by
Sun-Wang-Xu~\cite{SWX25} in the course of proving their
Theorem~\ref{thm:nondegsurgery} on surgery. Recall that by \cite{SWX25} we know that the flow is mean convex and non-collapsing in a neighborhood of a non-degenerate cylindrical singularity. As a consequence estimating the full curvature is equivalent to estimating the mean curvature, by Haslhofer-Kleiner~\cite{HK16}, as follows. 
\begin{prop}\label{prop:HK16}
There is a parabolic neighborhood of any non-degenerate singularity, and a constant $C > 0$ (depending on the flow), such that on this neighborhood we have $|A| \leq C|H|$ and $|\nabla A| \leq C|H|^2$. 
\end{prop}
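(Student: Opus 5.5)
The plan is to reduce Proposition~\ref{prop:HK16} to the Haslhofer--Kleiner local estimates for $\alpha$-noncollapsed mean convex flows \cite{HK16}. Those estimates say: if a mean curvature flow is mean convex and $\alpha$-noncollapsed on a parabolic ball $P(X,r)$, and $H(X)\le r^{-1}$, then $|A|\le C(\alpha)r^{-1}$ and $|\nabla A|\le C(\alpha)r^{-2}$ on $P(X,r/C(\alpha))$, with $C(\alpha)$ independent of the flow otherwise. Applying this with $r$ comparable to $H(X)^{-1}$ gives the pointwise bounds $|A|\le C|H|$ and $|\nabla A|\le C|H|^2$. So the real task is to produce a fixed parabolic neighborhood on which both hypotheses hold uniformly, with room to spare.

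\textbf{Step 1.} Let $X_0=(x_0,t_0)$ be a non-degenerate cylindrical singularity. By \cite{SWX25}, or by the mean convex neighborhood result of Bamler--Lai \cite{BL25}, there is a parabolic neighborhood $P(X_0,4r_0)$ on which the flow is mean convex, $H>0$, with a suitable orientation, and $\alpha$-noncollapsed for some $\alpha>0$.

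\textbf{Step 2.} Take any regular point $X\in P(X_0,r_0)$ and set $\rho=\min\{H(X)^{-1},r_0\}$. Then $P(X,\rho)\subset P(X_0,2r_0)$, and the local estimate applies on this ball.
\begin{itemize}
\item If $H(X)^{-1}\le r_0$, the estimate gives $|A|(X)\le C H(X)$ and $|\nabla A|(X)\le C H(X)^2$ directly.
\item If $H(X)^{-1}>r_0$, the estimate only gives $|A|\le C r_0^{-1}$ and $|\nabla A|\le Cr_0^{-2}$ at $X$, which is not yet a bound in terms of $H$.
\end{itemize}

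\textbf{Step 3 (main obstacle).} The small-$H$ case of Step 2 must be ruled out. I would do this by showing $\inf_{P(X_0,r_0)}H>0$ after shrinking $r_0$. Mean convexity together with the strong maximum principle, applied to $(\partial_t-\Delta)H=|A|^2H$, gives $H>0$ at every smooth point. Away from the singular set, compactness of the closed neighborhood then gives a positive lower bound for $H$. Near singular points, $H\to\infty$, since the tangent flows are cylinders or spheres with $H\sim |t-t_0|^{-1/2}$ before the singular time. After the singular time, the surgery picture of Theorem~\ref{thm:nondegsurgery} and the Sun--Wang--Xue asymptotics give the same lower bound. A cleaner alternative is to avoid needing a lower bound altogether: apply the Haslhofer--Kleiner global statement that, in an $\alpha$-noncollapsed mean convex region, $|A|/H$ and $|\nabla A|/H^2$ are bounded at every point whose $H^{-1}$-scale ball lies in the region. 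Points with small $H$ can then be handled by shrinking the neighborhood. In either route, once $H\ge h_0>0$ on $P(X_0,r_0)$, the second case of Step 2 gives $|A|\le Cr_0^{-1}\le C(r_0h_0)^{-1}H$, and similarly for $|\nabla A|$. Taking $C$ to be the maximum of the constants from the two cases finishes the proof.
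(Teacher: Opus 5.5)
Your proposal is correct and takes the same route as the paper: the paper gets mean convexity and noncollapsing near a non-degenerate singularity from \cite{SWX25} and then applies the Haslhofer--Kleiner local curvature estimates \cite{HK16}, with no further argument. Your Step~3 lower bound $H\ge h_0$ makes explicit the small-$H$ regime that the paper absorbs into the flow-dependent constant; your ``alternative'' route is unnecessary, and as stated it still implicitly relies on that same lower bound.
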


\subsection{Asymptotic expansion up to the singularity}
We first recall the result of \cite[Theorem 1.1 (vi)]{SWX25} about the profile of the
hypersurface at the singular time. 
\begin{theorem}\label{thm:SWXMain}
  At time $t=0$, in a neighborhood of the origin, $N_0$ is given by the
  graph of $u(\theta, y)$ over $S^{n-k}(\sqrt{2(n-k)}) \times
  \mathbb{R}^k$, with 
  \[ u(\theta, y) = \frac{\sqrt{2(n-k)}|y|}{2 |\log |y||^{1/2}} (1 +
    o(1)) - \sqrt{2(n-k)}, \]
  where $\Vert o(1)\Vert_{C^0} \to 0$ as $y\to 0$.
\end{theorem}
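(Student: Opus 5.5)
Since Theorem~\ref{thm:SWXMain} is quoted from \cite{SWX25}, I would reprove it by the Angenent--Vel\'azquez matching strategy. The idea is to propagate the Sun--Xue expansion of the rescaled flow from the parabolic region out to an intermediate region. Then I would use the local smoothness of the unrescaled flow, at spatial scale $|y|$, to carry the profile forward from the time when that scale is ``intermediate'' up to the singular time $t=0$.

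\textbf{Step 1 (parabolic region).} Let $M_\tau$ be the rescaled flow centered at the singularity. By non-degeneracy, the Sun--Xue expansion gives a radius function
\[
r(y,\tau) = \sqrt{2(n-k)}\Bigl(1+\frac{|y|^2-2k}{4\tau}\Bigr) + o(\tau^{-1})
\]
on $B_{K\sqrt\tau}$ for every fixed $K$. In the self-similar variable $\xi = y/\sqrt{\tau}$ this reads $r \approx \sqrt{2(n-k)}\,(1+|\xi|^2/4)$ for bounded $\xi$.

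\textbf{Step 2 (intermediate region).} Here I want to upgrade this to the ODE profile
\[
r(\xi\sqrt\tau,\tau) = \sqrt{2(n-k)}\,\sqrt{1+|\xi|^2/2}\,(1+o(1)),
\]
uniformly for $|\xi|\le \xi_0$ with $\xi_0$ arbitrarily large, once $\tau \ge \tau(\xi_0)$.

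\begin{itemize}
\item This profile is the self-similar solution of the leading-order equation $\partial_\tau r = \Delta r - \tfrac12 y\cdot\nabla r + \tfrac r2 - \tfrac{n-k}{r}$, evaluated in the $\xi$-variables.
\item I would prove it with sub- and supersolutions. These come either as rotationally symmetric barriers built from this profile with slightly perturbed constants, as in \cite{AV97}, or, more robustly, via the avoidance principle against shrinking Angenent-type tori and spheres.
\item Mean convexity and noncollapsing near the singularity, together with Proposition~\ref{prop:HK16}, give the curvature control needed to keep $M_\tau$ graphical over the cylinder on the whole range $|\xi|\le\xi_0$.
\end{itemize}

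\textbf{Step 3 (transfer to $t=0$).} Fix a small $Y\in\mathbb{R}^k$ and choose $\tau$ with $e^{-\tau/2}\xi_0\sqrt\tau = |Y|$, so that $\tau = 2|\log|Y||(1+o(1))$.

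\begin{itemize}
\item At time $t=-e^{-\tau}$, Step 2 gives a radius over $Y$ of
\[
R = e^{-\tau/2} r \approx \sqrt{2(n-k)}\,\frac{|\xi_0|}{\sqrt 2}\,e^{-\tau/2} = \sqrt{2(n-k)}\,\frac{|Y|}{\sqrt{2\tau}} \approx \frac{\sqrt{2(n-k)}\,|Y|}{2|\log|Y||^{1/2}},
\]
up to a factor $1+O(\xi_0^{-2})$.
\item The remaining time $e^{-\tau} = |Y|^2/(\xi_0^2\tau)$ compares to $R^2\sim|Y|^2/\tau$ with ratio about $\xi_0^{-2}$.
\item Near $Y$ the flow is a nearly straight, slowly varying cylinder of radius $R$, with $|A|\lesssim R^{-1}$ and $|\nabla A|\lesssim R^{-2}$ by Proposition~\ref{prop:HK16}. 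By pseudolocality or the interior estimates of \cite{HK16}, the radius therefore changes only by a factor $1+O(\xi_0^{-2})$ over $[-e^{-\tau},0]$.
\item Letting $\xi_0\to\infty$ as $Y\to 0$ yields the stated profile with an $o(1)$ error in $C^0$. Writing the surface as a graph over the fixed cylinder, after fixing the normalization of $u$, gives the displayed formula.
\end{itemize}

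\textbf{Main obstacle.} I expect the hardest step to be Step 2: controlling the intermediate region uniformly for large $|\xi|$ and without any symmetry. The Sun--Xue expansion holds only on balls of radius $K\sqrt\tau$ with an error $o(\tau^{-1})$ whose uniformity in $K$ is not automatic. My first attempt would be a graphical maximum-principle argument for $r^2$ (equivalently for $H^{-1}$, using mean convexity) in the $\xi$ variables. I would compare with explicit barriers of the form $2(n-k)(1\pm\delta+|\xi|^2/2)$ and use noncollapsing to rule out the hypersurface leaving the graphical regime before $|\xi|=\xi_0$.
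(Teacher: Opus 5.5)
Your Step 3 is the right mechanism; the paper gives no proof of this statement and simply quotes \cite[Theorem 1.1 (vi)]{SWX25}. Steps 1 and 2, however, do not hold up as proposed.

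Step 1 is misstated. The expansion $\sqrt{2(n-k)}\bigl(1+\frac{|y|^2-2k}{4\tau}\bigr)+o(\tau^{-1})$ is valid only on compact (or slowly growing) sets in $y$. On $B_{K\sqrt\tau}$, where $|y|^2/\tau$ is of order one, it is not what Sun--Xue prove. It also contradicts the profile $\sqrt{1+|\xi|^2/2}$ you then aim for: at $|\xi|=2$ it gives $2$ instead of $\sqrt 3$.

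More seriously, the barriers you propose for Step 2, which you identify as the main step, have the wrong sign. For $w=r^2$ the leading-order equation is $\partial_\tau w=\Delta w-\frac{|\nabla w|^2}{2w}-\frac12 y\cdot\nabla w+w-2(n-k)$. For $W_a=2(n-k)\bigl(a+\frac{|y|^2}{2\tau}\bigr)$ one computes
\begin{equation*}
\partial_\tau W_a-\Bigl(\Delta W_a-\frac{|\nabla W_a|^2}{2W_a}-\frac12\,y\cdot\nabla W_a+W_a-2(n-k)\Bigr)=-2(n-k)(a-1)+O\Bigl(\frac{1+|\xi|^2}{\tau}\Bigr).
\end{equation*}
So $W_{1+\delta}$ is a strict subsolution and $W_{1-\delta}$ a strict supersolution, the opposite of what upper and lower barriers require. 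This reflects the unstable constant mode of the rescaled cylinder, which grows like $e^{\tau}$. In the intermediate region, information is transported outward from the inner region along $\dot y=y/2$. A comparison argument would therefore have to be built around the neutral family $1+c|\xi|^2/2$ with $c\approx 1$, fed by inner boundary data, and would have to absorb $O(1/\tau)$ errors. That is genuinely delicate, and it is exactly what \cite{SX22} handles.

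You do not need to prove Step 2 yourself. It is precisely \cite[Theorem 1.3]{SX22}, recorded in the paper as \eqref{SX1_est} and \eqref{eq:rescale40}. For each fixed $K$, it gives the radius $\sqrt{2(n-k)}\bigl(1+\frac{|y|^2-2k}{2\tau}\bigr)^{1/2}+O(\tau^{-3/4})$ in $C^1$ on $B_{K\sqrt\tau}$.

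With this input, your Step 3 goes through using the pseudolocality statement quoted after Theorem~\ref{thm:SWXMain}, at scale $R$ around $Y$:
\begin{itemize}
\item the remaining time $e^{-\tau}$ is only the fraction $\approx 2/(2+\xi_0^2)$ of the extinction time of the cylinder of radius $R$;
\item the axial slope $O(\tau^{-1/2})$ makes the neck nearly straight on balls of radius $\epsilon^{-1}R$ for any fixed $\epsilon$;
\item the $C^1$ closeness upgrades to the closeness pseudolocality needs by standard interior estimates.
\end{itemize}

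The computation is also cleaner than you state. At $t=-e^{-\tau}$ one has $R^2=2(n-k)e^{-\tau}(1-k/\tau)+(n-k)|Y|^2/\tau$. After time $e^{-\tau}$ of near-cylindrical shrinking, the radius at $Y$ is therefore $\sqrt{(n-k)/\tau}\,|Y|\,(1+o(1))$. Since $\tau=2|\log|Y||(1+o(1))$, this is the claimed profile. The $\xi_0$-dependence cancels, so any fixed $\xi_0$ works and no diagonal choice is needed.

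Finally, add the routine check that near the origin $N_0$ contains nothing besides this graph. This follows from the graphicality of $M_\tau$ on $B_{K\sqrt\tau}$ together with avoidance against shrinking spheres. Balls of radius $\gg e^{-\tau/2}$ that are empty at $t=-e^{-\tau}$ stay essentially empty up to $t=0$.
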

We will use $r=|y|$ for the distance along the axis of the cylinder. 
In other words along $N_0$, near the origin, we can write
\[ |x| = \rho(r) ( 1+ o(1)), \qquad \rho(r) = \frac{ \sqrt{2(n-k)}
    r}{2 |\log r|^{1/2}}. \]
The clearing-out lemma from Colding-Minicozzi~\cite[Theorem 6.1]{CM15}
(see \cite[Lemma 4.1]{SWX25}) says that $|x| + \sqrt{t} = o(r)$ for $t
> 0$ in a neighborhood of $(0,0)$. In addition the various possible
blowups were classified in \cite[Theorem 4.2]{SWX25}: if $(x_j, t_j)
\to (0,0)$ with $t_j \geq 0$, then there are scales $\lambda_j \to 0$
such that the rescalings
\[ s\mapsto \lambda_j^{-1}(N_{t_j + \lambda_j^2s} - x_j) \]
converge either to a cylinder $S^{n-k}\times\mathbb{R}^k$, or a bowl
translator times $\mathbb{R}^{k-1}$.

We will repeatedly use the
pseudolocality result, \cite[Theorem 2.5]{SX22}, which states the
following: for any $\tau_0 \in (-1,0)$ and $\delta_0, R_0 > 0$, 
if a mean curvature flow $L_t$ is $\epsilon$-close to
$S^{n-k}(\sqrt{2(n-k)}) \times\mathbb{R}^k$ at $t=-1$ on
$B_{\epsilon^{-1}}$, and $\epsilon$ is sufficiently small, depending
on $\tau_0, \delta_0, R_0$ and a bound for the area ratios, then $L_t$
is a $\delta_0$-graph over $\sqrt{-t} S^{n-k}(\sqrt{2(n-k)})
\times\mathbb{R}^k$ on $B_{R_0}$ for $t\in [-1,\tau_0)$.

Let us record the behavior of the rescaled mean curvature flow
$M_\tau = e^{\tau/2} N_{-e^{-\tau}}$, which is a slight amplification of \cite[Theorem 1.2, Theorem 1.3]{SX22}.

\begin{theorem}[\cite{SX22}]\label{C^2_crude_lemma}
Let $N_t$ and $M_{\tau}$ be as above, and denote by $u$ the profile of $M_{\tau}$. Denoting
\[
u_0(y,\theta,\tau)=\sqrt{2(n-k)}\left(\sqrt{1+\frac{\sum_{i=1}^{k}(y_i^2-2)}{2\tau}}-1\right).
\]
For every $K<\infty$ there exists $\tau_0<\infty$ and a constant $C<\infty$ such that, setting $w=u-u_0$ we have
\begin{equation}\label{SX1_est}
|D^jw|(x,\tau) \leq \frac{C}{\tau^{3/4}}, \qquad j=0,1, \;\tau \geq \tau_0,\; |x|\leq K\sqrt{\tau},
\end{equation}
and
\begin{equation}\label{SX2_est}
|D^jw|(x,\tau) \leq \frac{C}{\tau^{3/2}}\exp(|x|^2/6), \qquad j=0,1,2, \;\tau \geq \tau_0,\; |x|\leq K\sqrt{\tau}.
\end{equation}
Note that \eqref{SX2_est} gives better decay in $\tau$ at the cost of the exponential factor in $|x|$.
\end{theorem}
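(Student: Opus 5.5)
The theorem is a slight amplification of \cite[Theorem 1.2, Theorem 1.3]{SX22}. The plan is to derive it from those results together with parabolic interior estimates and the pseudolocality statement recalled above. Throughout, $u$ denotes the graph function of $M_\tau$ over $\mathcal{C}_{n,k}$ on $B_{K\sqrt{\tau}}$. The comparison profile $u_0$ is the exact graph function of the ``parabolic'' approximate solution $\sqrt{2(n-k)}\sqrt{1+\sum(y_i^2-2)/(2\tau)}$. It satisfies the rescaled MCF graph equation up to an error of size $O(\tau^{-2}(1+|y|^4))$ on $|y|\le K\sqrt\tau$, with correspondingly weighted bounds on its derivatives. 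Since $1+\sum(y_i^2-2)/(2\tau)\ge 1-k/\tau$, we have $u_0$ bounded together with its derivatives on this region, with $|D^ju_0|\lesssim \tau^{-j/2}$ at $|y|\sim\sqrt\tau$. We write the equation for $w=u-u_0$ as
\[ \partial_\tau w = \mathcal{L}w + \mathcal{Q}(w) + E, \]
where $\mathcal{L}=\Delta-\tfrac12 x\cdot\nabla+1$ on the cylinder, $\mathcal{Q}$ is quadratic in $(w,Dw,D^2w)$ with coefficients depending on $u_0$, and $E$ is the error term above.

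For \eqref{SX2_est}, I would start from the Gaussian $L^2$ bound of \cite{SX22}, namely $\|w(\cdot,\tau)\|_{L^2}\le C\tau^{-3/2}$. Note that the $\tau^{-1}$ term is exactly matched by $u_0$, and the next term in the expansion appears at order $\tau^{-2}$ in $L^2$, up to the $\tau^{-3/2}$ slack allowed. The key step is a weighted $L^2\to L^\infty$ estimate. On a unit parabolic cylinder centered at a point with $|x|=R\le K\sqrt\tau$, the Gaussian weight is comparable to $e^{-R^2/4}$. Hence the $L^2$ norm there is bounded by $C\tau^{-3/2}e^{R^2/8}$. Local parabolic $L^\infty$ and Schauder estimates, which are valid because $u$ is $C^4$-small by Definition~\ref{defn:goodasympt} and the convergence to the cylinder, then give $|D^jw|\le C\tau^{-3/2}e^{R^2/8}+C\sup|E|$. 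Since $1/8<1/6$, this is absorbed into $\tau^{-3/2}e^{|x|^2/6}$. One point needs care: the drift $\tfrac12 x\cdot\nabla$ has size $R$, so the natural parabolic scale is $R^{-1}$ rather than $1$. Working on cylinders of size $R^{-1}$ costs polynomial factors $R^{C}$, which are again absorbed by $e^{(1/6-1/8)R^2}$. This also explains why the constant $1/6$ is stated, rather than the sharp $1/8$.

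For \eqref{SX1_est}, the exponential factor is useless once $|x|\gtrsim\sqrt{\log\tau}$, so a different argument is needed in the outer region. I would use the $C^0$ estimate of \cite[Theorem 1.3]{SX22}, $|u-u_0|\le C\tau^{-3/4}$ on $B_{K\sqrt\tau}$, which is the content of their intermediate-region analysis. For $|x|\le \sqrt{3\log \tau}$, \eqref{SX2_est} already gives $\tau^{-3/2}\tau^{1/2}=\tau^{-1}\le\tau^{-3/4}$. In the intermediate region $\sqrt{3\log\tau}\le|x|\le K\sqrt\tau$, I would obtain the gradient bound by interpolation: the $C^2$ bounds on $u$ from pseudolocality, applied to the unrescaled flow at scale $\sqrt{-t}$ around each point, give $|D^2 w|\le C$. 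Combining this with the $C^0$ bound $\tau^{-3/4}$ yields only $|Dw|\lesssim\tau^{-3/8}$, which is too weak. Instead I would apply interior Schauder estimates for the $w$-equation on parabolic cylinders of size $\sim 1/|x|$ but in a time window of length $O(1)$ in $\tau$. This is equivalent to unit-scale estimates for the unrescaled flow at the point, after translating to follow the drift. These estimates give $|Dw|\le C(\sup|w| + \sup|E|)$ with a constant uniform in $K$, which produces the $\tau^{-3/4}$ bound.

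The main obstacle is therefore the gradient bound in the intermediate region of \eqref{SX1_est}. Controlling the drift term uniformly there requires passing to the unrescaled, pseudolocality-controlled picture, where the flow is a $\delta_0$-graph over the shrinking cylinder on $B_{R_0}$, and working with exact parabolic regularity rather than the weighted $L^2$ theory.
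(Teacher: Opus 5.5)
For \eqref{SX2_est} your argument is essentially the paper's. You localize the Gaussian $L^2$ bound of \cite[Theorem 1.2]{SX22} to unit balls; the paper records this as $\|w(\cdot,\tau)\|_{L^2(B(x,1))}\le C\tau^{-3/2}e^{|x|^2/7}$. You then apply interior parabolic estimates to the difference, which solves a linear, uniformly parabolic equation with bounded coefficients.

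The only difference is how the drift is handled. The paper passes to the unrescaled flow and rescales to unit size at the neck scale $\lambda\sim\sqrt{-t_0}$ around the point, where there is no drift. In rescaled coordinates this region is a ball of size $O(1)$, not $1/|x|$, carried along by the drift over an $O(1)$ window in $\tau$. You instead use rescaled cylinders of size $R^{-1}$ and absorb the polynomial losses into $e^{(1/6-1/8)R^2}$. Both work.

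For \eqref{SX1_est} the paper simply quotes \cite[Theorem 1.3]{SX22} for both $j=0$ and $j=1$. Your separate derivation of the gradient bound is therefore not needed.

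If you do want to derive the gradient bound from the $C^0$ bound, one step fails as written. With your stated error $E=O(\tau^{-2}(1+|y|^4))$, you only get $\sup|E|=O(K^4)$ near $|y|=K\sqrt\tau$. So $|Dw|\le C(\sup|w|+\sup|E|)$ does not give $\tau^{-3/4}$.

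You need the finer structure of the error, namely $E=O(\tau^{-2})+O(\tau^{-3}(1+|y|^4))$. This is the rescaled form of \eqref{non_lin_U_0}; the leading term is $k\sqrt{2(n-k)}/(2\tau^2)$ up to a factor $S^{-1}$, with $S^2=1+(|y|^2-2k)/(2\tau)$. It is $O(K^4\tau^{-1})$ on $|y|\le K\sqrt\tau$, and with it your unrescaled interior estimate closes.

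Separately, Definition~\ref{defn:goodasympt} concerns degenerate singularities, and $u$ is not small on $B_{K\sqrt\tau}$ here: at $|y|=K\sqrt\tau$ one has $u_0\approx\sqrt{2(n-k)}(\sqrt{1+K^2/2}-1)$. The a priori control you actually use is smooth closeness to round cylinders at the local neck scale. Accordingly, the linearization around $u_0$ contains linear terms with $u_0$-dependent coefficients beyond $\mathcal{L}$. This is harmless for local estimates.
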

\begin{remark}
In particular, for any $K > 0$, on the
expanding balls $B_{K\tau^{1/2}}(0)$ the transverse radius of the flow
is
\[ \label{eq:rescale40} |x| = \sqrt{2(n-k)} \left(1 + \frac{|y|^2 - 2k}{2\tau}\right)^{1/2}
  + O(\tau^{-3/4}). \]
The error is valid in $C^1$.     
\end{remark}
\begin{proof}[Proof of theorem \ref{C^2_crude_lemma}]
Estimate \eqref{SX1_est} is just \cite[Theorem 1.3]{SX22}. Estimate \eqref{SX2_est} is resulted from combining Theorem \cite[Theorem 1.2]{SX22} and standard parabolic estimates, as we now show.
Considering the unrescaled profile and model profile function 
\[
U(x,\theta,t)=\sqrt{-t}u\left(\frac{x}{\sqrt{-t}},\theta,-\log(-t)\right),\; U_0(x,\theta,t)=\sqrt{-t}u_0\left(\frac{x}{\sqrt{-t}},\theta,-\log(-t)\right), 
\]
then letting $\mathcal{M}$ be the MCF operator in the graph over $\mathbb{R}^{k}\times S^{n-k}$, a direct calculation shows that in the region $|x|\leq K\sqrt{-t}|\log(t)|$ we have
\begin{equation}\label{non_lin_U_0}
\mathcal{M}[U_0]=\frac{k\sqrt{2(n-k)}}{2\sqrt{-t}\log^2(-t)}+O_1\left(\frac{1+\frac{|x|^4}{t^2}}{\sqrt{-t}|\log^3(-t)|}\right),
\end{equation}
where $O_1$ describes the behavior including first derivatives. Now, take some point $(x_0,t_0)$ and rescale the mean curvature flow, as well as the model profile $U_0$ to unit size around that point. Namely, taking $\lambda=\sqrt{2(n-k)}\sqrt{t_0}+U_0(x_0,0,t_0)$, 
\begin{equation}
V(x,\theta,t)= \lambda^{-1}U(x_0+\lambda x,\theta,t_0+\lambda^2 t), \qquad V_0(x,\theta,t)= \lambda^{-1}U_0(x_0+\lambda x,\theta,t_0+\lambda^2 t),
\end{equation} 
the rescaling of \eqref{non_lin_U_0} implies
\begin{equation}\label{non_lin_V_0}
\mathcal{M}[V_0]=\lambda \frac{k\sqrt{2(n-k)}}{2\sqrt{-t_0-\lambda^2t}\log^2(-t_0-\lambda^2t)}+\lambda O_1\left(\frac{1+\frac{|x_0+\lambda x|^4}{(t_0+\lambda^2 t)^2}}{\sqrt{-t_0-\lambda^2 t}|\log^3(-t_0-\lambda^2 t)|}\right).
\end{equation}
It follows from smooth convergence to cylinders and the fundamental theorem of calculus that the difference  $\tilde{W}=V-V^0$ satisfies a linear parabolic  equation
\[
\tilde{W}_t-a^{ij}\partial_i\partial j \tilde{W}-b^i\partial_i \tilde{W}-c\tilde{W}=-\mathcal{M}[V_0]
\]
with uniformly bounded ellipticity and coefficient. Therefore, applying standard parabolic estimates, we get that for $j=0,1,2$
\begin{equation}\label{int_est}
|D^j\tilde{W}|(0,0) \leq C(\varepsilon)\left(\|\tilde{W}\|_{L^2(B(0,\varepsilon)\times (-\varepsilon,0))} +\frac{1}{\log^2(t_0)}\right).
\end{equation}
On the other hand, using \cite[Theorem 1.2]{SX22}, for every $|x|\leq K\sqrt{\tau}$ we have
\begin{equation}\label{w_L2_est}
\|w(\cdot,\tau)\|_{L^2(B(x,1))} \leq C\tau^{-3/2}\exp(|x|^2/7). 
\end{equation}
As ball sizes at rescaled times (for $u$ and $u_0$) and the times for ($V$ and $V_0$) are comparable we can plug \eqref{w_L2_est} into \eqref{int_est} to obtain a Hessian bound for $\tilde{W}$, which translated back into $w$ gives \eqref{SX2_est}.    
\end{proof}

\subsection{Curvature and position bounds after the singularity}

\begin{lemma}\label{very_cyl_away_lemma}
    For every $\varepsilon>0$ and every $t>0$ sufficiently small, all points in $N_t\cap (B(0,\varepsilon))$ with $|x|\geq (\sqrt{2}+\varepsilon)\sqrt{t|\log t|}$ are $\varepsilon$ cylindrical.
\end{lemma}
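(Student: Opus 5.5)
The plan is to locate each such point on the part of the neck that has not yet pinched, and to apply pseudolocality at the scale of the local neck radius. Throughout I read $|x|$ in the statement as the distance $r=|y|$ along the axis. This is essentially equivalent near the singularity, since by the clearing-out lemma the transverse size of the flow is $o(r)$ for $t\geq 0$. The threshold $(\sqrt2+\varepsilon)\sqrt{t|\log t|}$ should be exactly the axial position where the neck, which at time $0$ has radius $\rho(r)=\frac{\sqrt{2(n-k)}\,r}{2|\log r|^{1/2}}(1+o(1))$ by Theorem~\ref{thm:SWXMain}, has not yet had time to shrink away. Heuristically, a cylinder of radius $\rho$ survives for time $\rho^2/(2(n-k))$. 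Using $|\log r|\approx \tfrac12|\log t|$ in the relevant range, the condition $r\geq(\sqrt2+\varepsilon)\sqrt{t|\log t|}$ gives
\[ \frac{\rho(r)^2}{2(n-k)} = \frac{r^2}{4|\log r|}(1+o(1)) \geq (1+c(\varepsilon))\,t \]
for some $c(\varepsilon)>0$, once $t$ is small. So at time $t$ a definite fraction of the neck radius remains.

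\emph{Step 1: a cylindrical initial condition at scale $\rho(r)$.} Fix a point at axial position $r$, small. I will show that at a time $t_-=-\eta\rho(r)^2$ slightly before the singular time, the flow is $\epsilon'$-close in $C^2$ to a round cylinder of radius $\approx\sqrt{\rho(r)^2+2(n-k)\eta\rho(r)^2}$ on a ball of radius $\epsilon'^{-1}\rho(r)$ around the point, with $\eta$ small but fixed. To do this, pass to the rescaled flow at $\tau=-\log(\eta\rho^2)$. The rescaled axial position is $r/\sqrt{\eta}\rho \sim C_\eta|\log r|^{1/2}\sim C'_\eta\sqrt{\tau}$, which lies in a region $|y|\leq K\sqrt\tau$. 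On that region Theorem~\ref{C^2_crude_lemma} gives $C^1$ closeness to the explicit profile $u_0$, via \eqref{SX1_est}. Curvature control is supplied either by the $C^2$ bound \eqref{SX2_est}, or more robustly by Proposition~\ref{prop:HK16} together with interior estimates for the noncollapsed mean convex flow near the singularity. The profile $u_0$ varies on axial scale $\sqrt\tau$, which is much larger than the unit scale, so on balls of any fixed multiple of the local radius the surface is as close to a round cylinder as we like once $r$ is small.

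\emph{Step 2: pseudolocality.} Rescale parabolically about the point so that the cylinder at $t_-$ has radius $\sqrt{2(n-k)}$ at time $-1$. The rescaled time interval from $t_-$ to $t$ then ends at a time $\tau_0<0$ bounded away from $0$ in terms of $\varepsilon$ and $\eta$, by the inequality displayed above. The pseudolocality theorem \cite[Theorem 2.5]{SX22} then shows that the flow remains a $\delta_0$-graph over the shrinking cylinder on $B_{R_0}$ up to that time. Its radius at that time is bounded below by $c(\varepsilon)\rho(r)$. Rescaling by this radius, the point is $\varepsilon$-cylindrical.

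\emph{Step 3: covering the remaining points.} Points of $N_t\cap B(0,\varepsilon)$ with $r$ not small are handled by smoothness of the flow away from the origin together with the clearing-out lemma, after shrinking the ball if needed. The main obstacle is the uniformity in Steps 1–2. One has to check that the $o(1)$ errors in Theorem~\ref{thm:SWXMain} and the replacement of $|\log r|$ by $\tfrac12|\log t|$ cost only a fixed fraction of the margin $\varepsilon$. One also needs the constant $K$ in Theorem~\ref{C^2_crude_lemma} to be chosen uniformly given $\eta$, and for this I would first try to fix $\eta$ depending only on $\varepsilon$.
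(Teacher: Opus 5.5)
Your argument is correct and is essentially the paper's. Both read off the local neck radius from the Sun--Xue asymptotics at a pre-singular time and observe that this neck outlives time $t$ by a margin depending on $\varepsilon$. The paper takes time $-t$ and the thin annulus $(\sqrt2+\varepsilon)\sqrt{t|\log t|}\le|x|\le(\sqrt2+2\varepsilon)\sqrt{t|\log t|}$, while you take $-\eta\rho(r)^2$ and treat every $r$ above the threshold at once. The other difference is how cylindricality is carried forward: the paper uses a blow-up limit and the classification \cite[Theorem 4.2]{SWX25}, whereas you use the pseudolocality theorem of \cite{SX22} plus interior estimates. You are right to avoid \eqref{SX2_est} here, since it is useless at these distances.

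The one thing to fix is Step 3. Smoothness away from the origin does not make points at axial distance comparable to $\varepsilon$ be $\varepsilon$-cylindrical: the neck profile has slope $\sim|\log r|^{-1/2}$, which is not small compared to $\varepsilon$ when $r\sim\varepsilon$. So the statement must be read on a ball $B(0,r_*(\varepsilon))$, which is all the paper proves or uses.
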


\begin{proof}
Observe that the precise asymptotics \eqref{SX1_est} imply that for every $\varepsilon>0$, taking time $t_1>0$ sufficiently close to zero and considering
\begin{equation}
\tilde{N}_s=\frac{1}{\sqrt{t_1}}N_{t_1s},
\end{equation}
we have that $\tilde{N}_s\cap \left(B(0,(\sqrt{2}+2\varepsilon)\sqrt{|\log t_1|})-B(0,(\sqrt{2}+\varepsilon)\sqrt{|\log t_1|})\right)$ evolve as a cylinder for the entire interval $s\in [-1,1]$. Indeed,  the classification of blowups \cite[Theorem 4.2]{SWX25} implies that after recentering at a point in this annulus, and taking the limit as $t_1\rightarrow 0$  the flow $\tilde{N}_s$ converges to a cylinder of initial radius at least $\sqrt{(4+\varepsilon/2)(n-k)}$, that will evolve for a time interval strictly containing $[-1,1]$). Taking $t_1$ sufficiently small and considering time $s=1$ yields the result.   
\end{proof}

We now prove the key curvature estimates for the flow after the non-degenerate singularity. We start with defining the `tip', and then estimate its position, from which we can prove our first estimate on the mean curvature.
\begin{defn}
   Suppose the flow $N_t$ has a non-degenerate singularity at the origin modelled on the  cylinder $\mathbb{S}^{n-k}\times \mathbb{R}^k$. Let $\pi_k:\mathbb{R}^{n+1}\to \mathbb{R}^k$ be the projection onto the $\mathbb{R}^k$ factor. For any direction $v\in \mathbb{S}^{k-1}\subset \mathbb{R}^k$, we define
   \begin{align*}
      p_t(v):=\operatorname{Argmin}_{\left\{x\in N_t, \frac{\pi_k(x)}{|\pi_k(x)|}=v\right\}}\left( \left\lvert{x}\right\rvert \right)
   \end{align*}
\end{defn}
\begin{prop}\label{positionest}
    Let $\mathcal{N}$ be a closed mean curvature (unit-regular cyclic mod 2 Brakke) flow with an isolated non-degenerate singularity at the origin, modelled on the cylinder $\mathbb{S}^{n-k}\times \mathbb{R}^k$. Then, there exists an $r_0>0$ and $c,C \in(0,\infty), c<C$,  such that in the parabolic cylinder $P(0,r)=B(0,r)\times(0,r^2)$, with $r\in(0,r_0)$ we have
    
    \begin{align*}
        c\sqrt{-t\log t}\leq |p_t(v)|\leq C\sqrt{-t\log t}
    \end{align*}
\end{prop}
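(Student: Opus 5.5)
The plan is to prove the two inequalities separately, with $t>0$ small. The upper bound comes from Lemma~\ref{very_cyl_away_lemma} together with a barrier/clearing-out argument. The lower bound comes from comparing with shrinking cylinders inherited from the profile at time $0$, using the asymptotics of Theorem~\ref{thm:SWXMain} and the blow-up classification of \cite[Theorem 4.2]{SWX25}. Throughout, write $x=(y,z)$ with $y=\pi_k(x)$. By the clearing-out lemma, $|z|=o(|y|)$ on $N_t$ near the origin, so $|x|$ and $|y|$ are comparable, and it suffices to control $|y|$ at the tip $p_t(v)$.

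\emph{Upper bound.} The idea is that for $|y|\ge C\sqrt{t|\log t|}$ the flow is still a thin neck, so the hole cannot reach that far. By Theorem~\ref{thm:SWXMain}, at time $0$ the radius over $y$ is $\rho(|y|)\approx \sqrt{2(n-k)}\,|y|/(2|\log|y||^{1/2})$. A neck of radius $\rho$ survives for time about $\rho^2/(2(n-k))\approx |y|^2/(4|\log|y||)$. So if $|y|^2\ge C^2 t|\log t|$ with $C$ large, the local neck at $y$ has survival time well beyond $t$. I would make this rigorous with Lemma~\ref{very_cyl_away_lemma}, or more directly with pseudolocality (\cite[Theorem 2.5]{SX22}) applied at time $0$ around the point $(y,0)$ at scale $\rho(|y|)$, using that $\rho$ varies slowly there. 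This shows that $N_t$ contains an $\varepsilon$-cylindrical neck over every $y$ in direction $v$ with $|y|\ge C\sqrt{t|\log t|}$ and $|y|<r_0$. The tip, however, is the closest point of $N_t$ in direction $v$, and it cannot lie on a neck: the neck region over the annulus is a graph over the $S^{n-k}$ factor, which contains no points with $|z|\to 0$. I also need $N_t$ to actually have points with small $|y|$ in direction $v$. For this I use Theorem~\ref{thm:nondegsurgery}: after the surgery the region near the axis is capped by $B^{n-k+1}\times S^{k-1}$, whose $S^{k-1}$ factor meets every direction $v$. So the minimizer $p_t(v)$ exists, and it lies on the cap, hence inside $|y|\le C\sqrt{t|\log t|}$.

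\emph{Lower bound.} This is the harder direction. I would show that for $|y|\le c\sqrt{t|\log t|}$ there are no points of $N_t$ at all near the axis. At time $0$, the region $\{|y|\le 2c\sqrt{t|\log t|}\}$ has radius at most $\rho(2c\sqrt{t|\log t|})\approx c\sqrt{2(n-k)}\sqrt t$ for $t$ small. The plan is to enclose $N_0\cap\{|y|\le R\}$, with $R=2c\sqrt{t|\log t|}$, inside a barrier that vanishes before time $t$. A natural barrier is the inside of a rotationally symmetric shrinking cylinder of radius about $c'\sqrt{t}$ with $c'<\sqrt{2(n-k)}$, truncated in $y$, or alternatively an Angenent-type torus or a sphere of radius comparable to $\sqrt t$. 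A compact barrier cannot enclose a piece of neck that continues outward, so I would instead use the mean convex neighbourhood from \cite{BL25} and an avoidance argument for the complementary domain. The fact to establish is that the "inside" of the neck near $y=0$ has thickness $\lesssim\sqrt t$, so it disappears by time $t$.

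The cleanest route is a contradiction/compactness argument. Suppose there are $t_j\to 0$ and tips with $|p_{t_j}|\le c_j\sqrt{t_j|\log t_j|}$, $c_j\to 0$. Rescale by $\lambda_j=\sqrt{t_j}$ about $(0,0)$ in space-time. Theorem~\ref{C^2_crude_lemma} gives that at time $s=-1$ the rescaled flow is close to the cylinder of radius $\sqrt{2(n-k)}\,\sqrt{1+(|y|^2-2k)/(2|\log t_j|)}$ on balls of size $K\sqrt{|\log t_j|}$. Since the rescaled profile is convex in $y$ to leading order, neck radii at $|y|\sim c_j\sqrt{|\log t_j|}$ are, to within $o(1)$, equal to the centre radius, and the pinch happens uniformly across that region. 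The limit is then a flow through the cylinder whose surgery happens everywhere on $\{|y|\lesssim c_j\sqrt{|\log|}\}$ at nearly the same time. On the other hand, \cite[Theorem 4.2]{SWX25} says that the blow-up at the tip is a bowl$\times\mathbb{R}^{k-1}$, and tracking the tip location using the quadratic $|y|^2/(2\tau)$ correction shows that at time $t$ the cap sits where $|y|^2/|\log t|\approx$ const. This yields $|p_t|\ge c\sqrt{t|\log t|}$.

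The main obstacle I expect is making this tip-location computation rigorous. The $O(\tau^{-3/4})$ error in \eqref{SX1_est} must be small compared with the $|y|^2/\tau$ variation that sets the tip position, and that is only borderline at $|y|\sim\sqrt\tau$. I would try first to use the sharper \eqref{SX2_est} on a slightly earlier time slice, then pseudolocality to propagate to time $t$.
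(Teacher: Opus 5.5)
Your upper bound is sound and is essentially the first half of the paper's argument. The neck over an axis point at distance $r$ in direction $v$ has time-$0$ radius $\rho(r)$, and at that scale it is modelled on a round cylinder. It therefore survives until about $(1-\delta)r^2/(4|\log r|)$. For $C>\sqrt2$ this gives neck points in direction $v$ at distance $\approx C\sqrt{t|\log t|}$ at time $t$, and these already bound the minimiser. The detour through Theorem~\ref{thm:nondegsurgery}, which is only stated at particular times $t_\pm$, is unnecessary.

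The genuine gap is the lower bound, where there is no argument yet. You abandon the barrier idea. The compactness argument about $(0,0)$ at scale $\sqrt{t_j}$ then rests on a ``tip-location computation'' from the $|y|^2/(2\tau)$ correction, which you yourself flag as unresolved. Carrying it out would require controlling the flow through the singular time on a region of size $\sqrt{|\log t_j|}$, with errors from \eqref{SX1_est} that are borderline. Pseudolocality, which you propose for the propagation, only ever yields \emph{survival} of a neck up to a time before its extinction. By itself it can only produce the upper bound.

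The missing idea is that the lower bound comes from the same local blow-up, using extinction instead of survival.
\begin{itemize}
\item Dilate at time $0$ about an axis point $x$ with $x/|x|=v$, by $\lambda=2|\log|x||^{1/2}/|x|$. By Theorem~\ref{thm:SWXMain} and \cite[Theorem 4.2]{SWX25}, every subsequential limit is the round shrinking cylinder, normalised to become extinct at time $1$. A bowl$\times\mathbb{R}^{k-1}$ is excluded because the time-$0$ slice extends to infinity in both directions $\pm v$.
\item Brakke convergence to a flow that is empty after time $1$ forces the mass in the ball to go to zero. With the clearing-out lemma, $N_t$ must then vacate $B(x,R\lambda^{-1})$ at some $t\le(1+\delta)|x|^2/(4|\log|x||)$.
\item The set-theoretic monotonicity of the flow in its mean convex neighbourhood (\cite{BL25}, \cite{SWX25}) prevents the surface from re-entering afterwards. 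The interior region can only shrink, and inside that ball it was already a thin solid neck, so once the neck is gone the ball lies outside it for good.
\item Since $r\mapsto r^2/(4|\log r|)$ is increasing, all axis points with $|x|\le c\sqrt{t|\log t|}$, $c<\sqrt2$, are cleared by time $t$. Uniformity in $x$ and $v$ comes from a contradiction/diagonal argument. Combined with $|z|=o(|y|)$, this gives $|p_t(v)|\ge c\sqrt{t|\log t|}$.
\end{itemize}
Your concern that a compact barrier cannot enclose a neck which continues outward does not arise, because everything happens in balls of size comparable to the local neck radius. This is also why the paper obtains the sharp constants $c,C\approx\sqrt2$ (Remark~\ref{rmk:SharpcC}).
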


Before giving the proof we recall the definition of parabolic dilation, and of being $(\delta,(n,k),R)$-cylindrical.
\begin{defn}
    Let $\mathcal{N}=\{N_t\}_{t\in I}$, be a Mean curvature flow (or Brakke flow) on some interval $I\subseteq \mathbb{R}$. For $\lambda>0$ and a point $(x_0,t_0)\in\mathbb{R}^{n+1}\times \mathbb{R}$, we define the parabolic dilation as
    \begin{align*}
        D_\lambda(\mathcal{N}):= \{\lambda (N_{\lambda^{-2} \tilde{t}}-x_0)\}_{\tilde{t}\in \tilde{I}}
    \end{align*}
    where $\tilde{t} = \lambda^2(t-t_0)$, and $\tilde{I}=\lambda^2(I-t_0)$.
\end{defn}
\begin{defn}
   For $\delta>0,R<\infty$ and $k,n$ positive integers, $k<n$, we say a mean curvature flow $\mathcal{N}^n\subset \mathbb{R}^{n+1}\times\mathbb{R}$ is $(\delta,(n,k),R)$-cylindrical (at the scale $\lambda\in(0,\infty$)), if the flow $\mathcal{D}_\lambda(\mathcal{N})$ (after suitable translation) is $\delta$-close to the round $(n,k)$-cylinder terminating at time 1, in $C^{\lfloor 1/\delta \rfloor}(B(0,R)\times[0,1-\delta])$).
\end{defn}
\begin{proof}
   
     We show that there exist $c,C$ such that
     \begin{align*}
        c\sqrt{-t\log t}\leq p_t(v)\leq C\sqrt{-t\log t}
    \end{align*}
    where $v\in \mathbb{S}^{k-1}$ is a unit vector in the `flat direction' of the generalised cylinder $\mathbb{S}^{n-k}\times \mathbb{R}^{k}$. 
   
    Recalling the Sun--Wang--Xue asymptotic, Theorem \ref{thm:SWXMain}, at time $t=0$ the radius at distance $|x|$ has the asymptotic form
      \begin{align*}
         r(x)=\frac{\sqrt{2(n-k)}|x|}{2\sqrt{-\log(|x|)}}(1+o_x(1))
      \end{align*}
     Fix $v\in S^{k-1}$ and let $x_i$ be a sequence of points $x_i\to 0, \frac{x_i}{|x_i|}=v$. Consider the dilation of the flow about $x_i$:
     \begin{align*}
        \mathcal{N}_i := \mathcal{D}_{\lambda_i}(\mathcal{N}-(x_i,0)), \ \lambda_i = \frac{2\sqrt{-\log(|x_i|)}}{|x_i|}.
     \end{align*}
     
     Taking a (sub-sequential) Brakke limit as $i\to\infty$ and applying the results of Bamler--Lai yields an ancient asymptotically cylindrical flow $\mathcal{N}_\infty$. In fact, from Sun--Wang--Xue, \cite[Remark 2.11, Theorem 4.2]{SWX25}, the limit is either the $k$-cylinder or the $k$-bowl times a plane translating in the direction  $v$.  It is easy to see that the limit is the $(n,k)$-cylinder from the asymptotic of Sun--Wang--Xue, Theorem \ref{thm:SWXMain}: the $t=0$ time-slice of the limit $\mathcal{N}_\infty$ must extend to both $\pm\infty$ in the $v$ direction, as the distance to the singularity goes to infinity along the sequence, and the remaining $k-1$ directions follow immediately from the asymptotic. Alternatively, if we saw the $k$-bowl cross a plane, the Brakke convergence could be promoted to smooth convergence, in which case we would find a `tip' at some finite distance from the origin in each of the $\mathcal{N}_i$ at time 0, violating our assumption of a singularity. 
     Now, by the normalisation $\sqrt{2(n-k)}$, we know that the limit flow is extinct by time $t=1$. Thus, for any $R>0$ and $\delta>0$, the convergence is smooth on $B(0,R)\times\left[0,1-\delta\right]$ and we have Brakke convergence on $B(0,R)\times\left[0,1+\delta\right]$. Translated back to the un-dilated flow: for sufficiently large $i$, the flow is $(\delta,k,R)$-cylindrical at scale $\lambda_i$. Thus, the flow evolves like a cylinder in some spatial ball 
     \begin{align*}
     B(x_i,R\frac{|x_i|}{2\sqrt{-\log(|x_i|)}})
     \end{align*} on the time interval 
     \begin{align*}
        \left[0,\frac{|x_i|^2}{-4(\log|x_i|)}(1-\delta)\right],
    \end{align*} 
     in particular, the flow must vacate this ball for some time 
    \begin{align*}
    t_i\in \left[\frac{|x_i|^2}{-4(\log|x_i|)}(1-\delta),\frac{|x_i|^2}{-4(\log|x_i|)}(1+\delta)\right]
    .\end{align*}
    
      It stands to reason that, since the singularity is isolated and the flow is monotonic (in a set theoretic sense), the only way the flow can clear-out from this neighborhood for large $i$, is for a `tip' to pass through this neighborhood of $x_i$ during the time interval $\left[\frac{|x_i|^2}{4(-\log|x_i|)}(1-\delta),\frac{|x_i|^2}{4(-\log|x_i|)}(1+\delta)\right]$.

     Setting $t_i^-:=\frac{|x_i|^2}{4|\log|x_i||}(1-\delta)$ and $t_i^+:=\frac{|x_i|^2}{4|\log|x_i||}(1+\delta)$, we can calculate the values of $c,C$ by considering $\sqrt{-t_i^\pm\log(t_i^\pm)}$. 

     From the above discussion, at time $t_i^-$ we known that the `tip', $P_{t_i^-}(v)$, must satisfy 
     \begin{align*}
        P_{t_i^-}(v)\leq \left\lvert{x_i}\right\rvert-R\lambda_i^{-1}
     \end{align*}
     Note, $\lambda_i^{-1}=(1-\delta)^{-1/2}\sqrt{t_i}$. Next, we observe:
     \begin{align}\label{upperbound1}
        \sqrt{-t_i^-\log(t_i^-)}&=\frac{(1-\delta)^{1/2}}{2}\left\lvert{x_i}\right\rvert\sqrt{2+\frac{\log(1-\delta)-\log(-4\log(\left\lvert{x_i}\right\rvert))}{\log(\left\lvert{x_i}\right\rvert)}}\\&\geq \frac1{C_I}\left\lvert{x_i}\right\rvert
     \end{align}
     where we write $C_I$ to highlight that the constant depends on the choice of sufficiently large $i$.
     We now deduce:
     \begin{align}\label{upperbound2}
        P_{t_i^-}(v)\leq C_I\sqrt{-t_i^-\log(t_i^-)}-R(1-\delta)^{-1/2}\sqrt{t_i^{-}}
     \end{align}
     Again, for sufficiently large $i$, we may absorb the second term into the first by increasing the constant. This yields the choice of $C$. A similar argument can be used to show the existence of $c$, using $t_i^+$.
     The sequence $x_i$ and direction $v$ was chosen arbitrarily, it follows (invoking a diagonalisation argument if necessary) that we may choose $r_0>0$ sufficiently small that the claimed tip position estimates hold.
\end{proof}

\begin{remark}\label{rmk:SharpcC}
   Examining equations \eqref{upperbound1} and \eqref{upperbound2}, we see  $C=(1+o(r_0))\sqrt{2}$, as taking $r_0$ smaller allows us to make the errors smaller. Similarly, $c=(1-o(r_0))\sqrt{2}$.
\end{remark}

Even though a non-degenerate cylindrical singularity is manifestly not asymptotically conical, the precise asymptotics of \cite{SWX25} show that at the singular time, it does open up conically at comparable scales. Because of this, it stands to reason that the flow behaves like a self expander next to its tips. This is the content of the next theorem.

\begin{theorem}[Expander behavior after the singularity]\label{quo_after21}
For every $K<\infty$ and $\varepsilon>0$ there exist some physical time $t_0>0$ such that for every $t\in (0,t_0)$
\begin{equation}
\left|2+\frac{\langle x,\nu \rangle}{tH}\right|<\varepsilon ,\qquad x\in N_{t}\cap B(0,K\sqrt{t|\log(t)|}).
\end{equation}
\end{theorem}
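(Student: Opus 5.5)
The plan is to reduce the statement to a maximum principle for a quantity satisfying a linear parabolic equation. Write $f := \langle x,\nu\rangle + 2tH$. Along any mean curvature flow we have $(\partial_t-\Delta)H = |A|^2 H$ and $(\partial_t-\Delta)\langle x,\nu\rangle = |A|^2\langle x,\nu\rangle - 2H$, with the orientation chosen so that $H>0$ in the mean convex neighborhood of \cite{SWX25,BL25}. Hence $(\partial_t - \Delta - |A|^2) f = 0$. Since $H$ solves the same equation and is positive, the ratio
\[ g := \frac{f}{H} = \frac{\langle x,\nu\rangle}{H} + 2t \]
satisfies the drift heat equation $(\partial_t - \Delta - 2\nabla\log H\cdot\nabla) g = 0$. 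The claim of Theorem~\ref{quo_after21} is exactly $|g| \le \varepsilon t$ on $N_t \cap B(0,K\sqrt{t|\log t|})$ for small $t$, because $\frac{\langle x,\nu\rangle}{tH} + 2 = g/t$. So it suffices to show $g = o(t)$ there, which I would do by the maximum principle on the space-time region $D_A = \{(x,t) : x \in N_t,\ |x| \le A\sqrt{t|\log t|},\ 0<t<t_0\}$ for a fixed $A > \max(K,\sqrt2)$.

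\textbf{Step 1 (initial data).} At $t=0$, Theorem~\ref{thm:SWXMain} gives a profile of radius $\rho(r)(1+o(1))$ at distance $r=|y|$ from the origin, where $\rho(r) = \sqrt{2(n-k)}\, r/(2|\log r|^{1/2})$. For such a profile $\langle x,\nu\rangle \approx \rho - r\rho' \sim \rho/|\log r|$ and $H\approx (n-k)/\rho$, so $g \sim \rho^2/|\log r| \sim r^2/|\log r|^2$. Points of $D_A$ near time $t$ have $r^2 \lesssim A^2 t|\log t|$, so this gives $g = O(t/|\log t|) = o(t)$. Thus $g$ tends to $0$ relative to $t$ as one approaches the bottom of the region. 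Because the bottom of $D_A$ degenerates to the singular point, I would implement this by applying the maximum principle on $D_A \cap \{t \ge s\}$ and letting $s\to 0$. This uses the clearing-out lemma and Proposition~\ref{positionest}, which say that the slice $D_A\cap\{t=s\}$ lies within distance $O(\sqrt{s|\log s|})$ of the origin, so that $\sup_{D_A\cap\{t=s\}}|g| = o(s)$ by the same scaling computation combined with pseudolocality.

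\textbf{Step 2 (lateral boundary).} On $|x| = A\sqrt{t|\log t|}$ with $A>\sqrt2$, Lemma~\ref{very_cyl_away_lemma} says the flow is $\varepsilon$-cylindrical. There I would compare with the exact computation for a shrinking cylinder of slowly varying radius $R(y,t)$, with $R^2 = \rho(y)^2 - 2(n-k)t$. Using $\langle x,\nu\rangle = (R - yR_y)/\sqrt{1+R_y^2}$ and $RR_y = \rho\rho'$, one gets
\[ g = \frac{\rho^2 - 2(n-k)t - y\rho\rho'}{n-k} + 2t + \text{error} = \frac{\rho^2 - y\rho\rho'}{n-k} + \text{error} = o(t) + \text{error}. \]
So the boundary values are $|g|\le \eta t' \le \eta t$ at each earlier time $t'$. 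The maximum principle then gives $|g| \le \eta t$ throughout $D_A$, and in particular on the smaller ball of radius $K\sqrt{t|\log t|}$.

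\textbf{Main obstacle.} The main difficulty is making the error in Step 2 genuinely $o(t)$. The term $y R_y$ multiplies a slope of size about $|\log t|^{-1/2}$ by $|y| \sim \sqrt{t|\log t|}$, so the slope must be controlled to accuracy $o(|\log t|^{-1/2})$ after rescaling by $\sqrt t$. Mere $\varepsilon$-closeness to a cylinder is not enough for this. I would first try to obtain the required control from the $C^1$ asymptotics (Theorem~\ref{C^2_crude_lemma} and Theorem~\ref{thm:SWXMain}) propagated forward by pseudolocality, together with interior parabolic estimates for the graph function over the exact shrinking profile $R$. If this is insufficient, I would fall back on a blow-up contradiction argument using the classification of limits \cite[Theorem 4.2]{SWX25}, which identifies the limits as cylinders or bowl $\times\mathbb{R}^{k-1}$, and then compute $g/t$ directly on those limits.
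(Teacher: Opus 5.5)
Your $g=\langle x,\nu\rangle/H+2t$ is exactly the paper's quantity $q$, up to the factor $t_1$ after rescaling, and its drift-heat equation is correct. The problem is the choice of domain.

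\textbf{The gap.} Because you confine the maximum principle to $t>0$, Step 2 needs $|g|\le \eta t$ on $|x|=A\sqrt{t|\log t|}$. That is a special case of the theorem itself, on the cylindrical part of the region, which is where the whole difficulty lies. As you note, it requires the axial component of $\nu$ to accuracy $o(|\log t|^{-1/2})$, and neither fallback supplies this.
\begin{itemize}
\item A blow-up argument cannot work. After rescaling by $\sqrt t$, the axial contribution to $g/t$ is $A\sqrt{|\log t|}\,\langle \hat y,\nu\rangle/(\sqrt t\,H)$ with $\hat y=y/|y|$. This is a diverging factor times a quantity that merely tends to $0$ on the limit cylinder, so no limit (cylinder or bowl$\times\mathbb{R}^{k-1}$) sees the needed rate.
\item Pseudolocality is only qualitative. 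Interior estimates need a quantitative $C^0$ bound on a full parabolic neighbourhood, and none is available across the singular time. At such a point, Theorem~\ref{C^2_crude_lemma} applies only at times $\le -ct$, while Theorem~\ref{thm:SWXMain} is only $C^0$ with relative error $o(1)$.
\end{itemize}
For the same reason your expansion of $\rho-r\rho'$ in Step 1 is not justified. There, however, you only need $\sup|g|\to 0$ as $s\to 0$, which is much cruder.

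\textbf{The missing idea: start the maximum principle before the singularity.}
\begin{itemize}
\item The paper sets $\tilde N_s=t_1^{-1/2}N_{t_1 s}$ for $s\in[-1,1]$. The slice $s=-1$ is $M_{|\log t_1|}$. There the $C^1$ estimate \eqref{SX1_est} has error $\tau^{-3/4}=o(\tau^{-1/2})$, which is precisely the accuracy you lack. It gives $|q|\le\varepsilon$ on the whole ball of radius $(K+1)\sqrt{|\log t_1|}$.
\item It then applies the maximum principle on the annulus $\mathcal{A}_K$, which stays smooth and cylindrical for $s\in[-1,1]$. There only the crude lateral bound $|q|\le C|\log t_1|$ is needed. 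The annulus has width $\sim\sqrt{|\log t_1|}$ in neck units, and an exponential barrier $e^{as}e^{-b(|x|-(K-1)\sqrt{|\log t_1|})}$ with $a\sim|\log t_1|$, $b\sim\sqrt{|\log t_1|}$ makes the influence of the lateral data at the middle radius polynomially small in $t_1$.
\item Finally it runs a second maximum principle on $B(0,K\sqrt{|\log t_1|})$ over $s\in[-1,1]$, through the singular point (where $q\to0$), and reads off $|q(\cdot,1)|\le 2\varepsilon$.
\end{itemize}
Your region $D_A$ has its bottom at the singular time. It can therefore never use the pre-singular asymptotics, and it has no width to damp crude lateral data. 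That is why your argument is forced to require sharp lateral values.
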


We defer the proof of this result to Theorem \ref{quo_after2} in Section \ref{near_tip_exp_sec}.

\begin{cor}\label{tipcurvest}
   Let $r_0$ be as in Proposition \ref{positionest}. There exist $c,C\in(0,\infty)$ such that, for all $v\in\mathbb{S}^{k-1}$, we have
   \begin{align*}
        c\sqrt{\frac{-\log t}{t}}\leq|H|(p_t(v),t)\leq C\sqrt{\frac{-\log t}{t}}, \qquad t\in(0,r_0)
    \end{align*}
\end{cor}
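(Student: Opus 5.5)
The plan is to combine the expander-type identity of Theorem~\ref{quo_after21} with the tip position bounds of Proposition~\ref{positionest}. At the tip $p_t(v)$ the normal should essentially point along the position vector, so $|\langle x,\nu\rangle| \approx |p_t(v)|$. Theorem~\ref{quo_after21} then gives $|H| \approx |\langle x,\nu\rangle|/(2t)$, and Proposition~\ref{positionest} converts $|p_t(v)|\sim\sqrt{-t\log t}$ into $|H|\sim \sqrt{-\log t/t}$.

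The steps, in order, are as follows.

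\textbf{Step 1: control of the inner product.} Since $p_t(v)$ minimises $|x|$ over the slice $\{x\in N_t : \pi_k(x)/|\pi_k(x)| = v\}$, the gradient of $|x|^2$ restricted to $N_t$ is normal to that slice at $p_t(v)$. I plan to show that $\langle x,\nu\rangle$ is comparable to $|x|$ there. By rotational symmetry considerations one would expect $x$ to be nearly normal. To justify this robustly, I would argue by blowup.
\begin{itemize}
\item Rescale around $(p_t(v),t)$ by the curvature scale.
\item The classification \cite[Theorem 4.2]{SWX25} says the limit is a cylinder or a bowl $\times\,\mathbb{R}^{k-1}$.
\item Near the tip it must be the bowl, since the tip is where the flow has cleared out, as in the proof of Proposition~\ref{positionest}.
\item At the bowl's tip the normal is the translation direction $v$.
\end{itemize}
Combined with the clearing-out estimate $|x|+\sqrt t=o(r)$, so that the $z$-component is negligible, this gives $\langle x,\nu\rangle = |p_t(v)|(1+o(1))$. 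The lower bound $|\langle x,\nu\rangle|\ge c'|p_t(v)|$ is all that is really needed. The upper bound $|\langle x,\nu\rangle|\le|x|$ is trivial.

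\textbf{Step 2: applying the expander identity.} The point $p_t(v)$ lies in $B(0,K\sqrt{t|\log t|})$ with $K=2C$, where $C$ is from Proposition~\ref{positionest}. Take $\varepsilon=1$ in Theorem~\ref{quo_after21}. Then for $t<t_0$ we get
\[ tH \in \left[-\langle x,\nu\rangle,\; -\tfrac{1}{3}\langle x,\nu\rangle\right], \]
up to sign, which yields
\[ \frac{|\langle x,\nu\rangle|}{3t}\le |H| \le \frac{|\langle x,\nu\rangle|}{t}. \]
Inserting Step 1 and Proposition~\ref{positionest} gives
\[ c\sqrt{\frac{-\log t}{t}}\le |H| \le C\sqrt{\frac{-\log t}{t}}. \]

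\textbf{Step 3: the full time range.} The estimate above holds for $t$ below the threshold from Theorem~\ref{quo_after21}. For $t$ between that threshold and $r_0$, I would use smoothness and compactness of the flow near the tip, together with the fact that $H\neq0$ by mean convexity \cite{SWX25}, to adjust the constants. Alternatively, one could shrink $r_0$, which the statement implicitly allows.

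The main obstacle is Step 1: showing that the normal at the minimising point is aligned with $p_t(v)$, i.e. that $p_t(v)$ really sits at the bowl-like tip rather than in a cylindrical region. I would first try the blowup-and-contradiction argument. If along a sequence $\langle x,\nu\rangle/|x|\to0$, the limit would have to be a cylinder whose tangential direction contains $v$. That contradicts minimality of $|x|$, since on a cylinder one can decrease $|x|$ along $-v$ within the ray slice.
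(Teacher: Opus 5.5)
Your proposal is correct and is essentially the paper's argument: apply Theorem~\ref{quo_after21} with $\varepsilon=1$ at the tip to get $|\langle x,\nu\rangle|/(3t)\le |H|\le |\langle x,\nu\rangle|/t$, then insert the tip position bounds of Proposition~\ref{positionest}. The only difference is in Step~1: the paper simply asserts $\langle p_t(v),\nu(p_t(v))\rangle=|p_t(v)|$, which is exact when $k=1$ (then $p_t(v)$ is a critical point of $|x|^2$ on $N_t$) but in general holds only approximately, so your blowup-and-minimality justification of $|\langle p_t(v),\nu\rangle|\ge c'|p_t(v)|$ fills in a step the paper leaves implicit rather than departing from it.
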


\begin{proof}
   For $v\in \mathbb{S}^{k-1}$, note that $\langle p_t(v),\nu(p_t(v)) \rangle=\|p_t(v)\|$. By Theorem \ref{quo_after2} with $\varepsilon=1$ and $K$ sufficiently large, this gives
   \[
   \frac{\|p_t(v)\|}{3t}\leq H(p_t(v),t) \leq \frac{\|p_t(v)\|}{t}
   \]
   Using the tip position estimate of Proposition \ref{positionest} the result follows.
\end{proof}

\begin{remark}
   As before, we can take  $C=(1+o(r_0))\frac1{\sqrt{2}}$, as taking $r_0$ smaller allows us to make the errors smaller. Similarly, we can take $c=(1-o(r_0))\frac1{\sqrt{2}}$. 
\end{remark}

\begin{cor}\label{initialcurvatureest}
   Under the same assumptions as Proposition \ref{positionest}, there exists an $r_1>0$ such that, for $r\in (0,r_1)$, in the parabolic cylinder $P(0,r)$ we have
   \begin{align*}
        |H|(\cdot,t)\leq 2C\sqrt{\frac{-\log t}{t}}
    \end{align*}
    where $C$ is the constant from Corollary \ref{tipcurvest}.
\end{cor}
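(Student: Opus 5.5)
We need to bound $|H|$ at every point of $N_t$ in $P(0,r)$, not just at the tips $p_t(v)$. The plan is to split each time slice into two regions, handled respectively by Theorem~\ref{quo_after21} and by Lemma~\ref{very_cyl_away_lemma}. Fix $K$ large, say $K=3$ so that $K>\sqrt2+\varepsilon$. Let $r_1\le r_0$ be small enough that both results apply for $t\in(0,r_1^2)$ inside $B(0,r_1)$.

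\emph{Inner region} $|x|\le K\sqrt{t|\log t|}$. Theorem~\ref{quo_after21} with $\varepsilon$ small gives
\[ |H|\le \frac{|\langle x,\nu\rangle|}{(2-\varepsilon)t}\le \frac{|x|}{(2-\varepsilon)t}\le \frac{K}{2-\varepsilon}\sqrt{\frac{-\log t}{t}}. \]
This bounds $|H|$ by a constant times $\sqrt{-\log t/t}$, but the constant is not yet $2C$. To get the sharp constant, I would use the sharper form of Theorem~\ref{quo_after21}, $H\approx |\langle x,\nu\rangle|/(2t)$, together with two facts near the tip region. First, $|x|\le(\sqrt2+\varepsilon)\sqrt{t|\log t|}$ wherever the surface is not $\varepsilon$-cylindrical. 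Second, $C=(1+o(1))/\sqrt2$ by the remark after Corollary~\ref{tipcurvest}. Together these give $|H|\le (\sqrt2+\varepsilon)/2\cdot\sqrt{-\log t/t}\le 2C\sqrt{-\log t/t}$ for $\varepsilon$ small, using $2C\approx\sqrt2>\sqrt2/2$.

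\emph{Outer region} $|x|\ge(\sqrt2+\varepsilon)\sqrt{t|\log t|}$. By Lemma~\ref{very_cyl_away_lemma}, these points are $\varepsilon$-cylindrical. I would make this quantitative using the proof of that lemma: the point lies on a nearly round neck whose radius is at least comparable to its radius at time $t$ after rescaling by $\sqrt{t}$. The neck has not yet pinched, because its pinching time is at least about $|x|^2/(4|\log|x||)\ge t(1+\varepsilon')$. By the cylinder comparison, the radius at time $t$ is then at least $c'\sqrt{t}$, which gives $|H|\lesssim (n-k)/(c'\sqrt t)\ll\sqrt{-\log t/t}$ for small $t$. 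The overlap of the two regions covers everything, since $K>\sqrt2+\varepsilon$.

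The main obstacle is the outer region near the transition $|x|\sim\sqrt{2t|\log t|}$. There the neck is about to pinch, and a naive radius bound is not uniform. I would handle this by the blow-up/compactness argument: if the bound failed along a sequence, rescaling by the curvature would give a limit that is either a cylinder or a bowl$\times\mathbb{R}^{k-1}$ by the classification of \cite[Theorem 4.2]{SWX25}. By Proposition~\ref{prop:HK16} its maximal curvature occurs at the tip, which is controlled by Corollary~\ref{tipcurvest}. That yields a contradiction with the factor $2$ slack.
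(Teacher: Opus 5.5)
Your argument is correct, but it takes a different route from the paper.

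\textbf{The paper's route.} The paper argues by contradiction. It picks points of maximal curvature at times $t_i\to 0$ that violate the bound and blows up by $H$. By Bamler--Lai and the Sun--Wang--Xue classification, the limit is a cylinder or a bowl times $\mathbb{R}^{k-1}$. It rules out the bowl by identifying its tip with $p_{t_i}(v)$ (``there cannot be two tips''), so $H(x_i,t_i)/H(p_{t_i}(v),t_i)\to 1$, which contradicts the factor $2$. It rules out the cylinder by set monotonicity.

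\textbf{Your route.} You estimate directly instead.
\begin{itemize}
\item For $|x|\le(\sqrt2+\varepsilon)\sqrt{t|\log t|}$, Theorem~\ref{quo_after21} together with $|\langle x,\nu\rangle|\le|x|$ gives $|H|\le\frac{\sqrt2+\varepsilon}{2-\varepsilon}\sqrt{|\log t|/t}$. The denominator is $2-\varepsilon$, not $2$ as you wrote, which is harmless.
\item For $|x|\ge(\sqrt2+\varepsilon)\sqrt{t|\log t|}$, the neck at axial distance $|x|$ pinches only at a time $\ge(1+\varepsilon')t$. Hence $|H|\le C(\varepsilon)t^{-1/2}$, which is exactly Lemma~\ref{cylregioncurvdecay}.
\end{itemize}
Neither Lemma~\ref{cylregioncurvdecay} nor Theorem~\ref{quo_after21} uses the corollary, so there is no circularity. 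The choice $K=3$ plays no role beyond $K\ge\sqrt2+\varepsilon$.

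\textbf{Comparing with $2C$.} What you actually need is a lower bound on $C$. Any admissible $C$ has one: $C$ dominates $H(p_t(v))$, and $H(p_t(v))\ge\frac{|p_t(v)|}{(2+\varepsilon)t}\ge\frac{(1-o(1))\sqrt2}{2+\varepsilon}\sqrt{|\log t|/t}$ by Remark~\ref{rmk:SharpcC}. Hence $2C\ge(1-o(1))\sqrt2$, which beats your inner bound of about $1/\sqrt2$.

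\textbf{What each route buys.} Yours is more quantitative: it shows $\sup|H|$ is $(1+o(1))$ times the tip curvature, so the factor $2$ is not even needed. It also avoids the delicate ``only one tip'' identification. The paper's route is softer and rests only on the blow-up classification and monotonicity, plus Corollary~\ref{tipcurvest}.

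\textbf{Your last paragraph is unnecessary.} With $\varepsilon$ fixed, the outer-region radius bound $c(\varepsilon)\sqrt t$ is uniform. The transition $|x|\approx\sqrt{2t|\log t|}$ lies in your inner region, where the expander bound already applies. That paragraph is also mis-justified: Proposition~\ref{prop:HK16} only gives $|A|\le C|H|$ and $|\nabla A|\le C|H|^2$, and says nothing about where $H$ is maximized. Locating the maximum at $p_t(v)$ is precisely the nontrivial step of the paper's argument.
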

\begin{proof}
   The corollary is the same as asserting that the maximum of the curvature at time $t$ occurs at (or, more accurately, near) one of the tip points $p_t(v)$.
   Thus, we show that there exists an $r_1$ such that there cannot be points in $B(0,r_1)$ with mean curvature larger than this `tip'.

   Suppose for contradiction such points exist, that is, for a sequence of times $t_i\to0$, we find points $x_i$ such that $H(x_i,t_i)>2C\sqrt{\frac{-\log t_i}{t_i}}$, where $C$ is the constant from Corollary \ref{tipcurvest}. Without loss of generality, we assume that $(x_i,t_i)$ is the maximum of the mean curvature at time $t_i$ (at least in the neighborhood of the singularity we are considering), and there is a fixed $v\in \mathbb{S}^{n-k-1}$ such that $\frac{\pi(x_i)}{|\pi(x_i)|}=v$ (this can be achieved by taking a subsequence, as $\mathbb{S}^{n-k-1}$ is compact). Clearly, these points must accumulate at the space-time origin, as if they accumulated elsewhere, we would violate the isolated nature of the singularity.
   We now blow-up along this sequence by the mean curvature. By the canonical neighborhood theorem of Bamler--Lai, \cite{BL25}, and \cite[Remark 2.11, Theorem 4.2]{SWX25}, the (sub-sequential) limit is either the $(n,k)$-cylinder or the $k$-bowl cross a plane, translating in the direction $v$.
   
   If we saw the bowl, we would have an immediate contradiction: since we are blowing up at the maximum of the mean curvature, the limit would have the tip of the bowl at the space-time origin. However, this is the same limit we see after blowing up at the point $P_{v}(t_i)$ by the maximum of the mean curvature at time $t_i$. Recall, $P_{v}(t_i)$ is characterised as being (in a suitable sense) the closest point to the origin in the direction $v$ and $(x_i,t_i)$ as the maximum of the mean curvature. Both of these points are in some sense the `tip' in the direction $v$ at time $t_i$. Since there cannot be two tips (by the set monotonicity of mean convex MCF), and we know the mean curvature of the point $P_{v}(t_i)$, we must in fact have that the difference of mean curvatures $|H(x_i,t_i)-H(P_{v}(t_i))|$ converges to 0. Thus, for sufficiently large $i$, the ratio of mean curvatures $H(x_i,t_i)/H(P_{v_1}(t_i),t_i)$ cannot be larger than 2, a contradiction. (In fact, for any $c\in(1,2)$, we can find an $I$ such that for $i>I$, $H(x_i,t_i)/H(P_{v_1}(t_i),t_i)<c$).
   
   Hence, the limit must be the cylinder. This also leads to contradiction: If we see the cylinder, then the argument presented in Proposition \ref{positionest} means the flow must vanish from a neighborhood of $x_i$ far before the tip passes through, as the scale of this cylinder is much smaller than the scale of the tip. This again contradicts the set monotonicity of the mean convex MCF.
\end{proof}

Using our knowledge of the geometry after the non-degenerate singularity, we can now improve the initial curvature estimate to get more refined estimate involving the distance from the tip.

\begin{prop}\label{curvdecay}
   Let $K>0$, and suppose $ B(0,K\sqrt{-t\log t})\cap N_t\neq \emptyset$. Then, there exists $r,C_1, C_2$ such that, for $t\in[0,r^2]$ and every $x\in B(0,K\sqrt{-t\log t})\cap N_t$, we have
   \begin{align*}
      H(x,t)\leq C_1\sqrt{\frac{-\log t}{t}} \left({1+C_2\sqrt{\frac{-\log t}{t}}\left|{\left\langle {x-p_t(v)},{v}\right\rangle}\right|}\right)^{-1/2} 
   \end{align*}
   where $v=\frac{\pi(x)}{|\pi(x)|}$.
\end{prop}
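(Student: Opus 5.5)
We will prove the estimate by a contradiction and blow-up argument at the natural scale $\lambda_t:=\sqrt{t/(-\log t)}$, which by Corollary~\ref{tipcurvest} is comparable to the inverse curvature at the tip. The inequality has two regimes. Near the tip, where $|\langle x-p_t(v),v\rangle|\lesssim \lambda_t$, it reduces to Corollary~\ref{initialcurvatureest}, since the bracket is bounded. Far from the tip the claim is that $H$ decays like the curvature of a bowl at distance $d$ from its tip, namely $H\sim \lambda_t^{-1}(d/\lambda_t)^{-1/2}$. So the content is a uniform bowl-like decay of $H$ along the direction $v$, valid out to distances of order $\sqrt{t|\log t|}$.

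\textbf{Step 1 (near-tip region).} For $d:=|\langle x-p_t(v),v\rangle|\le \Lambda\lambda_t$, with $\Lambda$ fixed and large, Corollary~\ref{initialcurvatureest} gives $H\le 2C\lambda_t^{-1}$. Choosing $C_1\ge 2C\sqrt{1+C_2\Lambda}$ then yields the bound in this range.

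\textbf{Step 2 (intermediate region, $\Lambda\lambda_t\le d\le \eta\sqrt{t|\log t|}$).} Argue by contradiction. Suppose there are sequences $t_i\to0$ and points $x_i$ in this range with $H(x_i,t_i)\sqrt{d_i/\lambda_{t_i}}\,\lambda_{t_i}\to\infty$. Rescale by $\lambda_{t_i}^{-1}$ around the tip $p_{t_i}(v_i)$. By the classification of blow-ups (\cite[Theorem 4.2]{SWX25} together with Bamler--Lai), and the fact from Corollary~\ref{tipcurvest} that the tip curvature is comparable to $\lambda_t^{-1}$, these rescalings subconverge smoothly on compact sets to the translating $k$-bowl $\times\mathbb{R}^{k-1}$ moving in direction $v$. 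This limit satisfies the desired estimate with explicit constants, because the bowl's curvature decays like $d^{-1/2}$. The convergence alone only handles bounded $d/\lambda_t$, so to reach large $d/\lambda_t$ we need a second input: Theorem~\ref{quo_after21}, the expander quotient $\langle x,\nu\rangle\approx -2tH$. Together with Proposition~\ref{prop:HK16}, noncollapsing and convexity, this controls $H$ through the geometry. At a point at distance $d$ behind the tip, the hypersurface is nearly a round $S^{n-k}$-fibred cylinder-like piece whose cross-sectional radius $\rho$ satisfies $\rho^2\approx \lambda_t d$ (the bowl profile). Then $H\approx (n-k)/\rho$ follows from local cylindricity, and the cylindricity is obtained by rescaling at scale $\rho$ and using Lemma~\ref{very_cyl_away_lemma}-type arguments.

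\textbf{Step 3 (far region, $d\ge \eta\sqrt{t|\log t|}$ up to $K\sqrt{t|\log t|}$).} Here the target bound is $H\lesssim t^{-1/2}$. This follows from the expander behaviour of Theorem~\ref{quo_after21}: $H\approx -\langle x,\nu\rangle/(2t)$, and $|\langle x,\nu\rangle|\lesssim$ the transverse radius, which is $\lesssim\sqrt t$ there by the clearing-out estimate combined with the rescaled asymptotics \eqref{eq:rescale40} propagated past time $0$. Equivalently, the point is $\varepsilon$-cylindrical at scale $\sim\sqrt t$ by the blow-up classification.

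\textbf{Main obstacle.} The main difficulty is Step 2: obtaining the transverse radius bound $\rho^2\lesssim\lambda_t d$ uniformly along the whole range where neither the bowl limit nor the far cylinder limit directly applies. The first approach to try is to apply the blow-up classification at the point $x$ itself, rescaled by $H(x,t)$. The limit must be either a cylinder or a bowl$\times\mathbb{R}^{k-1}$. A bowl limit is excluded because there is only one tip (set monotonicity, as in Corollary~\ref{initialcurvatureest}). A cylinder limit forces the flow near $x$ to vanish after time roughly $H^{-2}$. Comparing this with the time at which the tip reaches $x$ (moving at speed $\sim\lambda_t^{-1}$ by Corollary~\ref{tipcurvest}, i.e.\ roughly time $d\lambda_t$) gives $H^{-2}\gtrsim d\lambda_t$, which is exactly the claimed estimate.
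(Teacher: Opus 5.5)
Your final paragraph is the paper's argument. The paper lets $t_1$ be the time at which the tip reaches the axial position of $x$, and uses the tip curvature bound of Corollary~\ref{tipcurvest} as a motion bound to get $t_1-t\gtrsim d\sqrt{t/|\log t|}$. It then uses that the neck through $x$ stays $\varepsilon$-cylindrical until a tip-scale amount before $t_1$, which gives $H(x,t)\lesssim (t_1-t)^{-1/2}$; this is exactly your comparison of the cylindrical extinction time $\sim H^{-2}$ with the tip arrival time.

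Since the tip speed at times $s\ge t$ is $\lesssim \lambda_s^{-1}\le\lambda_t^{-1}$, this one argument covers the whole range $\Lambda\lambda_t\le d\le K\sqrt{t|\log t|}$. You can therefore drop the expander/ODE route in Step 2 and all of Step 3. Step 3 is in any case not justified as written: the bound $|\langle x,\nu\rangle|\lesssim\rho$ needs control of the axial slope, because $|y|\gg\sqrt t$ there, and an upper bound $\rho\lesssim\sqrt t$ on the transverse radius gives no upper bound on $H$ through cylindricity.
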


\begin{proof}
Note that, in light of Corollary \ref{initialcurvatureest} and \cite[Theorem 4.2]{SWX25}, given $C<\infty$,  it suffices to prove the estimate for $|x-p_t(x)|\geq C\sqrt{|\log(t)|t^{-1}}$, as close to the tip, in the tip curvature scale, the curvature remains comparable to the tip curvature. Given any $x$ and denoting $d=\|x-p_t(v)\|$ we can therefore assume that
\[
\frac{d}{2} \leq |\langle {x-p_t(v)},{v}\rangle|\leq 2d,
\]
and prove the estimate with $d$ instead of $|\langle {x-p_t(v)},{v}\rangle |$.

Denote by $t_1$ the time at which the tip arrives at $x$. More precisely $t_1$ is such that
\[
\pi_k(p_{t_1}(v))=\pi_k(x)
\]
By Theorem \ref{tipcurvest} , we know that
\begin{equation}
H(p_{t_1}(v),t_1) \leq C_1\sqrt{\frac{|\log(t_1)|}{t_1}}, \qquad  \frac{1}{C_1}d\sqrt{\frac{t}{|\log(t)|}} \leq t_1-t.
\end{equation}
where $C_1$ depends on $K$. The second estimate is just a motion bound, in which we have used the fact that $t_1$ is comparable to $t$ (by the tip position bound), hence the curvatures are also comparable. 

\bigskip

Considering the flow $\tilde{N}_s=\lambda(N_{\lambda^{-2}s+t_1}-p_{t_1}(v))$ where $\lambda=H(p_{t_1}(v),t_1)$, this flow is $\varepsilon$ cylindrical for all time $s\in [\lambda^2(t-t_1),-\Lambda]$, where $\Lambda=\Lambda(n-k,\varepsilon)$ is a constant computed on the bowl soliton. Therefore taking $C$ large enough in the initial paragraph of the proof, denoting 
\[
(\tilde{x},\tilde{t})=(\lambda (x-p_{t_1}(v)),\lambda^2(t-t_1)),
\]
we have
\begin{equation}
    H_{\tilde{N}}(\tilde{x},\tilde{t}) \leq C_2\lambda^{-1}|t-t_1|^{-1/2} \leq C_3\lambda^{-1} d^{-1/2}\left(\frac{t}{|\log(t)|}\right)^{-1/4}.
\end{equation}
So 
\[
H_{N_t}(x,t) \leq C_3d^{-1/2}\left(\frac{t}{|\log(t)|}\right)^{-1/4}.
\]
This completes the proof.

\end{proof}

We now record the following lemma, which says if we are sufficiently far away from the origin at the length scale $\sqrt{-t\log(t)}$, then we are actually in the cylindrical regime. 
\begin{lemma}\label{cylregioncurvdecay}
   For every small $\delta,\epsilon>0$, there exists $r>0$ with the following significance:
   
    For $(x,t)\in N_t\cap P(0,r)$, with $|x|>R(t):= \sqrt{\frac{2}{(1-\epsilon)(1-\delta)}}\sqrt{-t\log(t)}$, we have
   \begin{align*}
      H(x,t)\leq \sqrt{2}(1+\delta)\sqrt{n-k}\frac{\sqrt{-\log |x|}}{|x|}\left[\sqrt{\delta}\right]^{-1} 
   \end{align*}
\end{lemma}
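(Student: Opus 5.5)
The plan is to compare the point $(x,t)$ with the time-$0$ slice, which is described by Theorem~\ref{thm:SWXMain}, and then run the flow forward from time $0$ for the short time $t$, using pseudolocality/cylindrical blow-up limits. The idea is that points with $|x|>R(t)$ are located where the time-$0$ profile is still a cylinder of radius $\rho(|y|)\approx \sqrt{2(n-k)}\,|y|/(2\sqrt{-\log|y|})$. That cylinder has remaining lifetime $\rho^2/(2(n-k)) \approx |y|^2/(4|\log |y||)$. The threshold $R(t)$ is chosen exactly so that at time $t$ this cylinder is still alive, with a definite fraction of its lifetime left.

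\textbf{Step 1.} Set $s:=|y|$, the axial distance, and relate $|x|$ to it. By the clearing-out lemma, $|x|=s(1+o(1))$ in $P(0,r)$. The condition $|x|>R(t)$ with $\log s\approx \tfrac12\log t$ then becomes
\[ t \le (1-\epsilon)(1-\delta)\frac{s^2}{4|\log s|}\,(1+o(1)). \]
The factor $(1-\epsilon)$ is spent absorbing the $o(1)$ errors, coming both from clearing out and from Theorem~\ref{thm:SWXMain}. The conclusion is that $t\le (1-\delta)T(s)$, where $T(s)$ is the extinction time of the round cylinder of radius $\rho(s)$.

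\textbf{Step 2.} Next I would show that the time-$0$ slice near the axial point $y$ is cylindrical at scale $\rho(s)$ on a ball of radius $R_0\rho(s)$, with $R_0$ large. This is exactly the blow-up argument in the proof of Proposition~\ref{positionest}. Dilate by $\lambda=\rho(s)^{-1}$ about the point at axial position $y$ at time $0$. The limit is the round cylinder, not the bowl, because the slice extends in both directions along $v$ and the singularity is isolated. Hence, for $r$ small, the flow is $(\delta',(n,k),R_0)$-cylindrical at that scale on the time interval $[0,(1-\delta/2)T(s)]$. This uses pseudolocality (\cite[Theorem 2.5]{SX22}) to get closeness to the evolving shrinking cylinder $\sqrt{\rho^2-2(n-k)t}\,S^{n-k}\times\mathbb{R}^k$ up to a fixed fraction of its lifetime.

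\textbf{Step 3.} Read off the curvature. On the shrinking cylinder at time $t\le(1-\delta)T$, the radius is
\[ \rho\sqrt{1-t/T}\ \ge\ \rho\sqrt{\delta}, \]
so
\[ H \le (1+\delta)\frac{n-k}{\rho\sqrt{\delta}} = (1+\delta)\sqrt{2(n-k)}\,\frac{\sqrt{-\log s}}{s\sqrt{\delta}}. \]
Replacing $s$ by $|x|$ costs another factor $1+o(1)$, which is absorbed into $(1+\delta)$ after shrinking $r$. This yields the stated bound $\sqrt{2}(1+\delta)\sqrt{n-k}\,\sqrt{-\log|x|}\,|x|^{-1}\delta^{-1/2}$.

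\textbf{Main obstacle.} The delicate point is uniformity. The cylindrical closeness in Step 2 must hold uniformly over all axial directions $v$ and all small $s$, and for times up to $(1-\delta/2)T(s)$, which is nearly the extinction time. I would handle this by a contradiction/compactness argument over sequences $(s_i,v_i)\to 0$, following Proposition~\ref{positionest}. The limiting cylinder is smooth on $[0,1-\delta/2]$ in normalized time, so for large $i$ the convergence is smooth there, and monotonicity prevents any other sheet from entering. A second subtlety is that points with $|x|>R(t)$ might lie off the main neck. The clearing-out lemma and Lemma~\ref{very_cyl_away_lemma} rule this out, since every such point lies on the evolving cylinder around its own axial position.
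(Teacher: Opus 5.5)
Your proposal is correct and follows essentially the same route as the paper. The paper also invokes the cylindrical blow-up argument from Proposition~\ref{positionest} to obtain the shrinking-cylinder bound $H\lesssim \sqrt{2(n-k)}\,\frac{\sqrt{-\log|x|}}{|x|}\,(1-t/a)^{-1/2}$ for $t\le(1-\delta)a$, where $a=\frac{|x|^2}{4|\log|x||}$, and then shows that $|x|>R(t)$ forces $t\le(1-\delta)a$. The one place to tighten is your Step 1. The heuristic $\log s\approx\tfrac12\log t$ fails when $t\ll s^2$, even though the conclusion is then trivial. The paper instead spends $\epsilon$ on the $\log(4|\log|x||)/(2|\log|x||)$ correction and concludes via the monotonicity of $y\mapsto -y\log y$ on $(0,1/e)$, which handles all $t$ uniformly.
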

\begin{proof}
Recall, from the argument in the proof of Proposition \ref{positionest}, for every $\delta>0$, we can find $r,R>0$ such that, for $x\in B(0,r)$, the flow is $(\delta, k,R)$-cylindrical. From which, we can conclude
     for $ y\in B(x,R\frac{|x|}{\sqrt{-4\log(|x|)}})$ and $ t\in \left(0,\frac{|x|^2}{-4(\log|x|)}(1-\delta)\right]$, we have
     \begin{equation}\label{eqn:cylindricalest}
      H(y,t)\leq \sqrt{2}(1+\delta)\frac{\sqrt{-\log(|x|)}}{|x|}\left(1-\frac{-4(\log|x|)}{|x|^2}t\right)^{-1/2}
     \end{equation}

     For every $\epsilon$, by potentially reducing $r$, we can assume $r$ is such that $x\in B(0,r)$ implies $\left\lvert{\frac{\log(-4\log |x|)}{2\log|x|}}\right\rvert\leq\epsilon$ and $r^2<1/e$.

     Now, fix $t$ and a point $x$ such that $|x|\geq \sqrt{\frac{2}{(1-\epsilon)(1-\delta)}}\sqrt{-t\log(t)}$. Then, 
     \[
     \begin{aligned}
      \sqrt{-\frac{(1-\delta)|x|^2}{-4\log|x|}\log\left(\frac{|x|^2}{-4\log|x|}\right)}&=|x|\sqrt{1-\delta}\sqrt{\frac{2\log(|x|)-\log(-4\log(|x|))}{4\log|x|}}\\
      &=\frac{|x|\sqrt{1-\delta}}{\sqrt{2}}\sqrt{1-\frac{\log(-4\log(|x|))}{2\log|x|}}\\
      &\geq \sqrt{\frac{(1-\delta)(1-\epsilon)}{2}}|x| \\
      &\geq \sqrt{-t\log(t)}
     \end{aligned} \]
     which implies 
     \begin{align*}
      -(1-\delta)a\log(a)\geq -b\log(b)
     \end{align*}
     and therefore
     \begin{align*}
      -(1-\delta)a\log((1-\delta)a)\geq -b\log(b)
     \end{align*}
     where $a=\frac{|x|^2}{-4\log|x|}$, $b=t$. By the monotonicity of $-y\log(y)$, for $y<1/e$, we deduce $t\leq(1-\delta)\frac{|x|^2}{-4\log|x|}$. 
     In particular, this means that the point $(x,t)$ is in the region in which \eqref{eqn:cylindricalest} holds.
     The lemma follows from the cylindrical curvature estimate.
\end{proof}
\begin{lemma}\label{curvcomp} There exists an $r>0$ such that, for points $(x,t)\in P(0,r)$ such that $c\sqrt{t|\log t|}\leq|x|\leq C\sqrt{t|\log t|}$ we have

    \begin{align*}
        \frac{|\log|x||}{|x|}\asymp \sqrt{\frac{|\log t|}{t}}
    \end{align*}
\end{lemma}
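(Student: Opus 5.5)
The statement is purely a calculus comparison, so the plan is to compare $|x|$ and $|\log|x||$ separately with the corresponding quantities in $t$, and then divide. No geometric input is needed beyond the hypothesis $c\sqrt{t|\log t|}\leq |x|\leq C\sqrt{t|\log t|}$. This is the same two-sided bound that Proposition~\ref{positionest} supplies for the tip.

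\emph{Step 1.} I would choose $r$ small enough that $r<1$, $r^2<e^{-1}$, and $|x|<1$ on $P(0,r)$. Then $\log t<0$ and $\log|x|<0$, so all logarithms have a definite sign.

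\emph{Step 2.} Taking logarithms of the hypothesis gives
\[
\log|x| = \tfrac12\log t + \tfrac12\log|\log t| + O(1),
\]
where $|O(1)|\leq \max(|\log c|,|\log C|)$. Since $\log|\log t| = o(|\log t|)$ as $t\to0$, shrinking $r$ further gives
\[
\tfrac14|\log t|\leq |\log|x||\leq |\log t|
\]
for all $t\in(0,r^2)$. This is the only step where smallness of $r$ matters; the absorption of the $\log|\log t|$ and constant terms is the one point needing care, and it is routine.

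\emph{Step 3.} Dividing the bound of Step 2 by the hypothesis on $|x|$ gives
\[
\frac{|\log|x||}{|x|}\asymp \frac{|\log t|}{\sqrt{t|\log t|}} = \sqrt{\frac{|\log t|}{t}}.
\]
The implied constants depend only on $c$ and $C$. As a result, the cylindrical-region curvature scale $\sqrt{-\log|x|}/|x|$ from Lemma~\ref{cylregioncurvdecay} is dominated by $\sqrt{|\log t|/t}$ in this annular region, which matches the tip curvature scale of Corollary~\ref{tipcurvest}.
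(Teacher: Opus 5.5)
Your proposal is correct and takes essentially the same route as the paper. Both take logarithms of the two-sided bound on $|x|$, absorb the $\log|\log t|$ and constant terms into $|\log t|$ by shrinking $r$, and divide by $|x|$; your intermediate bound $\tfrac14|\log t|\le|\log|x||\le|\log t|$ just makes the paper's absorption step explicit.
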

\begin{proof} For sufficiently small $r$, we know for points $x,y\in B(0,r), |y|<|x|$ $|\log|x||<|\log|y||$.

    We consider points $c\sqrt{t|\log t|}\leq|x|\leq C\sqrt{t|\log t|}$. We have
    \begin{align*}
        \left|\log C +\frac{\log t}{2}+\frac{\log|\log t|}{2}\right|\leq|\log|x||\leq\left|\log c +\frac{\log t}{2}+\frac{\log|\log t|}{2}\right|
    \end{align*}
    Dividing by $|x|$, we conclude the claim by shrinking $r$ such that we bound the additional terms by a constant times $\sqrt{\frac{|\log t|}{t}}$.
\end{proof}
\begin{prop}\label{prop:curvxest}
    There exists an $r, C>0$ such that in $B(0,r)$, we have
    \begin{align*}
        H(x,t)\leq  C\frac{|\log|x||}{|x|}
    \end{align*}
\end{prop}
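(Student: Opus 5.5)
We split into times $t\le 0$ and $t>0$, and for $t>0$ further into regions according to the size of $|x|$ relative to $\sqrt{t|\log t|}$.

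\emph{Times $t\le 0$.} Here we use the rescaled flow and Theorem~\ref{C^2_crude_lemma}. On $B_{K\sqrt{\tau}}$ the rescaled surface $M_\tau$ is $C^2$-close to the profile $u_0$, with transverse radius given by \eqref{eq:rescale40}. Outside this region, the asymptotics of Theorem~\ref{thm:SWXMain} together with pseudolocality show that the flow near a point $x$ is a cylinder of radius comparable to $\rho(|x|)\sim |x|/|\log|x||^{1/2}$ until times well after $0$. In both regimes the curvature is bounded by a constant times the inverse radius. For the first regime this gives $H\lesssim (-t)^{-1/2}$ when $|x|\lesssim \sqrt{-t\log(-t)}$; for such $x$ we have $|\log|x||/|x| \gtrsim (-t)^{-1/2}$ up to constants, so the bound is of the required form. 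For the second regime the inverse radius is at most $C|\log|x||^{1/2}/|x|\le C|\log|x||/|x|$.

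\emph{Times $t>0$: three regions.}
\begin{itemize}
\item \emph{Far region}, $|x|\ge R(t)$. Lemma~\ref{cylregioncurvdecay} gives directly $H\le C\sqrt{-\log|x|}/|x|\le C|\log|x||/|x|$, since $|\log|x||\ge 1$ in $B(0,r)$.
\item \emph{Intermediate region}, $c\sqrt{t|\log t|}\le |x|\le R(t)$. Corollary~\ref{initialcurvatureest} gives $H\le 2C\sqrt{|\log t|/t}$, and Lemma~\ref{curvcomp} converts this to $H\le C'|\log|x||/|x|$. Here $c$ is the constant from Proposition~\ref{positionest}, and $R(t)$ is comparable to $\sqrt{t|\log t|}$, so Lemma~\ref{curvcomp} applies after enlarging its upper constant.
\item \emph{Inner region}, $|x|< c\sqrt{t|\log t|}$. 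Every point of $N_t$ in a direction $v$ satisfies $|x|\ge |p_t(v)|\ge c\sqrt{t|\log t|}$, by the definition of the tip as the minimizer of $|x|$ in that direction. Some care is needed for points with $\pi_k(x)=0$ or small $\pi_k(x)$. These are handled by the clearing-out lemma: for small $|\pi_k(x)|$ the transverse coordinate is $o(r)$ and no surface is present near the axis after the surgery of Theorem~\ref{thm:nondegsurgery}. Concretely, the tip-position lower bound says that $N_t\cap B(0,c\sqrt{t|\log t|})=\emptyset$. So the inner region is empty.
\end{itemize}

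\emph{Main obstacle.} The step I expect to be hardest is making the regimes overlap uniformly near $t=0$. At small positive times, points with $|x|\gg\sqrt{t|\log t|}$ lie inside $B(0,r)$ but must be treated by the cylindrical estimate Lemma~\ref{cylregioncurvdecay}, whose constants degenerate through the factor $\delta^{-1/2}$. I would fix $\delta=1/2$ once, so that $\delta$ contributes only a constant, and choose $r$ as the minimum of the radii from Corollary~\ref{initialcurvatureest}, Lemma~\ref{cylregioncurvdecay}, Lemma~\ref{curvcomp}, and Proposition~\ref{positionest}.

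For negative times the analogous overlap must be checked between the $K\sqrt{\tau}$-region and the pseudolocality region. These overlap because Theorem~\ref{thm:SWXMain}'s graphical description at $t=0$ controls points with $|x|\ge K\sqrt{-t\log(-t)}$ for large $K$. Near the boundary of that region we also use $\log(-t)\approx 2\log|x|$ to compare $(-t)^{-1/2}$ with $|\log|x||^{1/2}/|x|$.
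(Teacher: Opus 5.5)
Your treatment of $t>0$ is exactly the paper's argument: the tip lower bound of Proposition~\ref{positionest} empties $B(0,c\sqrt{t|\log t|})$, Corollary~\ref{initialcurvatureest} together with Lemma~\ref{curvcomp} (whose proof works for any fixed pair of constants, which covers the gap up to $R(t)$) handles the intermediate annulus, and Lemma~\ref{cylregioncurvdecay} with any fixed small $\delta$ handles $|x|>R(t)$. The $t\le 0$ case is not needed, since the proposition is a post-singularity statement for $t\in(0,r^2)$, which is how it is used in Lemma~\ref{lem:conjbarrier}; if you keep that case, launch pseudolocality forward from earlier slices controlled by Theorem~\ref{C^2_crude_lemma}, because the $t=0$ profile of Theorem~\ref{thm:SWXMain} alone gives no information at negative times.
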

\begin{proof}
    This follows immediately from our tip position estimate Proposition \ref{positionest}, Lemma \ref{cylregioncurvdecay} and Lemma \ref{curvcomp}.
\end{proof}

\subsection{Almost conical behavior at the tip location}

Let $r_0, \lambda > 0$ and $y_0 = r_0 \mathbf{e}\in \mathbb{R}^k$ be a vector along
the $\mathbb{R}^k$ factor of the
cylinder, where $|\mathbf{e}|=1$. Consider the scaled and translated flow
\[ \tilde{N}_s = \lambda^{-1} (N_{\lambda^2 s} - y_0). \]
Define $T$ by $\lambda = e^{-T/2}$. Then the
slice $\tilde{N}_{-1}$ agrees with the translation $M_T - e^{T/2}y_0$ in terms of the
rescaled flow $M_T$
corresponding to $N_t$. By the above
discussion, near the origin $\tilde{N}_0$ is close to a cylinder of
radius $\lambda^{-1}\rho(r_0)$. To ensure that this is roughly of a
fixed size, we want to choose $\lambda \sim \rho(r_0)$. Motivated by
this, it is convenient to let
\[ r_0^2 = 2\lambda^2 |\log \lambda| =
  T e^{-T}. \]
Note that with this choice we have
\[ \frac{\rho(r_0)^2}{\lambda^2} = \frac{(n-k) r_0^2}{2|\log r_0|
    \lambda^2} \to n-k,\]
as $T\to \infty$ (or equivalently as $\lambda\to 0$). Therefore, for
large $T$, $\tilde{N}_0$ is close to a cylinder of radius
$\sqrt{n-k}$. Note that a cylinder of radius exactly $\sqrt{n-k}$
would disappear in time $1/2$. The following result says that we have
good control of the flow $\tilde{N}_s$ for $s\in [-1, s_0]$, once $T$
is large enough, for any given $s_0 < 1/2$. 

\begin{lemma} \label{lem:linearterm10} 
  Fix $s_0 < 1/2$. For any $R > 0$, if $T$ is sufficiently large,
  depending on $s_0, R$, then the translated
  and rescaled flow $\tilde{N}_s$ can be parametrized by
  \[ \label{eq:p21} (\theta, y) \mapsto (F_T(\theta, y, s) \theta, y), \]
  for $s\in [-1,s_0]$ on the ball $B_R$. Here $\theta\in S^{n-k}$ and
  $y\in \mathbb{R}^k$, and
  \[\label{eq:p22} F_T(\theta, y, s) = a(s) + \frac{n-k}{a(s) T^{1/2}}
    \mathbf{e}\cdot y+
    \mathcal{E}_T(\theta, y, s). \]
  For $s\in [-1/2, s_0]$ we have $\Vert \mathcal{E}_T\Vert_{C^4} = o(T^{-1/2})$, and $a(s) =
  \sqrt{(n-k)(1-2s)}$. 
\end{lemma}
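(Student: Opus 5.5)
The plan is to reduce the lemma to the asymptotics of the rescaled flow $M_\tau$ from Theorem~\ref{C^2_crude_lemma}, and then push them forward to $s\in[-1,s_0]$ with pseudolocality. At $s=-1$ the slice $\tilde N_{-1}$ equals $M_T - e^{T/2}y_0$. Since $e^{T/2}r_0 = T^{1/2}$, the point $y_0$ corresponds in $M_T$ to $Y=T^{1/2}\mathbf{e}$, i.e. $|Y|=\sqrt{T}$. This lies inside $B_{K\sqrt T}$ for $K=2$, so both \eqref{SX1_est} and \eqref{SX2_est} apply on $B_{R}(Y)$ for $T$ large.

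\textbf{Step 1: the slice at $s=-1$.} Expand the model profile $u_0$ at $y=Y+y'$ with $|y'|\le R$:
\[ 1+\frac{|Y+y'|^2-2k}{2T} = \frac32 + \frac{\mathbf{e}\cdot y'}{T^{1/2}} + O(T^{-1}), \]
so that
\[ \sqrt{2(n-k)}\Big(1+\tfrac{|Y+y'|^2-2k}{2T}\Big)^{1/2} = \sqrt{3(n-k)} + \frac{\sqrt{2(n-k)}}{2\sqrt{3/2}}\,\frac{\mathbf{e}\cdot y'}{T^{1/2}} + O(T^{-1}). \]
Since $a(-1)=\sqrt{3(n-k)}$, the linear coefficient is $\frac{\sqrt{n-k}}{\sqrt3\,T^{1/2}} = \frac{n-k}{a(-1)T^{1/2}}$, matching \eqref{eq:p22}. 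Derivatives of this expansion in $y'$ are $O(T^{-1})$ beyond linear order.

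The correction $w$ is the difficulty. \eqref{SX1_est} only gives $O(T^{-3/4})$, which is $o(T^{-1/2})$ in $C^1$, and that is enough at $s=-1$ in $C^0/C^1$. The weighted estimate \eqref{SX2_est}, however, has a factor $e^{T/6}$ at $|y|\sim\sqrt T$ and is useless there. I would therefore use \eqref{SX1_est} only, plus the graphicality over a fixed-radius cylinder, to obtain $\|F_T(\cdot,-1)-a(-1)-\frac{n-k}{a(-1)T^{1/2}}\mathbf{e}\cdot y\|_{C^1(B_{2R})}=o(T^{-1/2})$. Here I also have to check that the $\theta$-dependence is small; the quoted $C^1$ bound covers this.

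\textbf{Step 2: evolving to $s\in[-1,s_0]$.} The approximate solution is the flow of cylinders whose radius is $a(s)+\frac{n-k}{a(s)T^{1/2}}\mathbf{e}\cdot y$. This is exactly the linearization, about the shrinking cylinder $a(s)$, of the profile equation. For a cylinder with radius function $F(y,s)$ the equation reads $\partial_sF=\Delta_yF-(n-k)/F+\dots$. Linearizing about $a$ gives $\partial_s\phi=\Delta\phi+\frac{n-k}{a^2}\phi$, and $\phi=c(s)\,\mathbf{e}\cdot y$ with $c'=\frac{n-k}{a^2}c$ yields $c\propto 1/a$ because $a a'=-(n-k)$. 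Quadratic errors are $O(T^{-1})$ on $B_R$.

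Pseudolocality \cite[Theorem 2.5]{SX22}, applied after a unit rescaling, shows that $\tilde N_s$ stays a small graph over the shrinking cylinder on $B_{2R}$ for $s\in[-1,s_0]$, since $a(s_0)>0$. Next set $\mathcal{E}_T = F_T - a - \frac{n-k}{aT^{1/2}}\mathbf{e}\cdot y$. It satisfies a uniformly parabolic linear equation whose forcing is $O(T^{-1})$, with initial data $o(T^{-1/2})$ in $C^0$. Localizing by cutoffs from $B_{2R}$ to $B_R$ generates boundary terms of the same size. A maximum principle, or a local $L^\infty$ estimate, then gives $\sup|\mathcal{E}_T|=o(T^{-1/2})$ on $B_{3R/2}\times[-1,s_0]$. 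Interior parabolic Schauder estimates, which need a time gap and explain why the $C^4$ claim is only made for $s\ge -1/2$, upgrade this to $C^4$ on $B_R\times[-1/2,s_0]$. The representation \eqref{eq:p21} on all of $[-1,s_0]$ follows from graphicality.

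\textbf{Main obstacle.} The delicate point is the $o(T^{-1/2})$ control at $s=-1$ at distance $\sqrt T$ from the origin, where the good estimate \eqref{SX2_est} degenerates. I would rely on \eqref{SX1_est}, whose $T^{-3/4}$ is just enough. If its $C^0$ closeness turns out insufficient for the $\theta$-independence, I would instead use a maximum principle on $\tilde N_s$ across a larger ball, with barriers given by the linearized model cylinders perturbed by $\pm\epsilon T^{-1/2}$.
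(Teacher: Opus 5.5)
Your Step 1 matches the paper, which also uses only the $O(T^{-3/4})$ $C^1$ bound \eqref{SX1_est} at $s=-1$, and your linearized computation giving $c\propto 1/a$ is correct. There is a genuine gap in Step 2.

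On the fixed ball $B_{2R}$, the only information you have at precision $o(T^{-1/2})$ is at $s=-1$. On the lateral boundary $\partial B_{2R}\times(-1,s_0]$, pseudolocality gives only $|\mathcal{E}_T|\le\delta_0$. The cutoff terms $2\nabla\chi\cdot\nabla\mathcal{E}_T+\mathcal{E}_T\Delta\chi$ live in the annulus at times $s>-1$, where $\mathcal{E}_T$ is exactly the unknown. So the claim that they are ``of the same size'' as the initial data is circular, and a maximum principle on $B_{2R}\times[-1,s_0]$ needs lateral data you do not have.

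This cannot be repaired on a ball of fixed radius. Over a time interval of length $\sim 1$, lateral data of size $M$ reaches points of $B_R$ at order $Me^{-cR^2}$, a fixed fraction of $M$ that does not tend to zero as $T\to\infty$. Even an a priori bound $M=O(T^{-1/2})$ would only yield $O(T^{-1/2})$, not $o(T^{-1/2})$. The obstacle you flagged, controlling the slice at distance $\sqrt T$ at $s=-1$, is not the real difficulty, since \eqref{SX1_est} already handles it. The difficulty is propagating the precision forward in time.

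The paper supplies three ingredients you are missing.
\begin{enumerate}
\item[(i)] Take the $s=-1$ expansion on balls whose radius grows with $T$, e.g.\ $R_T=CT^{1/8}$. There $(|y|^2-2k)/(2T)=O(T^{-3/4})$, so $F_T(\cdot,-1)=a(-1)+\frac{n-k}{a(-1)T^{1/2}}\mathbf{e}\cdot y+O(T^{-3/4})$ still holds in $C^1$.
\item[(ii)] Use a barrier argument to show that $T^{1/2}u$ is bounded on compact sets for $s\in[-1,s_0]$. Here $u$ is the graph function of $\tilde N_s$ over the cylinders $L_s=S^{n-k}(a(s))\times\mathbb{R}^k$.
\item[(iii)] Run a compactness argument. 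Along $T^{(i)}\to\infty$, $(T^{(i)})^{1/2}u^{(i)}$ converges locally smoothly to a solution $U$ of $\partial_sU=\Delta U+|A|^2U$ on the \emph{entire} cylinder, with $U(\cdot,-1)=\frac{n-k}{a(-1)}\mathbf{e}\cdot y$. Uniqueness in the growth class from (ii) forces $U=\frac{n-k}{a(s)}\mathbf{e}\cdot y$. This gives $\mathcal{E}_T=o(T^{-1/2})$ in every $C^\ell$ on $B_R\times[-1/2,s_0]$.
\end{enumerate}

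Alternatively, once (ii) provides $|u|\le C(R')T^{-1/2}$ on $B_{R'}$, your maximum-principle argument does work on $B_{R'}$. Choose $R'$ so that $C(R')e^{-c(R'-R)^2}\le\epsilon$, then take $T$ large depending on $R'$. Either way, the domain must be enlarged as the required precision improves, and an a priori $O(T^{-1/2})$ bound is needed there.
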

\begin{proof}
   Recall that $\tilde{N}_{-1} = M_T - e^{T/2}y_0$, and $y_0 = r_0
   \mathbf{e}$. Since $r_0 = T^{1/2} e^{-T/2}$, this implies that
   $\tilde{N}_{-1} = M_T - T^{1/2}\mathbf{e}$. From
   \eqref{eq:rescale40} we have that on a ball of radius $T^{1/2}$,
   the hypersurface $\tilde{N}_{-1}$ can be parametrized as in
   \eqref{eq:p21}, with
   \[ F_{T}(\theta, y, -1) &= \sqrt{2(n-k)} \left( 1 + \frac{ |y+
         T^{1/2}\mathbf{e}|^2 - 2k}{2T}\right)^{1/2} + O(T^{-3/4}) \\
     &= \sqrt{2(n-k)} \left( \frac{3}{2} + \frac{y \cdot \mathbf{e}}{T^{1/2}}
       + \frac{|y|^2 - 2k}{2T}\right)^{1/2} + O(T^{-3/4}). 
   \]
   From now we will work on a ball of radius $R_T = CT^{1/8}$, with $C > 2$,
   but possibly shrinking $C$ when we pass to interior estimates. For
   $|y| < R_T$, we have
   \[ F_T(\theta, y, -1) = \sqrt{3(n-k)} + \frac{
       n-k}{\sqrt{3(n-k)T}} y\cdot \mathbf{e} + O(T^{-3/4}), \]
   where the error holds in $C^1$. Note here that $|y|^2 T^{-1} <
   CT^{-3/4}$, so the additional errors from the expansion can all be
   absorbed. Since $a(-1) = \sqrt{3(n-k)}$, this shows the required
   expansion when $s=-1$.

   To obtain estimates for $s\in [-1,s_0]$, note first that if we let
   $L_s = S^{n-k}(a(s)) \times \mathbb{R}^k$, then $L_s$ satisfies the
   mean curvature flow. It is a flow of cylinders collapsing at time
   $s=1/2$ and for fixed
   $s_0 < 1/2$, the flow $L_s$ has uniformly bounded geometry on $s\in
   [-1,s_0]$, and so we can approximate the flow $\tilde{N}_s$ using
   the linearization, as $T\to \infty$. To make this precise, let
   $T^{(i)} \to \infty$, and consider the corresponding flows
   $\tilde{N}^{(i)}_s$. On the larger and larger balls
   $B_{R_{T^{(i)}}}$ we can write $\tilde{N}^{(i)}_s$ as the graph of
   $u^{(i)}$ over the cylinders $L_s$. By a barrier argument, and the
   expansion at $s=-1$, we see that on any compact set,
   $(T^{(i)})^{1/2} u^{(i)}$ is bounded for sufficiently large
   $i$. Up to choosing a subsequence we can extract a limit $U$ over
   the cylinder $L_s$ for $s\in [-1,s_0]$, and
   \[ U(\theta, y, -1) = \frac{n-k}{\sqrt{3(n-k)}} \mathbf{e}\cdot
     y. \]
   The function $U$ satisfies the linearized equation 
   \[ \partial_s U = \Delta U + |A|^2 U \]
   over $L_s$, so we find that
   \[ U(\theta, y, s) = \frac{n-k}{a(s)} \mathbf{e}\cdot y. \]
   Using that the $\sqrt{T^{(i)}} u^{(i)}$ converge to $U$ locally
   smoothly, it follows that on any fixed region $B_R \times [-1/2,
   s_0]$ we have
   \[ u(\theta, y, s) = \frac{n-k}{a(s) T^{1/2}} \mathbf{e}\cdot y +
     o(T^{-1/2}), \]
   where the error estimate holds in $C^{\ell}$ for any $\ell$.  This
   implies the result. 
\end{proof}

The following is the crucial consequence of the presence of the linear
$y\cdot \mathbf{e}$ term in the expansion~\eqref{eq:p22}.
\begin{lemma}\label{lem:support21}
 Let $\epsilon > 0$. There exists $s_0 > 0$ depending on
 $\epsilon$, and for any $R > 0$ there exists $T_0$ depending on
 $\epsilon, R$, with the following property. Following the notation of
 Lemma~\ref{lem:linearterm10},  on the translated and rescaled flow
 $\tilde{N}_s$ we have
 \[ \label{eq:supp22} |\langle \nu, x + T^{1/2}\mathbf{e} \rangle A| \leq \epsilon\]
 on the ball $B_R$, for $s\in [0, s_0]$. 
\end{lemma}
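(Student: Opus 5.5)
Write $W := \langle \nu, x + T^{1/2}\mathbf{e}\rangle$. In unrescaled coordinates this is, up to the factor $\lambda^{-1}$, the support function $\langle \nu, x\rangle$ of $N_{\lambda^2 s}$ measured from the singular point, since $x + T^{1/2}\mathbf{e} = \lambda^{-1}x_{\text{orig}}$. The plan is to compute $W$ and $A$ directly from the parametrization of Lemma~\ref{lem:linearterm10}. The smallness will come from the fact that the huge translation $T^{1/2}\mathbf{e}$ is almost tangential to the cylinder, while its small normal component is cancelled by the tilt of $\nu$ produced by the linear term in \eqref{eq:p22}.

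\textbf{Step 1: expand $\nu$.} The hypersurface is $\{(F_T\theta, y)\}$ with $F_T = a(s) + b(s)T^{-1/2}\mathbf{e}\cdot y + \mathcal{E}_T$, where $b(s) = (n-k)/a(s)$. To first order in $T^{-1/2}$ the unit normal is
\[ \nu = \bigl(\theta, -\nabla_y F_T\bigr)\bigl(1+|\nabla F_T|^2\bigr)^{-1/2} = \bigl(\theta,\, -b(s)T^{-1/2}\mathbf{e}\bigr) + O(T^{-1}) + o(T^{-1/2}), \]
with the errors controlled in $C^2$ on $B_R$ by the $C^4$ bound on $\mathcal{E}_T$ for $s\in[-1/2,s_0]$.

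\textbf{Step 2: compute $W$.} Substituting $x = (F_T\theta, y)$ gives
\[ W = F_T - b\,T^{-1/2}\,\mathbf{e}\cdot(y + T^{1/2}\mathbf{e}) + o(1) = a(s) - b(s) + o(1), \]
because the linear terms in $y$ cancel at order $T^{-1/2}$. The term $T^{1/2}\cdot o(T^{-1/2})$ is only $o(1)$, which is why the error estimate in Lemma~\ref{lem:linearterm10} must be $o(T^{-1/2})$ and not merely $O(T^{-1/2})$. We then have
\[ a(s) - \frac{n-k}{a(s)} = \frac{a(s)^2 - (n-k)}{a(s)} = \frac{-2s(n-k)}{a(s)}. \]
This vanishes at $s=0$ and is bounded by $C_n s$ for $s\in[0,s_0]$ with $s_0 < 1/4$, say.

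\textbf{Step 3: conclude.} On $B_R\times[0,s_0]$ the second fundamental form satisfies $|A| \le 2/a(s) + o(1) \le C_n$. Hence $|W A| \le C_n s_0 + o(1)$ as $T\to\infty$. Choose $s_0$ so that $C_n s_0 < \epsilon/2$; this depends only on $\epsilon$ and $n$. Then choose $T_0(\epsilon, R)$ so that the $o(1)$ term is below $\epsilon/2$.

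\textbf{Main obstacle.} The delicate point is Step 2: bookkeeping the error in $\nu$ multiplied by the large vector $T^{1/2}\mathbf{e}$. Only the $\mathbf{e}$-component of $\nu$ matters there, and it equals $-\partial_{\mathbf{e}}F_T/(1+|\nabla F_T|^2)^{1/2}$. Its error is $\partial_{\mathbf{e}}\mathcal{E}_T + O(T^{-3/2})$, which is $o(T^{-1/2})$ by the $C^1$ part of the $\mathcal{E}_T$ estimate. I would write this component out explicitly rather than rely on the vector expansion of $\nu$, to make sure no $O(T^{-1})$ term in that component survives after multiplication by $T^{1/2}$.
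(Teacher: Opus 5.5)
Your proposal is correct and follows the paper's proof essentially line by line: expand $\nu$ from the parametrization of Lemma~\ref{lem:linearterm10}, note that $\langle \nu, x\rangle = a(s)+o(1)$ while the linear term in $F_T$ gives $\langle \nu, T^{1/2}\mathbf{e}\rangle = -\frac{n-k}{a(s)}+o(1)$, so $\langle \nu, x+T^{1/2}\mathbf{e}\rangle = -\frac{2s(n-k)}{a(s)}+o(1)$, and then multiply by a bounded $|A|$. One harmless slip is that on the near-cylinder $|A| = \frac{\sqrt{n-k}}{a(s)}+O(T^{-1/2})$, not at most $\frac{2}{a(s)}+o(1)$ (that bound fails once $n-k\geq 5$), but you only use $|A|\leq C_n$, so the argument is unaffected.
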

\begin{proof}
  We fix a time $s\in [0, s_0]$ for small $s_0$, and work on the
  time slice $\tilde{N}_{s}$. We write
  \[ X : (\theta, y) \mapsto (F_T(\theta, y, s) \theta, y) \]
  for the parametrization of $\tilde{N}_{s}$ in the ball $B_R$, and
  for simplicity write $F$ for $F_T$. The
  unit normal is given by
  \[ \nu = J^{-1} (\theta - F^{-1} \nabla_{S^{n-k}} F, -\nabla_y F), \]
  where
  \[ J = (1 + F^{-2}|\nabla_{S^{n-k}} F|^2 + |\nabla_y F|^2)^{1/2} = 1
    + O(T^{-1}). \]
  To write down the components of the second fundamental form $A$, we
  use coordinates $\theta_a$ on the sphere factor, and $y_i$ on the
  $\mathbb{R}^k$ factor. We write $\sigma_{ab}$ for the metric on the
  unit sphere $S^{n-k}$. We have
  \[ A_{ab} &= J^{-1}(-\nabla^2_{ab} F + F\sigma_{ab} + 2F^{-1} F_a
    F_b) = a(s) \sigma_{ab} + O(T^{-1/2}), \\
    A_{ai} &= J^{-1} (-\nabla^2_{ai} F + F^{-1} F_a F_i) = O(T^{-1/2}), \\
    A_{ij} &= -J^{-1} \nabla_{ij} F = O(T^{-1/2}). \]
  while the metric is $g = a(s)^2 \sigma + |dy|^2 + O(T^{-1/2})$. 

  From these formulas, and the expansion \eqref{eq:p22}, we have
  \[ \langle \nu, x\rangle &= J^{-1}\langle (\theta - F^{-1}
    \nabla_{S^{n-k}} F, -\nabla_y F), (F \theta, y)\rangle \\
    &= J^{-1}(F - y\cdot \nabla_y F) \\
  &= a(s) + O(T^{-1/2}). \]
We also have (note that here the linear term in \eqref{eq:p22} is important)
\[ \langle \nu, T^{1/2}\mathbf{e}\rangle &= J^{-1}\langle (\theta -
  F^{-1}\nabla_{S^{n-k}} F, -\nabla_y F), (0,
  T^{1/2}\mathbf{e})\rangle \\
  &= -\frac{n-k}{a(s)} + o(1), \]
and
\[ |A| = \frac{\sqrt{n-k}}{a(s)} + O(T^{-1/2}). \]
Combining these, we get
\[ |\langle \nu, x + T^{1/2}\mathbf{e}\rangle A| &=
  \frac{\sqrt{n-k}}{a(s)} \left(a(s) - \frac{n-k}{a(s)}\right) +
  o(1) \\
  &= \frac{\sqrt{n-k}}{a(s)^2}(-2s(n-k)) + o(1). \]
Note that $a(s)\to \sqrt{n-k}$ as $s\to 0$. So if $T$ is sufficiently
large, and $s_0$ is sufficiently small, then \eqref{eq:supp22} holds. 
\end{proof}

We will need the following conclusion of this, phrased in terms of the
original flow $N_t$. 
\begin{lemma}\label{lem:almostconical}
  For any $\epsilon > 0$, there exists a large $K > 0$ and small $\delta
  > 0$ such that on the region after the singularity where $|x| +
  t^{1/2} < \delta$, and $|x| > K | t |\log t||^{1/2}$, we have
  \[ |\langle \nu, x\rangle A| \leq \epsilon. \]
\end{lemma}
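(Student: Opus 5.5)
The plan is to reduce Lemma~\ref{lem:almostconical} to Lemma~\ref{lem:support21} by a scaling argument, with one extra ingredient to cover points far from the tip. The quantity $\langle \nu, x\rangle A$ is scale invariant under parabolic dilations centred at the origin. Under the translation by $y_0$ in Lemma~\ref{lem:support21}, the position vector $x$ of $N_t$ corresponds to $\lambda(\tilde x + T^{1/2}\mathbf{e})$ in the rescaled flow, since $y_0/\lambda = T^{1/2}\mathbf{e}$. So \eqref{eq:supp22} is exactly the bound $|\langle\nu,x\rangle A|\le\epsilon$ on the region
\[ \{(x,t) : x \in y_0 + \lambda B_R,\ t = \lambda^2 s,\ s\in[0,s_0]\}, \qquad r_0^2 = T e^{-T},\ \lambda = e^{-T/2}. \]
Given $\epsilon$, first fix $s_0=s_0(\epsilon)$ from Lemma~\ref{lem:support21}. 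Then fix some $R$ (say $R=10$) and $T_0(\epsilon,R)$. The task is to show that, as $T\ge T_0$ and $\mathbf{e}\in S^{k-1}$ vary, these regions cover the set $\{|x|>K(t|\log t|)^{1/2}\}$ near the origin, at least for points in the time window where they sit near the axis.

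For the covering step, take a point $(x,t)$ with $t>0$ small and $|x|>K\sqrt{t|\log t|}$. Write $x=(y,z)$. By the clearing-out lemma, $|z| = o(|y|)$ for points of $N_t$, and indeed $|z|\lesssim \rho(|y|)$ by Theorem~\ref{thm:SWXMain} and the cylindrical behaviour. Choose $\mathbf{e}=y/|y|$ and $T$ with $T^{1/2}e^{-T/2}=|y|$, so $\lambda^2\approx |y|^2/(2|\log|y||)$. Then $\tilde x=\lambda^{-1}(x-y_0)=(0,\lambda^{-1}z)$ lies in $B_R$, because $\lambda^{-1}|z|\lesssim\lambda^{-1}\rho(|y|)\to\sqrt{n-k}$. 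The rescaled time is $s=t/\lambda^2$. This only covers $s\le s_0$, that is $t\le s_0|y|^2/(2|\log|y||)$. Comparing with the constraint $|x|^2>K^2t|\log t|$ gives $t\lesssim |x|^2/(K^2|\log|x||)$, using Lemma~\ref{curvcomp}-type log comparisons. Choosing $K^2\gg 2/s_0$ therefore ensures $s\le s_0$. The issue is the range $s$ near $0$, or even $s<0$, which is excluded since $t>0$; so we need $s\in(0,s_0]$, and this holds. I would also let $T$ range over a continuum, so every $|y|$ is realised once $|y|<\delta$ and $T\ge T_0$.

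I expect the main obstacle to be uniformity. Lemma~\ref{lem:support21} is stated for fixed $\mathbf{e}$ and $T\to\infty$, and we need $T_0$ independent of $\mathbf{e}\in S^{k-1}$. The $o(T^{-1/2})$ error in Lemma~\ref{lem:linearterm10} comes from a compactness argument; I would redo it with sequences $(T^{(i)},\mathbf{e}^{(i)})$, which is fine since \eqref{eq:rescale40} is uniform in direction.

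One must also check that no points of $N_t$ with $|x|>K\sqrt{t|\log t|}$ escape the parametrised ball $B_R$, for instance sheets with large $|z|$. These are handled by the graphical description near $(0,0)$, which is valid at $t=0$ and persists for the short times considered by pseudolocality. Away from the axis there is nothing else, since the flow is a single cylindrical graph in $B_\delta$ by Theorem~\ref{thm:SWXMain} and \cite[Theorem 4.2]{SWX25}.
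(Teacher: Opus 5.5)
Your proposal is correct and follows the paper's proof. Both apply Lemma~\ref{lem:support21} at the point with $\mathbf{e}$ the direction of the axial projection and $T$ fixed by $Te^{-T}=|y|^2$, and both use $|x|>K\sqrt{t|\log t|}$ to force $s=e^Tt<s_0$. The paper does this last step more cleanly: $Te^{-T}>\tfrac{K^2}{4}t|\log t|$ and the monotonicity of $xe^{-x}$ give $T<|\log t|$, hence $e^Tt<4/K^2$. One minor slip: $R$ should be tied to the dimension, e.g.\ $R=\sqrt{2(n-k)}$ as in the paper, since your $R=10$ fails once $n-k\ge 100$. Your remarks on uniformity in $\mathbf{e}$ and on there being a single graphical sheet just make explicit what the paper leaves implicit.
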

\begin{proof}
  Consider a point $(x_0, t_0)$ in the given region. We have already
  seen that the $y$-projection $y_0$ of $x_0$ satisfies $|y_0| >
  \frac{1}{2}|x_0|$, if $\delta$ is small enough. Let $y_0 = r_0
  \mathbf{e}$, and consider the rescaling and translation
  $\tilde{N}_s$ as above. Recall that by definition $Te^{-T} = r_0^2$
  and $\lambda = e^{-T/2}$ is the scaling factor. As a result, the
  point $(x_0, t_0)$ along the flow $N_t$ will correspond to the point
  $(e^{T/2}x_0 - T^{1/2} \mathbf{e}, e^T t_0)$ along
  $\tilde{N}_s$. Moreover the point $e^{T/2}x_0 - T^{1/2}\mathbf{e}$ is
  contained in a ball of a fixed radius, say
  $\sqrt{2(n-k)}$.

  We apply Lemma~\ref{lem:support21}, with $R = \sqrt{2(n-k)}$ and our
  given $\epsilon$. Choosing $\delta$, and therefore $r_0$ small
  ensures that $T$ is very large. It remains to ensure that $e^T t_0 <
  s_0$ for the $s_0$ given by Lemma~\ref{lem:support21}. By assumption
  we have
  \[ Te^{-T} =  r_0^2 > \frac{K^2}{4} t_0 |\log t_0|. \]
  Since the function $x e^{-x}$ is decreasing, if $K^2/4 > 1$, this
  implies that $T < |\log t_0|$. Therefore, rearranging the inequality
  we have
  \[ e^T t_0 < \frac{4}{K^2} \frac{T}{|\log t_0|} < \frac{4}{K^2}. \]
  Choosing $K$ sufficiently large implies that $e^T t_0 < s_0$ for any
  given $s_0$.

  Lemma~\ref{lem:support21} therefore implies that $|\langle \nu, x +
  T^{1/2}\mathbf{e}\rangle A|\leq \epsilon$. Note that these are the
  quantities computed along the flow $\tilde{N}_s$. Denoting by
  $\hat{x}, \hat{A}$ the corresponding quantities on $N_t$ (note that
  the unit normal doesn't change), we have
  \[ \hat{x} &= \lambda x + y_0 = e^{-T/2} (x + T^{1/2}\mathbf{e}), \]
  while the second fundamental form scales by $|\hat{A}| = e^{T/2}
  |A|$. The required result follows. 
\end{proof}

\subsection{Sharp metric change upper bound}
The purpose of this section is to prove the following estimate, which is crucial for the conditional backwards uniqueness theorem (Theorem \ref{main_uniqueness_thm}). By convention, for a symmetric $2$ tensor $T$ and a real number $\lambda$ we write $T\geq \lambda $ if $\lambda_1(T) \geq \lambda$, where $\lambda_1(T)$ is the smallest eigenvalue of $T$. 

\begin{theorem}\label{metric_changed_upper_bound}
For every $\varepsilon>0$ There exists some $t_0>0$ such that for all $t\in (0,t_0)$
\begin{equation}\label{desired_AH_bd}
AH \geq -\frac{1/2+\varepsilon}{t}
\end{equation} 
\end{theorem}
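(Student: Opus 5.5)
The plan is to prove \eqref{desired_AH_bd} by a maximum principle argument for the tensor
\[ Z := t\,H A + \left(\tfrac12+\varepsilon\right) g \]
on a parabolic region $P(0,r)\cap\{t>0\}$, restricted to the part of the flow where curvature is large. I will show that $Z\geq 0$ there. The choice of $Z$ is motivated by scaling: $tHA$ is invariant under parabolic dilations centered at the space-time origin, and the function $f(t) = -\frac{1}{2t}$ solves $f' = 2f^2$. This is exactly the ODE one gets for the reaction term of $HA$, because $(\partial_t-\Delta)A = |A|^2 A$ (in a time-dependent orthonormal frame) and $(\partial_t-\Delta)H=|A|^2H$.

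\emph{Evolution equation.} First I compute
\[ (\partial_t - \Delta)(HA) = 2|A|^2 HA - 2\nabla H\ast\nabla A \]
and then the evolution of $Z$. At a first time where the smallest eigenvalue of $Z$ vanishes, take a null eigenvector $v$ and extend it parallelly. At such a point
\[ HA(v,v) = -\frac{1/2+\varepsilon}{t}. \]
The reaction terms then give $\partial_t Z(v,v)\geq \frac{1/2+\varepsilon}{t}\cdot(\text{positive})$, provided $2|A|^2 t\,|HA(v,v)|\ \gtrsim\ (1/2+\varepsilon)/t$. This is where I will use Proposition~\ref{prop:HK16}, namely $|A|\leq CH$ and $|\nabla A|\leq CH^2$, together with the fact that $H>0$ near the singularity.

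\emph{Gradient term.} The cross term $\nabla H\ast\nabla A$ must be handled as in Hamilton's tensor maximum principle, using Codazzi and the null-eigenvector condition $\nabla_i Z(v,v)=0$, i.e.
\[ \nabla_i A(v,v) = -A(v,v)\,\frac{\nabla_i H}{H}. \]
I expect this to absorb the bad gradient term into a non-negative quantity up to a factor controlled by $\varepsilon$. If it does not close exactly, I will instead work with a slightly modified quantity, for example $t\,HA + (\frac12+\varepsilon)(1+\eta\, t H^2)^{-1}g$, or with $\lambda_{\min}(A)/H$ weighted by $tH^2$.

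\emph{Boundary and initial data.} I will check these in three pieces.

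(i) \emph{Outer region} $|x|\geq K\sqrt{t|\log t|}$. Here Lemma~\ref{very_cyl_away_lemma} and Lemma~\ref{lem:almostconical} show that the flow is $\epsilon$-cylindrical, so $A\geq -\epsilon H$. Combined with Proposition~\ref{prop:curvxest} and Lemma~\ref{curvcomp} this gives
\[ tHA\geq -\epsilon\, tH^2 \geq -\epsilon\, C\,t|\log|x||^2/|x|^2 \geq -C\epsilon/K^2, \]
which is small once $K$ is large.

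(ii) \emph{Spatial boundary} $|x|\sim r$. There the flow is smooth, so $tHA=O(t)\to 0$.

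(iii) \emph{Initial behavior} as $t\to 0$ in the inner region $|x|\leq K\sqrt{t|\log t|}$. I will argue by blow-up. If a sequence $(x_i,t_i)$ violated the bound there, rescale by $H(x_i,t_i)$. By \cite[Theorem 4.2]{SWX25} the limit is a cylinder or $\text{bowl}\times\mathbb{R}^{k-1}$, both with $A\geq 0$. The issue is that $t_iH^2\sim|\log t_i|$ is unbounded (Corollary~\ref{initialcurvatureest}), so qualitative convergence $A/H\to A_\infty/H_\infty$ alone does not control $tHA$.

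\emph{Main obstacle.} The hardest step is the quantitative convexity estimate needed in (iii): showing that the negative part of $A$ near the tip region decays like $o\big((tH)^{-1}\big)$ rather than merely $o(H)$. My first attempt is to feed in Theorem~\ref{quo_after21}. The relation $\langle x,\nu\rangle\approx -2tH$ expresses almost-expander behavior, and expanders satisfy an exact identity relating $\nabla A$ to $A(x^\top,\cdot)$. I hope to use this identity in the maximum principle computation, so that the gradient term itself produces the favorable constant $\tfrac12$. The $\varepsilon$-error from Theorem~\ref{quo_after21} becomes the $\varepsilon$ in \eqref{desired_AH_bd}, and the scale $K\sqrt{t|\log t|}$ of that theorem matches the outer-region cutoff used in (i).
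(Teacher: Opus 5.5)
Your maximum principle for $Z=tHA+cg$, $c=\tfrac12+\varepsilon$, does not close, because the reaction term has the opposite sign to the one you claim.

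In an Uhlenbeck frame $(\partial_t-\Delta)(HA)=2|A|^2HA-2\langle\nabla H,\nabla A\rangle$. At a first point where $Z(v,v)=0$ you have $HA(v,v)=-c/t$, hence $A(v,v)<0$ since $H>0$. The reaction part of $(\partial_t-\Delta)Z(v,v)$ is then $HA(v,v)+2t|A|^2HA(v,v)=-\tfrac{c}{t}(1+2t|A|^2)<0$. Moreover $(\nabla Z)(v,v)=0$ gives $\nabla A(v,v)=-A(v,v)\nabla\log H$, which turns the gradient term into $-2t\langle\nabla H,\nabla A(v,v)\rangle=-2c|\nabla\log H|^2\le 0$.

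Every term is unfavorable, so no contradiction arises. The flow amplifies negative eigenvalues of $HA$: the reaction ODE is $f'=2|A|^2f$, which decreases for $f<0$, not $f'=2f^2$. Meanwhile your barrier $-c/t$ increases in $t$. Writing $tHA=(A/H)\cdot tH^2$ makes this transparent: $A/H$ has no reaction term, but $tH^2$ has a positive one. Your fallbacks do not help. The second one is literally the same quantity. In the first, the factor $(1+\eta tH^2)^{-1}$ is itself driven down by that positive reaction.

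There is a second, independent obstruction, the one you flag in (iii). The quantity $tHA$ is invariant under parabolic dilations about $(0,0)$, so no positive time slice supplies initial data, and a forward maximum principle on $\{t>0\}$ cannot even start. The sharp information has to be imported from \emph{before} the singular time.

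Separately, step (i) is off by a factor $|\log t|$. Proposition~\ref{prop:curvxest} only gives $tH^2\lesssim|\log t|/K^2$ at $|x|\sim K\sqrt{t|\log t|}$. You need the sharper bound $|A|\le C\sqrt{|\log t|}/|x|$ of \eqref{AlmostCon_bd}, which comes from Proposition~\ref{curvdecay}.

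The missing idea is to split $AH=\frac{A}{H}\cdot H^2$ and bound the two factors by different mechanisms.

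Only $A/H$ is handled by a maximum principle, since it satisfies the reaction-free equation $(\partial_t-\Delta)(A/H)=2\langle\nabla\log H,\nabla(A/H)\rangle$.
\begin{itemize}
\item Theorem~\ref{AH_bd_prior} gives the sharp pre-singular bound $A/H\ge-\frac1\tau-C\tau^{-3/2}$ on $B(0,K\sqrt\tau)$ of the rescaled flow. This comes from the Sun--Xue profile $u_0$: its flat-direction curvature $\approx-\sqrt{2(n-k)}/(2\tau)$, divided by $H_0\approx\sqrt{(n-k)/2}$, is exactly $-1/\tau$.
\item Theorem~\ref{quo_after} carries this bound across $t=0$ on $t_1^{-1/2}N_{t_1 s}$, $s\in[-1,1]$, yielding $A/H\ge-(1+\varepsilon)/|\log t|$ on $B(0,K\sqrt{t|\log t|})$.
\end{itemize}

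The factor $H^2$ is bounded pointwise, not by a maximum principle.
\begin{itemize}
\item By Theorem~\ref{quo_after2}, $H\le(1+\varepsilon)|\langle x,\nu\rangle|/(2t)\le(1+\varepsilon)|x|/(2t)$.
\item For $|x|\le(\sqrt2+\varepsilon)\sqrt{t|\log t|}$ this gives $H^2\le(\frac12+\varepsilon)|\log t|/t$. Multiplying by the $A/H$ bound gives exactly $-(\frac12+\varepsilon)/t$, with the sharp constant $\frac12=1\cdot(\sqrt2)^2/4$.
\item In the cylindrical annulus out to $K\sqrt{t|\log t|}$, $|\langle x,\nu\rangle|\ll|x|$, so $H$ is even smaller.
\item Beyond that annulus, \eqref{AlmostCon_bd} makes $t|A|^2$ small.
\end{itemize}

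So your instinct to use the expander relation is right. Its role, however, is this upper bound on $H$, not producing a favorable gradient term.
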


\subsubsection{Sharp $A/H$ lower bound} The main ingredient in proving Theorem \ref{metric_changed_upper_bound} is the following.

\begin{theorem}[$A/H$ lower bound after the singularity]\label{quo_after}
For every $K<\infty$ and $\varepsilon>0$ there exist some physical time $t_0>0$ such that for every $t\in (0,t_0)$
\begin{equation}
\frac{A}{H} \geq -\frac{1+\varepsilon}{|\log(t)|},\qquad x\in N_{t}\cap B(0,K\sqrt{t|\log(t)|}).
\end{equation}
\end{theorem}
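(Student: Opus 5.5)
The plan is to obtain the bound from a maximum principle for the scale-invariant tensor $A/H$ on a fixed mean convex parabolic neighborhood of the singularity. Boundary and initial data will come from the cylindrical asymptotics, and the constant $1+\varepsilon$ will be pinned down by a blow-up argument near the tips. By Bamler--Lai and \cite{SWX25}, on a parabolic neighborhood $P=B(0,r)\times(-r^2,r^2)$ the flow is mean convex, with $H>0$, and smooth away from the origin. There $|A|\le CH$ (Proposition~\ref{prop:HK16}), so $A/H$ is bounded and smooth on $P\setminus\{(0,0)\}$. The tensor $Z=A/H$ satisfies
\[ (\partial_t-\Delta)Z = \frac{2}{H}\langle\nabla H,\nabla Z\rangle, \]
modulo reaction terms that vanish in the direction of a null eigenvector of $Z+b\,g$ whenever $b\geq 0$. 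This is the same cancellation used in Huisken's convexity estimates. Consequently, for a barrier $b$ with $(\partial_t-\Delta)b\ge 0$ and controlled gradient, $Z+b\,g\ge0$ is preserved. The natural choice is $b(t)=(1+\varepsilon)/|\log t|$, which is increasing in $t$ with $b'(t)=(1+\varepsilon)/(t\log^2 t)>0$, so it has the favourable sign.

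The key steps, in order, are as follows.
\begin{enumerate}
\item \emph{Data at $t=0^+$ and on the lateral boundary.} Away from the region $|x|\le K\sqrt{t|\log t|}$, Lemma~\ref{very_cyl_away_lemma} and Lemma~\ref{cylregioncurvdecay} show the flow is nearly cylindrical. Differentiating the profile of Theorem~\ref{thm:SWXMain} twice gives
\[ A_{yy}/H\sim-\rho\rho''=O(|\log|x||^{-2}), \]
which is much better than $1/|\log t|$, since $|\log|x||\ge\frac12|\log t|$ there. The same holds near the singular time from the $C^2$ estimate \eqref{SX2_est} of Theorem~\ref{C^2_crude_lemma}. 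On the outer boundary $|x|=r$ we use smoothness together with $H>0$.
\item \emph{The almost conical region} $|x|>K\sqrt{t|\log t|}$. Here I would use Lemma~\ref{lem:linearterm10}. It shows the rescaled flow $\tilde N_s$ is a cylinder perturbed by a linear term of size $T^{-1/2}$ plus an error $o(T^{-1/2})$ in $C^4$, where $T\approx|\log t|$. The curvature in the $y$ directions is therefore $o(T^{-1/2})$, which is not yet the needed $O(T^{-1})$. To improve this I would return to the limit $U$ in that proof, which is linear in $y$ and has zero Hessian, and track the next order. At order $T^{-1}$ the quadratic term $\frac{|y|^2-2k}{2T}$ inherited from \eqref{eq:rescale40} evolves under the linearized flow on shrinking cylinders. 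This should give $A_{yy}/H\ge-(1+o(1))/T$, and that is exactly where the constant $1$ should come from.
\item \emph{Interior propagation.} Inside $B(0,K\sqrt{t|\log t|})$, where the tips live, I would apply the tensor maximum principle to $Z+b\,g$ on the time-dependent domain bounded by $|x|=K\sqrt{t|\log t|}$, with boundary data supplied by step 2. Near $t=0^+$ this domain shrinks to the singular point, so a minimum principle alone cannot start.
\item \emph{Starting the argument near $t=0^+$.} I would argue by contradiction. Take $(x_i,t_i)$ with $|\log t_i|\,\lambda_1(A/H)<-(1+\varepsilon)$ and $t_i$ minimal. Blowing up at the tip curvature scale (Corollary~\ref{tipcurvest}) gives a bowl$\times\mathbb{R}^{k-1}$ or a cylinder, by \cite[Theorem 4.2]{SWX25}. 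Both limits are convex, so $|\log t_i|\,A/H$ must be computed from first-order corrections. Theorem~\ref{quo_after21} shows the solution is asymptotically an expander, $\langle x,\nu\rangle\approx-2tH$, and this should identify those corrections.
\end{enumerate}

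The main obstacle is this quantitative rate $1/|\log t|$, sharp up to the factor $1+\varepsilon$, in the tip region. Compactness only yields $A/H\ge-o(1)$. Getting the right constant requires matching the order-$1/T$ term of the Sun--Xue expansion, namely the $\frac{|y|^2-2k}{2T}$ correction, through the transition from cylinder to bowl. I would attempt this first in the almost conical region via the linearization on $L_s$, and then close the interior by the maximum principle, using $b'(t)>0$ to absorb the errors.
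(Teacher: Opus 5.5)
\textbf{There is a genuine gap, and it sits exactly where you locate the main obstacle.} The route you sketch there does not close it.

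\emph{Steps 3--4.} You try to start a maximum principle at $t=0^+$ on $B(0,K\sqrt{t|\log t|})$. This domain collapses onto the singular point, where you have no quantitative information. The blow-up contradiction cannot supply that information. At the tip scale the limits (bowl$\times\mathbb{R}^{k-1}$ or cylinder) are convex, and $|\log t_i|\,\lambda_1(A/H)$ is a lower-order correction that disappears in the limit. Passing $|\log t_i|\,A/H$ to a limit would need a priori bounds of exactly the kind you are trying to prove. Theorem~\ref{quo_after21} only fixes the leading-order expander relation, not the $1/|\log t|$ term. Nor need a minimal violating time exist, since violations may accumulate at $t=0$.

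\emph{Step 1} is also wrong as stated. Theorem~\ref{thm:SWXMain} is only a $C^0$ asymptotic, so it cannot be differentiated twice. Moreover, for $k\geq 2$ the directions of $\mathbb{R}^k$ orthogonal to $y$ have $A/H\approx -\rho\rho'/((n-k)|y|)\approx -1/(2|\log|y||)$. At $|x|\sim K\sqrt{t|\log t|}$ this is $\approx -1/|\log t|$, which is the sharp order, not $O(|\log|x||^{-2})$.

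\textbf{The missing idea is to take the data \emph{before} the singularity and carry it across the singular time at a frozen scale.}
\begin{enumerate}
\item \emph{Pre-singular initial data.} Fix $t_1$ and set $\tilde N_s=t_1^{-1/2}N_{t_1 s}$ for $s\in[-1,1]$. Then $\tilde N_{-1}=M_\tau$ with $\tau=|\log t_1|$. The ball $B(0,K\sqrt{|\log t_1|})$ is exactly the ball $B(0,K\sqrt{\tau})$ on which the Sun--Xue expansion gives the sharp bound $A/H\ge -1/\tau - C\tau^{-3/2}$. This is Theorem~\ref{AH_bd_prior}: the Hessian $\frac{\sqrt{2(n-k)}}{2\tau}$ of $u_0$, upgraded from the Gaussian-deteriorating estimate by a drift-term barrier.
\item \emph{Maximum principle with a constant.} Run the tensor maximum principle for $A/H+(1+2\varepsilon)/|\log t_1|+\delta(s+1)$ on $\tilde N_s\cap B(0,K\sqrt{|\log t_1|})$, $s\in[-1,1]$. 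The term $(1+2\varepsilon)/|\log t_1|$ is constant, replacing your $b(t)$.
\item \emph{The singular point is harmless.} For this fixed $t_1$, the Haslhofer--Kleiner convexity estimate gives $A/H\ge -o(1)$ on a small two-sided parabolic neighborhood of $(0,0)$. That beats $-(1+2\varepsilon)/|\log t_1|$.
\item \emph{Lateral data.} The data on $|x|=K\sqrt{|\log t_1|}$ also comes from the pre-singular bound. It is propagated through the cylindrical annulus of width $\sim\sqrt{|\log t_1|}$ using exponentially localized barriers. These only need the crude bound $A/H\ge -1$ on the annulus boundary.
\end{enumerate}
In particular, no second-order expansion in the almost conical region (your step 2) is required.
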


In order to derive Theorem  \eqref{quo_after}, we first obtain the following lower bound \textit{prior} to the singularity, that is sharp in time, but Gaussian deteriorating in space.

\begin{lemma}
Under the condition of Lemma \ref{C^2_crude_lemma}, there exist some $\tau_0$ and $C<\infty$ such that for every $x\in M_{\tau}\cap B(0,\varepsilon e^{\tau/2})$ with $\tau \geq \tau_0$ we have
\[
\frac{A}{H} \geq -\mathrm{min}\left\{\left(\frac{1}{\tau}+\frac{Ce^{|x|^2/6}}{\tau^{3/2}}\right),\varepsilon\right\}
\]
\end{lemma}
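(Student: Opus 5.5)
We plan to read the bound off directly from the graphical description of $M_\tau$ over the cylinder $\mathcal{C}_{n,k}$ given by Theorem~\ref{C^2_crude_lemma}, splitting space into two regions. In the inner region $|x|\leq K\sqrt{\tau}$ we write $u=u_0+w$ and compute the second fundamental form of the graph. For the exact model profile $u_0$, the sphere directions give $A_{ab}\approx \frac{1}{\sqrt{2(n-k)}}\sigma$-type eigenvalues, comparable to $H\approx \sqrt{(n-k)/2}$. The $y$-directions give $A_{ij}=-\nabla^2_{ij}u_0/J+\dots$. Differentiating $u_0=\sqrt{2(n-k)}\bigl(\sqrt{1+\sum(y_i^2-2)/(2\tau)}-1\bigr)$ twice gives
\[ \nabla^2_{ij}u_0 = \frac{\sqrt{2(n-k)}}{2\tau}\,\delta_{ij}\,(1+O(|y|^2/\tau))\,(\cdots)^{-1/2} + O(|y|^2\tau^{-2}), \]
so that $A_{ij}/H \geq -\frac{1}{\tau}(1+o(1))$ in the range $|y|\ll \sqrt\tau$. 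More precisely, we would check that the model's most negative eigenvalue ratio is $\ge -1/\tau$ up to a relative error. This error can be absorbed into the $Ce^{|x|^2/6}\tau^{-3/2}$ term, since $|y|^2/\tau^{2}\le \tau^{-3/2}e^{|x|^2/6}$ after enlarging $C$. The exact constant needs care, so we would recompute the Hessian of the square root explicitly.

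Next we add the perturbation $w$. By \eqref{SX2_est}, $|D^jw|\leq C\tau^{-3/2}e^{|x|^2/6}$ for $j\le 2$. The second fundamental form depends smoothly on $(u,Du,D^2u)$ near the cylinder, and $H$ is bounded below by a positive constant there. Hence the change in $A/H$ is bounded by $C(|w|+|Dw|+|D^2w|)\leq C\tau^{-3/2}e^{|x|^2/6}$. This gives the first alternative of the minimum for $|x|\le K\sqrt\tau$. Fixing $K$ large (say $K^2>6\log$-threshold so that $Ce^{K^2\tau/6}\tau^{-3/2}\gg 1$), the right-hand side exceeds $\varepsilon$ at the boundary of this region. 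So outside the region, and even where $Ce^{|x|^2/6}\tau^{-3/2}\ge\varepsilon$, it suffices to prove $A/H\geq-\varepsilon$.

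For the outer region $|x|\ge K\sqrt\tau$ (indeed wherever the first term exceeds $\varepsilon$), up to $|x|\le\varepsilon e^{\tau/2}$, we use that before the singular time the flow is in the mean convex neighborhood (Bamler--Lai, \cite{SWX25}). There, by the classification of blowups it is locally $\varepsilon$-close to round cylinders $S^{n-k}\times\mathbb{R}^k$ at the curvature scale: every blowup at points approaching the singularity with $t<0$ is a cylinder (no bowls occur before $t=0$). Since on a cylinder $A/H\ge 0$, a compactness/contradiction argument gives $A/H\geq -\varepsilon$ once $\tau_0$ is large and the ball is $B(0,\varepsilon e^{\tau/2})$, i.e.\ unrescaled radius $\varepsilon$, shrinking $\varepsilon$-neighborhood as needed. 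Scale invariance of $A/H$ makes the rescaling harmless.

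The main obstacle is the inner computation. It requires getting the sharp constant $1/\tau$ from the model Hessian and verifying that all $|y|$-dependent corrections fit into the Gaussian-weighted error term. We would also check consistency where $|x|\sim\sqrt\tau$, since there the gradient terms $|Du_0|^2\sim |y|^2/\tau^2$ enter $J$ and the $A_{ai}$ mixing. We would handle these terms with the same absorption into $e^{|x|^2/6}\tau^{-3/2}$.
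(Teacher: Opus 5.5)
Your proposal is correct and follows essentially the paper's argument: the sharp constant comes from the explicit Hessian of the model profile $u_0$ (giving $A_0/H_0\ge -1/\tau$ up to lower order), combined with the quasilinear estimate $|A-A_0|+|H-H_0|\le C(|w|+|Dw|+|D^2w|)$ and \eqref{SX2_est}, while away from the core only $A/H\ge-\varepsilon$ is needed. The differences are cosmetic: the paper takes the core to be $|x|^2\le 12\log\tau$ rather than $|x|\le K\sqrt{\tau}$, and gets $A/H\ge-\varepsilon$ directly from the Haslhofer--Kleiner convexity estimate \cite{HK16} instead of your blow-up/compactness argument with cylindrical limits.
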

\begin{proof}
This is a fairly direct application of Lemma \ref{C^2_crude_lemma}, \cite{HK16} and the fact that  the Hessian of the graph of $u_0$ essentially supports a $-\frac{g}{\tau}$ lower bound.

Indeed, observe that by the convexity estimate of \cite{HK16}, $A/H \geq -\varepsilon$ in the above domain, for $\tau_0$ sufficiently large. In particular, the desired estimates holds trivially if $|x|^2 \geq 12\log \tau$, as then the right hand side is bounded from above by $-\min\{C\sqrt{\tau},\varepsilon\} = -\varepsilon$.

For $|x|^2 \leq  12\log \tau$ we have that 
\begin{equation}\label{u_0_2_der}
|u_0|+|\nabla u_0| \leq \frac{1}{\tau^{3/4}}, \qquad \mathrm{Hess}_{\mathbb{R}^k}(u_0)=\frac{\sqrt{2(n-k)}}{2\tau}+O(\tau^{-7/4}).
\end{equation}
This yields that for the graph of the ideal profile 
\[
A_{0} \geq -\frac{\sqrt{2(n-k)}}{2\tau}+O(\tau^{-7/4}),
\]
and combined with the spherical terms we get
\[
H_{0} \geq \frac{(n-k)}{\sqrt{2(n-k)}}+O(\tau^{-3/4}),
\]
so for the ideal profile one gets
\begin{equation}\label{ideal_q}
\frac{A_{0}}{H_{0}} \geq -\frac{1}{\tau}+O(\tau^{-7/4}).
\end{equation}

\bigskip

Finally, by cylindricality
\[
|A|+|A_0| +H^{-1}+H_0^{-1} \leq C,
\]
so, by \eqref{u_0_2_der}, \eqref{SX1_est} and the fact that the expressions for mean curvature and the second fundamental form are quasilinear, we get the point-wise bound
\begin{equation}
|A-A_0|+|H-H_0| \leq C(|w|+|Dw|+|D^2w|),
\end{equation}
where $w$ is as in that Lemma \ref{C^2_crude_lemma}. Combining this, \eqref{SX2_est}, \eqref{ideal_q} and the bound
\begin{equation}
\left|\frac{A}{H}-\frac{A_{0}}{H_{0}}\right| \leq \frac{|A-A_{0}|}{H}+\frac{|A_0|}{HH_0}|H-H_0|,
\end{equation}
yields the desired result.
\end{proof}

Due to the drift term the estimate above can be upgraded to a sharp one inside $B(0,K\sqrt{\tau})$.

\begin{theorem}\label{AH_bd_prior}
For every $K<\infty$ there here exist some $\tau_0,C<\infty $ such that for all $\tau\geq \tau_0$ and $x\in M_{\tau}\cap B(0,K\sqrt{\tau})$
\begin{equation}
\frac{A}{H} \geq -\frac{1}{\tau}-\frac{C}{\tau^{3/2}}
\end{equation}
\end{theorem}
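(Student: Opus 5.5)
The plan is to improve the Gaussian-deteriorating lower bound of the preceding lemma to a uniform one on $B(0,K\sqrt{\tau})$ by a maximum principle argument for the quotient $A/H$ along the rescaled flow. Concretely, I will consider the smallest eigenvalue $\lambda_1$ of $A/H$ and work with the quantity
\[
q(x,\tau) = \tau\left(\lambda_1\left(\frac{A}{H}\right)+\frac{1}{\tau}\right).
\]
The aim is to show $q \geq -C\tau^{-1/2}$ on $B(0,K\sqrt{\tau})$.

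\textbf{Step 1: the evolution equation.} Along the rescaled MCF $M_\tau$, the quotient $A/H$ satisfies the drift--Laplacian equation
\[
(\partial_\tau - \mathcal{L})\frac{A}{H} = \frac{2}{H}\left\langle \nabla H, \nabla \frac{A}{H}\right\rangle,\qquad \mathcal{L}=\Delta - \tfrac12 x\cdot\nabla.
\]
This holds because $A$ and $H$ carry the same reaction term $|A|^2$, and the rescaling terms cancel in the quotient. The equation holds in the viscosity sense for $\lambda_1$, and the gradient term vanishes at a spatial minimum. The drift $-\tfrac12 x\cdot \nabla$ is the key feature: it pushes information outward. So values at $|x|\sim K\sqrt\tau$ at time $\tau$ are determined by values at smaller $|x|$ at earlier times, namely $|x|e^{-(\tau-\tau')/2}$. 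The characteristic through $(x,\tau)$ with $|x|\le K\sqrt\tau$ reaches $|x'|^2 \le 12\log\tau'$ after time $\tau-\tau' \approx \log(K^2\tau/\log \tau)$. At that point the lemma gives $A/H\ge -1/\tau' - C\tau'^{-3/2}e^{2\log\tau'}$, which is too weak. Instead I will run the characteristic only down to $|x'|\le \sqrt{\log\tau'}$, where $e^{|x'|^2/6}\le \tau'^{1/6}$. The lemma then gives $-1/\tau' - C\tau'^{-4/3}$, and $1/\tau'$ and $1/\tau$ differ by $O(\log\tau/\tau^2)$.

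\textbf{Step 2: a barrier.} To make Step 1 rigorous without characteristics, I will use a comparison function. Set
\[
\phi(x,\tau)=-\frac1\tau - \frac{C}{\tau^{3/2}} - \frac{\psi(x)}{\tau^{3/2}},
\]
where $\psi$ is a weight. Since $\mathcal{L}|x|^2 = 2n - |x|^2$ on the approximately cylindrical surface, a choice like $\psi = \tau^{-1/2}$ times a function of $|x|^2$ can be made a subsolution on a region $\{|x|\le \sqrt{\log\tau}\} \cup \{\sqrt{\log\tau}\le|x|\le 2K\sqrt\tau\}$. The choice is arranged so that $\phi$ is dominated by the lemma's bound on the outer boundary $|x|=2K\sqrt\tau$, where $A/H\ge -\varepsilon$ is too weak. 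On the inner region the lemma already suffices.

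\textbf{Step 3: the outer boundary.} To handle the outer boundary, I will choose the barrier weight growing like $|x|^2/\tau$, so that $\phi$ degenerates to $-1/\tau\cdot(1+c|x|^2/\tau)$. This is roughly $-C/\tau$ at $|x|\sim K\sqrt\tau$. For that weaker bound there I use that $u_0$'s Hessian is $O(1/\tau)$ on $B(0,2K\sqrt\tau)$ together with \eqref{SX1_est}, via interior estimates. The sharp constant then propagates inward along the drift from the time-$\tau_0$ data plus the inner region.

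\textbf{Main obstacle.} The main obstacle is Step 3: the estimates on $w$ at $|x|\sim K\sqrt\tau$ only give $C^1$ control of size $\tau^{-3/4}$, not curvature control. I would first try parabolic interior estimates on unit-scale balls, as in the proof of Theorem~\ref{C^2_crude_lemma}, to upgrade these to $C^2$ bounds of order $\tau^{-3/4}$. That yields $A/H\ge -C\tau^{-3/4}$ on the outer boundary, which I then need to beat with the barrier's decay. Balancing these rates against the drift gain $e^{-(\tau-\tau')/2}$ is where I expect the computation to be delicate.
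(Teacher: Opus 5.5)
There is a genuine gap. Your Step 1 heuristic is the right one: the outward drift means values on $B(0,K\sqrt\tau)$ are inherited from values at small $|x|$ about $\log\tau$ earlier, where the Gaussian factor in the preceding lemma is harmless. But the proposal never turns this into a working barrier, and the barriers you describe fail.

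\emph{Sign.} A lower barrier must be a subsolution. Since $\partial_\tau\bigl(-\tfrac1\tau-\tfrac{C}{\tau^{3/2}}\bigr)>0$, your $\phi$ has the wrong sign unless the spatial weight absorbs a $\tau^{-2}$ source.

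\emph{Sharp constant.} The weight you then propose has size $|x|^2/\tau^2$, which is of order $K^2/\tau$ at $|x|\sim K\sqrt\tau$. You would therefore end with $A/H\geq -(1+cK^2)/\tau$ there. This loses exactly the sharp constant $1$ that the theorem asserts, and which is what later yields $AH\geq-(1/2+\varepsilon)/t$.

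\emph{Rate.} Even your heuristic, stopped at $|x'|\le\sqrt{\log\tau'}$, only gives an error of order $\tau^{-4/3}$, not $\tau^{-3/2}$. The point must be brought in to $|x'|=O(1)$.

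The outer-boundary ``obstacle'' in Step 3 rests on a misconception. Because the drift points outward, the lateral boundary is an outflow boundary. You only need the crude bound $A/H\geq-\varepsilon$ there, provided the boundary is placed far out where the barrier is huge. No $C^2$ bounds at $|x|\sim K\sqrt\tau$ are needed.

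The missing idea is a Gaussian barrier $f=a(\tau)e^{b(\tau)|x|^2}$ with exponentially decaying $b$. Write $L$ for the operator in $L[A/H]=0$, i.e. your $\partial_\tau-\mathcal{L}$ minus the gradient term.
\begin{itemize}
\item Using $L[|x|^2]=-2n+|x|^2-4\langle\nabla\log H,x^\top\rangle$, one gets $L[f]\geq f(a'/a-Cb)+f|x|^2(b'+\tfrac34 b-4b^2)$.
\item Solve $b'+\tfrac34 b-4b^2=0$ with $b(\tau_1)=1/6$, i.e. $b=3/(16+2e^{3(\tau-\tau_1)/4})$.
\item Take $a'/a=Cb$ with $a(\tau_1)=C\tau_1^{-3/2}$. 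This gives $L[f]\geq0$ and $a\le D\tau_1^{-3/2}$.
\end{itemize}

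Freeze the constant $1/\tau_1$ instead of using $1/\tau$, and consider $A/H+1/\tau_1+f$.
\begin{itemize}
\item It is nonnegative at $\tau=\tau_1$ by the lemma.
\item It is nonnegative on $\partial B(0,\varepsilon e^{\tau_1/2})$ for $\tau\in[\tau_1,\tau_1+2\log\tau_1]$. There $A/H\ge-\varepsilon$, while $b|x|^2\gtrsim\varepsilon^2e^{\tau_1}\tau_1^{-3/2}$ makes $f$ enormous.
\item The tensor maximum principle propagates nonnegativity to $\tau_2=\tau_1+2\log\tau_1$.
\item At $\tau_2$ one has $b(\tau_2)\lesssim\tau_2^{-3/2}$, so $f\le D\tau_2^{-3/2}$ on $B(0,K\sqrt{\tau_2})$.
\item Finally $1/\tau_1-1/\tau_2=O(\log\tau_2/\tau_2^2)$, which is absorbed into $C\tau_2^{-3/2}$.
\end{itemize}
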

\begin{proof}
Recall that for the rescaled MCF
\begin{equation}\label{q_eq}
L[A/H]=0,\qquad \textrm{for}\qquad L[T]=\partial_{\tau}T-\Delta T+\frac{1}{2}\nabla_{x^{\top}}T-2\left\langle\nabla \log H, \nabla T\right\rangle.
\end{equation}
We will look for barriers of the form
\begin{equation}\label{f_def}
f(x,\tau)=a(\tau)\exp(b(\tau)|x|^2)
\end{equation}
Keeping in mind that $\partial_{\tau}x= H+\frac{x^\perp}{2}$ along a rescaled MCF, observe that
\begin{equation}
L[|x|^2]=2\langle \partial_{\tau}x,x\rangle - 2\langle H, x \rangle-2n+|x^{\top}|^2-4\langle \nabla \log H,  x^{\top} \rangle=-2n+|x|^2-4\langle \nabla \log H,  x^{\top} \rangle,
\end{equation}
and so 
\begin{align}
L[f]&=f\left[\frac{a'}{a}+|x|^2b'+b(-2n+|x|^2-4\langle \nabla \log H,  x^{\top} \rangle)-4b^2|x^{\top}|^2\right] \\
& \geq f(\frac{a'}{a}-Cb)+f|x|^2(b'+3/4b-4b^2).
\end{align}

The solution to the ODE $b'+\frac{3}{4}b-4b^2=0$ with initial value $b(\tau_1)=1/6$ is
\[
b(\tau)=\frac{3}{16+2e^{\frac{3(\tau-\tau_1)}{4}}}.
\] 
Integrating the equation $a'/a=Cb$ with $a(\tau_1)=\frac{C}{\tau_1^{3/2}}$ (which is explicitly solvable) yields an increasing function bounded by $\frac{D}{\tau_1^{3/2}}$, for some larger $D$.  Thus, with this choice, we get that 
\[
L[f]\geq 0.
\]
\bigskip

The initial conditions for $f$ and the previous lemma guarantee that both at $\tau=\tau_1$ and on $\partial B(0,\varepsilon e^{\tau_1/2})\times [\tau_1,\tau_1+2\log(\tau_1)]$ we have 
\begin{equation}\label{bd_cond}
\frac{A}{H}(\cdot, \tau)+\frac{1}{\tau_1}+f(\cdot, \tau) \geq 0.
\end{equation}
Therefore, the tensor maximum principle implies that, taking $\tau_2=\tau_1+2\log(\tau_1)$, we have
\begin{equation}
\frac{A}{H}(\cdot, \tau_2)+\frac{1}{\tau_1} \geq -f(\cdot, \tau_2). 
\end{equation}
Observing that $\frac{1}{\tau_1}=\frac{1}{\tau_2}+O(\tau_2^{-3/2})$ and  that $b(\tau_2)\leq \frac{4}{\tau_2}$, we get that at point with $|x|<K\sqrt{\tau_2}$ 
\[
f(x, \tau_2) \leq \frac{D}{\tau_2^{3/2}},
\]
after increasing $D$. This completes the proof.

\end{proof}

The fact that the estimate of Theorem \ref{AH_bd_prior} is valid at logarithmically expanding scales (in terms of $\tau = |\log |t||$), and the  opening of the non-degenerate neck pinch by Lemma \ref{C^2_crude_lemma}, allows us to push the above estimate from physical time $-t$ to physical time $t$ without a significant loss, allowing to prove the sharp $A/H$ lower bound at \textit{positive} times of Theorem \ref{quo_after}. The details are as follows.  

\begin{proof}[Proof of Theorem \ref{quo_after}]
The proof has two steps: In the first, we show the bound at points at distance precisely $K\sqrt{t|\log t|}$. This is a result of the fact that for a parabolic equation with boundary, a far away spatial boundary has an exponentially decaying effect on the solution at time $1$. This is most classically proven via heat kernel estimates, but in our tensor setting, we will show this via the maximum principle.  Once this is established, we can apply a maximum principle for $A/H$ on the entire domain (keeping in mind the singularity at $(0,0)$) to complete the proof.

For the first step, consider a time $t_1\in (0,t_0)$ and consider the parabolic rescaling of $N_t$ by a factor $t_1$
\begin{equation}
\tilde{N}_s=t_1^{-1/2}N_{t_1s},\qquad s\in [-1,1].
\end{equation}
Denoting 
\[
\mathcal{A}_K=B\Big(0,(K+1)\sqrt{|\log(t_1)|}\Big)-B\big(0,(K-1)\sqrt{|\log(t_1)|}\Big)
\]
Taking $K\geq 3$ it follows from Lemma \ref{very_cyl_away_lemma} and its proof, that for $s\in [-1,1]$, $\tilde{N}_s\cap \mathcal{A}_K$ has controlled geometry, and it can be assumed to be as cylindrical as we want. In particular, for every $s\in [-1,1]$ we have that 
\[
\frac{A}{H}(x,s) \geq -1,\qquad x\in \tilde{N}_s\cap \partial \mathcal{A}_K.
\] 
The analogue of \eqref{q_eq} along MCF is 
\begin{equation}\label{q_eq_MCF}
\tilde{L}\left(\frac{A}{H}\right)=0,\qquad\textrm{for}\;\;\;\;\;\tilde{L}[T]=\partial_{s}T-\Delta T-2\left\langle\nabla \log H, \nabla T\right\rangle.
\end{equation}
By the MCF flow equation $\partial_s-\Delta x=0$ so
\[
|(\partial_s-\Delta)|x|| \geq -\frac{C}{|x|}.
\]
Since $\tilde{M}_s\cap \mathcal{A}_K$ has controlled geometry and $|\nabla |x|| \leq 1$ we therefore get that
\begin{equation}\label{Lx}
\left|\tilde{L} |x|\right| \leq C, \qquad x\in \mathcal{A}
_K\cap \tilde{N}_s,\;s\in [-1,1].
\end{equation}

\bigskip
Now, considering a barrier of the form 
\[
f_1(x,s)=e^{as}e^{-b(|x|-(K-1)\sqrt{\log|t_1|})}, 
\]
\eqref{Lx} implies
\[
\tilde{L} f_1 = f_1\left[a-\tilde{L}[|x|]b-b^2|\nabla |x||^2\right] \geq f_1(a-Cb-b^2).
\]
Take $b=\beta\sqrt{\log (t_1^{-1})}$ and $a=\alpha\log(t_1^{-1})$. We choose $\beta=1/8$, $\alpha=1/32$ (we need $\beta^2 < \alpha$, and later we will need $\beta > 2\alpha$), which implies
\[
\tilde{L}f_1 \geq 0.
\]
Considering a similar barrier which decays away from the outer boundary of $\mathcal{A}_K$, we get that for the above choice of $a$ and $b$ 
\begin{equation}
\tilde{L}[f]\geq 0,\;\;\textrm{for}\;\;f(x,s)=e^{as}\left[e^{-b(|x|-(K-1)\sqrt{|\log t_1|})}+e^{-b((K+1)\sqrt{|\log t_1|}-|x|)}\right]. 
\end{equation}

\bigskip

Note $f$ is non negative, and for every $s\in [-1,1]$ and $x\in \partial \mathcal{A}_K$ we have that  $|f(x,s)|\geq t_1^{-\alpha s}$. By this and by Theorem \ref{AH_bd_prior}, on the parabolic boundary of $\mathcal{A}_K\times [-1,1]$ we have
\begin{equation}
\frac{A}{H} \geq -\frac{1+\varepsilon}{|\log t_1|}-\frac{f}{t_1^{\alpha}}.
\end{equation} 
Thus, by the tensor maximum  principle we have at points $x$ with $|x|=K\sqrt{|\log(t_1)|}$ that 
\begin{equation}
\frac{A}{H}(x,1) \geq -\frac{1+\varepsilon}{|\log t_1|}-\frac{f}{t_1^{\alpha}} \geq  -\frac{1+\varepsilon}{|\log t_1|}-\frac{2t_1^{\beta}}{t_1^{2\alpha}} \geq -\frac{1+2\varepsilon}{|\log t_1|},
\end{equation}
where the last inequality follows from $2\alpha < \beta$ in our parameter choice above.  This completes the first step of the proof.

\bigskip

Having established the bound on $\tilde{N}_s\cap \partial B(0,K\sqrt{|\log(t_1)|})$, for every 
\[
\delta\in (0,(2|\log(t_1)|)^{-1}\varepsilon)
\]
we can apply the tensor maximum principle for the tensor 
\[
Q_{\delta}=\frac{A}{H}+\frac{1+2\varepsilon}{|\log(t_1)|}+\delta(s+1).
\]
and for the flow $\tilde{N}_s\cap B(0,K\sqrt{|\log(t_1)|})$ and $s\in [-1,1]$. Note that the bound $Q_{\delta} >0$ holds true on the parabolic boundary, as well as some two sided parabolic neighborhood of the origin (by \cite{HK16}). Suppose that $s_0$ is the largest time in $[-1,1]$ for which $Q_{\delta}\geq 0$. If $s_0<1$, then there exists some interior point $(x_0,s_0)\neq (0,0)$  where $\lambda_1(Q_{\delta})=0$, but this contradicts the evolution equation for $\frac{A}{H}$. Thus $Q_{\delta}\geq 0$ for every such $\delta$ and we are done.
\end{proof}

\subsubsection{Near expansion by the tip}\label{near_tip_exp_sec}

In this section, we will prove Theorem \ref{quo_after21}, which was already used to derive the tip curvature estimate. For the reader's convenience, we state it again here. 

\begin{theorem}[Expander behavior after the singularity]\label{quo_after2}
For every $K<\infty$ and $\varepsilon>0$ there exist some physical time $t_0>0$ such that for every $t\in (0,t_0)$
\begin{equation}
\left|2+\frac{\langle x,\nu \rangle}{tH}\right|<\varepsilon ,\qquad x\in N_{t}\cap B(0,K\sqrt{t|\log(t)|}).
\end{equation}
\end{theorem}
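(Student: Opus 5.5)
The plan is to realise the quantity in the statement as the solution of a parabolic equation to which a maximum principle applies. The model is the proof of Theorem~\ref{quo_after}. The heuristic is that $Q:=2tH+\langle x,\nu\rangle$ vanishes identically on a self-expander. Along any mean curvature flow one has $(\partial_t-\Delta-|A|^2)H=0$ and $(\partial_t-\Delta-|A|^2)\langle x,\nu\rangle=-2H$ (with the orientation for which $H>0$ near the singularity and $\partial_t x=-H\nu$). Adding these gives the exact equation
\[
(\partial_t-\Delta-|A|^2)Q=0 .
\]
Since $H>0$ in a parabolic neighborhood of the singularity by \cite{SWX25}, the quotient $q:=Q/H$ satisfies $\tilde L[q]=0$, where $\tilde L$ is the operator from \eqref{q_eq_MCF}. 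The statement is equivalent to $|q|<\varepsilon t$ on $N_t\cap B(0,K\sqrt{t|\log t|})$. Since $\tilde L[\varepsilon t]=\varepsilon$, the functions $q\mp\varepsilon t$ are a sub- and a supersolution respectively. It therefore suffices to control $q$ on the parabolic boundary of a suitable space-time region with an error that is $o(t)$.

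\textbf{Model computation.} The key input is Lemma~\ref{lem:linearterm10}, together with the computations in the proof of Lemma~\ref{lem:support21}. Work on the translated and rescaled flow $\tilde N_s$ attached to a point $y_0=r_0\mathbf e$. There, up to $o(1)$ errors,
\[
\langle\nu,x+T^{1/2}\mathbf e\rangle=a(s)-\frac{n-k}{a(s)},\qquad H=\frac{n-k}{a(s)},\qquad a(s)^2=(n-k)(1-2s).
\]
Hence
\[
\frac{\langle\nu,x+T^{1/2}\mathbf e\rangle}{H}=\frac{a(s)^2}{n-k}-1+o(1)=-2s+o(1).
\]
After undoing the scaling by $\lambda$, so that $t=\lambda^2s$ and $x\mapsto\lambda(x+T^{1/2}\mathbf e)$, this reads $q=2t+\langle x,\nu\rangle/H=o(\lambda^2)$ uniformly for $s\in[0,s_0]$ with $s_0<1/2$. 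In the almost-conical region of Lemma~\ref{lem:almostconical}, the relation between $\lambda$ and $t$ is the one computed there: $e^Tt<4/K^2$, so $t\lesssim\lambda^2$. One needs the comparability $\lambda^2\lesssim t$ as well. This holds on the boundary sphere $|x|=K\sqrt{t|\log t|}$, where $r_0^2=T e^{-T}\sim t|\log t|$ forces $\lambda^2=e^{-T}\sim t$. Consequently $|q|\le\varepsilon t$ on that sphere for $t$ small.

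\textbf{Maximum principle.} Fix $t_1$ and apply the maximum principle to $q\mp\varepsilon' t$ on the region $\{(x,t):0<t\le t_1,\ |x|\le K\sqrt{t|\log t|}\}$. Alternatively, as in the proof of Theorem~\ref{quo_after}, use a fixed ball $B(0,K'\sqrt{t_1|\log t_1|})$ for $t\in[0,t_1]$ with $K'>K$ slightly larger; this avoids moving boundaries but requires the lateral bound for $t\ll t_1$ as well.
\begin{itemize}
\item \emph{Initial data at $t=0$.} Here $q=\langle x,\nu\rangle/H$ on $N_0$. The same model computation at $s=0$, where $a(0)^2=n-k$, gives $q=o(\lambda^2)=o(r^2/|\log r|)$ at axial distance $r$, using Theorem~\ref{thm:SWXMain}. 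Inside the ball this is $o(t_1)$, and it tends to $0$ as $r\to0$.
\item \emph{Lateral boundary.} Use the model computation above.
\item \emph{The singular point $(0,0)$.} Here $q\to0$ by Proposition~\ref{prop:HK16} and the clearing-out lemma. Exactly as in the last step of the proof of Theorem~\ref{quo_after}, a first interior touching point cannot occur at the origin.
\end{itemize}
This yields $|q|\le 2\varepsilon' t$ in the region, which is the claim.

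\textbf{Main obstacle.} I expect the main difficulty to be making the model computation uniform. One needs to pass from the per-point rescalings of Lemma~\ref{lem:linearterm10}, which are valid for $T\to\infty$ with fixed $R$ and $s\in[-1/2,s_0]$, to a statement covering the whole region. That region includes the tip zone, where $s=e^Tt$ approaches $1/2$ and the cylindrical picture breaks down into bowl-times-$\mathbb{R}^{k-1}$ blow-ups. If the lateral boundary is placed at $K$ large, only $s\le s_0$ is needed there, and the tip zone lies in the interior where no boundary data are required. The delicate points are then the initial slice near $r\to0$ and the endpoint $(0,0)$. There I would first try a compactness and contradiction argument: take a sequence of bad points, rescale by $t_i^{-1/2}$, and use the classification of blow-ups \cite[Theorem 4.2]{SWX25} to see that the limit must be a configuration on which $Q\equiv0$.
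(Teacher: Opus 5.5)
Your route, which takes both initial and lateral data from the post-singular expansion of Lemma~\ref{lem:linearterm10} and then applies one maximum principle, is genuinely different from the paper's and can be made to work. However, the maximum-principle step as you wrote it does not close, because the comparison function $\varepsilon' t$ vanishes at $t=0$.

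\emph{Moving region.} On $\{|x|\le K\sqrt{t|\log t|}\}$ the domain collapses to the origin as $t\to0$. The only ``initial'' information is then the behaviour of $q/t$ as $(x,t)\to(0,0)$ inside $B(0,K\sqrt{t|\log t|})$, and that is the statement itself. Knowing $q\to0$ there does not help, since $\varepsilon' t\to0$ as well.

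\emph{Fixed ball.} The two comparisons $-\varepsilon' t\le q\le \varepsilon' t$ would force $q\equiv0$ on $N_0$, i.e.\ $\langle x,\nu\rangle\equiv0$. That would make $N_0$ a cone near the origin, contradicting Theorem~\ref{thm:SWXMain}. Your own bound $q=o(r^2/|\log r|)$ gives smallness, not vanishing. Similarly, on the fixed lateral sphere at times $t\ll t_1$ the model only gives $|q|=o(\lambda^2)$ with $\lambda^2\approx K^2t_1$, which is not $o(t)$.

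\emph{The fix.} Compare with a constant, which $\tilde L$ annihilates. Work on $\bigcup_{t\in[0,t_1]}N_t\cap B(0,K\sqrt{t_1|\log t_1|})$ with $K$ large. Your estimates then give:
on $N_0$, $|q|\le o(1)K^2t_1$, since $\lambda(r)^2=r^2/T(r)\le K^2t_1(1+o(1))$ there;
on the lateral sphere, $|q|=o(K^2t_1)$, since there $\lambda^2=K^2t_1(1+o(1))$ and $s=t/\lambda^2\lesssim K^{-2}$, so Lemma~\ref{lem:linearterm10} applies;
at $(0,0)$, $q\to0$, while the comparison constant $\varepsilon t_1$ is positive.
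The maximum principle then yields $|q|\le\varepsilon t_1$ on $N_{t_1}\cap B(0,K\sqrt{t_1|\log t_1|})$, which is the claim at the arbitrary time $t_1$. You never need $|q|\le\varepsilon t$ at intermediate times, and the compactness argument in your last paragraph becomes unnecessary.

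\emph{Comparison with the paper.} The paper also uses a constant comparison, running the maximum principle for $q=2s+\langle x,\nu\rangle/H$ on $t_1^{-1/2}N_{t_1s}$ for $s\in[-1,1]$. It starts \emph{before} the singularity. The data at $s=-1$ come from the Sun--Xue asymptotics, via the shrinker lemma $\langle x,\nu\rangle/H\approx 2$ on $M_\tau\cap B(0,K\sqrt{\tau})$. On the lateral boundary it only knows that the flow stays cylindrical in the annulus $\mathcal{A}_K$, with the crude bound $|q|\le C|\log t_1|$. It therefore needs the exponential barrier from the proof of Theorem~\ref{quo_after} to make that boundary negligible, before running the interior maximum principle through the singular point.

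Your corrected version reads both the initial and the lateral data directly off Lemma~\ref{lem:linearterm10}. That lemma is proved from \eqref{eq:rescale40} independently of this theorem, so there is no circularity, and you dispense with both the annulus barrier and the pre-singular data. The price is that you need that expansion in $C^2$, uniformly in the base point, including the exact coefficient of the linear term $\frac{n-k}{a(s)T^{1/2}}\mathbf{e}\cdot y$. This is because your model computation obtains $q$ from a cancellation between $O(1)$ terms, namely $\langle\nu,x\rangle\approx a(s)$ and $\langle\nu,T^{1/2}\mathbf{e}\rangle\approx-(n-k)/a(s)$.
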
 

Similarly to the proof of the $A/H$ lower bound, we will start with an information prior to the singularity time, and push it forward via (the same) maximum principle to positive times. 
\begin{lemma}[Shrinker behavior before the singular time] 
For every $K<\infty$ and $\varepsilon$ there exists $\tau_0>0$ such that for $\tau\geq \tau_0$, along the rescaled flow we have 
\begin{equation}
\left|\frac{\langle x,\nu \rangle}{H}-2\right|< \varepsilon
\end{equation}
in  $B(0,K\sqrt{\tau})\cap M_{\tau}$
\end{lemma}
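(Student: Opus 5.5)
The bound follows from the asymptotics of Theorem~\ref{C^2_crude_lemma}, in the same way as the preceding $A/H$ lemma. The strategy is to compute $\langle x,\nu\rangle$ and $H$ exactly on the graph of the ideal profile $u_0$ over $\mathcal{C}_{n,k}$. Both differ from their cylinder values by $O(K^2/\tau)$ on $B(0,K\sqrt{\tau})$. The error $w=u-u_0$ is then controlled in $C^1$ by \eqref{SX1_est} and in $C^2$ by smooth cylindrical convergence.

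\textbf{Step 1: the exact cylinder.} On $\mathcal{C}_{n,k}$ the normal is $(\theta,0)$ and $\langle x,\nu\rangle=\sqrt{2(n-k)}$. The mean curvature is $H=(n-k)/\sqrt{2(n-k)}=\sqrt{(n-k)/2}$. Hence $\langle x,\nu\rangle/H=2$; this is just the shrinker equation $H=\langle x,\nu\rangle/2$.

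\textbf{Step 2: the ideal profile.} For $|y|\le K\sqrt{\tau}$ the function $u_0$ satisfies
\[
|u_0|\le C K^2,\qquad |\nabla u_0|\le CK\tau^{-1/2},\qquad |\nabla^2u_0|\le C\tau^{-1}.
\]
The radius $\sqrt{2(n-k)}+u_0$ is bounded below: on this ball $1+\sum(y_i^2-2)/(2\tau)\ge 1-k/\tau$. That lower bound alone is not enough. Along the graph of radius $F=\sqrt{2(n-k)}+u_0$ one has $\langle x,\nu\rangle=J^{-1}(F-y\cdot\nabla_yF)$, exactly as computed in Lemma~\ref{lem:support21}. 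The mean curvature is $H=J^{-1}\big((n-k)/F-\Delta_yF\big)+O(|\nabla F|^2)$.

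\textbf{Step 3: the ratio on the profile.} With $F=\sqrt{2(n-k)}\,\sqrt{1+q}$, where $q=\sum(y_i^2-2)/(2\tau)$, we get $y\cdot\nabla_yF=\sqrt{2(n-k)}\,(|y|^2/(2\tau))/\sqrt{1+q}$. Therefore $F-y\cdot\nabla F=\sqrt{2(n-k)}\,(1-k/\tau)/\sqrt{1+q}$. Meanwhile $(n-k)/F=\sqrt{(n-k)/2}/\sqrt{1+q}$. The ratio is therefore $2(1-k/\tau)$, up to errors of size $\Delta F\sim\tau^{-1}$ and $|\nabla F|^2\lesssim K^2/\tau$. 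So $\langle x,\nu\rangle/H=2+O(K^2/\tau)$ on the ideal profile.

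The main obstacle is that $|u_0|$ is \emph{not} small near $|y|\sim K\sqrt{\tau}$, because $q$ is of order $K^2$. A naive perturbation around the cylinder therefore fails there, and the exact cancellation in Step 3 is essential. This scale-invariance reflects that the profile near those points is a slowly varying cylinder of radius $F$, for which $\langle x,\nu\rangle/H=F^2/(n-k)$. If this ratio did not cancel I would fall back on the sublinear-in-$|y|$ regime. The cancellation above suggests, however, that the leading terms match: $F^2/(n-k)$ is corrected by the $-y\cdot\nabla F$ term precisely so that the ratio stays $2$.

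\textbf{Step 4: transfer to $M_\tau$.} By \eqref{SX1_est} the actual $u$ differs from $u_0$ by $C\tau^{-3/4}$ in $C^1$ on $B(0,K\sqrt\tau)$. Second derivatives of $w$ are small by smooth convergence to cylinders at unit scale around each point; one can use interpolation, or simply Brakke-to-smooth convergence on unit balls together with the fact that $M_\tau$ is locally $\epsilon$-cylindrical. Since $H$ is bounded below by a positive constant there, $\langle x,\nu\rangle/H$ changes by $o(1)$. Taking $\tau_0$ large gives the bound $<\varepsilon$.
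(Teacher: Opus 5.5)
Your proposal is correct and takes the same route as the paper. Like the paper, you compare $M_\tau$ to the graph of $u_0$ using \eqref{SX1_est} and local cylindricality, with $H$ bounded below depending on $K$. You then check directly that on this profile the $-y\cdot\nabla F$ term cancels the $(1+q)$ factor, giving $F-y\cdot\nabla F=\sqrt{2(n-k)}(1-k/\tau)/\sqrt{1+q}$; this is the paper's identity $S^2-\sum y_i^2/(2\tau)=1$, up to the $O(1/\tau)$ shift from the $-2k$ term.
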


\begin{proof}
By \eqref{SX1_est} and everywhere cylindricality, we have that, inside $B(0,K\sqrt{\tau})$, denoting by $M^0_\tau$ the graph of $u_0$ over the cylinder we have
\[
|\langle x^0,\nu^0\rangle-\langle x,\nu \rangle | \leq \frac{C}{\tau^{3/4}},\qquad |H-H^0| \leq \varepsilon
\]
where the identification is done in the domain of the cylindrical graph.
As $H^0$ is bounded and bounded away from zero in the domain, we have
\[
\left|\frac{\langle x,\nu \rangle}{H}-\frac{\langle x^0,\nu^0 \rangle}{H^0}\right| \leq \frac{|\langle x,\nu \rangle-\langle x^0,\nu^0 \rangle|}{H}+|\langle x^0,\nu^0 \rangle|\frac{|H-H_0|}{HH_0}.
\]
Therefore, if we were to show 
\begin{equation}
\left|\frac{\langle x^0,\nu^0 \rangle}{H^0}-2\right|< \varepsilon,
\end{equation}
the result would follow.
\bigskip

For that, from the precise form of $u_0$, a direct calculation shows that  setting $S:=\sqrt{1+\frac{\sum y_i^2}{2\tau}}$ we have

\begin{align*}
H^0&= \frac{\sqrt{(n-k)}}{\sqrt{2}S}+o(1),\\
\nu^0&=\frac{(-\sqrt{2(n-k)}\frac{y_1}{2S\tau}\ldots -\sqrt{2(n-k)}\frac{y_k}{2S\tau},\hat{\theta})}{\sqrt{1+\frac{1}{S^2t^2}\sum y_i^2}},\\
x^0&=(y_1,\ldots y_k, \sqrt{2(n-k)}S\hat{\theta}).
\end{align*}

Therefore, 
\begin{equation}
    \frac{\langle x^0,\nu^0\rangle}{H^0}=2\left(S^2-\sum \frac{y_i^2}{2\tau}\right)+o(1)=2+o(1).
\end{equation}
This completes the proof.

\end{proof}

We can prove Theorem \ref{quo_after21}.
\begin{proof}[Proof of Theorem \ref{quo_after21}]
The proof is exactly the same as the one of Theorem \ref{quo_after}. Taking some time $t_1>0$ and defining $\tilde{N}_s$, $\mathcal{A}_K$ as therein,  the quantity
\[
q=2s+\frac{\langle x,\nu \rangle}{H}
\]
satisfies $|q(\cdot,-1)|\leq \varepsilon$ and $|q|\leq C|\log(t_1)|$ for every $s\in [-1,1]$ and $x\in \tilde{N}_s\cap \partial \mathcal{A}_k$, for some $C=C(K)$.
Moreover, for $\tilde{L}$ as in Theorem \ref{quo_after}
\[
\tilde{L}[q]=0, 
\]
we conclude using the first maximum principle of Theorem \ref{quo_after} (or heat kernel estimates) to get that $|q| \leq 2\varepsilon$ for every $s\in [-1,1]$ and $x\in \tilde{N}_s\cap \mathcal{A}_K$. Since $q$ is small in a two sided neighborhood of the origin, using the second maximum principle argument of Theorem \ref{quo_after} gives, at time $s=1$ 
\[
\left|2+\frac{\langle x,\nu \rangle}{H}\right |\leq 2\varepsilon.
\]
Scaling back to $N_t$ we get that at time $t_1$
\[
\left|2t_1+\frac{\langle x,\nu \rangle}{H}\right|<2\varepsilon t_1.
\]
Dividing by $t_1$ gives the result.
\end{proof}

\subsubsection{Conclusion of the argument}

We are finally ready to prove the main theorem of this section.
\begin{proof}[Proof of Theorem \ref{metric_changed_upper_bound}]
First observe that by \eqref{AlmostCon_bd}, taking $K$ sufficiently large we have that 
\[
|A(x,t)| \leq \frac{\delta}{\sqrt{t}}
\]
if $|x|\geq K\sqrt{t\log|t|}$. Thus for such points $x$ \eqref{desired_AH_bd} is trivial. Fix that $K$.

\bigskip

By Lemma  \ref{very_cyl_away_lemma}, for small enough $t$, given a point $x\in N_t\cap B(0,K\sqrt{t|\log t|})$ with $|x|\geq (\sqrt{2}+\varepsilon)\sqrt{t|\log (t)|}$ we have
\[
|\langle x,\nu \rangle| \leq \frac{1}{K}|x| \leq \sqrt{t|\log t|}. 
\]
In light of Theorem \ref{quo_after2}, this gives
\[
H(x,t) \leq (1/2+\varepsilon)\sqrt{t|\log t|}
\]
which combined with Theorem \ref{quo_after} yields \eqref{desired_AH_bd} at such $x$.

\bigskip

Finally, for $x\in M_t\cap B(0,(\sqrt{2}+\varepsilon)\sqrt{t|\log t|})$,  we have 
\[
|\langle x,\nu \rangle| \leq |x| \leq (\sqrt{2}+\varepsilon)\sqrt{t|\log t|}),
\]
and so Theorem \ref{quo_after2} yields
\[
H \leq \frac{\sqrt{|\log t|})}{t(\sqrt{2}-\varepsilon)}.
\]
Together with Theorem  \ref{quo_after}, this gives \eqref{desired_AH_bd}.
\end{proof}

\section{Finding good initial perturbations}\label{sec:initialperturb}
Given the discussion in Section~\ref{sec:perturb}, and in particular
Proposition~\ref{prop:genericperturb}, our main remaining task is to show 
that if we want to perturb away degenerate cylindrical singularities
of a flow $L_t$
near a space-time point $(x_0, t_0)$, then for any $\kappa > 0$ we can find initial
perturbations of our flow that give rise to a $\kappa$-good family of
perturbations of the rescaled flow $M_\tau$ centered at $(x_0, t_0)$,
at a suitable time $T_1$, as in Definition~\ref{defn:kappagood}. We
will apply successive perturbations to the original flow $N_t$, and in
this discussion we suppose that $L_t$ is already one such small
perturbation, which we seek to further perturb in order to eliminate
further degenerate singularities.

Since
$M_{T_1+1}$ is a rescaling (and translation, and rotation) of $L_{t_0
  - e^{-T_1-1}}$, this amounts to finding a suitable family of perturbations
$L^{\mathbf{a}}_0$, for which $L^{\mathbf{a}}_{t_1}$ has the desired
graphicality property over $L_{t_1}$, where $t_1 = t_0 -
e^{-T_1-1}$. In our application we will assume that $L_{t_1}$ is
smooth, and has only non-degenerate cylindrical singularities in the
interval $[0, t_1]$. 

To simplify the notation we will translate in time, and work with a flow $L_t$ defined for $t\in
[-1,1]$, with a single non-degenerate singularity at $(0,0)$. We want
to perturb the data at $t=-1$ in order to have a prescribed effect at
$t=1$. To set this up more precisely, let $\mathcal{B}\subset
C^{2,\alpha}(L_{-1})$ be a small ball centered at the origin. For any $f\in \mathcal{B}$ we define the
flow $L_{f,t}$ such that $L_{f,-1}$ is the graph of $f$ over
$L_{-1}$. From Sun-Wang-Xue's work~\cite{SWX25} we have the following. 

\begin{prop}\label{prop:curvest20}
  Given $\delta > 0$, if $\mathcal{B}$ is small enough, then from
  Theorem~\ref{thm:isolated} we know that for $f\in \mathcal{B}$ the
  flow $L_{f, t}$ also only has one non-degenerate cylindrical
  singularity, which is in the parabolic $\delta$-neighborhood of $(0,0)$.

\end{prop}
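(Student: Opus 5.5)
The plan is to reduce Proposition~\ref{prop:curvest20} to Theorem~\ref{thm:isolated}, supplemented by a compactness argument that rules out new singularities away from $(0,0)$. We use three facts about $L_t$ on $[-1,1]$: it has a single singular point, at $(0,0)$; that singularity is non-degenerate cylindrical; and $L_t$ is smooth elsewhere.

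First, we may assume $\delta<\delta_0$, the constant of Theorem~\ref{thm:isolated}. Shrinking $\delta$ only strengthens the conclusion. Theorem~\ref{thm:isolated} then supplies a closeness threshold $\eta>0$: whenever a flow is $\eta$-close to $L_t$, it has exactly one singularity in the parabolic $\delta$-neighborhood $P_\delta$ of $(0,0)$, and that singularity is non-degenerate cylindrical.

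Next we relate closeness to the size of $f$. The initial data $L_{f,-1}$ is the graph of $f$ over the smooth compact hypersurface $L_{-1}$, so it converges to $L_{-1}$ in $C^{2,\alpha}$ as $f\to 0$. Because $L_t$ has only multiplicity one cylindrical and spherical singularities, the results of \cite{BL25,HW20} quoted before Theorem~\ref{thm:main} give uniqueness of the flow, hence no fattening. This gives continuous dependence on the initial data in the Brakke topology. It also gives smooth local convergence wherever the limit flow $L_t$ is smooth, by Brakke--White local regularity together with unit regularity. Consequently, for $\mathcal{B}$ small enough, every $L_{f,t}$ with $f\in\mathcal{B}$ is $\eta$-close to $L_t$ near $(0,0)$.

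It remains to exclude singularities of $L_{f,t}$ outside $P_\delta$. Suppose not. Then there are $f_i\to 0$ and singular points $X_i$ of $L_{f_i,t}$ with $X_i\notin P_\delta$ and $t(X_i)\in[-1,1]$. All the flows stay in a fixed compact set, by comparison with a large shrinking sphere, so after passing to a subsequence $X_i\to X_\infty\notin P_\delta$. Now $L_t$ is smooth near $X_\infty$, and the convergence $L_{f_i,t}\to L_t$ is smooth there by the local regularity above. The $L_{f_i,t}$ therefore have uniformly bounded curvature near $X_\infty$, which contradicts $X_i$ being singular. Points near $t=-1$ pose no extra difficulty, since the flow starts smooth there and short-time existence with $C^{2,\alpha}$-controlled data gives uniform smoothness on a fixed initial interval.

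I expect the main obstacle to be making the Brakke-topology continuity in $f$ rigorous, specifically obtaining closeness to $L_t$ on the whole interval $[-1,1]$, past the singular time, and not just before it. This is exactly where uniqueness and non-fattening, from the mean convex neighborhood theorem of \cite{BL25}, are needed: any subsequential Brakke limit of the $L_{f_i,t}$ is a flow emanating from $L_{-1}$, and by uniqueness it must equal $L_t$. With that in hand, both the local $\eta$-closeness near $(0,0)$ and the contradiction argument away from it go through.
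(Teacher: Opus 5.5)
Your proposal is correct and follows the same route as the paper, which gives no separate argument and simply invokes Theorem~\ref{thm:isolated} (Sun--Wang--Xue) for the perturbed flows $L_{f,t}$. Your compactness argument makes explicit the step the paper leaves implicit, namely that no new singularities appear outside the parabolic $\delta$-neighborhood of $(0,0)$; it rests on Brakke continuity in the initial data (via the uniqueness from \cite{BL25,HW20}) and local regularity where $L_t$ is smooth.
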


Our goal in this section is to prove the following.
\begin{prop}\label{prop:10}
  Let $h$ be a smooth function on $L_1$, and let $\kappa > 0$. Then
  there exists $f\in C^{2,\alpha}(L_{-1})$, and $\delta > 0$ with the
  following property. For $|a| < \delta$ define the flow $L^a_t :=
  L_{af, t}$ with initial condition $L_{af,-1}$ given by the graph of
  $af$ over $L_{-1}$. Then for any $|a_1|, |a_2| < \delta$, the
  hypersurface $L^{a_2}_1$ is the graph of $F$ over $L^{a_1}_1$, where
  \[ |F - (a_2-a_1)h| \leq \kappa |a_2-a_1|. \]
\end{prop}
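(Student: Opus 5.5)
The plan is to reduce Proposition~\ref{prop:10} to an approximate controllability statement for the linearized flow through the singular time, plus a first-order expansion of the nonlinear solution map $f\mapsto L_{f,1}$. The solution map has no reason to be differentiable across $t=0$ in a classical sense. Instead we will show a weaker statement: for $f$ in the ball $\mathcal{B}$ and any direction $g$, the hypersurface $L_{f+ag,1}$ is a graph over $L_{f,1}$ of a function $a\,\mathcal{P}_f(g)+o(a)$. Here $\mathcal{P}_f(g)$ is the solution at $t=1$ of the linearized equation $(\partial_t-\Delta-|A|^2)v=0$ along $L_{f,t}$, with initial data $g$. The error must be uniform in $f\in\mathcal{B}$, and $\mathcal{P}_f$ must depend continuously on $f$. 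With this in hand, the proposition follows by integrating along the segment: $F=\int_{a_1}^{a_2}\mathcal{P}_{af}(f)\,da+o(|a_2-a_1|)$. It therefore suffices to choose $f$ so that $\|\mathcal{P}_0(f)-h\|_{C^0}<\kappa/2$, and then to take $\delta$ small so that $\mathcal{P}_{af}(f)$ stays within $\kappa/2$ of $\mathcal{P}_0(f)$.

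To find $f$, we argue by duality. Away from the singular point, the linearized flow is an honest parabolic equation. Near $(0,0)$ we first need to make sense of $\mathcal{P}_0$, i.e.\ show that solutions with smooth initial data at $t=-1$ extend through $t=0$, and that they are bounded and smooth at $t=1$ away from a small set. For this we will use the mean convex, noncollapsed structure near the singularity (Proposition~\ref{prop:HK16}) and the curvature bounds of Proposition~\ref{prop:curvxest} and Corollary~\ref{initialcurvatureest}. Geometrically, the variation is realized by comparing nearby flows, and the barrier argument of Proposition~\ref{prop:barrier} gives $L^\infty$ control. Density of the range of $\mathcal{P}_0$ in $L^2(L_1)$ then follows if any $w$ solving the conjugate equation $(\partial_t+\Delta+|A|^2-H^2)w=0$ backwards from $w(1)=\phi$, and pairing to zero with every $\mathcal{P}_0(g)$, must vanish. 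The vanishing pairing forces $w(\cdot,-1)=0$. By backwards uniqueness for smooth flows \cite[Lemma 5.2]{HV92}, this gives $w\equiv0$ on $[-1,0)$, and hence $w\to0$ locally smoothly away from the singular set as $t\downarrow0$. Theorem~\ref{thm:linearbackwarduniqueness} then gives $w\equiv0$ on $(0,1]$, so $\phi=0$. Density in $L^2$ must then be upgraded to $C^0$ approximation of the smooth $h$. We plan to do this by approximating $h$ in a stronger Sobolev norm, running the duality argument in $H^s$, and using parabolic smoothing on $[1/2,1]$, where $L_t$ is smooth.

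The main obstacle is justifying the first-order expansion through the singularity, uniformly in $f$. The singular point moves with $f$ (Proposition~\ref{prop:curvest20}), so the nearby flows are not graphs over each other near it. Our first attempt will be to exclude a small parabolic ball $P_\rho$ around the singular set. Outside $P_\rho$, the flows $L_{f+ag,t}$ are graphical over $L_{f,t}$ by smooth dependence and Brakke closeness. Inside $P_\rho$, we will use the barrier comparison of Proposition~\ref{prop:barrier}, trapping $L_{f+ag,t}$ between time translates $L_{f,t\pm Ca}$. This bounds the graph function by $Ca$ everywhere after the singular time. Letting $a\to0$ and then $\rho\to0$ should identify the limit of the normalized difference as $\mathcal{P}_f(g)$ on $L_1$. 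For this we need the contribution from $P_\rho$ to vanish, which we hope to obtain from the $L^\infty$ bound together with the small volume of $P_\rho$, under control from the curvature estimates of Section~3.
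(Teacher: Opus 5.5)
Your overall architecture matches the paper: the Gateaux derivative $\mathcal{P}^f_1$, its continuity in $f$, integration along $a\mapsto af$, and density of the range of $\mathcal{P}_1$ via duality and backwards uniqueness. However, the density step has a genuine gap at the singular time.

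You let the dual solution $w$ with $w(\cdot,1)=\phi$ run all the way down to $t=-1$, and claim that the vanishing pairing forces $w(\cdot,-1)=0$. That requires two things:
(a) a continuation of $w$ backwards through $t=0$, where the topology changes;
(b) continuity of $\int_{L_t}\mathcal{P}_t(g)\,w(\cdot,t)\,d\mathcal{H}^n$ across $t=0$, i.e.\ no contribution from a shrinking neighborhood of the singular point.

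Neither is available. From the $t>0$ side you only know $w(\cdot,0)$ on $L_0\setminus\{0\}$, with the bound $|w|\le C\tilde G$ of Lemma~\ref{lem:conjbarrier}. As $t\downarrow 0$ this bound becomes $\exp(4B(\log|x|)^2)$, which beats every power of $|x|^{-1}$, while $\mathcal{P}_t(g)$ is only bounded by $C(1+|H|)$. So the available bounds do not make $w(\cdot,0)$ integrable near the origin, do not control the near-singularity part of the pairing, and do not pin down any backwards continuation.

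The paper deliberately never defines $w$ for $t<0$. Instead it argues as follows.
\begin{itemize}
\item It uses Theorem~\ref{thm:linearbackwarduniqueness} contrapositively to get $w(\cdot,0)\not\equiv 0$.
\item It chooses $\xi$ supported away from the origin with $\int_{L_0}\xi\,w(\cdot,0)\neq 0$.
\item It realizes $\xi$ at time $-\epsilon$ up to an error $e^{-1/\epsilon}$, using Lemma~\ref{lem:smoothdenserange} on the smooth interval $[-1,-\epsilon]$.
\item It shows that the forward solution from $\xi$ satisfies $|U_\epsilon(\cdot,\epsilon)|\le C\Phi e^{-c/\epsilon}$ on $B_{2r_0}$, via heat kernel bounds on an annulus plus Lemma~\ref{lem:maxprinc2}.
\end{itemize}
This short-time exponential smallness beats $\sup_{L_\epsilon}\tilde G\le\exp(C|\log\epsilon|^N)$ and $\sup_{L_\epsilon}\Phi\le C\epsilon^{-1/2}|\log\epsilon|^C$. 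Hence the pairing at time $\epsilon$ stays near $\int\xi\,w(\cdot,0)\neq0$, and conservation on the smooth interval $(0,1)$ gives the contradiction. That mechanism, which never transports the dual solution across the singularity, is the missing idea.

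There is a second, smaller gap: $\mathcal{P}_f(g)$ is not well defined until you fix a uniqueness class at the singular point. Solutions of the linearized equation away from $(0,0)$ with the same initial data can differ by singular solutions emanating from $(0,0)$, just as a heat kernel centered there does for the heat equation. Your ``small volume of $P_\rho$'' heuristic does not exclude these. The paper uses the admissible class $|u|\le C\Phi$ with $\Phi\sim 1+|H|$, together with a second supersolution $\Psi\sim H\log H$ satisfying $\Psi/\Phi\to\infty$ (Lemma~\ref{lem:barrier20}, which relies on $|\nabla\log H|<|A|$ from Lemma~\ref{lem:31}). This gives the maximum principle of Lemma~\ref{lem:maxprinc2} and hence uniqueness. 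Relatedly, the barrier comparison bounds the normalized graph function by $C(1+|H|)$, not by a constant as you state, which is exactly why this is the right class.
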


The proof will rely on several intermediate steps. A major ingredient, which we
defer to Section~\ref{sec:backwardsunique}, is a backwards uniqueness theorem for the
conjugate linearized operator, which will be used in
Proposition~\ref{prop:11}.
We first construct some barrier functions that will be used when
studying solutions of the linearized equation. For this we will need
the following estimate
\begin{lemma}\label{lem:31}
  In a small neighborhood of the singularity $(0,0)$ we have
  $|\nabla\log H| < |A|$.
\end{lemma}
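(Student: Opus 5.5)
The plan is to argue by contradiction via blow-up. The quantity $|\nabla \log H|/|A| = |\nabla H|/(H|A|)$ is invariant under parabolic rescaling. The flow is mean convex near $(0,0)$ by \cite{SWX25}, \cite{BL25}, so $H>0$ there and the quantity is well defined. Suppose there are points $(x_j,t_j)\to(0,0)$ with $|\nabla H|(x_j,t_j)\geq H(x_j,t_j)|A|(x_j,t_j)$. Set $\lambda_j = H(x_j,t_j)\to\infty$; this is forced, since otherwise the points would accumulate at a regular point of the flow. Consider the rescalings $\lambda_j(N_{t_j+\lambda_j^{-2}s}-x_j)$. By Proposition~\ref{prop:HK16} we have $|A|\leq C H$ and $|\nabla A|\leq CH^2$, and higher derivative estimates follow from \cite{HK16}. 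Hence these rescaled flows are smooth with bounded curvature near the origin on a backward parabolic neighborhood, and they converge smoothly and locally to an ancient, noncollapsed, convex limit with $H(0,0)=1$. The scale-invariant inequality passes to the limit, giving $|\nabla H|\geq H|A|$ at the origin of the limit.

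Next I identify the possible limits. If $t_j\geq 0$, \cite[Theorem 4.2]{SWX25} (as used in the proofs of Proposition~\ref{positionest} and Corollary~\ref{initialcurvatureest}) shows the limit is either a round cylinder $S^{n-k}\times\mathbb{R}^k$ or $\mathbb{R}^{k-1}\times$ Bowl, where the bowl is the rotationally symmetric translator in $\mathbb{R}^{n-k+2}$. If $t_j<0$, Theorem~\ref{C^2_crude_lemma} together with the pseudolocality and cylindrical estimates of Lemma~\ref{very_cyl_away_lemma} show that the flow is $\epsilon$-cylindrical at the curvature scale. Therefore the limit is again the round cylinder. This is the same dichotomy used in Corollary~\ref{initialcurvatureest}, and I would invoke it verbatim. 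On the cylinder $\nabla H\equiv 0$ while $H|A|>0$, which is a contradiction. It remains to rule out the bowl.

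For the bowl, write $m=n-k+1$ and take $e$ to be the translation direction. The translator equation gives $H=\langle \nu,e\rangle$ and $\nabla H = -A(e^{\top})$. By rotational symmetry $e^{\top}$ lies in the profile direction, so $|\nabla H| = \kappa_1|e^\top| = \kappa_1\sqrt{1-H^2}$. Here $\kappa_1$ is the profile curvature and $\kappa_2$ the rotational curvature, with $H=\kappa_1+(m-1)\kappa_2$ and $|A|^2=\kappa_1^2+(m-1)\kappa_2^2$. The factor $\mathbb{R}^{k-1}$ changes nothing. So the claim reduces to the strict pointwise inequality
\[ \kappa_1\sqrt{1-H^2} < H\sqrt{\kappa_1^2+(m-1)\kappa_2^2} \]
on the bowl. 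At the tip $e^\top=0$, so the inequality holds. Far out, the profile is $u\approx r^2/(2(m-1))$, which gives $H\approx (m-1)/r$, $\kappa_2\approx 1/r$ and $\kappa_1\approx (m-1)^2/r^3$. The left side is then $O(r^{-3})$ while the right side is of order $r^{-2}$, so the inequality holds with room to spare.

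I expect the main obstacle to be verifying this inequality at every intermediate point of the bowl, not just asymptotically. My plan is to use the profile ODE. Parametrize by the angle $\varphi$ between $\nu$ and $e$, so that $H=\cos\varphi$, $\sqrt{1-H^2}=\sin\varphi$ and $\kappa_2=\sin\varphi/r$. Then it suffices to show $\kappa_1\sin\varphi < \cos\varphi\,\sqrt{m-1}\,\sin\varphi/r$, that is, $\kappa_1 r<\sqrt{m-1}\cos\varphi$. Since $H=\kappa_1+(m-1)\sin\varphi/r=\cos\varphi$, this is equivalent to a bound on the ratio $\kappa_1/\kappa_2$ in terms of $\cot\varphi$. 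I would attempt to establish it by a maximum principle or ODE comparison along the profile, using the known fact that $\kappa_1\leq\kappa_2$ on the bowl. If this approach is too delicate, the fallback is to weaken the lemma to $|\nabla\log H|\leq C|A|$, which the same blow-up argument gives immediately. In that case I would check whether that constant suffices for the later barrier constructions.
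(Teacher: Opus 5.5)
Your blow-up reduction and the cylinder/bowl dichotomy match the paper's argument, and your reduction on the bowl is correct. With $\kappa_2=\sin\varphi/r$ and $|A|\geq\sqrt{m-1}\,\kappa_2$, it suffices to show $\kappa_1 r<\sqrt{m-1}\cos\varphi$. In terms of the profile $u(r)$ this reads $F:=ru''/(1+(u')^2)<\sqrt{m-1}$. The gap is that you never prove this on the intermediate part of the bowl, and that is where the actual content of the lemma lies. Checking the tip and the asymptotic regime is not enough. The fact you plan to use, $\kappa_1\leq\kappa_2$, also does not close the gap by itself. Since $\kappa_1/\kappa_2=F/u'$, it only gives your inequality where $\tan\varphi=u'<\sqrt{m-1}$. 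But $u'\sim r/(m-1)$ is unbounded, and the asymptotic expansion only takes over for some non-explicit large $r$.

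The missing step is a first-touching argument for $F$ itself. The translator equation gives $F=r-(m-1)u'$, hence $F'=1-(m-1)(1+(u')^2)F/r$ with $F(0)=0$. Your fact $\kappa_1\leq\kappa_2$ is equivalent to $u'\geq r/m$. The paper proves the strict version $u'>r/m$ directly: with $w=u'-r/m$ and $w(0)=0$, one has $w'\geq (u')^2/m>0$ wherever $w\leq 0$. Now suppose $r_0$ is the first point with $F(r_0)=1$, so that $F'(r_0)\geq 0$. Yet $F'(r_0)\leq 1-2(m-1)u'/r_0<1-2(m-1)/m\leq 0$, because $m=n-k+1\geq 2$. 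Hence $F<1$, i.e. $|\nabla\log H|=F\kappa_\theta<\kappa_\theta\leq|A|$, which is stronger than what you need.

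Your fallback $|\nabla\log H|\leq C|A|$ is not an acceptable substitute. In Lemma~\ref{lem:barrier20} the lemma enters through $(\partial_t-\Delta-|A|^2)(H\log H)=H(|A|^2-|\nabla\log H|^2)$. For $C>1$ the negative part is of order $H^3$ near $(0,0)$, which $\Lambda_1 H\log H$ cannot absorb. Without the supersolution $\Psi$, the maximum principle of Lemma~\ref{lem:maxprinc2} is lost, and with it the uniqueness of admissible solutions.
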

\begin{proof}
  This follows from the fact that near the singularity the flow is
  modelled on cylinders and the bowl soliton, by
  Sun-Wang-Xue~\cite{SWX25}. 
  The desired inequality is
  scale invariant, so if it were to fail along a sequence of points
  $X_i \to (0,0)$, then after rescaling by the curvature at $X_i$, and taking a limit, we would
  find a point $X_\infty$ on either a cylinder, or on the bowl
  soliton, such that $|\nabla\log H|(X_\infty) \geq |A|(X_\infty)$. On
  a cylinder this is clearly a contradiction. On the bowl soliton we
  can do an explicit calculation to see that this also cannot
  happen. 

  For this let us write the $m$-dimensional bowl soliton as the graph
  of $u(r) = u(|x|)$ over $\mathbb{R}^m$. The translator equation can
  be written as
  \[  \frac{u''}{1 + (u')^2} + \frac{m-1}{r} u' = 1, \]
  which implies that the mean curvature is given by
  \[ H = \frac{1}{(1 + (u')^2)^{1/2}}. \]
  The angular principal curvatures are
  \[ \kappa_\theta = \frac{u'}{r (1 + (u')^2)^{1/2}}. \]
  In terms of this we have
  \[\label{eq:logH} |\nabla\log H| &= \frac{ u' u'' }{(1 + (u')^2)^{3/2}} = \frac{r
      u''}{1 + (u')^2} \kappa_\theta. \]
  We first claim that $u' > r/m$. To see this, let $w=u' - r/m$, and
  note that by expanding the solution $u'$ of the
  translator equation as a power series near the origin, we get $u'(r)
  = \frac{r}{m} + O(r^3)$. Therefore $w(0)=0$. Whenever $w(r) \leq 0$,
  then we have $u' \leq r/m$ and so
  \[ w' &= u'' - \frac{1}{m} \\
    &= \left( 1 - \frac{m-1}{r} u'\right) (1 + u'^2) - \frac{1}{m}\\
    &> \left(1 - \frac{m-1}{m}\right) (1 + u'^2) - \frac{1}{m}> 0. \]
  This shows that in fact $w(r) > 0$ for all $r > 0$, as claimed.

    Let us define $F = ru'' (1 + (u')^2)^{-1}$. Then the translation
    equation implies
    \[ F' = 1 - \frac{(m-1)(1 + (u')^2)}{r} F. \]
    We have $F(0)=0$, and we claim that $F(r) < 1$ for all
    $r$. Indeed, if at some first time $r_0$ we had $F(r_0)=1$, then
    $F'(r_0) \geq 0$, however we have
    \[ F'(r_0) \leq 1 - \frac{2(m-1) u'}{r}, \]
    and at the same time we showed that $u' > \frac{r}{m}$, so
    \[F'(r_0) < 1 - \frac{2(m-1)}{m}. \]
    Since $m \geq 2$, this implies $F'(r_0) < 0$, which is a
    contradiction.

    Since $F < 1$, using \eqref{eq:logH} we get $|\nabla \log H| <
    \kappa_\theta \leq |A|$. This shows that on the bowl soliton we
    cannot have a point $X_\infty$ with $|\nabla \log H|(X_\infty)
    \geq |A|(X_\infty)$, as claimed. 
\end{proof}

\begin{lemma}\label{lem:barrier20}
  There is a function $\Phi > 0$ along the flow $L_t$ away from
  $(0,0)$ satisfying the following properties:
  \[ \begin{aligned}
      (\partial_t - \Delta - |A|^2) \Phi &\geq 0, \\
      C^{-1} (1 + |H|) \leq \Phi &\leq C(1 + |H|), 
    \end{aligned} \]
  where $r = |x| + |t|^{1/2}$. There is an additional positive supersolution
  $\Psi$ satisfying
  \[ (\partial_t - \Delta - |A|^2) \Psi >  0, \qquad \text{ and }
    \frac{\Psi}{\Phi} \to \infty \text{ as } r\to 0. \]
\end{lemma}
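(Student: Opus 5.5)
The plan is to build both functions from powers of $H$, using two facts: $H$ solves the linearized equation exactly, $(\partial_t-\Delta-|A|^2)H=0$, and by Lemma~\ref{lem:31} powers $H^p$ become strict supersolutions for suitable $p$. Because $[-1,1]$ is compact and the only singularity is at $(0,0)$, away from a small parabolic neighborhood of the singularity the flow is smooth with $|A|^2+|\nabla A|\le C_0$. Near the singularity, by \cite{SWX25}, \cite{BL25} and Proposition~\ref{prop:HK16}, there is a mean convex parabolic neighborhood $\mathcal{V}$ of $(0,0)$ on which $H>0$. Shrinking $\mathcal{V}$, the blowup classification of \cite[Theorem 4.2]{SWX25} (cylinders and bowls) shows $H\to\infty$ as $r\to 0$. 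So we may assume $H\ge 1$ on $\mathcal{V}$, and also $|\nabla\log H|^2\le(1-\eta)|A|^2$ there for some $\eta>0$. This strict margin is what the compactness argument in the proof of Lemma~\ref{lem:31} actually gives, since the inequality is strict on the scale-invariant models and the models form a compact family.

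For $p>1$, a direct computation using $(\partial_t-\Delta)H=|A|^2H$ gives
\[ (\partial_t-\Delta-|A|^2)H^p = (p-1)H^p\bigl(|A|^2-p|\nabla\log H|^2\bigr). \]
This is $\ge (p-1)H^p\tfrac{\eta}{2}|A|^2>0$ on $\mathcal{V}$ once $p<1+\eta/2$. Thus $H^p$ is a strict supersolution near the singularity, and $H^p/H=H^{p-1}\to\infty$ as $r\to0$. The function $H$ itself is the exact (borderline) solution and serves as the model for $\Phi$.

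Next I globalize with a cutoff. Choose $\chi$ smooth on spacetime with $\chi=0$ on a smaller neighborhood $\mathcal{V}'\Subset\mathcal{V}$ of $(0,0)$, $\chi=1$ outside $\mathcal{V}$, and $|\partial_t\chi|+|\Delta\chi|\le C_\chi$. Set
\[ \Phi=e^{Ct}(H+K\chi), \]
with $K$ large so that $H+K>1$ outside $\mathcal{V}$, where $|H|$ is bounded. Then
\[ (\partial_t-\Delta-|A|^2)\Phi=e^{Ct}\bigl(C(H+K\chi)+K(\partial_t\chi-\Delta\chi-|A|^2\chi)\bigr). \]
There are three regions to check.
\begin{itemize}
\item On $\mathcal{V}'$ this is $CH e^{Ct}\ge0$.
\item Outside $\mathcal{V}$ it is $\ge e^{Ct}K(C-C_0)\ge0$ once $C\ge C_0+1$, using $H+K\ge 1$ and $|A|^2\le C_0$.
\item On $\mathcal{V}\setminus\mathcal{V}'$, where $H\ge1$ and $|A|$ is bounded, it is $\ge e^{Ct}(C-K(C_\chi+C_0))\ge0$ once $C\ge K(C_\chi+C_0)$.
\end{itemize}
The constants are chosen in the order $K$, then $C$. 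The comparability $C^{-1}(1+|H|)\le\Phi\le C(1+|H|)$ is immediate: on $\mathcal{V}'$ we have $\Phi\sim H\ge1$, and elsewhere $H$ is bounded and $H+K\chi$ is bounded below by a positive constant.

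For $\Psi$ the same recipe applies with $\Psi=e^{C't}(H^p+K'\chi)$. The strict positivity on $\mathcal{V}'$ comes from the computation above. Elsewhere, strictness comes from choosing $C'$ strictly larger than needed, which works because $H^p+K'\chi$ is bounded below by a positive constant. Finally $\Psi/\Phi\gtrsim H^{p-1}\to\infty$ as $r\to0$.

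The main obstacle is the strict margin $|\nabla\log H|^2\le(1-\eta)|A|^2$ near the singularity. Lemma~\ref{lem:31} as stated gives only the non-strict pointwise inequality, and a margin is needed both for $p>1$ to work and for strictness. I would redo its blowup argument: along a sequence violating the margin, rescale by $H$ and pass to a cylinder or bowl limit. This uses that the bowl computation gives $F<1$ with $|\nabla\log H|/\kappa_\theta$ bounded away from $1$ uniformly in $r$. That uniformity in turn needs the behavior of $F$ as $r\to\infty$, where $F\to 0$, together with the fact that the angular curvatures dominate $|A|$ there.
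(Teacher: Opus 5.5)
Your argument is correct in substance, but it takes a different route from the paper.

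\textbf{What the paper does.} It avoids cutoffs by composing with functions of $H$ defined on all of $\mathbb{R}$.
\begin{itemize}
\item $\Phi=e^{\Lambda t}(1+H^2)^{1/2}$. The defect $-|A|^2/g(H)-|\nabla H|^2/g(H)^3$ is absorbed by $\Lambda g(H)$ using the Haslhofer--Kleiner bounds $|A|\le C|H|$ and $|\nabla A|\le C|H|^2$ of Proposition~\ref{prop:HK16}.
\item $\Psi=e^{\Lambda_1 t}g(H)$, with $g$ smooth, positive, and equal to $s\log s$ for large $s$. Here $(\partial_t-\Delta-|A|^2)(H\log H)=H(|A|^2-|\nabla\log H|^2)$.
\end{itemize}
So $H\log H$ is exactly the borderline choice for which Lemma~\ref{lem:31} as stated suffices, and no margin $\eta$ is ever needed.

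\textbf{What your route trades.} For $\Phi$ you use that $H$ solves the equation exactly near the singularity. You therefore need only $H\ge 1$ there and no derivative estimates, at the cost of a cutoff. For $\Psi$, your $H^p$ with $p>1$ does require the quantitative margin, but that margin follows verbatim from the blow-up argument of Lemma~\ref{lem:31}. A sequence with ratio $|\nabla\log H|^2/|A|^2\ge 1-1/i$ produces, after rescaling, a point on a limiting cylinder or bowl$\times\mathbb{R}^{k-1}$ where the ratio is $\ge 1$. Base points escaping to infinity along the bowl simply give cylinder limits. So your computation $F\to 0$ is a consistency check, not a necessity. Your $\Psi/\Phi\sim H^{p-1}$ also grows faster than the paper's $\log H$, which is harmless. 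One advantage of the paper's cutoff-free formulas is that uniformity of constants along the perturbed flows in Proposition~\ref{prop:Ndiff} is easier to see.

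\textbf{Two small repairs.}
\begin{itemize}
\item As written, $\Psi=e^{C't}(H^p+K'\chi)$ is undefined, or at least not $C^2$, where $H\le 0$, and this can happen outside $\mathcal{V}$. Replace $H^p$ by $(1-\chi)H^p$, or by $g(H)$ with $g$ smooth, positive, and $g(s)=s^p$ for $s\ge 1$; this is the same device the paper uses. The extra terms on $\mathcal{V}\setminus\mathcal{V}'$ are bounded and absorbed by $C'$, since the bracket is at least $\min\{1,K'\}$ there.
\item Outside $\mathcal{V}$, $H$ may be negative, so the lower bound there should be $e^{Ct}(C-KC_0)$, not $e^{Ct}K(C-C_0)$. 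This is still nonnegative under your choice $C\ge K(C_\chi+C_0)$.
\end{itemize}
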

\begin{proof}
   The (signed) mean curvature $H$ satisfies $(\partial_t - \Delta -
   |A|^2)H = 0$. Let us define $g(s) = (1+s^2)^{1/2}$. Then
   \[ \begin{aligned}  (\partial_t -\Delta - |A|^2) g(H) &= (H g'(H) -
       g(H)) |A|^2 - g''(H) |\nabla H|^2 \\
       &=  - \frac{|A|^2}{g(H)} - \frac{|\nabla
         H|^2}{g(H)^3}, \end{aligned} \]
   and $g(H)$ is comparable to $1+|H|$. 
   For large $\Lambda > 0$ define
   \[ \label{eq:Phidef31} \Phi = e^{\Lambda t} g(H) = e^{\Lambda t} (1 + H^2)^{1/2}, \]
   which is also comparable to $1 + |H|$. We have
   \[ (\partial_t -\Delta - |A|^2) \Phi = e^{\Lambda t} \left( - \frac{|A|^2}{g(H)} - \frac{|\nabla
         H|^2}{g(H)^3} + \Lambda g(H)\right) \geq 0, \]
   if $\Lambda$ is sufficiently large, using 
   the Haslhofer-Kleiner curvature estimates, Proposition~\ref{prop:HK16}. 

   To construct the second supersolution $\Psi$, we argue similarly, but we
   now consider $g: \mathbb{R} \to (0,\infty)$ which is a smooth
   positive function, that equals $s\log s$ for sufficiently large
   $s$. We again have
   \[ (\partial_t - \Delta - |A|^2) g(H) = (H g'(H) - g(H)) |A|^2 -
     g''(H) |\nabla H|^2. \]
   On a small neighborhood $U$ of $(0,0)$ we have $g(H) = H\log H$, so
   here we have
   \[ (\partial_t - \Delta - |A|^2) g(H) = H (|A|^2 - |\nabla \log
     H|^2) \geq 0, \]
   by Lemma~\ref{lem:31}. Outside of the neighborhood $U$ the
   curvature is uniformly bounded. It follows that if we define $\Psi
   = e^{\Lambda_1 t} g(H)$ for sufficiently large $\Lambda_1$, then
   \[ (\partial_t - \Delta - |A|^2) \Psi = e^{\Lambda_1 t}\left(
       (\partial_t - \Delta - |A|^2) g(H) + \Lambda_1 g(H) \right) \geq
     0. \]
   Since $H\to \infty$ as $r\to 0$, it is clear that $\Psi / \Phi \sim
   \log H \to \infty$ as $r\to 0$. 
   \end{proof}

We define the following classes of functions with controlled
singularities at $(0,0)$.
\begin{defn} A function $u$ along $L_t$ is called \emph{admissible} if
  $|u| \leq C \Phi$ for a constant $C > 0$. 
\end{defn}

The following lemma gives a maximum principle for the linearized
equation.
\begin{lemma}\label{lem:maxprinc2}
  Suppose that $u$ is an admissible function along $L_t$, satisfying the
  differential inequality
  \[ (\partial_t - \Delta - |A|^2) u \leq 0. \]
 Then we have
  \[ \sup_{L_b} \frac{u_+}{\Phi} \leq \sup_{L_a} \frac{u_+}{\Phi}, \]
 for any $a < b$, where $u_+ = \max\{u, 0\}$. 
\end{lemma}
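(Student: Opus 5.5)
The plan is a weighted maximum principle using the two supersolutions of Lemma~\ref{lem:barrier20}: $\Phi$ serves as the weight, and $\Psi$ serves as a penalty near the singular point $(0,0)$. Since $\Phi>0$ is a supersolution, we have, for $u$ a subsolution, that $v=u/\Phi$ satisfies
\[ (\partial_t-\Delta)v - 2\langle\nabla\log\Phi,\nabla v\rangle = \Phi^{-1}(\partial_t-\Delta-|A|^2)u - \frac{u}{\Phi^2}(\partial_t-\Delta-|A|^2)\Phi \leq 0 \]
wherever $u\geq 0$. Away from $(0,0)$ the flow is smooth and compact, so one would like the classical maximum principle for $v_+$. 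The only difficulty is that $(0,0)$ is a singular point, where $\Phi\to\infty$ and the coefficient $\nabla\log\Phi$ is unbounded. A positive maximum of $u/\Phi$ could a priori be approached there without being attained.

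To exclude this, for $\epsilon>0$ consider
\[ v_\epsilon = \frac{u-\epsilon\Psi}{\Phi}. \]
Since $(\partial_t-\Delta-|A|^2)\Psi>0$, the numerator $u-\epsilon\Psi$ is again a subsolution, and it is strict. Admissibility gives $|u|\leq C\Phi$, while $\Psi/\Phi\to\infty$ as $r\to 0$. Hence $v_\epsilon\to-\infty$ as $(x,t)\to(0,0)$, so $v_\epsilon<0$ on a parabolic neighborhood of the singularity whose size depends on $\epsilon$. For $a<0<b$, on the remaining region of $[a,b]$ the flow is smooth with bounded geometry (compact hypersurfaces). A positive interior maximum of $v_\epsilon$ at a first time would therefore have to occur away from $(0,0)$. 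There the strict inequality, together with $\nabla v_\epsilon=0$ and $\Delta v_\epsilon\leq 0$, gives a contradiction. This yields
\[ \sup_{L_b}(v_\epsilon)_+\leq\sup_{L_a}(v_\epsilon)_+\leq \sup_{L_a}\frac{u_+}{\Phi}, \]
using $\Psi>0$. Letting $\epsilon\to 0$ pointwise gives the claim. When $b\leq 0$ or $a\geq 0$ the standard argument already works; the only case needing care is an interval containing the singular time. There one should also handle the slice $t=0$: the singular point is excluded, and continuity in time of $\sup$ across $t=0$ holds because the flow is smooth away from $(0,0)$ and $v_\epsilon$ is negative near it.

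The main technical point is to make the first-time maximum argument rigorous near $(0,0)$. Concretely, one must check that the set where $v_\epsilon\geq 0$ stays a definite distance from the singular point uniformly on $[a,b]$. This follows from $\Psi/\Phi\to\infty$ and the bound $|u|/\Phi\leq C$. The fact that $L_t$ changes topology at $t=0$ does not matter, since the comparison is purely local in spacetime away from the singularity. One also needs that $\Phi,\Psi$ are smooth and positive on $L_t\setminus\{(0,0)\}$, which holds by construction since $g>0$.
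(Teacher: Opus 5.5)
Your proposal is correct and follows essentially the same route as the paper: you penalize by $\epsilon\Psi$ so that the competitor tends to $-\infty$ at the singular point, and then run a first-touching-time maximum principle using the strict inequality $(\partial_t-\Delta-|A|^2)\Psi>0$. The only difference is cosmetic: the paper avoids the quotient equation by applying the argument directly to the linear combination $W_\delta = u - (\sup_{L_a} u_+/\Phi)\,\Phi - \delta\Psi$, which is a strict subsolution and negative at $t=a$, so it needs no drift term and no sign analysis of the zeroth-order term.
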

\begin{proof}
  For $\delta > 0$ let
  \[ W_\delta = u - \left(\sup_{L_a} \frac{u_+}{\Phi}\right) \Phi -
    \delta\Psi. \]
  Note that $W_\delta < 0$ at $t=a$. In addition we have that
  $W_\delta \to -\infty$ at $(0,0)$ by the property of $\Psi$. We
  claim that $W_\delta \leq 0$ for all $t\in [a,b]$. Otherwise there
  would be a first space-time point $(x_0, t_0)$ where $W_\delta = 0$,
  and at this point we would have
  \[ 0 \leq (\partial_t - \Delta - |A|^2) W_\delta < 0, \]
  which is a contradiction. It follows that we have
  \[ \sup_{L_b} \frac{u}{\Phi} \leq \sup_{L_a} \frac{u_+}{\Phi} + \delta
    \frac{\Psi}{\Phi}. \]
  Letting $\delta\to 0$ gives the required result. 
\end{proof}

As a consequence we have the following solvability of the
linearized equation.
\begin{lemma}\label{lem:solveinhom}
  Let $u_a$ be
  a smooth function on a time slice $L_a$ with $a\not=0$. There exists
  a unique admissible solution $u$ of the linearized equation
  \[ (\partial_t - \Delta - |A|^2) u = 0, \]
  for $t \geq a$, with initial condition $u(x,a) = u_a(x)$. The
  solution $u$ satisfies
  \[ \sup_{L_t} \frac{|u|}{\Phi} \leq \sup_{L_a} \frac{|u_a|}{\Phi}, \]
  for $t \geq a$. 
\end{lemma}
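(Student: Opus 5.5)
Uniqueness and the estimate are immediate from Lemma~\ref{lem:maxprinc2}. If $u$ is an admissible solution, apply the lemma to both $u$ and $-u$; this gives $\sup_{L_t}|u|/\Phi \leq \sup_{L_a}|u_a|/\Phi$ for $t\ge a$. If two admissible solutions have the same initial data, their difference is admissible with zero data at $t=a$, hence vanishes. So the real work is existence, and there are two cases.

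If $a>0$, or if $a<0$ and we only need $t<0$, the flow is smooth and compact on the relevant interval. Standard linear parabolic theory on a smooth compact evolving hypersurface then produces a smooth solution, and it is admissible because $\Phi$ is bounded below by $C^{-1}$ and $u$ is bounded on compact time intervals. When $a<0$, we first solve on $[a,0)$, and the estimate already gives $|u|\le C\Phi$ there. The remaining task is to continue the solution through the singular time $t=0$.

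For this we use an exhaustion by smooth problems. Take times $t_j\uparrow 0$ and radii $r_j\downarrow 0$, and on the space-time domain $D_j = \{(x,t): t\ge a,\ |x|+|t|^{1/2}\ge r_j\}$, which is a smooth flow domain, solve
\[ (\partial_t-\Delta-|A|^2)u_j=0, \qquad u_j(\cdot,a)=u_a, \qquad u_j=u \text{ on } \partial D_j\cap\{t<0\}. \]
On the part of $\partial D_j$ with $t\ge 0$ we impose the boundary value $\Phi\cdot\sup_{L_a}|u_a|/\Phi$ times a cutoff. A cleaner option is to set $u_j=0$ on $\partial D_j$ and correct the initial and boundary data by a smooth partition. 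The maximum principle argument of Lemma~\ref{lem:maxprinc2} can be run on $D_j$, using $\Phi$ and $\delta\Psi$ as barriers, with boundary data dominated by $M\Phi$ where $M=\sup_{L_a}|u_a|/\Phi$. This yields $|u_j|\le M\Phi$ on $D_j$, uniformly in $j$.

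Because $\Phi$ is locally bounded away from $(0,0)$, interior parabolic Schauder estimates on compact subsets of the flow away from the singular point give uniform $C^{2,\alpha}$ bounds. A diagonal subsequence then converges locally smoothly to a solution $u$ on $\{t\ge a\}\setminus\{(0,0)\}$ with $|u|\le M\Phi$, so $u$ is admissible. Its initial value is $u_a$, since convergence is smooth up to $t=a$, where the flow is smooth and $a\neq 0$.

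The main obstacle is making the boundary data on $\partial D_j$ compatible and keeping it controlled by $M\Phi$, especially on the portion of $\partial D_j$ after the singularity, where no a priori solution exists yet. Choosing boundary values of the form $\pm M\Phi$ restricted to suitable barriers, or simply $0$ with the comparison $|u_j|\le M\Phi$ obtained from the supersolution property of $\Phi$, resolves this. It also works with Dirichlet data that are merely continuous, since we only need uniform interior estimates in the limit.
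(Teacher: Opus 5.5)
Your uniqueness and estimate step is exactly the paper's: Lemma~\ref{lem:maxprinc2} applied to $\pm u$. Your existence argument, however, takes a genuinely different route.

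\emph{The paper's route.} The paper never solves a linear boundary value problem. It flows the perturbed initial surface given by the graph of $\epsilon u_a$. It then uses simple barriers before and after the singular time, and the mean convex barriers of Proposition~\ref{prop:barrier} across it, to write the perturbed flow as a graph $F_\epsilon$ over $L_t$ with $|F_\epsilon|\le C\epsilon(1+|H|)$ on compact sets away from $(0,0)$. The solution is $u=\lim\epsilon^{-1}F_\epsilon$. The paper quotes this construction right afterwards to get Gateaux differentiability of $\mathcal{N}$ with derivative $\mathcal{P}^f_1$, which is what Proposition~\ref{prop:10} needs.

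\emph{Your route.} Your exhaustion is purely linear and correct in substance. You excise a neighbourhood of the singular point, solve with zero lateral data, obtain $|u_j|\le M\Phi$ from the supersolution property of $\Phi$, and pass to a limit by interior estimates. On the truncated domain $|A|^2$ is bounded, so the ordinary maximum principle suffices and $\Psi$ is not needed. Uniqueness also makes the choice of lateral data irrelevant, so the options involving $u$ on $\partial D_j\cap\{t<0\}$ or $M\Phi$ times a cutoff can be dropped.

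\emph{Comparison.} Your argument is more elementary, avoids the nonlinear barrier machinery, and gives the constant $M$ directly. It does not by itself yield the differentiability of $\mathcal{N}$. For that you would still need the paper's comparison of perturbed flows, after which uniqueness identifies $\lim\epsilon^{-1}F_\epsilon$ with your solution.

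\emph{A technical correction.} The region $\{|x|+|t|^{1/2}\ge r_j\}$ is not a smooth flow domain. Its boundary has a cusp along $|x|=r_j$, $t=0$, and characteristic points where the excised set appears and disappears. You can either appeal to Perron's method on non-cylindrical domains, or, more simply, excise a cylinder $B_r\times[-\eta,\eta]$ with $\eta\ll r^2$:
\begin{enumerate}
\item Choose $\eta\ll r^2$ so that $\partial B_r$ is transverse to $L_t$ for $|t|\le\eta$.
\item Solve a cylindrical Dirichlet problem on $L_t\setminus B_r$ for $t\in[-\eta,\eta]$, with a cutoff of $u(\cdot,-\eta)$ as initial data.
\item Restart at $t=\eta$ with the data extended by zero inside $B_r$.
\end{enumerate}
Each step preserves $|u_j|\le M\Phi$, since $0\le M\Phi$.
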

\begin{proof}
  The uniqueness of the solution, and the claimed estimate for $|u| /
  \Phi$ follows from applying Lemma~\ref{lem:maxprinc2} to $u$ and
  $-u$. To prove the existence of $u$ we argue by perturbing the
  flows, as follows.

  To simplify the notation we will assume that $a=-1$. For a given
  $u_{-1}$ on $L_{-1}$, and small $\epsilon > 0$ consider the flow
  $L^{\epsilon u_{-1}}_t$ whose initial condition is given by the
  graph of $\epsilon u_{-1}$ over $L_{-1}$. 
  For sufficiently small
  $\epsilon$ we can use the barriers from
  Proposition~\ref{prop:barrier} to control the flow $L^{\epsilon
    u_{-1}}_t$. We can first use a simple barrier given by the graph
  of $\epsilon \Lambda e^{\Lambda(t+1)}$ over $L_t$, for suitable large $\Lambda$, until $t=-\xi$, then use
  Proposition~\ref{prop:barrier} to bound the flow slightly past the
  singular time $t=0$, and then use a simpler barrier again until
  $t=1$. The conclusion is that we have a constant $C$, depending on
  $u_{-1}$ but not on $\epsilon$, such that the following holds: for
  any compact set $K$ away from $(0,0)$, once $\epsilon$ is
  sufficiently small (depending on $K$), we can write $L^{\epsilon
    u_{-1}}_t$ as the graph of a function $F_\epsilon(x,t)$ over
  $L_t$, and $|F_\epsilon(x,t)| \leq C\epsilon (1 + |H(x,t)|)$. It
  follows that as $\epsilon \to 0$, along a sequence the normalized functions
  $\epsilon^{-1} F_\epsilon$ converge locally smoothly, away from
  $(0,0)$, to a solution $u(x,t)$ of the linearized equation, with
  $u(x,-1) = u_{-1}(x)$. Moreover $u$ satisfies $|u(x,t)| \leq C (1 +
  |H|)$, so $u$ is admissible, as required. 
\end{proof}

Note that the same arguments apply to small perturbations of $L_t$,
away from their non-degenerate singularities. In particular for sufficiently small
$f \in \mathcal{B}$, we can consider the linearization of the mean curvature flow equation
along the flow $L^f_t$. For any function $u$ on $L^f_{-1}$ we 
can construct a unique admissible solution $\mathcal{P}^f_t(u)$ of the equation
\[ (\partial_t - \Delta - |A|^2) \mathcal{P}^f_t(u) = 0 \]
along the flow $L^f_t$, away from its singular point $(x_0,t_0)$, with
initial condition $u$.

For  $\mathcal{B}\subset
C^{2,\alpha}(L_{-1})$ sufficiently small, let us define the nonlinear map
\[ \mathcal{N} : \mathcal{B} \to L^\infty(L_1), \]
so that $L^f_{1}$ is the graph of  $\mathcal{N}(f)$ over $L_1$. The
construction of $u(x,t)$ in Lemma~\ref{lem:solveinhom} proves the
following. 
\begin{prop}
  The map $\mathcal{N}$ is (Gateaux) differentiable, and its derivative
  $D\mathcal{N}_f$ at $f\in \mathcal{B}$ is given by
  \[ \mathcal{P}^f_1 : C^{2,\alpha}(L_{-1}) \to L^\infty(L_1). \]
  Note that for sufficiently small $f$ the hypersurface $L^f_{-1}$
  (resp. $L^f_1$) is graphical over $L_{-1}$ (resp. $L_1$), and using
  this we can view functions on $L^f_{-1}$ (resp. $L^f_1$) as
  functions on $L_{-1}$ (resp. $L_1$). 
\end{prop}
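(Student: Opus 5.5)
We prove differentiability by comparing the perturbed flows to the flows along the ray $f+\epsilon u$, using the construction in the proof of Lemma~\ref{lem:solveinhom} with base flow $L^f_t$ in place of $L_t$.

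\textbf{Setup and convergence of difference quotients.} Fix $f\in\mathcal{B}$ and $u\in C^{2,\alpha}(L_{-1})$. By Proposition~\ref{prop:curvest20}, for $\epsilon$ small the flows $L^{f+\epsilon u}_t$ have a single non-degenerate singularity near that of $L^f_t$. We want to show that
\[ \epsilon^{-1}\bigl(\mathcal{N}(f+\epsilon u)-\mathcal{N}(f)\bigr)\to \mathcal{P}^f_1(u) \]
in $L^\infty(L_1)$.

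To do this, we view $L^{f+\epsilon u}_{-1}$ as the graph of a function $\epsilon u_\epsilon$ over $L^f_{-1}$, where $u_\epsilon\to u$ in $C^{2,\alpha}$. We then run the barrier argument of Lemma~\ref{lem:solveinhom} with $L^f_t$ as the base flow:
\begin{itemize}
\item the simple exponential barrier up to time $-\xi$;
\item Proposition~\ref{prop:barrier} across the singular time, applied with $S'=L^f$ and $S''=L^{f+\epsilon u}$;
\item a simple barrier again up to $t=1$.
\end{itemize}
This gives $|F_\epsilon|\le C\epsilon(1+|H|)$ away from the singularity, where $L^{f+\epsilon u}_t$ is the graph of $F_\epsilon$ over $L^f_t$. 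The constant $C$ must be uniform in $\epsilon$, and uniform for $f$ in a small ball.

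Next, $\epsilon^{-1}F_\epsilon$ satisfies a linear parabolic equation whose coefficients converge to those of $\partial_t-\Delta-|A|^2$. Hence along any sequence it subconverges locally smoothly, away from the singular point, to an admissible solution with initial data $u$. By the uniqueness in Lemma~\ref{lem:solveinhom} (applied along $L^f_t$), the limit is $\mathcal{P}^f_t(u)$, so the full family converges.

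\textbf{Upgrading to uniform convergence on $L_1$.} Since $L^f_1$ is smooth and $t=1$ lies at positive distance from the singular time, local smooth convergence on compact subsets of $\{t\in[1/2,1]\}$ already yields uniform convergence on all of $L_1$. Here we use that $L_1$ is compact and that the singular point is at a time near $0$, far from $t=1$. Finally, we identify $L^f_1$ with $L_1$ via graphs. The composition of graph functions is smooth in the $C^0$ sense for small graphs, so the difference quotient of $\mathcal{N}$ at $f$ equals $\epsilon^{-1}F_\epsilon(\cdot,1)$ up to an $o(1)$ error. This shows $D\mathcal{N}_f(u)=\mathcal{P}^f_1(u)$. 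Linearity and boundedness of $u\mapsto \mathcal{P}^f_1(u)$ follow from uniqueness and the estimate $\sup|u|/\Phi\le \sup|u_a|/\Phi$ in Lemma~\ref{lem:solveinhom}.

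\textbf{Main obstacle.} The delicate step is applying Proposition~\ref{prop:barrier} with the base flow $L^f_t$ rather than the original flow, while keeping the constants $\xi,\sigma$ and the sets $U_1,U_2$ uniform. We handle this by applying the proposition with $S_t=L_t$ and treating both $L^f$ and $L^{f+\epsilon u}$ as flows $S',S''$ sufficiently close to $S$. The hypothesis that $S''$ lies between time translates of $S'$ at a time slightly before the singularity comes from the $C\epsilon$ graphical bound, combined with the mean convexity of $L^f$ on $U_2$ (via Bamler--Lai). Near the singular point itself we have no control of $F_\epsilon$, but none is needed, since only time $t=1$ matters.
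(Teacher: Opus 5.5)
Your proposal is correct and follows the paper's approach. The paper's proof is simply the remark that the construction in Lemma~\ref{lem:solveinhom} gives the derivative: barrier bounds $|F_\epsilon|\le C\epsilon(1+|H|)$, locally smooth subconvergence of $\epsilon^{-1}F_\epsilon$ away from the singularity, and identification of the limit by uniqueness of admissible solutions. You carry this out along the base flow $L^f_t$ rather than $L_t$, and the extra care you add (constants uniform for $f$ near $0$, and the upgrade to uniform convergence on the smooth compact slice $L_1$) is what the paper leaves implicit.
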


We next want to show that the map $\mathcal{P}_1$ has dense image in
$C^0(L_1)$. First we recall the setting where no singularities occur
along the flow (see \cite[Theorem 5.2]{HV92} or \cite[Lemma
5.3]{SX22}).
\begin{lemma}\label{lem:smoothdenserange}
  For any $t_1 < 0$ the map $\mathcal{P}_{t_1}$ has dense range in
  $C^0(L_{t_1})$. 
\end{lemma}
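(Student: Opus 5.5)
The plan is the standard duality argument, reducing density of the range to backwards uniqueness for the conjugate equation on the smooth part of the flow. For $t_1<0$ the flow $L_t$ is smooth on $[-1,t_1]$, so $\mathcal{P}_{t_1}$ is the solution operator of the linear parabolic equation $(\partial_t-\Delta-|A|^2)u=0$ on a smoothly varying family of compact hypersurfaces. By Hahn--Banach, if the range is not dense in $C^0(L_{t_1})$, there is a nonzero finite signed Radon measure $\mu$ on $L_{t_1}$ with
\[ \int_{L_{t_1}} \mathcal{P}_{t_1}(u)\,d\mu = 0 \quad\text{for all } u\in C^{2,\alpha}(L_{-1}). \]
We will show that this forces $\mu=0$.

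The first step is to solve the adjoint problem. Pull everything back to a fixed reference manifold via the normal parametrization of the smooth flow. Then solve the conjugate (backward) equation
\[ (\partial_t+\Delta+|A|^2-H^2)w=0 \]
for $t<t_1$, with final data $w(\cdot,t_1)=\mu$. The term $-H^2$ appears because $\partial_t\, dV=-H^2\,dV$. Since this is a backward heat equation run backwards in time, it is well posed. The solution $w$ is smooth for $t<t_1$ and converges to $\mu$ weakly as $t\to t_1$.

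The second step is the Green identity. Differentiating $\int_{L_t}u\,w\,dV$ in $t$ gives zero, by the two equations and integration by parts on closed hypersurfaces. Letting $t\to t_1$ via weak convergence, this yields
\[ \int_{L_{-1}} u\,w(\cdot,-1)\,dV=\int_{L_{t_1}}\mathcal{P}_{t_1}(u)\,d\mu=0 \]
for all smooth $u$. Hence $w(\cdot,-1)\equiv0$. Running the same identity from any intermediate time $s\in(-1,t_1)$ shows more generally that $w(\cdot,s)$ is determined by $\mu$.

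The third step, which is the main point, is to conclude $w\equiv0$ from $w(\cdot,-1)=0$. Here we need backwards uniqueness in the reversed sense: $w$ solves a forward-parabolic equation in the variable $s=-t$, and vanishes at a later $s$-time, so it must vanish identically. This is the classical backward uniqueness for parabolic equations with smooth, bounded coefficients on compact manifolds, which can be obtained via log-convexity (Agmon--Nirenberg) or a Carleman estimate (as in \cite[Theorem 5.2]{HV92} and \cite[Lemma 5.3]{SX22}). Once $w\equiv0$ on $[-1,t_1)$, the weak limit gives $\mu=0$, a contradiction. The only delicate points are the weak continuity of $w$ at $t_1$ with measure data and the justification of the log-convexity on time-dependent metrics; both are routine because the flow is smooth on $[-1,t_1]$.
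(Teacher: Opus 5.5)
Your proposal is correct and follows essentially the same route as the paper. The paper gives no separate proof and cites \cite[Theorem 5.2]{HV92} and \cite[Lemma 5.3]{SX22}, whose argument is exactly this reduction: Hahn--Banach, then the Green identity \eqref{eq:ddtinnerprod}, then classical backward uniqueness for the conjugate equation on the smooth part of the flow. The paper also pushes this same mechanism past a non-degenerate singularity in Proposition~\ref{prop:11}, where Theorem~\ref{thm:backwardsunique} takes the place of the classical backward uniqueness you invoke.
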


Our goal is to extend this to the
case where the flow encounters a non-degenerate singularity. The proof
will also use the backwards uniqueness result shown in Section
\ref{sec:backwardsunique}, for the conjugate linearized equation
\[ \partial_t w= -\Delta w - |A|^2 w + H^2 w. \]
Note that if $u$ solves the equation $(\partial_t - \Delta - |A|^2)u =
0$ and $w$ solves the conjugate equation, then at smooth times $t$ we
have
\begin{equation} \label{eq:ddtinnerprod}
  \begin{aligned} \frac{d}{dt} \int_{L_t} uw\, d\mathcal{H}^n &=
    \int_{L_t}\left ( (\Delta u
    + |A|^2 u)w + u( - \Delta w - |A|^2 w + H^2 w) - H^2 uw\right)\,
  d\mathcal{H}^n \\
  &= 0. \end{aligned} \end{equation}

We will need the following barrier function for the conjugate linearized
equation.
\begin{lemma}\label{lem:conjbarrier}
  There are $B, r_0 > 0$ such that on the
  neighborhood $B_{r_0}(0) \times (0,r_0)$ after the singularity the
  function
  \[ G(x, t) = \exp \left( B\left| \log \left( |x|^2 + t |\log
          t|^2\right)\right|^2\right) \]
  is a subsolution of the conjugate linear equation in the sense that
  \begin{equation}\label{eq:dtG40} \partial_t G \leq - \Delta G -
    (|A|^2 - H^2) G.
    \end{equation}
  For sufficiently large $\Lambda$ the function $\tilde{G} =
  e^{-\Lambda t}G$ can be extended to a global strictly positive
  subsolution, for $t > 0$. 
\end{lemma}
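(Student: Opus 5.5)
We write $G=e^{\phi}$ with $\phi = B\ell^2$ and $\ell = |\log q|$, where $q = |x|^2 + t|\log t|^2$. Since $q<1$ near the origin, $\ell = -\log q$. We have
\[ \partial_t G + \Delta G = G\left(\partial_t\phi + \Delta\phi + |\nabla\phi|^2\right), \]
so \eqref{eq:dtG40} is equivalent to
\[ \partial_t\phi + \Delta\phi + |\nabla\phi|^2 + |A|^2 - H^2 \leq 0. \]
Since $|A|^2\leq H^2$ is not available (mean convexity gives $|A|^2 \leq H^2$ only when $A\geq 0$), we instead use Proposition~\ref{prop:HK16}, which gives $|A|^2 - H^2 \leq C H^2$. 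The strategy is therefore to show that $\partial_t\phi$ is very negative and dominates every other term. The key quantities are as follows.

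First, the time derivative. Along the flow, $\partial_t|x|^2 = 2\langle x, \mathbf{H}\rangle = -2H\langle x,\nu\rangle$, with orientation chosen so that $\mathbf{H}=-H\nu$. Moreover $\partial_t(t|\log t|^2) = |\log t|^2 - 2|\log t| \geq \tfrac12|\log t|^2$. By Theorem~\ref{quo_after2} near the tip, and Lemma~\ref{lem:almostconical} together with Lemma~\ref{cylregioncurvdecay} away from it, we have $H|\langle x,\nu\rangle| \leq 3t H^2$ near the tip and $H|\langle x,\nu\rangle|\leq \epsilon$ in the conical region. Hence
\[ \partial_t q \geq \tfrac12|\log t|^2 - C\,tH^2 - \epsilon. \]
By Corollary~\ref{initialcurvatureest}, $tH^2 \leq C|\log t|$, so this is $\geq \tfrac14 |\log t|^2$. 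It follows that
\[ \partial_t\phi = -2B\ell\,\frac{\partial_t q}{q} \leq -\frac{B\ell|\log t|^2}{2q}. \]
Second, the spatial terms. We have $|\nabla q|\leq 2|x|$ and $|\Delta q| \leq 2n + 2|x|H$. Thus
\[ |\nabla\phi| \leq \frac{4B\ell|x|}{q}, \qquad |\Delta\phi| \leq C\left(\frac{B\ell(1+|x|H)}{q} + \frac{B\ell |x|^2}{q^2}\right). \]
Using Proposition~\ref{prop:curvxest} ($H\leq C|\log|x||/|x|$) together with $|x|^2\leq q$, we get $|x|H\lesssim \ell$ and $|\nabla\phi|^2 \lesssim B^2\ell^2/q$.

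The main obstacle is comparing all of these with the leading term $B\ell|\log t|^2/q$. The curvature term $CH^2$ must be handled separately in two regions. Where $|x|^2\geq t|\log t|^2$, we have $H^2\lesssim \ell^2/|x|^2 \lesssim \ell^2/q$. Where $|x|^2 < t|\log t|^2$, we have $q\sim t|\log t|^2$ and $H^2\lesssim|\log t|/t \lesssim |\log t|^3/q$, while $\ell\sim|\log t|$. Unfortunately, the gradient term $B^2\ell^2/q$ grows quadratically in $B$, while the good term $B\ell|\log t|^2/q$ grows only linearly in $B$. The saving is that $\ell \leq C|\log t|$ in the relevant region, since $q\geq t|\log t|^2$. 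We can then choose $B$ large, so that $B\ell|\log t|^2$ beats $C\ell^2$ and $C|\log t|^3$, and afterwards take $r_0$ small (so $|\log t|\gg B$), so that $B\ell|\log t|^2 \gg B^2\ell^2$, which holds because $|\log t|^2 \gg B\ell$ only if $\ell \ll |\log t|^2/B$. This last condition is true once $|\log t| \gg B$.

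The case $|x|\gg\sqrt{t}|\log t|$ with $\ell\approx 2|\log|x||$ much smaller than $|\log t|$ is even better. There the estimate $\partial_t q\geq \tfrac14|\log t|^2$ is compared against $\ell^2\leq|\log t|^2$ only, and for this comparison we need $\partial_t q$ measured against $q/|x|^2$ rather than against $q$. I would double check that in this regime the term $\Delta\phi\sim B\ell|x|^2/q^2\sim B\ell/q$ is still dominated by $B\ell|\log t|^2/q$, which it is.

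For the global extension, we pick a cutoff region. Outside $B_{r_0/2}\times(0,r_0)$, and for $t\geq r_0$, the flow is smooth with bounded curvature away from the singularity. There we glue $G$ to a positive function that is constant in space, for instance $\max$-type smoothing of $G$ with a large constant $M$ chosen to exceed $G$ on the overlap. This is a positive subsolution up to a bounded error $C G$. Multiplying by $e^{-\Lambda t}$ adds $-\Lambda\tilde G$ to $\partial_t\tilde G$, which absorbs the bounded errors $|A|^2-H^2$ and the cutoff terms once $\Lambda$ is large. This gives a global strictly positive subsolution for $t>0$.
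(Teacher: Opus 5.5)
Your proposal is correct and follows essentially the same route as the paper: the same computation for $G=e^{B(\log q)^2}$ with the $|\log t|^2$ term from $\partial_t(t|\log t|^2)$ dominating, the same two-region bound on the curvature term ($|x|^2\ge t|\log t|^2$ versus $|x|^2<t|\log t|^2$), the same use of $|\log q|\le|\log t|$ to beat the $B^2$ gradient term by choosing $B$ first and then $r_0$ small, and the same cutoff-plus-$e^{-\Lambda t}$ global extension. Two minor points. First, the paper bounds $|\langle x,\mathbf{H}\rangle|\le C|\log t|$ directly from Proposition~\ref{prop:curvxest} and the tip position, so Theorem~\ref{quo_after2} and Lemma~\ref{lem:almostconical} are not needed here. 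Second, your inference of $|x|H\lesssim \ell$ from $|x|^2\le q$ runs the wrong way, since that inequality gives $\ell\le 2|\log|x||$; the correct justification uses $|x|^2\gtrsim t|\log t|$ from Proposition~\ref{positionest}, and in any case only $|x|H\lesssim|\log t|$ is needed.
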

\begin{proof}
  We use a direct calculation to prove \eqref{eq:dtG40}. To simplify the notation below, let us write
  \[ q=|x|^2 + t |\log t|^2. \]
  If $r_0$ is sufficiently small, then by Proposition \ref{positionest}, we have
    \[\label{ord_mag_bar} |x|^2 \geq C^{-1} t|\log t|, \qquad 1 \leq |\log q| \leq |\log t| \]
on our neighborhood. By Proposition \ref{prop:curvxest} and the Haslhofer-Kleiner curvature estimates, i.e. Proposition~\ref{prop:HK16}, we have
  \[\label{A_bdxx} |A| \leq C \frac{|\log |x| |}{|x|}, \]
  
  As a consequence, we claim that
  \[ \label{eq:A51} |A|^2 \leq C \frac{|\log q| |\log t|^2}{q}. \]
  Indeed, in the region where $|x| > t^{1/2}|\log t|$, $q$ is comparable to $|x|^2$, so 
  \[ \frac{|\log q| |\log t|^2}{q} \geq \frac{|\log q|^3}{q} \geq C^{-1}  \frac{|\log |x||^3}{|x|^2} \geq |A|^2.\]
  Similarly, on the region where $|x| \leq t^{1/2}|\log t|$, \eqref{A_bdxx} and \eqref{ord_mag_bar} gives
  \[ |A|^2 \leq C \frac{|\log t|^2}{|x|^2} \leq C \frac{|\log t|}{t}, \]
  using $|x|^2 \geq C^{-1}t|\log t|$. in that region
  $q$ is comparable to $t |\log t|^2$, so 
  \[ \frac{|\log q| |\log t|^2}{q} \geq \frac{|\log q|^3}{q} \geq C^{-1} \frac{|\log t|}{t}, \]
  and so \eqref{eq:A51} follows. From \eqref{A_bdxx} and \eqref{ord_mag_bar} we also have
  \[ \label{eq:A52} |\langle x, H\rangle| \leq C|\log|x|| \leq C|\log t|.\]

Along the mean curvature flow we have $\partial_t |x|^2 = 2\langle x, H\rangle$ and $\Delta |x|^2 = 2n + 2\langle x, H\rangle$. Therefore
\[ (\partial_t + \Delta) q = |\log t|^2 - 2|\log t| + 2n + 4\langle x, H\rangle \geq \frac{1}{2}|\log t|^2, \] 
if $r_0$ is small enough, 
and $|\nabla q|^2 = 4|x^T|^2$. By definition we have $G = \exp(B(\log q)^2)$, so 
\[ \frac{ (\partial_t + \Delta + |A|^2 - H^2)G}{G} &= \partial_t B(\log q)^2 + \Delta B(\log q)^2 +  B^2|\nabla (\log q)^2|^2 + (|A|^2 - H^2) \\
&= 2B(\log q)q^{-1}(\partial_t + \Delta)q + 2B|\nabla\log q|^2 + 2B|\log q| |\nabla \log q|^2\\
&\quad +4B^2 |\log q|^2 |\nabla\log q|^2 + |A|^2 - H^2 \\
&\leq -B|\log q| q^{-1}|\log t|^2 +32 B^2  |\log q|^2 q^{-2} |x^T|^2 + |A|^2.  \]
We use $|x^T|^2\leq q$, and \eqref{eq:A51} to get 
\[  \frac{ (\partial_t + \Delta + |A|^2 - H^2)G}{G} &\leq 
-B|\log q| q^{-1}|\log t|^2 + 32B^2 q^{-1} |\log q|^2 + C|\log q| q^{-1} |\log t|^2 \\
&\leq 0, \]
if $B$ is large enough, and $r_0$ is sufficiently small.

  Use a cutoff function to
  extend $G$ smoothly to all of $L_t$ for $t > 0$ so that $G$
  is strictly positive. Let $\Lambda > 0$ and define $\tilde{G} =
  e^{-\Lambda t}G$. Then we have
  \[\begin{aligned}
      (\partial_t + \Delta + |A|^2 - H^2)\tilde{G} &= e^{-\Lambda t}
      \left( (\partial_t + \Delta + |A|^2 -H^2)G - \Lambda G\right) \\
      &\leq 0, \end{aligned} \]
  if $\Lambda$ is sufficiently large. 
\end{proof}

We are now ready to prove our main claim about the density of the
range of $\mathcal{P}_1$. 
\begin{prop}\label{prop:11}
  Given $h$ on $L_1$ and $\kappa > 0$ we can find some $f \in
  C^{2,\alpha}(L_{-1})$ such that
  \[ |\mathcal{P}_1(f) - h| < \kappa. \]
\end{prop}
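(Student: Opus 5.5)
The plan is the standard duality argument: a Hahn--Banach reduction to an annihilating functional, followed by backwards uniqueness for the conjugate linearized equation. Suppose the range of $\mathcal{P}_1$, viewed as a subspace of $C^0(L_1)$, is not dense. Then there is a nonzero signed Radon measure $\mu$ on $L_1$ with $\int_{L_1}\mathcal{P}_1(f)\,d\mu = 0$ for all $f\in C^{2,\alpha}(L_{-1})$.

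\emph{Step 1: the dual solution after the singularity.} Since $L_t$ is smooth on $(0,1]$, solve the conjugate linearized equation $(\partial_t+\Delta+|A|^2-H^2)w=0$ backwards from $t=1$ with terminal data $\mu$. This is a forward parabolic problem in the reversed time $1-t$, so $w$ is smooth for $t<1$. To avoid dealing with measure data, I would first smooth by running the equation a short time: the pairing identity \eqref{eq:ddtinnerprod} on $[1-\eta,1]$ shows it is equivalent to annihilating the range of $\mathcal{P}_{1-\eta}$ against a smooth $w(\cdot,1-\eta)$. Identity \eqref{eq:ddtinnerprod} then gives $\int_{L_t}\mathcal{P}_t(f)\,w\,d\mathcal{H}^n=0$ for all $t\in(0,1]$.

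\emph{Step 2: passing to $t\to 0^+$.} I need control of $w$ near $(0,0)$ from above. Using the subsolution $\tilde G$ of Lemma~\ref{lem:conjbarrier} as a comparison function (maximum principle backwards in time on $(0,1]$), I expect $|w|\le C\tilde G$. The bound $G=\exp(B|\log q|^2)$ is only quasi-polynomial, while admissible solutions satisfy $|\mathcal{P}_t(f)|\le C(1+|H|)$ and $H\lesssim \sqrt{|\log t|/t}$ by Corollary~\ref{initialcurvatureest}. So I don't directly get integrability uniform in $t$.

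Instead I would choose $f$ cleverly. By Lemma~\ref{lem:smoothdenserange} applied at times $t_1<0$, together with continuity across the singular time, I want to show that for each fixed $t\in(0,1)$ and any smooth $\phi$ compactly supported away from the singular point on $L_t$, $\int_{L_t}\phi w=0$. Equivalently, $\mathcal{P}_t$ has range dense in $C^0_c(L_t\setminus\{\text{sing}\})$ locally.

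A cleaner alternative, which I would try first, is the following. Consider the limit $w(\cdot,0^+)$ locally smoothly away from the singular set. For $t<0$ define $w$ by solving backwards; on smooth times the pairing is constant, and Lemma~\ref{lem:smoothdenserange} forces $w(\cdot,t)\equiv 0$ for $t<0$. The task is then to transfer this vanishing to $w(\cdot,0^+)$. I would glue the pairing across $t=0$: admissibility of $\mathcal{P}_t(f)$, the bound $|w|\le C\tilde G$, and the fact that the singular region has small measure (area ratio bounds with volume of the tip region $\sim (t|\log t|)^{n/2}$) should give $\int_{L_{0^+}}u\,w=\lim_{t\downarrow0}\int_{L_t}u\,w$. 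Then, by varying $f$ supported away from the singularity, $w\to 0$ locally smoothly away from the singular set as $t\to0^+$.

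\emph{Step 3: backwards uniqueness.} Theorem~\ref{thm:linearbackwarduniqueness} then gives $w\equiv 0$ on $(0,1]$, contradicting $\mu\neq 0$. Approximation in sup norm by $\mathcal{P}_1(f)$ for some smooth $f$ follows, which is the claim.

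The main obstacle is Step 2: justifying the limit of the pairing as $t\to0^+$ against the singular growth of both $\mathcal{P}_t(f)$ and $w$ near the tip. This also requires showing that vanishing of the pairing for all $f$ forces $w(\cdot,0^+)=0$ away from the singular set, which uses density of $\mathcal{P}_0$ in the smooth part, obtained via Lemma~\ref{lem:smoothdenserange} and continuity.
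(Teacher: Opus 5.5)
Your framework (annihilating measure $\mu$, conjugate solution $w$ on $(0,1]$ with $|w|\le C\tilde G$, contradiction via Theorem~\ref{thm:linearbackwarduniqueness}) is the paper's. However, Step~2 has a genuine gap, exactly at the point you flag.

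For a fixed $f$, the limit $\lim_{t\downarrow 0}\int_{L_t}\mathcal{P}_t(f)\,w\,d\mathcal{H}^n=\int_{L_0}\mathcal{P}_0(f)\,w(\cdot,0)\,d\mathcal{H}^n$ cannot be justified from the bounds available. Near the singularity you only know $|\mathcal{P}_t(f)|\le C\Phi$ and $|w|\le C\tilde G$. Here $\tilde G=e^{-\Lambda t}G$ with $G=\exp(B|\log q|^2)$ grows faster than any power of $|x|^{-1}$. As $t\to 0$ the bound degenerates to $\exp(4B(\log|x|)^2)$, which is not even integrable near the origin on $L_0$. By contrast, the area of $L_t\cap B_\rho$ is only $O(\rho^n)$. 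So the small measure of the tip region gives neither domination nor uniform integrability.

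The same problem sinks your variant of extending $w$ to $t<0$. The datum $w(\cdot,0)$ is not known to lie in any class from which the conjugate equation can be solved backwards. In any case, showing that the pairing vanishes for $t<0$ would need the same gluing across $t=0$. Likewise, density of $\mathcal{P}_t$ on $L_t$ for $t\in(0,1)$ is essentially the statement you are trying to prove, so it cannot be used as an input.

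The missing idea is to let the test datum depend on $\epsilon$, so that its solution is \emph{exponentially} small near the singular point; this beats the quasi-polynomial growth of $\tilde G$. Your remark about choosing $f$ cleverly points this way, but it has to be made quantitative. The paper's argument runs as follows.

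\begin{itemize}
\item Since $\mu\neq 0$, Theorem~\ref{thm:linearbackwarduniqueness} forces the local smooth limit $w(\cdot,0)$ to be nonzero away from the origin.
\item Choose $\xi$ on $L_0$ vanishing on $B_{4r_0}$ with $\int_{L_0}\xi w\neq 0$, and transplant it to $\xi_{-\epsilon}$ on $L_{-\epsilon}$.
\item Apply Lemma~\ref{lem:smoothdenserange} on the smooth interval $[-1,-\epsilon]$ to get $u_\epsilon$ with $|\mathcal{P}_{-\epsilon}(u_\epsilon)-\xi_{-\epsilon}|<e^{-1/\epsilon}$.
\item Let $U_\epsilon$ be the admissible solution with data $\xi_{-\epsilon}$ at time $-\epsilon$. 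Heat kernel bounds on the annulus $B_{3r_0}\setminus B_{r_0}$, plus Lemma~\ref{lem:maxprinc2} inside $B_{2r_0}$, give $|U_\epsilon(\cdot,\epsilon)|\le C\Phi e^{-c/\epsilon}$ on $L_\epsilon\cap B_{2r_0}$.
\item Meanwhile $\sup_{L_\epsilon}\Phi\tilde G\le \exp(C|\log\epsilon|^N)$. Hence $\int_{L_\epsilon}U_\epsilon w\to\int_{L_0}\xi w\neq 0$.
\item The error $|\mathcal{P}_\epsilon(u_\epsilon)-U_\epsilon|\le Ce^{-1/\epsilon}\Phi$ is negligible by the same comparison.
\item Conservation of the pairing on $(0,1]$ then gives $\int_{L_1}\mathcal{P}_1(u_\epsilon)\,d\mu\neq 0$, a contradiction.
\end{itemize}

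This route never takes $t\to 0$ with a fixed $f$, and never needs $w$ for $t\le 0$.
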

\begin{proof}
  Arguing by contradiction, let us suppose that the image of
  $\mathcal{P}_1$ is not dense in $C^0$. Then by duality there is
  a Radon measure $d\zeta$ on $L_1$ such that $\int_{L_1}
  \mathcal{P}_1(f) d\zeta = 0$ for all $f\in C^{2,\alpha}(L_{-1})$. 

  Consider the solution $w(x,t)$ of the conjugate
  linearized equation
  \[ \partial_t w  = -\Delta w - |A|^2 w + H^2 w \]
  for $t\in (0,1]$, with initial condition given by $w(x,1) =
  d\zeta(x)$ (more precisely $w(\cdot, t)\to d\zeta$ in the sense of
  measures as $t\to 1$). Note that for $t \in (0,1]$ the flow $L_t$ is
  smooth, and so by standard parabolic theory the function $w(x,t)$ is
  smooth for $t\in (0,1)$. Using the barrier $\tilde{G}$ constructed
  in Lemma~\ref{lem:conjbarrier}, we have an estimate
  \[ \label{eq:wbound3} |w(x,t)| \leq C \tilde{G}(x,t), \]
  for $t\in (0, \frac{1}{2}]$, for a suitable constant $C$. Since
  $\tilde{G}$ only blows up at the origin, using interior estimates we
  can extract a smooth limit $w(x,0)$ for $x\not=0$. 
  Using Theorem~\ref{thm:backwardsunique},
  we have that $w(x,0)$ is not identically zero. Note that we do not
  need to define $w(x,t)$ for $t < 0$. 

  Since $w(x,0)$ is not identically zero, we can find a smooth
  function $\xi$ on $L_0$, supported away from the origin, such that
  $\int_{L_0} \xi w\, d\mathcal{H}^n \not=0$. Define $r_0 > 0$ such
  that $\xi = 0$ on $L_0 \cap B_{4r_0}(0)$, and let $\epsilon > 0$. If $\epsilon$ is
  sufficiently small, then on the support of $\xi$ we can identify
  $L_0$ with $L_{-\epsilon}$ through graphicality, and so we can view
  $\xi$ as a function $\xi_{-\epsilon}$ on $L_{-\epsilon}$. In
  addition since the flow $L_t$ 
  is smooth on the time interval $[-1, -\epsilon]$, the solution map
 $\mathcal{P}_{-\epsilon}$  of the linearized operator has dense range in
 $C^0$ by  Lemma~\ref{lem:smoothdenserange}, so that we
 can find a function $u_\epsilon$ on $L_{-1}$ such that
 $|\mathcal{P}_{-\epsilon}(u_\epsilon) - \xi_{-\epsilon}| < e^{-1/\epsilon}$.

 We claim that if $\epsilon$ is sufficiently small,  then
 \[ \label{eq:51} \int_{L_\epsilon}
   \mathcal{P}_\epsilon(u_\epsilon)(x) w(x, \epsilon)\, d\mathcal{H}^n
   \not=0. \]
 Assuming this claim for the moment, note that by
 \eqref{eq:ddtinnerprod}, for $t\in (0,1)$ we have 
 \[ \frac{d}{dt} \int_{L_t} \mathcal{P}_t(u_\epsilon)(x) w(x, t)\,
   d\mathcal{H}^n = 0, \]
 and so as $t\to 1$, \eqref{eq:51} together with the weak convergence
 $w(\cdot, t) \to d\zeta$ implies that $\int_{L_1}
 \mathcal{P}_1(u_\epsilon) d\zeta \not=0$, contradicting the choice of
 $d\zeta$.

 It remains to show the claim \eqref{eq:51}. Let $U_\epsilon$ denote
 the admissible solution of the linearized operator for $t >
 -\epsilon$, with initial condition $U_\epsilon(x, -\epsilon) =
 \xi_{-\epsilon}$ on $L_{-\epsilon}$. The existence of this is given
 in Lemma~\ref{lem:solveinhom}. As $\xi$ is supported away from zero, $|\xi_{-\varepsilon}|/|\Phi|$ is uniformly bounded. Therefore, by the maximum principle of Lemma ~\ref{lem:maxprinc2}, there exist a uniform $C<\infty$ such that 
 \[\label{bd_point_phi_bd}
 U_{\varepsilon}(x,t) \leq C\Phi(x,t), \qquad t\in [-\varepsilon,1].
 \]
Consider the annulus 
$\mathcal{A} = B_{3r_0} \setminus B_{r_0}$ around $0$ and recall that  $\xi$ is supported
outside of $B_{3r_0}$. Since the coefficients of the linearized equation are bounded in $\mathcal{A}$, it follows from standard heat kernel estimates, using \eqref{bd_point_phi_bd} as boundary data, that
 \[\label{eq:Uepsbound3_bd} |U_\epsilon| (x, t) \leq C_1 \Phi(x,
   t) e^{-c_1/(t+\varepsilon)}, \] 
   for $x\in \partial B_{2r_0}$. Applying the maximum principle argument of Lemma \ref{lem:maxprinc2} in $B_{2r_0}$ (not globally) and using \eqref{eq:Uepsbound3_bd} for temporal boundary bound, we therefore get that
 \[\label{eq:Uepsbound3} |U_\epsilon| (x, \epsilon) \leq C_1 \Phi(x,
   \epsilon) e^{-c_1/(2\epsilon)}, \] 
   for $x\in B_{2r_0}$.

 Since outside of $B_{r_0}$ the flow is smooth, we have that $|U_\epsilon(x, \epsilon)
 -\xi_\epsilon| \to 0$ as $\epsilon\to 0$ on $L_\epsilon \setminus
 B_{r_0}$. At the same time, in the ball  $L_\epsilon \cap B_{2r_0}$ we have the bound
 in \eqref{eq:Uepsbound3} for $U_\epsilon$ and the bound
 \eqref{eq:wbound3} for $w$.  It follows that 
 \[ \left|\int_{L_\epsilon \cap B_{2r_0}} U_\epsilon(x, \epsilon) w(x,
     \epsilon)\, d\mathcal{H}^n\right| 
  &\leq C (\sup_{L_\epsilon} \Phi) (\sup_{L_\epsilon} \tilde{G})
  e^{-c_1/(2\epsilon)}. \]
 On $L_\epsilon$ we have $\tilde{G} \leq C \exp( C|\log \epsilon|^N)$,
 while $\Phi \leq C \epsilon^{-1/2} |\log \epsilon|^C$. It follows
 that
  \[ \left|\int_{L_\epsilon \cap B_{2r_0}} U_\epsilon(x, \epsilon) w(x, \epsilon)\, d\mathcal{H}^n\right|
    &\leq C e^{-c_1 / (2\epsilon)} \epsilon^{-1/2} |\log \epsilon|^C
    \exp (C |\log \epsilon|^N) \to 0,
    \]
 as $\epsilon\to 0$, since the dominant term is $e^{-c_1 / (2\epsilon)}$. 
 It follows from this, together with the choice of $\xi$, that
 \[ \label{eq:limintUeps}
   \lim_{\epsilon\to 0} \int_{L_\epsilon} U_\epsilon(x, \epsilon) w(x, \epsilon)\,
   d\mathcal{H}^n \not=0. \]
 To show the claim \eqref{eq:51}, it remains to compare $U_\epsilon(x,
 \epsilon)$ with $\mathcal{P}_\epsilon(u_\epsilon)$. Recall that by
 construction, at time $-\epsilon$ we have $U_\epsilon =
 \xi_{-\epsilon}$, and so $|\mathcal{P}_{-\epsilon}(u_\epsilon) -
 U_\epsilon| < e^{-1/\epsilon}$ on $L_{-\epsilon}$. Using the barrier
 $\Phi$ we have
 \[ |\mathcal{P}_\epsilon(u_\epsilon) - U_\epsilon(\cdot, \epsilon)|
   \leq C e^{-1/\epsilon} \Phi, \text{ on } L_\epsilon, \]
 with $C=(\inf_{t\in [-1.1]} \Phi)^{-1} <\infty$, and so
 \[ &\left| \int_{L_\epsilon} (U_\epsilon(x, \epsilon) w(x,\epsilon)-  \mathcal{P}_\epsilon(u_\epsilon)(x, \epsilon) w(x,\epsilon))\,
     d\mathcal{H}^n\right| \leq C e^{-1/\epsilon} \left|
     \int_{L_\epsilon} \Phi(x, \epsilon) \tilde{G}(x, \epsilon)\,
     d\mathcal{H}^n\right|\\ 
    &\qquad\qquad \leq C e^{-1/\epsilon} \epsilon^{-1/2} |\log \epsilon|^C e^{C|\log \epsilon|^2}. \]
 The right hand side converges to zero as $\epsilon \to 0$, so
 from \eqref{eq:limintUeps} we get
 \[ \lim_{\epsilon\to 0} \int_{L_\epsilon}
   \mathcal{P}_\epsilon(u_\epsilon) (x, \epsilon) w(x, \epsilon)\,
   d\mathcal{H}^n \not=0, \]
 and this shows the claim \eqref{eq:51} for sufficiently small
 $\epsilon$. 
\end{proof}

Let us suppose that $f$ is a function given by this proposition. We
will later choose $\kappa$ small. We have the following.
\begin{prop}\label{prop:Ndiff}
  Given $\epsilon > 0$, there exists an $a_0 > 0$ such that if $\Vert g\Vert_{C^{2,\alpha}} <
  a_0$ (also depending on $f$), then we have
  \[  |\mathcal{P}^g_1(f) - \mathcal{P}_1(f)| < \epsilon. \]
\end{prop}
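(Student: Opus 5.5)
Fix $\epsilon>0$. The argument is a compactness and continuity argument. Away from the singular region we use smooth dependence of the linearized solution on the background flow. Near the singularity we use the uniform barrier $\Phi$ and the maximum principle, Lemma~\ref{lem:maxprinc2}. The quantity $\mathcal{P}^g_1(f)$ is a function on $L^g_1$, and we view it on $L_1$ by graphicality. Since $L_1$ is smooth and compact, and $L^g_1$ is a small $C^2$-graph over it, this identification introduces only $o(1)$ errors as $a_0\to 0$. So it suffices to compare the two solutions of the linearized equation along the two flows.

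\textbf{Step 1: uniform barriers.} By Proposition~\ref{prop:curvest20}, for $\Vert g\Vert_{C^{2,\alpha}}$ small, $L^g_t$ has a single non-degenerate singularity at $X_g$, within $\delta$ of $(0,0)$. The barriers $\Phi^g$, $\Psi^g$ of Lemma~\ref{lem:barrier20} can then be built along $L^g_t$ with constants independent of $g$. This needs the ingredients of that lemma to hold uniformly: the Haslhofer--Kleiner estimates (Proposition~\ref{prop:HK16}), Lemma~\ref{lem:31}, and a uniform curvature bound outside a fixed neighborhood. I would obtain this by a contradiction and compactness argument, using that perturbed flows converge smoothly away from the singularity and that blowup limits near $X_g$ are still cylinders or bowls, as in \cite{SWX25}. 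Lemma~\ref{lem:solveinhom} then gives
\[ |\mathcal{P}^g_t(f)|\le C\Phi^g\le C'(1+|H_{L^g}|) \]
with $C'$ independent of $g$.

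\textbf{Step 2: a region where both solutions are small.} Let $U=B_\rho(0)\times(-\rho^2,\rho^2)$ with $\rho$ small. Outside $U$, the flows $L^g_t$ converge smoothly to $L_t$ as $g\to 0$. To control the linearized solutions near $U$ we need more than the crude bound $C(1+|H|)$, since $H$ is large there. The plan is to use the time slice $t=\rho^2$: it lies just past the singularity, and on it both solutions are bounded by $C\Phi$. The key claim is that the contribution from the bad region shrinks as $\rho\to 0$. The issue is that $\Phi$ itself is large near the tip at time $\rho^2$, so a pointwise statement on $L_{\rho^2}$ alone would not suffice.

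\textbf{Step 3: passing to time $1$.} Instead, argue by contradiction with a sequence $g_i\to 0$. The solutions $\mathcal{P}^{g_i}_t(f)$ are bounded by $C\Phi^{g_i}$, which is locally bounded away from $(0,0)$. Parabolic estimates then give a subsequence converging locally smoothly away from $(0,0)$ to a solution $v$ of the linearized equation along $L_t$, with $v(\cdot,-1)=f$ and $|v|\le C\Phi$. So $v$ is admissible, and the uniqueness part of Lemma~\ref{lem:solveinhom} gives $v=\mathcal{P}_t(f)$. Since $L_1$ is a compact slice at positive distance from the singular point, the convergence is smooth, hence uniform, on $L_1$. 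This contradicts $|\mathcal{P}^{g_i}_1(f)-\mathcal{P}_1(f)|\ge\epsilon$, which proves the proposition.

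\textbf{Main obstacle.} The hard step is the uniformity in Step 1, i.e.\ that the supersolution $\Psi^g$ with $\Psi^g/\Phi^g\to\infty$ at $X_g$ exists with uniform constants. This is what allows admissibility and uniqueness to pass to the limit. The inequality of Lemma~\ref{lem:31} and the estimate $|A|\le C|H|$ must hold on a neighborhood of fixed size for all the perturbed flows. I would prove this by contradiction: take a sequence of points violating the inequality on flows $L^{g_i}$, blow up, and use Theorem~\ref{thm:isolated} together with the classification of blowup limits near non-degenerate singularities to get a cylinder or bowl limit. That contradicts the explicit computation in the proof of Lemma~\ref{lem:31}.
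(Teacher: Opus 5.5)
Your Step~3 is exactly the paper's argument, so the proposal is correct and Step~2 can simply be dropped: the uniform bound $|\mathcal{P}^{g_i}_t(f)|\le C(1+|H_i|)$ comes from the barrier $\Phi$ with $\Lambda$ uniform in $i$, and it is followed by locally smooth subsequential convergence away from $(0,0)$, admissibility of the limit, and the uniqueness part of Lemma~\ref{lem:solveinhom}. One correction to your ``main obstacle'': no uniformity of $\Psi^g$ is needed, since $\Psi^g$ enters the maximum principle on each perturbed flow only qualitatively (it disappears as $\delta\to 0$), and uniqueness is invoked only on the limit flow $L_t$ with its own $\Psi$; the only uniformity actually required is the constant in $\Phi^g\le C(1+|H|)$, i.e.\ uniform Haslhofer--Kleiner and curvature bounds.
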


\begin{proof}
  It is enough to show that if $g_i \to 0$ in $C^{2,\alpha}$, then
  $\mathcal{P}^{g_i}_1(f) \to \mathcal{P}^0_1(f)$ in $L^\infty$. Let
  us write $u^i(x, t) = \mathcal{P}^{g_i}_t(f)$, i.e. this is the
  solution of the linearized equation along the flow $L^{g_i}_t$, with
  initial data $f$. Let us denote by $Z_i$ the singular point of the
  flow $L^{g_i}_t$. By Theorem~\ref{thm:isolated} we have $Z_i \to
  (0,0)$ as $i\to\infty$. 

  We can apply Lemma~\ref{lem:solveinhom} to the solutions $u^i$. Note
  here that the barrier function $\Phi$ can be defined along the
  perturbed flows $L^{g_i}_t$ as well, and crucially in the estimate
  $\Phi \leq C(1 + |H|)$, the constant $C$ is uniform in $i$. Indeed, it is
  determined by the constant $\Lambda$ in \eqref{eq:Phidef31}, which
  in turn only depends on curvature bounds for the flows outside a
  mean convex neighborhood of the singularity, and these can be
  bounded uniformly.

  It follows that we have uniform bounds $|u^i| \leq C (1 + |H_i|)$,
  where we emphasize that the $u^i$ are defined along the different
  flows $L^{g_i}_t$, and $H_i$ are the corresponding mean
  curvatures. On any compact set $K$ away from $(0,0)$, this gives
  uniform bounds $|u^i| < C_K$, for sufficiently large $i >
  i(K)$. Using standard interior estimates, this implies that up to
  choosing a subsequence, the $u^i$ converge, locally smoothly away
  from $(0,0)$, to a solution $u$ of the linearized equation along
  $L_t$, with initial condition given by $f$. The upper bounds $|u^i|
  \leq C(1 + |H_i|)$ can be passed to the limit, implying that $u$ is
  admissible, and so by Lemma~\ref{lem:solveinhom} it is the unique
  admissible solution. Therefore $u = \mathcal{P}_t(f)$. As a
  consequence, on $L_1$ we have $u^i \to u$ in $C^\infty$ (identifying
  $L^{g_i}_1$ with $L_1$ using graphicality), and so in particular
  $\mathcal{P}^{g_i}_1(f) \to \mathcal{P}^{0}_1(f)$ in $L^\infty$ as
  required. 
\end{proof}

Given this result, we can compare $\mathcal{N}(af)$ for different
values of $a$:
\[ \begin{aligned}
    \mathcal{N}(a_2 f) - \mathcal{N}(a_1 f) &= \int_{a_1}^{a_2}
    D\mathcal{N}_{sf}(f) \, ds \\
    &= (a_2 - a_1) D\mathcal{N}_0(f) + \int_{a_1}^{a_2}
    (D\mathcal{N}_{sf}(f) - D\mathcal{N}_0(f)) \, ds
  \end{aligned} \]

Recall that $D\mathcal{N}_g = \mathcal{P}^g_1$, and by the choice of
$f$ we have $|D\mathcal{N}_0(f) - h| < \kappa$.  Therefore, using also
Proposition~\ref{prop:11}, we get
\[ |\mathcal{N}(a_2 f) - \mathcal{N}(a_1 f) - (a_2-a_1) h| \leq
  |a_2-a_1| (\kappa + \epsilon). \]
This completes the proof of Proposition~\ref{prop:10}.

Although we have focused on a flow that admits only a single
non-degenerate cylindrical singularity, the same arguments work if
there are finitely many such singularities. Indeed, the key barrier
function constructions can be easily extended to the case of multiple
singularities. We state the general result that we will be using in
the next section.

\begin{prop}\label{prop:initperturb11}
  Suppose that the mean curvature flow $L_t$ is such that $L_0$ and
  $L_T$ are smooth, and $L_t$ admits only non-degenerate cylindrical
  singularities for $t\in (0,T)$. Let $\kappa > 0$ and $h_1, \ldots,
  h_m \in C^\infty(L_T)$.

  There exists an $\epsilon > 0$, and $f_1,
  \ldots, f_m \in C^{2,\alpha}(L_0)$ with the following property. Let
  $\mathbf{a}=(a_1,\ldots, a_m)\in \mathbb{R}^m$ satisfy $|\mathbf{a}|
  < \epsilon$, and 
  define $L^{\mathbf{a}}_t$ to be the flow with initial condition
  given by the graph of $\sum a_j f_j$ over $L_0$. Then for any
  $|\mathbf{a}_1|, |\mathbf{a}_2| < \epsilon$ we can write
  $L^{\mathbf{a}_2}_T$ as the graph of $F$ over $L^{\mathbf{a}_1}_T$,
  and
  \[ \left| F - \sum_{j=1}^m (a_{2,j} - a_{1,j}) h_j \right| \leq
    \kappa |\mathbf{a}_2 - \mathbf{a}_1|, \]
  where the graphicality is used to view $F$ as a function over
  $L_T$. 
\end{prop}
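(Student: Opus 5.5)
The plan is to reduce Proposition~\ref{prop:initperturb11} to the single-singularity argument behind Proposition~\ref{prop:10}, extended to finitely many singularities and to $m$ parameters. First, since $L_t$ has only non-degenerate cylindrical singularities on $(0,T)$, these are isolated by Theorem~\ref{thm:isolated}, and compactness gives finitely many, say $Z_1,\dots,Z_p$. By Theorem~\ref{thm:isolated}, for all sufficiently small initial perturbations the perturbed flow again has exactly $p$ non-degenerate singularities, each near some $Z_j$.

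Next I would build the global barriers. Near each $Z_j$, Proposition~\ref{prop:HK16} and Lemma~\ref{lem:31} hold locally. So the construction $\Phi=e^{\Lambda t}(1+H^2)^{1/2}$ and $\Psi=e^{\Lambda_1 t}g(H)$ of Lemma~\ref{lem:barrier20} goes through verbatim: $g(s)=s\log s$ on a small neighborhood of each $Z_j$, and outside these neighborhoods $|A|$ is bounded. The conjugate barrier $\tilde G$ of Lemma~\ref{lem:conjbarrier} is glued from the local functions $G_j$ near each $Z_j$, one singular time at a time. With these in hand, Lemmas~\ref{lem:maxprinc2} and~\ref{lem:solveinhom} and the differentiability of $\mathcal N$ carry over unchanged. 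The existence part of Lemma~\ref{lem:solveinhom} uses Proposition~\ref{prop:barrier} near each singular time, alternating with simple graph barriers on the smooth intervals in between. Proposition~\ref{prop:Ndiff} also carries over, because the constants in $\Phi\le C(1+|H|)$ are uniform over nearby flows.

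The heart of the argument is density of the range of $\mathcal P_T: C^{2,\alpha}(L_0)\to C^0(L_T)$. I would argue by induction on the singular times $0<t_1<\dots<t_q<T$, where several $Z_j$ may share one time. The claim is that $\mathcal P_{s}$ has dense range for every smooth time $s$. For $s<t_1$ this is Lemma~\ref{lem:smoothdenserange}.

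For the inductive step I would repeat the proof of Proposition~\ref{prop:11} across the time $t_i$. Suppose a measure $\zeta$ on $L_s$, with $s\in(t_i,t_{i+1})$, annihilates the range. Solve the conjugate equation backwards to $t_i$ and bound it by $\tilde G$, so that it has a smooth limit $w(\cdot,t_i)$ away from the singular points. Theorem~\ref{thm:linearbackwarduniqueness} (the backwards uniqueness result), applied at time $t_i$, gives $w(\cdot,t_i)\not\equiv0$. Then choose $\xi$ supported away from the singular points. By the inductive hypothesis, combined with the fact that $\mathcal P_{t_i-\epsilon}$ factors through the earlier slices, we can approximate $\xi_{-\epsilon}$ on $L_{t_i-\epsilon}$ to within $e^{-1/\epsilon}$. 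Here I use that the solution operator of the linearized equation on the smooth interval $[t_{i-1}+\delta,t_i-\epsilon]$ maps $C^0$-close data to $C^0$-close solutions, and combine this with density on an earlier smooth slice. The localized heat-kernel estimate near each $Z_j$ then gives the same contradiction as before.

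With density in hand, the conclusion follows as at the end of Proposition~\ref{prop:10}. Choose $f_j$ with $|\mathcal P_T(f_j)-h_j|<\kappa/(2\sqrt m)$. Then
\[
\mathcal N\bigl(\textstyle\sum a_{2,j}f_j\bigr)-\mathcal N\bigl(\textstyle\sum a_{1,j}f_j\bigr)
\]
equals the integral along the segment from $\mathbf a_1$ to $\mathbf a_2$ of $D\mathcal N_{g}\bigl(\sum(a_{2,j}-a_{1,j})f_j\bigr)$. Proposition~\ref{prop:Ndiff}, applied to each $f_j$ with $\epsilon$ small, controls the difference $D\mathcal N_g-D\mathcal N_0$ and yields the bound $\kappa|\mathbf a_2-\mathbf a_1|$.

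Finally, one must pass from a graph over $L_T$ to a graph over $L^{\mathbf a_1}_T$. This changes $F$ only by an error of order $|\mathbf a_1|\,|\mathbf a_2-\mathbf a_1|$, which is absorbed by shrinking $\epsilon$.

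The main obstacle I expect is the inductive density step. One must keep the approximation error at time $t_i-\epsilon$ at size $e^{-1/\epsilon}$ even though the data are propagated through earlier singularities. The weighted estimate $\sup|u|/\Phi$ of Lemma~\ref{lem:solveinhom} should handle this, since on $L_{t_i-\epsilon}$, away from $Z_j$, $\Phi$ is bounded.
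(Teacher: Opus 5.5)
Your proposal is correct and follows the paper's route. The paper simply asserts that the single-singularity argument of Proposition~\ref{prop:10} extends to finitely many non-degenerate singularities: the barriers $\Phi,\Psi,\tilde G$, the admissible solutions, density of the range via Theorem~\ref{thm:linearbackwarduniqueness}, and integrating $D\mathcal{N}$ along the segment. Your induction over the singular times, the $m$-parameter bookkeeping and the remark on re-basing the graph over $L^{\mathbf a_1}_T$ are exactly the details the paper leaves implicit; note also that the ``obstacle'' you flag does not arise, since the inductive hypothesis already gives density of the range of $\mathcal{P}_{t_i-\epsilon}$ to any accuracy.
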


Combining this result with Proposition~\ref{prop:genericperturb} has the following consequence,
analogous to \cite[Proposition 42]{Sz26_2}, and has the same proof.
\begin{prop}\label{prop:perturb23}
  There is an $\underline{r}_0 > 0$, and a function $\Phi(r) > 0$ depending on the dimension and
  entropy bounds such that we have the following. Suppose that the
  flow $L_t$ has an $\ell$-degenerate $(n,k)$-cylindrical singularity
  at the piont $(x_0, t_0)$, and this singularity is
  $\ell$-degenerate at scale $2r$ for some $r <
  \underline{r}_0$. Assume moreover that the time slices $L_0, L_{t_0
    - r^2}$ are smooth, and $L_t$ has only non-degenerate cylindrical
  singularities for $t\in (0, t_0 - r^2)$.

  Then there are arbitrarily small $C^{2,\alpha}$ perturbations $L'_0$
  of $L_0$ such that the corresponding flow $L'_t$ has no
  $\ell'$-degenerate $(n,k')$-cylindrical singularity in the parabolic
  neighborhood $Q_{\Phi(r)}(x_0, t_0)$, if either $k= k'$ and $\ell'
  \leq \ell$, or if $k' > k$.
\end{prop}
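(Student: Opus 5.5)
The plan is to combine the initial-perturbation result, Proposition~\ref{prop:initperturb11}, with the local genericity statement, Proposition~\ref{prop:genericperturb}, and then use Proposition~\ref{prop:singularityorder} to handle the singularities that are not the chosen one.

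\emph{Step 1: normalize.} After a rotation and translation, assume $(x_0,t_0)=(0,0)$ and that the tangent flow there is $\mathcal{C}_{n,k}$. Choose $T_1$ as in Section~\ref{sec:perturb}. Using Proposition~\ref{prop:nearbydegen} with scale $2r$, this gives $T_1=-2\ln r'$ with $r'$ depending only on $r$, the dimension and the entropy. Replacing $r$ by a slightly smaller value if needed, arrange that $e^{-T_1-1}\ge r^2$. Then the time $t_1=t_0-e^{-T_1-1}$ lies at or before $t_0-r^2$. Since non-degenerate singularities are isolated (Theorem~\ref{thm:isolated}) and the flow is smooth at $t_0-r^2$, we may move $t_1$ slightly so that $L_{t_1}$ is smooth and only non-degenerate singularities occur in $(0,t_1)$. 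Because the rescaled slice $M_{T_1+1}$ differs from $L_{t_1}$ only by a fixed rescaling, the functions $h_j$ are fixed as follows: pulling back $\chi(|x|T_1^{-\beta/2})y_j$ for $j=\ell+1,\dots,k$ to $L_{t_1}$ gives $h_1,\dots,h_{k-\ell}$, scaled by $e^{-(T_1+1)/2}$.

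\emph{Step 2: build a $\kappa$-good family.} Apply Proposition~\ref{prop:initperturb11} with $T=t_1$, these $h_j$, and a small $\kappa'$. This produces $f_j$ on $L_0$ and a family $L^{\mathbf{a}}_t$ with initial data $\sum a_jf_j$. Rescaling the graph estimate at time $t_1$ by $e^{(T_1+1)/2}$ gives exactly the inequality in Definition~\ref{defn:kappagood}, with $\kappa=e^{(T_1+1)/2}\kappa'$. Continuity in $\mathbf{a}$ in the Brakke topology follows from the smooth dependence of the flow on data up to $t_1$. Choosing $\kappa'$ small therefore yields a $\kappa$-good family at $(x_0,t_0)$, with $\kappa$ as small as Proposition~\ref{prop:genericperturb} requires.

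\emph{Step 3: apply genericity.} Proposition~\ref{prop:genericperturb} then shows that for a.e.\ small $\mathbf{a}$ the flow $L^{\mathbf{a}}_t$ has no $\ell$-degenerate $(n,k)$-singularity in an $\epsilon(r)$-neighborhood of $(x_0,t_0)$. To exclude also $(k',\ell')$ with $k'=k,\ \ell'<\ell$ or $k'>k$, argue by contradiction using Proposition~\ref{prop:singularityorder}. Suppose that for $\mathbf{a}_i\to0$, with $\mathbf{a}_i$ generic, the flows $L^{\mathbf{a}_i}$ had such worse singularities $X_i$ accumulating in $Q_{\Phi(r)}(x_0,t_0)$. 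The limit $X_\infty$ would then be a singularity of $L_t$ of type at least as bad. We set $\Phi(r)$ small enough that, within this neighborhood, $L_t$ has no singularity worse than $(k,\ell)$. This reduction is the one in \cite[Proposition 42]{Sz26_2}; I expect it is stated there by induction down the order $\succ$, and that $\Phi(r)$ is chosen using the closedness of $\Sigma^>_{k,\ell}$. The case of $\ell'<\ell$ at the same $k$, or of $X_\infty=(x_0,t_0)$ itself, reduces to the same $(k,\ell)$ statement by applying Proposition~\ref{prop:genericperturb} at the scale controlled by Proposition~\ref{prop:nearbydegen}.

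\emph{Main obstacle.} I expect the hard part to be Step 3, and in particular ensuring that $\Phi(r)$ depends only on $r$, the dimension and the entropy, not on $L_t$. This requires the quantitative statements above, namely Propositions~\ref{prop:nearbydegen} and~\ref{prop:genericperturb}, together with a compactness argument over flows that are $\ell$-degenerate at scale $2r$.
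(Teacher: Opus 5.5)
Your overall strategy is the one the paper intends; it defers to \cite[Proposition 42]{Sz26_2}. You realize a $\kappa$-good family by perturbing $L_0$ via Proposition~\ref{prop:initperturb11}, apply Proposition~\ref{prop:genericperturb}, and rule out worse types with Proposition~\ref{prop:singularityorder}.

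The step that fails is the time bookkeeping in Step 1. You take $T_1=-2\ln r'$, with $r'$ the output of Proposition~\ref{prop:nearbydegen} at input scale $2r$. You then need $e^{-T_1-1}=r'^2/e\ge r^2$, i.e.\ $r'\ge\sqrt{e}\,r$, and nothing gives this: in general $r'$ is much smaller than $2r$.

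Shrinking $r$ does not rescue it. The non-degeneracy hypothesis is only given on $(0,t_0-r^2)$ for the given $r$. A smaller $r$ lengthens the interval on which you would need it, and it also shrinks $r'$. So your $t_1=t_0-e^{-T_1-1}$ can lie after $t_0-r^2$, and on $(t_0-r^2,t_1)$ the flow may have degenerate singularities elsewhere.

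This really happens in the paper's inductive application. There $t_0-r^2=s_1-(r_0/2)^2$ is exactly where the non-degenerate window of $L^{(1)}_t$ ends, and better-type degenerate singularities may occur afterwards. Proposition~\ref{prop:initperturb11} uses backwards uniqueness only across non-degenerate singularities, so it cannot be applied at $T=t_1$, and you do not get a $\kappa$-good family.

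The factor $2$ in the hypothesis is designed to handle this. The singularity has good asymptotics from $\tau=-2\ln(2r)$, and since $2\ln 2>1$ you can take $T_1=-2\ln(2r)$, adjusted slightly upward so that $L_{t_1}$ is smooth. Then $t_1\approx t_0-4r^2/e<t_0-r^2$ sits inside the non-degenerate window.

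The price is that Propositions~\ref{prop:perturb22} and \ref{prop:genericperturb} need every $\ell$-degenerate $(n,k)$-singularity of the flows $L^{\mathbf a}_t$ near $(x_0,t_0)$ to have good asymptotics at this same $T_1$. Proposition~\ref{prop:nearbydegen} alone does not give that, since it loses scale. In the inductive use it is supplied by a uniform fact: every such singularity of every sufficiently close flow is $\ell$-degenerate at scale $2r_0\ge 2r$. You should either build this uniformity into the hypothesis or prove it, rather than passing through $r'$.

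A smaller point concerns Brakke continuity of $\mathbf a\mapsto L^{\mathbf a}_t$ for all $t$. It comes from uniqueness of flows with only cylindrical and spherical singularities, not from smooth dependence up to $t_1$; the flow is not smooth on that interval. The uniformity of $\Phi(r)$ that you flag in Step 3 is secondary by comparison.
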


In other words we can ensure that after an arbitrarily small
perturbation of the initial condition, the only singularities that are
left in the fixed parabolic neighborhood $Q_{\Phi(r)}(x_0, t_0)$ are
``better'' than the original singularity at $(x_0, t_0)$ in terms of the
hierarchy defined in Definition~\ref{defn:singorder}.

Using this, we can prove our main result, 
Theorem~\ref{thm:main}. Recall that $N_t \subset\mathbb{R}^{n+1}$ is a
mean curvature flow of compact hypersurfaces, with only cylindrical
and spherical singularities. To prove the main result, we will prove
the following more general statement inductively. In this statement
we should think of the flow $L_t$ as being a small perturbation of the
original flow $N_t$.

\begin{prop}
   Suppose that the mean curvature flow $L_t$ satisfies the
   following:
   \begin{itemize}
     \item $L_t$ is smooth for $t\in [0, 2\underline{r}_0]$, for the
       constant $\underline{r}_0$ in
       Proposition~\ref{prop:perturb23}.
     \item For a degenerate singularity type $(k, \ell)$ (i.e. $0 < k
       < n$ and $0\leq \ell < k$), and $T > 0$ we have $\Sigma^>_{k,
         \ell}(L_t) = \emptyset$ for $t\in [0, T]$, i.e. the worst
       degenerate singularities appearing up to time $T$ are $\ell$-degenerate
       $(n,k)$-cylindrical (see Definition~\ref{defn:singorder}).
     \end{itemize}
    Then there are arbitrarily small $C^2$ perturbations $L'_0$ of
    $L_0$ such that the corresponding flow $L'_t$ has only
    non-degenerate cylindrical (and spherical) singularities up to time $T$. 
  \end{prop}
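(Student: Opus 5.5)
I will argue by a double induction: an outer induction on the singularity type $(k,\ell)$ with respect to the order $\succ$ of Definition~\ref{defn:singorder}, and an inner induction on the time up to which degenerate singularities have been removed. Since there are finitely many types, it suffices to prove the following. If $\Sigma^>_{k,\ell}(L_t)=\emptyset$ for $t\in[0,T]$, then after an arbitrarily small perturbation we have $\Sigma^>_{k,\ell}\cup\Sigma_{k,\ell}=\emptyset$ on $[0,T]$. Iterating this down the (finite) hierarchy yields the statement.

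By Proposition~\ref{prop:singularityorder}, applied to space-time translates of $L_t$ and using $\Sigma^>_{k,\ell}=\emptyset$, the set $\Sigma_{k,\ell}(L_t)\cap\{t\le T\}$ is compact. Each of its points is $\ell$-degenerate at some scale. Moreover, by Proposition~\ref{prop:nearbydegen} and Proposition~\ref{prop:singularityorder}, nearby points of $\Sigma_{k,\ell}$ for nearby flows are $\ell$-degenerate at a uniform scale. Hence I fix $r<\underline{r}_0$ such that every point of $\Sigma_{k,\ell}(L'_t)$ with $t\le T$ is $\ell$-degenerate at scale $2r$, for all flows $L'$ sufficiently close to $L$. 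This uniformity is essential: it keeps the neighborhoods $Q_{\Phi(r)}$ of Proposition~\ref{prop:perturb23} of fixed size, independent of the size of the perturbation.

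Next I define $t^*$ as the first time at which $\Sigma_{k,\ell}(L_t)$ is nonempty. Before $t^*$, only non-degenerate cylindrical and spherical singularities occur. I will assume, after a preliminary generic perturbation, that spherical singularities are handled by the same barrier and backwards uniqueness arguments as non-degenerate ones, or that they lie only at extinction of components. The first slab $[t^*, t^*+c\Phi(r)^2]$ is then treated as follows. By compactness I cover $\Sigma_{k,\ell}$ in this slab by finitely many parabolic neighborhoods $Q_{\Phi(r)}(X_i)$. For each $X_i$, Proposition~\ref{prop:perturb23} provides arbitrarily small perturbations of $L_0$ that clear $Q_{\Phi(r)}(X_i)$ of $(k,\ell)$-singularities, without creating worse ones there. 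Away from the neighborhoods, no worse singularities can appear either, by Proposition~\ref{prop:singularityorder} for small perturbations. To handle several $X_i$ at once, I process them successively. Each subsequent perturbation is chosen so small that the previously cleared neighborhoods stay clear. This uses the openness of the property of having no $\Sigma_{k,\ell}\cup\Sigma^>_{k,\ell}$ in a slightly shrunken compact neighborhood, again via Proposition~\ref{prop:singularityorder}. The hypotheses of Proposition~\ref{prop:perturb23} require $L_{t_0-r^2}$ to be smooth and only non-degenerate singularities before that time. I arrange this by choosing the slab width smaller than $r^2$, and by choosing a smooth time slice, which exists for a.e. time since the singular set has small parabolic dimension. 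The conditions that are valid before $t^*$ are preserved under small perturbations, since non-degenerate singularities persist by Theorem~\ref{thm:isolated}.

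Having cleared $[0, t^*+c\Phi(r)^2]$, I repeat the argument from the new first time of $\Sigma_{k,\ell}$. Each step advances time by a fixed amount $\ge c\Phi(r)^2$, because $r$ is uniform, so finitely many steps reach $T$. The main obstacle is keeping all previously gained properties stable under later perturbations. The cleared earlier time range must contain no degenerate singularities, which is an open condition only if I work on compact sets with margins and use the semicontinuity from Proposition~\ref{prop:singularityorder}. Non-degenerate singularities must also persist and stay isolated, which Theorem~\ref{thm:isolated} gives. I would handle this by fixing, at each step, a closed margin and taking each new perturbation small enough, in $C^{2,\alpha}$ and hence $C^2$, relative to all previous ones, so that the total perturbation stays arbitrarily small.
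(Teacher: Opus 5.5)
\textbf{Gap: the hypothesis of Proposition~\ref{prop:perturb23} is never secured.} You assert that before $t^*$ ``only non-degenerate cylindrical and spherical singularities occur''. That is not what your hypotheses give. The condition $\Sigma^>_{k,\ell}(L_t)=\emptyset$, together with the definition of $t^*$, only excludes singularities of type $(k,\ell)$ or worse. Degenerate singularities of strictly better type $(k',\ell')\prec(k,\ell)$ may still occur before $t^*$, for instance a degenerate $(n,1)$-cylindrical singularity before the first $\ell$-degenerate $(n,2)$-cylindrical one.

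Proposition~\ref{prop:perturb23} requires that $L_t$ have only non-degenerate singularities on $(0,t_0-r^2)$, so it cannot be applied to your first slab. The same failure recurs at every later slab. Proposition~\ref{prop:perturb23} only guarantees that no singularity of type $(k,\ell)$ or worse remains in $Q_{\Phi(r)}(X_i)$. Your ``cleared'' region may therefore still contain better-type degenerate singularities, either original ones or ones created by the perturbation, and the hypothesis fails at the next application. Persistence of non-degenerate singularities (Theorem~\ref{thm:isolated}) and the semicontinuity of Proposition~\ref{prop:singularityorder} say nothing about these.

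\textbf{The missing idea.} The type induction must go upward, and it must be interleaved with the time stepping. When proving the statement for $(k,\ell)$, assume the full Proposition for every type strictly better than $(k,\ell)$. Let $r_0$ be your uniform scale, $s_1$ the first time of $\Sigma_{k,\ell}$, and $\rho_0$ the fixed step size. Then proceed as follows.
\begin{enumerate}
\item Apply the inductive hypothesis on $[0,s_1-(r_0/2)^2]$, where no $(k,\ell)$-or-worse singularities occur, so that all singularities there become non-degenerate.
\item Perturb slightly so that $L_{s_1-(r_0/2)^2}$ is smooth.
\item Apply Proposition~\ref{prop:perturb23} to every $(k,\ell)$-singularity $(x_0,t_0)$ with $t_0\in[s_1-(\rho_0/2)^2,\,s_1+(\rho_0/2)^2]$, using the single reference slice $r^2=t_0-(s_1-(r_0/2)^2)\in[r_0^2/8,\,r_0^2/2]$. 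The slab must begin before $s_1$, because singular times move under perturbation.
\item Apply the inductive hypothesis again on $[0,s_1+(\rho_0/2)^2]$ to remove the remaining better-type degenerate singularities, and only then move to the next slab.
\end{enumerate}
With this interleaving, the rest of your outline goes through as in the paper: the uniform scale, the finite covering, successive smaller perturbations, openness via Proposition~\ref{prop:singularityorder}, and the fixed step size.
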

  \begin{proof}
    We will prove this statement by induction on the type of
    singularities that appear, similarly to \cite[Proposition
    46]{Sz26_2}. More precisely we will show that if
    $\Sigma^>_{k, \ell}(L_t) = \emptyset$ for $t\in [0, T]$, then
    we can find an arbitrarily small perturbation $L'_t$ for which we
    have $\Sigma_{k, \ell}(L'_t) \cup \Sigma^>_{k, \ell}(L_t) =
    \emptyset$ for $t\in [0,T]$. Applying this inductively implies the
    result.

    Suppose then that $\Sigma^>_{k, \ell}(L_t) = \emptyset$ for $t\in
    [0,T]$. From Proposition~\ref{prop:singularityorder} it follows
    that in this case $\bigcup_{t\in [0,T]}\Sigma_{k, \ell}(L_t)$ is a
    closed set. For every $X = (x,t) \in \Sigma_{k, \ell}(L_t)$ with
    $t\in [0, T]$ there is an $r_X > 0$ such that $X$ is
    $\ell$-degenerate at scale $r_X$. From
    Proposition~\ref{prop:nearbydegen} we find that for each such $X$
    there is an $r'_X > 0$ and an open set $U_X$, such that if $L'_t$
    is a sufficiently small perturbation of the flow $L_t$, then all
    $X' \in U_X\cap \Sigma_{k,\ell}(L_t')$ for $t\in [0,T]$ are
    $\ell$-degenerate at scale $r_X'$. We can cover $\Sigma_{k,\ell}(L_t)$
    over $t\in [0,T]$ with finitely many such open sets $U_{X_1},
    \ldots, U_{X_N}$, and we let $r_0 = \frac{1}{2}
    \min\{r_{X_i}'\}$. The conclusion is that every $\ell$-degenerate
    $(n,k)$-singularity that appears along a sufficiently close flow
    $L_t'$ for $t\in [0,T]$ is $\ell$-degenerate at scale $2r_0$. We
    will perform successive perturbations of $L_t$, but by choosing
    the perturbations sufficiently small, we ensure that
    every flow that we obtain still satisfies this property.

    Let us consider the flow $L_t$, and let $\rho_0 = \min\{\Phi(r_0 /
    4), r_0/2\}$ in terms of the function given in
    Proposition~\ref{prop:perturb23}. Note that using that proposition
    we will be able to find perturbations of our flow that eliminate
    the $\ell$-degenerate $(n,k)$-cylindrical singularities in
    parabolic $\rho_0$-neighborhoods.

    Let $s_1 \leq T$ denote the first time that a singularity in
    $\Sigma_{k,\ell}(L_t)$ appears (if there is no such singularity,
    then we are done with the inductive step). Note that by our
    assumptions we have $s_1 > 2\underline{r}_0$. We can apply our
    inductive hypothesis to the flow $L_t$ on the interval $[0, s_1 -
    (r_0/2)^2]$, since on this interval we have $\Sigma_{k,\ell}(L_t)
    = \emptyset$. In this way we obtain our first perturbation
    $L^{(1)}_t$ (by perturbing the initial value) which has only
    non-degenerate singularities for $t\in [0, s_1 - (r_0/2)^2]$.

    The perturbed flow $L^{(1)}_t$ still has
    $\Sigma^>_{k,\ell}(L^{(1)}_t) = \emptyset$ for $t \in [0, T]$, and
    by perturbing the flow slightly we can assume that $L^{(1)}_{s_1 -
      (r_0/2)^2}$ is smooth. We can now apply succesive perturbations
    of the initial condition $L^{(1)}_0$, using
    Proposition~\ref{prop:perturb23}, to eliminate all of the
    $\ell$-degenerate $(n,k)$-cylindrical singularities of $L^{(1)}_t$
    in the time interval $[s_1 - (\rho_0/2)^2, s_1 +
    (\rho_0/2)^2]$. Indeed, observe that if $(x_0, t_0)$ is such a
    singularity, with $t_0 \in [s_1 - (\rho_0/2)^2, s_1 +
    (\rho_0/2)^2]$, then
    \[ \frac{r_0^2}{8} \leq t_0 - (s_1 - (r_0/2)^2) \leq
      \frac{r_0^2}{2}. \]
    We apply Proposition~\ref{prop:perturb23},
    letting $r^2 = t_0 - (s_1 - (r_0/2)^2)$, and recalling that all
    the $\ell$-degenerate $(n,k)$-cylindrical singularities that
    appear are $\ell$-degenerate at scale $2r_0$. Using successively
    smaller perturbations of the initial data, we can eliminate these
    singularities in parabolic $\rho_0$-neighborhoods, and so finitely
    many such perturbations are enough to find a flow $L^{(2)}_t$
    which satisfies $\Sigma_{k,\ell}(L^{(2)}_t) = \emptyset$ for $t\in
    [s_1 - (\rho_0/2)^2, s_1 + (\rho_0/2)^2]$. Note that $L^{(2)}_t$
    can still be an arbitrarily small perturbation of $L_t$.

    Since $L^{(2)}_t$ no longer has $\ell$-degenerate $(n,k)$-cylindrical
    (or worse) singularities for  $t \in [0, s_1 + (\rho_0/2)^2]$, we can
    apply the inductive hypothesis again. This leads to a further
    perturbation $L^{(3)}_t$ of the flow which only has non-degenerate
    cylindrical (or spherical) singularities up to time $s_1 +
    (\rho_0/2)^2$. Note that all of this perturbed flow still
    satisfies $\Sigma^>_{k, \ell}(L^{(3)}_t) = \emptyset$ on the
    interval $[0,T]$ that we started with. Repeating the same argument
    finitely many times (note that $\rho_0$ is fixed), we find a
    perturbation $L'_t$ of the original flow $L_t$ which has only
    non-degenerate cylindrical singularities on $[0,T]$, as required. 
  \end{proof}

  This result quickly implies Theorem~\ref{thm:main}.
  \begin{proof}[Proof of Theorem~\ref{thm:main}]
    Let $T = T_{ext} + 1$, where $T_{ext}$ is the extinction time of
the flow $N_t$. In the Proposition above we take $(k, \ell) =
(n-1,0)$, and note that $\Sigma^>_{n-1, 0}(N_t)=\emptyset$ is
automatic. The conclusion is that there are arbitrarily small
perturbations $N'_0$ such that the corresponding flow $N'_t$ has only
non-degenerate cylindrical and spherical singularities. 
\end{proof}

\section{A conditional backwards uniqueness theorem}\label{sec:backwards1}

The main theorem of this section is the following conditional backwards uniqueness theorem for solutions of the conjugate linearized equation after a non-degenerate singularity. 
\begin{theorem}\label{main_uniqueness_thm}
Let $(N_t)_{t\in [0,T]}$ be a mean curvature flow of hypersurfaces in $\mathbb{R}^{n+1}$, smooth in $\mathbb{R}^{n+1}\times[0,T]-\{(0,0)\}$, encountering a non-degenerate neck-pinch singularity at $(0,0)$, and let $u:(N_t)_{t\in [0,T]}\rightarrow \mathbb{R}$ be a solution to the equation
\begin{equation}\label{conj_eg}
Lu +(|A|^2-H^2)u=0,
\end{equation}
where 
\[
Lu=\partial_t+\Delta.
\]
If there exists $\beta>0$  for which 
\begin{equation}\label{prox_ass}
\sup_{x\in N_t}\left(|u(x,t)|+|\nabla u(x,t)|\right) \exp(t^{-\beta})\rightarrow 0, \qquad \textrm{as}\; t\rightarrow 0,
\end{equation}
then $u\equiv 0$ for every $t$.
\end{theorem}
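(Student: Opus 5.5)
We prove the statement with a Carleman estimate adapted to the almost-expanding geometry after the singularity, in the spirit of \cite{DH25, Wang2014}. The starting point is that the solution is far smaller than any admissible growth: by \eqref{prox_ass}, $u$ vanishes to infinite order in $t$ (faster than $e^{-t^{-\beta}}$) as $t\to0$. We therefore aim for an estimate of the form
\[
\|e^{\lambda\phi} u\|_{L^2(\text{region})}\le C\,\|e^{\lambda\phi}(L+|A|^2-H^2)u\|_{L^2}+\text{boundary terms},
\]
with a weight $\phi$ that becomes very large as $t\to0$ but is dominated by $t^{-\beta}$. The natural choice, by analogy with the conical (expander) case of \cite{Wang2014, DH25}, is a weight in the self-similar variable $|x|^2/t$, for example $\phi = t^{-\gamma}\,\psi(|x|^2/t)$ with small $\gamma<\beta$, or a log-convex time factor such as $\lambda\log t$ combined with $|x|^2/(8t)$.

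\textbf{Step 1: region far from the tip.} Here we use Lemma~\ref{lem:almostconical}, which gives $|\langle\nu,x\rangle A|\le\epsilon$ for $|x|>K\sqrt{t|\log t|}$. This is what makes the commutator terms of the Carleman estimate, which involve $\mathrm{Hess}\,\phi$, $\langle x,\nu\rangle H$ and $A(x^T,x^T)$, behave as they would on a cone. We also use Proposition~\ref{prop:curvxest} to control the potential: $|A|^2-H^2\le C|\log|x||^2/|x|^2$, which is absorbed for large $\lambda$ once $\phi$ has the form $\lambda|x|^2/t$ near the relevant scales.

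\textbf{Step 2: region near the tips.} Here $|x|\lesssim K\sqrt{t|\log t|}$. The curvature is as large as $\sqrt{|\log t|/t}$ (Corollary~\ref{initialcurvatureest}), the flow is not Type I, and the conical structure fails. The tools for this region are Theorem~\ref{metric_changed_upper_bound}, $AH\ge-(1/2+\epsilon)/t$, and Theorem~\ref{quo_after2}. The bound $AH\ge -(1/2+\epsilon)/t$ controls the time derivative of the metric (since $\partial_t g=-2HA$) by roughly $1/t$ times the metric. This keeps the error from differentiating the volume form and the weight in time at the same order as the main positive term $\lambda/t$ coming from the time part of $\phi$. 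Theorem~\ref{quo_after2} ($\langle x,\nu\rangle\approx -2tH$) lets us evaluate $\partial_t\phi$ and $\Delta\phi$ along the flow as in an expander, so $(\partial_t+\Delta)\phi$ has a good sign up to an error of size $\epsilon/t$.

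\textbf{Step 3: combining and concluding.} We add the two regions, cutting off spatially at a fixed small radius $\delta$ where the flow is smooth. Away from the singularity, standard Carleman or log-convexity arguments (e.g.\ \cite{HV92}) handle the cutoff errors and the later times. Letting $\lambda\to\infty$ shows that $u$ vanishes on a short time interval $(0,t_1)$. Ordinary backwards uniqueness for the smooth flow on $[t_1,T]$ (as in \cite{SX22, HV92}), together with unique continuation, then gives $u\equiv0$ for all $t$. The assumption on $|\nabla u|$ in \eqref{prox_ass} is what makes the boundary terms at $t=\tau\to0$ vanish when integrating by parts: against $e^{\lambda\phi}$ with $\phi\lesssim t^{-\gamma}$, $\gamma<\beta$, these terms tend to zero.

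\textbf{Main obstacle.} The hardest part is the interface between the tip region and the almost-conical region, at scale $\sqrt{t|\log t|}$. There the logarithmic factors in $H^2\sim|\log t|/t$ and in the potential must still be beaten by the Carleman positivity. I would first try a weight containing an extra factor, such as $|x|^2/(t|\log t|)$, so that the positive terms also carry a $|\log t|$. I would then check that the sharp constant $1/2$ in Theorem~\ref{metric_changed_upper_bound} leaves enough room, with a strict inequality, against the $1/(4t)$-type term coming from the Gaussian part of the weight.
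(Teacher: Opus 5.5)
There is a genuine gap. You never fix a weight that is both admissible under \eqref{prox_ass} and positive enough to absorb the metric-change term, and each of your candidates fails one of these.

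\emph{Admissibility.} A weight whose spatial part grows like $|x|^2/t$ (as in $t^{-\gamma}\psi(|x|^2/t)$ with $\psi(s)\sim s$, or $|x|^2/(8t)$) is $\gg t^{-\beta}$ at any fixed $x\neq0$ once $\beta<1$. The $t\to0$ boundary terms $e^{2\lambda\phi}(u^2+|\nabla u|^2)$ then do not vanish, so the integrations by parts are not justified. This contradicts your own requirement $\phi\lesssim t^{-\gamma}$.

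\emph{Positivity.} A log-linear time weight $\mathcal{G}=t^{-2\lambda}$ is admissible but has no positivity margin. With $z=t^{-\lambda}u$ one gets
$\|Lu\|^2_{\mathcal G}\geq\lambda\iint t^{-2}z^2+\lambda\iint t^{-1}H^2z^2+\iint(2HA-H^2g)(\nabla z,\nabla z)$,
and Theorem~\ref{metric_changed_upper_bound} only gives $2HA\geq-(1+2\varepsilon)t^{-1}g$. The natural way to control $\iint t^{-1}|\nabla z|^2$, since the positive terms are zeroth order, is to test the equation against $t^{-1}z$. This yields $\iint t^{-1}|\nabla z|^2=(\lambda+\tfrac12)\iint t^{-2}z^2+\dots$. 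The net coefficient $\lambda-(1+2\varepsilon)(\lambda+\tfrac12)$ is then negative for every $\lambda$, even with $\varepsilon=0$. So ``keeping the error at the same order as the main positive term'' is precisely where the argument breaks: you need a strict margin, and you do not say where it comes from.

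\emph{Splitting.} Your two-region scheme leaves the interface at scale $\sqrt{t|\log t|}$ unaddressed. Carleman estimates on two regions cannot simply be added: the cutoff errors of each region live in the other, and must sit where the weight is smaller. Moreover, Lemma~\ref{lem:almostconical} is not needed for this statement at all. In the paper the almost-conical Carleman estimate serves only to \emph{verify} the hypothesis \eqref{prox_ass}, in Section~\ref{sec:backwardsunique}.

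The missing idea is that \eqref{prox_ass} is uniform over the compact $N_t$. You can therefore argue globally, with no spatial splitting, using a purely temporal super-polynomial weight $\mathcal{G}=e^{\alpha t^{-\beta}}$, with $\beta$ below the decay exponent. Then $\nabla(\mathcal{G}^{1/2}u)=\mathcal{G}^{1/2}\nabla u$, no Hessian of the weight appears, and, essentially as in Lemma~\ref{first_lem},
\begin{multline*}
\|Lu\|^2_{\mathcal G}\geq\frac{\alpha\beta(1+\beta)}{2}\|t^{-1-\beta/2}u\|^2_{\mathcal G}+\frac{\alpha\beta}{2}\|Ht^{-(1+\beta)/2}u\|^2_{\mathcal G}\\
-\|H\nabla u\|^2_{\mathcal G}-(1+2\varepsilon)\|t^{-1/2}\nabla u\|^2_{\mathcal G}.
\end{multline*}
Testing the equation against $H^2u\,\mathcal{G}$ and $t^{-1}u\,\mathcal{G}$ turns the two gradient terms into $\frac{\alpha\beta}{2}\|Ht^{-(1+\beta)/2}u\|^2_{\mathcal G}$ and $\frac{\alpha\beta}{2}\|t^{-1-\beta/2}u\|^2_{\mathcal G}$, plus lower-order terms. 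The $H$-terms cancel exactly. The $t$-terms close because the convexity of $t^{-\beta}$ supplies the factor $1+\beta$, which beats $1+2\varepsilon$ once $\varepsilon\ll\beta$. Since $\beta$ may be tiny, this is exactly why the constant $\tfrac12$ in Theorem~\ref{metric_changed_upper_bound} must be sharp. The non-Type~I curvature, $|A|+|H|\lesssim t^{-1/2-\delta}$, enters only lower-order terms, which are absorbed by the extra $t^{-\beta}$ the weight produces. A time cutoff, a comparison of $\mathcal{G}$ on $[0,t_3]$ versus $[t_2,t_1]$, and $\alpha\to\infty$ then give $u\equiv0$ near $t=0$. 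Smooth backwards uniqueness handles later times, as you say.
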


The proof of the conditional  backwards uniqueness theorem, Theorem \ref{main_uniqueness_thm}, is similar to the one in \cite{DH25}, which is in turn based on ideas in \cite{LP}. Our weights are the very same ones as the ones we  used in\cite{DH25}. There are two major differences:
\begin{enumerate}
    \item Unlike \cite{DH25}, we need to contend with the curvature not being type I (and not quickly decaying away from the origin). Many non-derivative terms are easily absorbable by the fact that the weight derivative produces an extra $t^{-\beta}$ term, while the deviation from a type I behavior is only logarithmic, in view of Lemma \ref{initialcurvatureest}.
    \item More importantly, in the analysis there is always a metric change gradient term. In \cite{DH25}, this was dealt with via splitting the estimate to one in the conical part (where one gets Type-I behavior with small constant) and the self-similar part, where one uses the rescaled flow (and for which the metric barely changes there) and additional Gaussian weights. While a similar splitting approach is feasible in this case too (due to Theorem \ref{quo_after2} and  Proposition \ref{curvdecay}), instead we use the  $AH$ lower bound estimate of Theorem  \ref{metric_changed_upper_bound} to control that gradient term directly. For this argument to work, the sharp bound (in the non-neck case) with sharp constant $AH\geq (-1/2-\varepsilon)t^{-1}$ is crucial.
\end{enumerate}

To set up the Carleman estimate, given parameters $\alpha,\beta>0$, consider the weight
\[
\mathcal{G}=\mathcal{G}_{\alpha,\beta}:=e^{\frac{\alpha}{t^\beta}}.
\]
and consider the weighted $L^2$ and $W^{1,2}$ space of functions $u:N_t\rightarrow \mathbb{R}$, whose (pseudo) norms are given by
\begin{equation}
\|u\|^2_{\mathcal{G}}=\int_{0}^T\int_{N_t}u^2\mathcal{G}d\mu_tdt,\qquad \|u\|^2_{\mathcal{G},1}=\int_{0}^T\int_{N_t}|\nabla u|^2\mathcal{G}d\mu_tdt.
\end{equation}

\begin{lemma}\label{first_lem}
Let $M_t$ be a  mean curvature flow on $[0,T_0]$, smooth outside of $(0,0)$, with a non-degenerate neck-pinch singularity at $(0,0)$. For every $\varepsilon>0$ there exists some $T_1\in(0,T_0)$ with the following significance: Let $T\in (0,T_1)$ and $u:(N_t)_{t\in [0,T]}\rightarrow \mathbb{R}$ be a  smooth function such that $u(\cdot,T)=0$ and $u(\cdot ,0)=0$, and suppose that there exists some $\beta_1>0$ such that  
\begin{equation}\label{u_asymptotics}
\sup_{N_t}\left(\left|u\exp\left(\frac{1}{t^{\beta_1}}\right)\right|+\left|\nabla u\exp\left(\frac{1}{t^{\beta_1}}\right)\right|\right) \leq D<\infty.
\end{equation}
Then for every $\beta\in (0,\beta_1)$ and $\alpha \geq 1$ 
\begin{align}
\|Lu\|^2_{\mathcal{G}} &\geq \frac{\alpha \beta(1+\beta/2)}{2} \|t^{-1-\beta/2} u\|^2_{\mathcal{G}}+\frac{\alpha \beta}{2} \|Ht^{-1/2-\beta/2} u\|^2_{\mathcal{G}} \\
&\qquad -\|H\nabla u\|^2_{\mathcal{G}}-(1+\varepsilon)\|t^{-1/2}\nabla u\|^2_{\mathcal{G}}
\end{align}
\end{lemma}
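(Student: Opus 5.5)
The plan is the standard Carleman-type computation: expand $\|Lu\|^2_{\mathcal{G}}$ into a cross term, integrate by parts in space and time, and control the terms produced by the time-dependent measure and metric. We write $\mathcal{G}=e^{\alpha t^{-\beta}}$, so that $\partial_t\mathcal{G}=-\alpha\beta t^{-1-\beta}\mathcal{G}$. Along the flow $\partial_t d\mu_t=-H^2d\mu_t$ and $\partial_t g_{ij}=-2HA_{ij}$. First we bound
\[
\|Lu\|^2_{\mathcal{G}}\geq \|Lu\|^2_{\mathcal{G}}-\|\partial_t u-\Delta u\|^2_{\mathcal{G}}... ,
\]
or more concretely we use $\|Lu\|^2_{\mathcal{G}}\ge 4\int\int \partial_t u\,\Delta u\,\mathcal{G}$ in the form $(a+b)^2\ge 4ab$ only after identifying the symmetric and antisymmetric parts of $L$ relative to the weight. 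In practice I would split $Lu$ as $\mathcal{S}u+\mathcal{A}u$ with respect to the weighted $L^2$ pairing on the moving surfaces, expand the square, drop $\|\mathcal{S}u\|^2+\|\mathcal{A}u\|^2\ge0$, and compute the commutator term $2\langle \mathcal{S}u,\mathcal{A}u\rangle$ by integration by parts. All boundary terms at $t=T$ and $t=0$ vanish: at $t=T$ because $u(\cdot,T)=0$, and at $t=0$ by \eqref{u_asymptotics} together with $\beta<\beta_1$. The latter holds because $\mathcal{G}\,e^{-2t^{-\beta_1}}\to0$ even after multiplying by the polynomial-logarithmic growth of $H$, using Corollary~\ref{initialcurvatureest}. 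Area is bounded by the entropy bound.

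The key identity comes from the cross term $2\int\int \partial_t u\,\Delta u\,\mathcal{G}$. Writing it as $-2\int\int\langle\nabla\partial_t u,\nabla u\rangle\mathcal{G}$ and then as $-\int\int \partial_t|\nabla u|^2\mathcal{G}$ up to the metric-change term $2\int\int HA(\nabla u,\nabla u)\mathcal{G}$, integration by parts in time produces $\int\int|\nabla u|^2(\partial_t\mathcal{G}-H^2\mathcal{G})$, a term with a favorable sign for $\alpha\beta t^{-1-\beta}$. The terms $\alpha\beta t^{-1-\beta}u^2$ and $H^2t^{-1-\beta}u^2$ claimed in the lemma arise from the time-derivative part paired with $u\,\partial_t\mathcal{G}$. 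Concretely, one term is $\int\int u\,\partial_t u\,\partial_t\mathcal{G}$, which after integration in time gives $\frac12\int\int u^2(-\partial_t^2\mathcal{G}+H^2\partial_t\mathcal{G})$. Here $-\partial^2_t\mathcal{G}\approx\alpha\beta(1+\beta)t^{-2-\beta}\mathcal{G}+\alpha^2\beta^2t^{-2-2\beta}\mathcal{G}$, and $H^2|\partial_t\mathcal{G}|=\alpha\beta H^2t^{-1-\beta}\mathcal{G}$. Choosing the conjugation constant appropriately (that is, testing against $\mathcal{G}$ with the $\frac12$ factors) yields the coefficients $\frac{\alpha\beta(1+\beta/2)}{2}$ and $\frac{\alpha\beta}{2}$, after absorbing lower-order pieces with Cauchy–Schwarz. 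The leftover gradient cross terms, such as $\int\int \nabla u\cdot\nabla\mathcal{G}$, vanish since $\mathcal{G}$ is spatially constant. Terms like $H\langle\nabla H,\ldots\rangle$ do not appear at this order. The error terms involving $H^2|\nabla u|^2$ and the time-derivative-of-measure contribution are bounded by $\|H\nabla u\|^2_{\mathcal{G}}$, which accounts for the $-\|H\nabla u\|^2_{\mathcal{G}}$ term.

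The main obstacle is the metric-change term $2\int\int HA(\nabla u,\nabla u)\mathcal{G}$. It has no sign, and since the curvature is not type I it cannot be absorbed by the weight. Here I would invoke Theorem~\ref{metric_changed_upper_bound}: for $T_1$ small we have $HA\ge-\frac{1/2+\varepsilon/2}{t}$ on $(0,T_1)$. This bounds the term from below by $-(1+\varepsilon)\|t^{-1/2}\nabla u\|^2_{\mathcal{G}}$, which is exactly the remaining term in the statement. The sharp constant $1/2$ matters here, since it produces the coefficient $1+\varepsilon$ that must later be beaten. Near $(0,0)$ the estimate holds in the neighborhood where the theorem applies. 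Away from it the curvature is bounded, so there $|HA|\le C\le \varepsilon/t$ for $t<T_1$ small. Finally I would double-check the bookkeeping of the factors $\frac12$: a careless split could give $2$ in place of $1+\varepsilon$, and the lemma's coefficients are calibrated so that this works.
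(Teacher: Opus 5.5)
Your skeleton matches the paper's proof: a square expansion reduced to a cross term, boundary terms at $t=0,T$ killed by \eqref{u_asymptotics}, $H^2$ terms from $\partial_t d\mu_t$, and the metric-change term $2\iint HA(\nabla u,\nabla u)\mathcal G$ bounded below by $-(1+\varepsilon)\|t^{-1/2}\nabla u\|^2_{\mathcal G}$ via Theorem~\ref{metric_changed_upper_bound}. However, the explicit computation you give has sign errors that hide the one cancellation the lemma depends on. Since $\partial_t\mathcal G=-\alpha\beta t^{-1-\beta}\mathcal G$, the cross term you single out is
\begin{equation*}
2\iint \partial_t u\,\Delta u\,\mathcal G\,d\mu_t\,dt=2\iint HA(\nabla u,\nabla u)\,\mathcal G\,d\mu_t\,dt-\alpha\beta\|t^{-(1+\beta)/2}\nabla u\|^2_{\mathcal G}-\|H\nabla u\|^2_{\mathcal G}.
\end{equation*}
So the $\alpha\beta$ gradient term is \emph{unfavorable}. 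It is negative and of size $\alpha t^{-1-\beta}\gg t^{-1}$, so it cannot be absorbed into the $\alpha$-independent term $-(1+\varepsilon)\|t^{-1/2}\nabla u\|^2_{\mathcal G}$ of the statement. Having only $\alpha$-independent negative gradient terms is exactly what Theorem~\ref{smooth_global_Carleman} later exploits. Likewise $\partial_t^2\mathcal G>0$, so $\iint u\,\partial_t u\,\partial_t\mathcal G=\frac12\iint u^2(-\partial_t^2\mathcal G+H^2\partial_t\mathcal G)$ is negative and does not supply the positive $u^2$ terms with the sign you wrote. Carried out with correct signs, the terms you list leave an uncontrolled $-\alpha\beta\|t^{-(1+\beta)/2}\nabla u\|^2_{\mathcal G}$, and no choice of ``conjugation constant'' or Cauchy--Schwarz removes it.

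The missing step is to make the symmetric/antisymmetric split explicit, which is what the paper's conjugation does. With $z=\mathcal G^{1/2}u$ one has $\mathcal G^{1/2}Lu=\partial_t z+\Delta z+\frac{\alpha\beta}{2t^{1+\beta}}z$. Then $(a+(b+c))^2\ge 2a(b+c)$ with $a=\partial_t z$ gives $\|Lu\|^2_{\mathcal G}\ge 2\langle\partial_t z,\Delta z\rangle+\alpha\beta\langle\partial_t z,t^{-1-\beta}z\rangle$. In your $u$-variables this means $\mathcal A u=\partial_t u-\frac{\alpha\beta}{2t^{1+\beta}}u$ and $\mathcal S u=\Delta u+\frac{\alpha\beta}{2t^{1+\beta}}u$. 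Then the following cancellations occur:
\begin{itemize}
\item $2\langle-\frac{\alpha\beta}{2t^{1+\beta}}u,\Delta u\rangle_{\mathcal G}=+\alpha\beta\|t^{-(1+\beta)/2}\nabla u\|^2_{\mathcal G}$ cancels the bad gradient term exactly.
\item $2\langle\partial_t u,\frac{\alpha\beta}{2t^{1+\beta}}u\rangle_{\mathcal G}=-\iint u\,\partial_t u\,\partial_t\mathcal G$ yields $\frac{\alpha\beta(1+\beta)}{2}\|t^{-1-\beta/2}u\|^2_{\mathcal G}+\frac{\alpha^2\beta^2}{2}\|t^{-1-\beta}u\|^2_{\mathcal G}+\frac{\alpha\beta}{2}\|Ht^{-(1+\beta)/2}u\|^2_{\mathcal G}$.
\item The $\alpha^2\beta^2$ part of that cancels against $2\langle-\frac{\alpha\beta}{2t^{1+\beta}}u,\frac{\alpha\beta}{2t^{1+\beta}}u\rangle_{\mathcal G}$.
\end{itemize}
In $z$-variables none of this bookkeeping arises, since $2\langle\partial_t z,\Delta z\rangle=2\iint HA(\nabla z,\nabla z)-\iint H^2|\nabla z|^2$ is independent of $\alpha$. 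With that fix no absorption is needed. The rest of your argument, namely the boundary terms and the sharp bound $AH\ge-(1/2+\varepsilon/2)/t$, then goes through exactly as in the paper.
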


\begin{proof}
Setting 
\[
z:=\sqrt{\mathcal{G}}u=e^{\frac{\alpha}{2t^{\beta}}}u,
\]
we easily compute 
\[
Lu = \mathcal{G}^{-1/2}\frac{\partial z}{\partial t}+\mathcal{G}^{-1/2}\Delta_{g(t)}z+\frac{\alpha \beta}{2t^{1+\beta}}\mathcal{G}^{-1/2}z.
\] 
Thus  
\begin{equation}\label{to_estimate_glob_car_lemma}
\| Lu\|^2_{\mathcal{G}}\geq  2\left\langle \frac{\partial z}{\partial t}, \Delta_{g(t)}z \right\rangle + \alpha\beta \left\langle \frac{\partial z}{\partial t}, \frac{z}{t^{1+\beta}} \right\rangle,
\end{equation}
provided the right hand side is defined. We will integrate by parts the two terms on the right hand side to obtain expressions which do not involve a time derivative of $z$.

For the second term on the right hand side, observe 
\begin{align}\label{to_estimate_glob_car_lemma_1}
 \alpha\beta\left\langle \frac{\partial z}{\partial t}, \frac{z}{t^{1+\beta}} \right\rangle&= \frac{\alpha \beta}{2} \int\partial_t \left(\int \frac{z^2}{t^{1+\beta}}d\mu_t\right) dt+ \frac{\alpha \beta(1+\beta)}{2}\left\|\frac{z}{t^{1+\beta/2}}\right\|^2+ \frac{\alpha \beta}{2}\left\|H\frac{z}{t^{(1+\beta)/2}}\right\|^2 \nonumber \\
 &= \frac{\alpha \beta(1+\beta)}{2}\left\|\frac{z}{t^{1+\beta/2}}\right\|^2+\frac{\alpha \beta}{2}\left\|H\frac{z}{t^{(1+\beta)/2}}\right\|^2,
\end{align}
where the right hand side is well defined by \eqref{u_asymptotics}, and the second equality is valid since the temporal boundary values of $z^2/t^{1+\beta}$ are zero since $u(\cdot,T)=0$ and by \eqref{u_asymptotics} again. Note that the appearance of the involving $H$ is due to the time derivative of the volume form.

For the first term of the right hand side of \eqref{to_estimate_glob_car_lemma}, observe that
\begin{equation}\label{first_term_bla}
2\frac{\partial z}{\partial t}\Delta_{g(t)}z=2\mathrm{div}_{g(t)}\left(\frac{\partial z}{\partial t}\nabla_{g(t)}z\right)-2\left\langle\nabla_{g(t)}\frac{\partial z}{\partial t},\nabla_{g(t)}z\right\rangle.
\end{equation}
Now,
\begin{equation}
2\left\langle\nabla_{g(t)}\frac{dz}{dt},\nabla_{g(t)}z\right\rangle = 2g^{ij}\frac{\partial^2z}{\partial x_i \partial t}\frac{\partial z}{\partial x_j}=\frac{d}{dt}\left(g^{ij}\frac{\partial z}{\partial x_i}\frac{\partial z}{\partial x_j}\right)- \frac{dg^{ij}}{dt}\frac{\partial z}{\partial x_i}\frac{\partial z}{\partial x_j},
\end{equation}
which, after taking  into account the evolution of the metric, 
\begin{equation}
g'(t)=-2HA,\qquad
\end{equation}
yields 
\begin{equation}
2\left\langle\nabla_{g(t)}\frac{dz}{dt},\nabla_{g(t)}z\right\rangle = 2g^{ij}\frac{\partial^2z}{\partial x_i \partial_t}\frac{\partial z}{\partial x_j}=\frac{d}{dt}g(\nabla z,\nabla z)-2HA(\nabla z,\nabla z).
\end{equation}
Substituting this into \eqref{first_term_bla}, the  divergence theorem, the evolution of the volume form,  and the boundary conditions once more we have
\begin{equation}
2\left\langle \frac{\partial z}{\partial t}, \Delta_{g(t)}z \right\rangle =  \iint 2HA(\nabla_{g(t)}z,\nabla_{g(t)}z)d\mu_tdt-\iint H^2|\nabla z|^2d\mu_tdt.
\end{equation}

Combining these with the definition of $z$ and  \eqref{to_estimate_glob_car_lemma}, \eqref{to_estimate_glob_car_lemma_1}, along with Theorem  \ref{metric_changed_upper_bound} yields the desired estimate.
\end{proof}

\begin{theorem}[Global Carleman]\label{smooth_global_Carleman}
Under the same conditions and notations of  Lemma \ref{first_lem},  there exist $T_0>0$, $\delta>0$  and $\alpha_0<\infty$ such that if  $\alpha\geq \alpha_0$, $T\leq T_0$ 
\begin{align}
\|Lu\|^2_{\mathcal{G}} &\geq  \alpha \beta \delta\|t^{-1-\beta/2}u\|^2_{\mathcal{G}}.
\end{align}
\end{theorem}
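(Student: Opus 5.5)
The strategy is to start from the inequality of Lemma~\ref{first_lem} and absorb the two negative gradient terms, $\|H\nabla u\|^2_{\mathcal{G}}$ and $(1+\varepsilon)\|t^{-1/2}\nabla u\|^2_{\mathcal{G}}$, into the positive zeroth-order terms and into $\|Lu\|^2_{\mathcal{G}}$ itself. To do this we need an independent estimate that controls the weighted gradient norm by $\|Lu\|_{\mathcal{G}}$ and zeroth-order terms. We obtain it by testing $Lu$ against $u\,\mathcal{G}\,\phi(t)$ for a suitable time weight $\phi$, and we take $\phi(t)=t^{\gamma}$ with $\gamma$ chosen close to $1+\beta$.

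\textbf{Energy identity.} We compute
\[
\iint Lu\cdot u\,\mathcal{G}\,t^{\gamma}\,d\mu_t\,dt .
\]
Integrating the Laplacian term by parts gives $-\|t^{\gamma/2}\nabla u\|^2_{\mathcal{G}}$. Since $\mathcal{G}$ depends only on $t$, no spatial derivative of the weight appears. Integrating the $\partial_t u$ term by parts in time produces three contributions:
\begin{itemize}
\item $\tfrac12\iint u^2\,\partial_t(\mathcal{G}t^{\gamma})$, whose dominant part is $-\tfrac{\alpha\beta}{2}\,t^{\gamma-1-\beta}u^2\mathcal{G}$;
\item a term $+\tfrac12\iint H^2u^2\mathcal{G}t^{\gamma}$ coming from the evolution of the volume form;
\item boundary terms, which vanish by $u(\cdot,0)=u(\cdot,T)=0$ together with \eqref{u_asymptotics}.
\end{itemize}
Applying Cauchy--Schwarz to the left-hand side, we arrive at
\[
\|t^{\gamma/2}\nabla u\|^2_{\mathcal{G}}\le \eta^{-1}\|Lu\|^2_{\mathcal{G}}+\eta\|t^{\gamma}u\|^2_{\mathcal{G}}+C\alpha\beta\,\|t^{(\gamma-1-\beta)/2}u\|^2_{\mathcal{G}}+\tfrac12\|Ht^{\gamma/2}u\|^2_{\mathcal{G}}.
\]

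\textbf{Choice of $\gamma$.} For this to control $\|t^{-1/2}\nabla u\|^2_{\mathcal{G}}$ we would need $\gamma=-1$. But then the $u$-terms on the right scale like $\alpha\beta\, t^{-2-\beta}u^2$, which is exactly the order of the good term in Lemma~\ref{first_lem}, and the constants will not match. This mismatch is where I expect the main difficulty. I would try two remedies.

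The first is to run the energy identity with the weight $\mathcal{G}$ replaced by $\mathcal{G}$ times an extra small power, so that the gradient bound carries a factor like $\alpha^{-1}$ relative to the good term. This mirrors the interpolation argument in \cite{DH25}. Concretely, integrating $\mathrm{div}(u\nabla u)$ gives
\[
\|t^{-1/2}\nabla u\|^2\le \|t^{-1/2}u\|\,\|t^{-1/2}\Delta u\|,
\]
and we would then rewrite $\Delta u=Lu-\partial_t u$ and estimate $\partial_t u$ through the conjugation $z=\sqrt{\mathcal{G}}u$ used in Lemma~\ref{first_lem}.

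The second, cleaner option is to redo the computation of Lemma~\ref{first_lem} keeping the cross term between $\partial_t z$ and $\Delta z$ more carefully, adding and subtracting a multiple $\mu\,t^{-1-\beta}z$ inside the square. This produces a positive gradient term
\[
\mu\,\alpha\beta\,\|t^{-(1+\beta)/2}\nabla u\|^2 ,
\]
which for $t\le T_0$ small and $\alpha\ge\alpha_0$ dominates both $(1+\varepsilon)t^{-1}|\nabla u|^2$ and $H^2|\nabla u|^2$. For the $H^2$ term we use $H^2\le C|\log t|/t\ll t^{-1-\beta}$, which follows from Corollary~\ref{initialcurvatureest} (and from smoothness away from the origin).

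\textbf{Lower-order terms and conclusion.} The zeroth-order terms generated by this adjustment are of size $\mu\,\alpha\beta\,t^{-2-\beta}u^2$ and $\mu\,\alpha\beta\,|A|^2t^{-1-\beta}u^2$. We control them using $|A|\le C|H|$ from Proposition~\ref{prop:HK16}, the curvature bound $H^2\lesssim |\log t|/t$ (so the deviation from Type~I is only logarithmic), and the $\tfrac{\alpha\beta}{2}\|Ht^{-1/2-\beta/2}u\|^2$ term, which absorbs the $|A|^2$ contribution after choosing $\mu$ small. Choosing $\mu$ small relative to $(1+\beta/2)/2$ then leaves a net coefficient $\delta>0$ in front of $\alpha\beta\|t^{-1-\beta/2}u\|^2_{\mathcal{G}}$, and we take $\alpha_0$ large and $T_0$ small to finish.
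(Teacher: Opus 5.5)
There is a genuine gap, and it sits where you locate the ``main difficulty.''

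\textbf{Your second option fails.} Write $c=\tfrac{\alpha\beta}{2}t^{-1-\beta}$. A gradient term $\mu\alpha\beta\|t^{-(1+\beta)/2}\nabla z\|^2$ does arise from the splitting $(\partial_t z-\mu cz)+(\Delta z+(1+\mu)cz)$, via $-2\mu\langle cz,\Delta z\rangle$. But the same splitting produces the cross term
\begin{equation*}
-2\mu(1+\mu)\|cz\|^2=-\tfrac{\mu(1+\mu)}{2}\alpha^2\beta^2\|t^{-1-\beta}z\|^2 .
\end{equation*}
This exceeds the good term $\alpha\beta\|t^{-1-\beta/2}z\|^2$ by a factor of order $\alpha t^{-\beta}$. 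It is not of size $\mu\alpha\beta t^{-2-\beta}u^2$, as you claim.

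No rearrangement rescues this. Take $u=e^{ik\cdot x}\phi(t)$, localized in a nearly flat region. Let $\phi$ be concentrated around the time $t_*$ at which $|k|^2=\tfrac{\alpha\beta}{2}t_*^{-1-\beta}$, on a window of width $(\alpha\beta)^{-1/2}t_*^{1+\beta/2}$. Then $\|Lu\|^2_{\mathcal{G}}\lesssim\alpha\beta\, t_*^{-2-\beta}\|u\|^2_{\mathcal{G}}$, while $\alpha\beta\|t^{-(1+\beta)/2}\nabla u\|^2_{\mathcal{G}}\approx\tfrac12(\alpha\beta)^2t_*^{-2-2\beta}\|u\|^2_{\mathcal{G}}$. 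So no Carleman estimate with this weight gives a gradient gain beyond $t^{-1}|\nabla u|^2$ with an $O(1)$ coefficient. That is exactly why the sharp constant $1/2$ in Theorem~\ref{metric_changed_upper_bound} is needed; if your gain existed, that constant would be irrelevant. Your first remedy is too vague to assess and runs into the same bookkeeping.

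\textbf{What you are missing.} Your first attempt already closes if you keep exact constants instead of writing $C\alpha\beta$. Testing $Lu$ against $t^{-1}\mathcal{G}u$ gives the identity
\begin{equation*}
\|t^{-1/2}\nabla u\|^2_{\mathcal{G}}=\tfrac{\alpha\beta}{2}\|t^{-1-\beta/2}u\|^2_{\mathcal{G}}+\tfrac12\|t^{-1}u\|^2_{\mathcal{G}}+\tfrac12\|Ht^{-1/2}u\|^2_{\mathcal{G}}-\langle t^{-1}\mathcal{G}u,Lu\rangle .
\end{equation*}
Since $(1+\varepsilon)\tfrac{\alpha\beta}{2}<\tfrac{\alpha\beta(1+\beta/2)}{2}$ once $\varepsilon<\beta/2$, the constants do match, with margin $\tfrac{\alpha\beta}{2}(\tfrac{\beta}{2}-\varepsilon)$. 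The remaining terms are lower order by \eqref{junk_track} and smallness of $T$.

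The term $\|H\nabla u\|^2_{\mathcal{G}}$ cannot be reached by a pure power of $t$. Near the tips $H^2$ is of order $|\log t|/t$, and any time weight strong enough to dominate it (such as $t^{-1}|\log t|$ or $t^{-1-\beta'}$) makes the zeroth-order terms exceed the good term. The paper instead tests against $H^2\mathcal{G}u$. This gives $\|H\nabla u\|^2_{\mathcal{G}}\le\tfrac{\alpha\beta}{2}\|Ht^{-(1+\beta)/2}u\|^2_{\mathcal{G}}+|\langle H^2\mathcal{G}u,Lu\rangle|$, plus errors from $H\partial_tH$, $H^4$ and $Hu\,\nabla H\cdot\nabla u$. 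Those errors are absorbed using \eqref{junk_track} with the $t^{\beta/4}$ slack. The main term cancels exactly against the second positive term $\tfrac{\alpha\beta}{2}\|Ht^{-1/2-\beta/2}u\|^2_{\mathcal{G}}$ of Lemma~\ref{first_lem}, which comes from the evolution of the volume form. In both tests the cross terms with $Lu$ are bounded by a small power of $T$ times $\|Lu\|_{\mathcal{G}}\|t^{-1-\beta/2}u\|_{\mathcal{G}}$, and are absorbed.
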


\begin{proof}
 For this proof, we make use of the fact that by Lemma \ref{initialcurvatureest} and \cite{HK16}, for every $\delta$ and $T$ sufficiently small, we can safely estimate
\begin{equation}\label{junk_track}
t^{1/2+\delta}(|H|+|A|)+t^{3/2+\delta}|\partial_t H|+t^{1+\delta}|\nabla H|\leq 1,
\end{equation}
for all $t\in[0,T]$. These bounds will be used repeatedly.

\bigskip

We start by writing
\begin{equation}\label{start_eq_car2}
\left\langle \frac{H^2\sqrt{\mathcal{G}}}{t^{\beta/4}}u, \sqrt{\mathcal{G}}t^{\beta/4}Lu \right\rangle= \left\langle H^2e^{\frac{\alpha}{t^{\beta}}}u, \frac{\partial u}{\partial t}\right\rangle +\left\langle H^2e^{\frac{\alpha}{t^{\beta}}}u, \Delta_{g(t)}u\right\rangle.
\end{equation}
As before, using the boundary conditions we have
\[
\left\langle H^2e^{\frac{\alpha}{t^{\beta}}}u, \frac{\partial u}{\partial t}\right\rangle=\frac{\alpha\beta }{2}\iint  \frac{H^2\mathcal{G}}{t^{1+\beta}} u^2d\mu_tdt+ \frac{1}{2}\iint  \mathcal{G}(-2H\partial_t H+H^4) u^2d\mu_tdt,
\]
so, utilizing \eqref{junk_track} we have that for every $\delta>0$
\begin{equation}\label{global_car_tech_est1}
\left|\left\langle H^2e^{\frac{\alpha}{t^{\beta}}}u, \frac{\partial u}{\partial t}\right\rangle\right| \leq  \frac{1}{2} \alpha \beta \left\|\frac{Hu}{t^{(1+\beta)/2}}\right\|^2_{\mathcal{G}}+\left\|\frac{u}{t^{1+\beta/4}}\right\|^2_{\mathcal{G}},
\end{equation}
provided $T$ is sufficiently small. Using the divergence theorem on every time slice, and using \eqref{junk_track} again, we also get
\begin{align}\label{global_car_tech_est_2}
\left\langle H^2\mathcal{G}u, \Delta_{g(t)}u\right\rangle\\
=\iint\mathrm{div}_{g(t)}&\left(\mathcal{G} H^2u\nabla u\right)d\mu_tdt-\iint H^2\mathcal{G}|\nabla u|^2-2Hu \nabla H\cdot \nabla u \mathcal{G}d\mu_tdt \\
&\leq -\|H\nabla u\|_{\mathcal{G}}^2+\varepsilon \|ut^{-1-\beta/2}\|^2_{\mathcal{G}}+\|\nabla u H t^{\beta/4}\|^2_{\mathcal{G}}.
\end{align}

\bigskip

Combining \eqref{global_car_tech_est1} and \eqref{global_car_tech_est_2} we get
\begin{align}\label{first_fin_ineq}
 \|H\nabla u\|_{\mathcal{G}}^2 &\leq \left|\left\langle \frac{H^2\sqrt{\mathcal{G}}}{t^{\beta/4}}u, \sqrt{\mathcal{G}}t^{\beta/4}Lu \right\rangle\right|+ \frac{\alpha\beta}{2} \left\| H\frac{u}{t^{(1+\beta)/2}}\right\|^2_{\mathcal{G}}+\varepsilon \|ut^{-1-\beta/2}\|_{\mathcal{G}}^2+\|\nabla u H t^{\beta/4}\|^2_{\mathcal{G}}\\
&\leq T^{\beta/4}\|Lu\|_{\mathcal{G}}\left\|\frac{H^2u}{t^{\beta/4}}\right\|_{\mathcal{G}}+ \frac{\alpha\beta}{2} \left\|\frac{Hu}{t^{(1+\beta)/2}}\right\|^2_{\mathcal{G}}+\varepsilon \|ut^{-1-\beta/2}\|^2_{\mathcal{G}}+\|\nabla u H t^{\beta/4}\|^2_{\mathcal{G}}.
\end{align}

Observe that repeating the above process, but starting with \begin{equation}\label{start_eq_car2var1}
\left\langle \frac{t^{-1}\sqrt{\mathcal{G}}}{t^{\beta/4}}u, \sqrt{\mathcal{G}}t^{\beta/4}Lu \right\rangle= \left\langle \frac{1}{t}e^{\frac{\alpha}{t^{\beta}}}u, \frac{\partial u}{\partial t}\right\rangle +\left\langle \frac{1}{t}e^{\frac{\alpha}{t^{\beta}}}u, \Delta_{g(t)}u\right\rangle.
\end{equation}
we similarly obtain (note that there will be no term corresponding to the $\nabla H\cdot \nabla u$ term above)
\begin{align}\label{come_back_term}
 \left\|\frac{1}{t^{1/2}}\nabla u\right\|_{\mathcal{G}}^2 &\leq  T^{\beta/4}\|Lu\|_{\mathcal{G}}\left\|\frac{u}{t^{1+\beta/4}}\right\|_{\mathcal{G}}+ \frac{\alpha\beta}{2} \left\|\frac{u}{t^{1+\beta/2}}\right\|^2_{\mathcal{G}}+\varepsilon \|ut^{-1-\beta/2}\|^2_{\mathcal{G}}.
\end{align}

\bigskip

Substituting \eqref{first_fin_ineq} into the  previous lemma we get
\begin{align}
\|Lu\|^2_{\mathcal{G}} &\geq \frac{\alpha \beta(1+\beta/2)}{2} \|t^{-1-\beta/2} u\|^2_{\mathcal{G}}+\frac{\alpha \beta}{2} \|Ht^{-1/2-\beta/2} u\|^2_{\mathcal{G}}-(1+\varepsilon)\|t^{-1/2}\nabla u \|^2_{\mathcal{G}}\\
&-T^{\beta/4}\|Lu\|_{\mathcal{G}}\left\|\frac{H^2u}{t^{\beta/4}}\right\|_{\mathcal{G}}- \frac{\alpha\beta}{2} \left\|\frac{Hu}{t^{(1+\beta)/2}}\right\|^2_{\mathcal{G}}- \varepsilon \|ut^{-1-\beta/2}\|^2_{\mathcal{G}}- \|\nabla u H t^{\beta/4}\|^2_{\mathcal{G}}\\
\end{align} 
Note that the third negative term cancels with the second positive term, the fourth negative term is absorbed into the first positive term (if $\alpha$ is large enough) and the second negative term are absorbed into the LHS and the first positive term. Finally, the last negative term is dominated by the first negative term. Thus, we have

\begin{equation}
\|Lu\|^2_{\mathcal{G}} \geq \frac{\alpha \beta(1+\beta/4)}{2} \|t^{-1-\beta/2} u\|^2_{\mathcal{G}}-(1+\varepsilon)\|t^{-1/2}\nabla u  \|^2_{\mathcal{G}},
\end{equation}
when $\alpha$ is large enough and $T$ is small enough. Substituting  \eqref{come_back_term} into this we get
\begin{align}
\|Lu\|^2_{\mathcal{G}} &\geq \frac{\alpha \beta(1+\beta/4)}{2} \|t^{-1-\beta/2} u\|^2_{\mathcal{G}}\\
&-(1+\varepsilon)\left(T^{\beta/4}\|Lu\|_{\mathcal{G}}\left\|\frac{u}{t^{1+\beta/4}}\right\|_{\mathcal{G}}+ \frac{\alpha\beta}{2} \left\|\frac{u}{t^{1+\beta/2}}\right\|^2_{\mathcal{G}}+\varepsilon \|ut^{-1-\beta/2}\|^2_{\mathcal{G}}\right).
\end{align}
Note the the second negative term is absorbed into the positive one, provided $\varepsilon$ (and in turn $T$) were sufficiently small. The other two terms are absorbed into the positive term and the left hand side.  
\end{proof}

\begin{proof}[Proof of Theorem \ref{main_uniqueness_thm}]
It suffices to prove the result when $T$ is small. To extend to larger times, one uses existing smooth backward uniqueness results for parabolic equations with smooth coefficients.  For the purpose of this proof, It's convenient to identify each $N_t$ with the final $N=N_T$, with evolving metric.

Take any $0<t_2<t_1<T$  and let $\varphi$ be a function which is $1$ for $t\in [0,t_2]$, decreasing between $t_2$ and $t_1$ and zero to $t\geq t_1$. Letting $\Omega_1$ be the domain $N\times [t_2,t_1]$ and $\Omega_2$ be the domain $N\times [0,t_2]$, setting $v=\varphi u$,  choosing $\beta\in (0,\beta_1)$ and considering the corresponding $\mathcal{G}$, we have

\begin{align}\label{tech_proof_of_tech_global_car}
\|Lv\|_{\Omega_1,\mathcal{G}}^2+\|Lu\|_{\Omega_2,\mathcal{G}}^2&=\|Lv\|^2_{\mathcal{G}}\geq   \|t^{-1-\beta/2}v\|^2_{\mathcal{G}} \geq  \|t^{-1-\beta/2}u\|^2_{\Omega_2,\mathcal{G}},
\end{align} 
where the first inequality uses Theorem \ref{smooth_global_Carleman} (Global Carleman).

In light of \eqref{conj_eg} and \eqref{junk_track}, if $T$ is sufficiently small  we can absorb the second term in the left hand side of \eqref{tech_proof_of_tech_global_car} into the right to obtain:

\begin{equation}\label{bd_norm}
\|Lv\|_{\Omega_1,\mathcal{G}}^2  \geq  \frac{1}{2}\|t^{-1-\beta/2}u\|^2_{\Omega_2,\mathcal{G}}.
\end{equation}

Now, taking any $t_3\in (0,t_2)$ and setting $\Omega_3=N\times [0,t_3]$ we can safely use \eqref{bd_norm} to obtain
\begin{equation}\label{bd_norm_2}
\|Lv\|_{\Omega_1,\mathcal{G}}^2  \geq  \frac{1}{2}\|t^{-1-\beta/2}u\|^2_{\Omega_2,\mathcal{G}} \geq  \frac{1}{2}\|t^{-1-\beta/2}u\|^2_{\Omega_3,\mathcal{G}} \geq \|u\|^2_{\Omega_3,\mathcal{G}},
\end{equation}
provided $T$ is small enough.

Note however that 
\begin{equation}
q:= \frac{\sup_{t\in [t_2,t_1]} e^{1/t^{\beta}}}{\inf_{t\in [0,t_3]} e^{1/t^{\beta}}}<1,
\end{equation}
so recalling recalling the definition of $\mathcal{G}$, \eqref{bd_norm_2} implies
\begin{equation}
\|u\|_{\Omega_3}^2 \leq q^{\alpha}\|Lv\|_{\Omega_1}^2\rightarrow 0, 
\end{equation}
as $\alpha \rightarrow \infty$. This completes the proof.
\end{proof}

\section{The backwards uniqueness theorem} \label{sec:backwardsunique}
Our goal in this section is to verify the hypotheses of the conditional backwards uniqueness result, Theorem~\ref{main_uniqueness_thm}. The argument will have several stages, following the approach of \cite{DH25}. The main result of this section will then be the following unconditional backwards uniqueness theorem, which we restate from the introduction (Theorem \ref{thm:linearbackwarduniqueness}) for the reader's convenience.

\begin{theorem}\label{thm:backwardsunique}
  Suppose that $N_t\subset \mathbb{R}^{n+1}$ is a mean curvature flow
  that has only non-degenerate cylindrical singularities at time
  $t=t_0$, and which is smooth for $t\in (t_0, t_1]$. Suppose that
  $u(x,t)$ solves the conjugate linearized equation
  \begin{equation} \label{eq:b13} 
  (\partial_t + \Delta + |A|^2 - H^2) u = 0, \end{equation}
  for $t\in (t_0, t_1]$, and as $t\to t_0$ we have $u(x,t)\to 0$
  locally smoothly away from the singular set. Then $u\equiv 0$. 
\end{theorem}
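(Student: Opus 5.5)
The plan is to reduce Theorem~\ref{thm:backwardsunique} to the conditional result, Theorem~\ref{main_uniqueness_thm}. It suffices to show that $u$, together with $\nabla u$, vanishes to infinite order in the strong sense required there. This means finding some $\beta>0$ with
\[
\sup_{N_t}(|u|+|\nabla u|)\exp(t^{-\beta})\to 0,\qquad t\to t_0^+ .
\]
After a time translation we may take $t_0=0$. Since the non-degenerate singularities at $t=0$ are isolated (Theorem~\ref{thm:isolated}), the argument localizes around each singular point, and we may assume there is a single one, at the origin. Once the decay is established, Theorem~\ref{main_uniqueness_thm} gives $u\equiv 0$ on a short interval $(0,T]$. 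Classical backwards uniqueness for smooth parabolic equations then propagates this to all of $(0,t_1]$.

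To obtain the decay, I would follow the staged approach of \cite{DH25}, treating the two regions separately.

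\textbf{Smooth region.} Away from the origin, on $N_t\setminus B_r(0)$, the flow is smooth up to and slightly before $t=0$. There $u\to 0$ smoothly as $t\to 0$, so $u$ extends by zero across $t=0$ locally. I would then apply standard local backwards-uniqueness or unique-continuation Carleman estimates, of Escauriaza--Seregin--\v{S}ver\'ak or Lin--Poon type, for parabolic equations with bounded smooth coefficients. These should show that $u$ vanishes identically near $t=0$ on compact sets away from the origin. If only a weaker conclusion is available, they should at least give super-exponential decay of the form $|u|\le C\exp(-c/t)$ on such sets.

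\textbf{Near the singularity.} Inside $B_r(0)$ I would use the conicality established earlier. Lemma~\ref{lem:almostconical} gives $|\langle\nu,x\rangle A|\le\epsilon$ for $|x|\ge K\sqrt{t|\log t|}$. Theorem~\ref{quo_after2} gives expander behaviour near the tip. Proposition~\ref{prop:curvxest} gives $H\lesssim|\log|x||/|x|$. Together these say the flow looks, at scale $|x|$, almost like an evolving cone, with curvature only logarithmically worse than type I.

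Working in expander-type rescaled variables $\tilde x = x/\sqrt t$, $s=\log t$, I would prove a frequency-type or Carleman estimate on annuli $\{K\sqrt{t|\log t|}\le|x|\le r\}$. The weight would be of the form $\exp(|x|^2/(8t))$ type. The point is that vanishing on the outer boundary propagates inward, giving $|u|\lesssim\exp(-c|x|^2/t)$. This mirrors the expander analysis of \cite{DH25} and \cite{Wang2014}.

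In the inner tip region $|x|\le K\sqrt{t|\log t|}$, I would use the barrier $\tilde G$ of Lemma~\ref{lem:conjbarrier} together with the maximum principle. This should transfer the boundary decay $\exp(-cK^2|\log t|)$ into the interior, costing at most a factor $\exp(C|\log t|^2)$. That loss is harmless for polynomial decay, but it is not good enough for an $\exp(-t^{-\beta})$ rate.

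\textbf{Main obstacle.} The hardest step is upgrading the decay to the stretched-exponential rate $\exp(-t^{-\beta})$ uniformly up to the tip. To do this, I would first use the annular Carleman estimate to obtain $|u|\le \exp(-c\,\delta^2/t)$ at $|x|=\delta$, for any fixed $\delta$. I would then choose $\delta$ depending on $t$, namely $\delta = t^{1/2-\beta/2}$. This is still much larger than $\sqrt{t|\log t|}$, so it remains in the almost-conical region, and at that radius the bound reads $\exp(-c\,t^{-\beta})$. The barrier step then costs only $\exp(C|\log t|^2)$ inside the smaller ball, which is absorbed by the stretched-exponential decay.

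Gradient bounds would follow from interior parabolic estimates at the curvature scale, again losing only polynomial factors in $t^{-1}$. Feeding the resulting estimate into Theorem~\ref{main_uniqueness_thm} completes the proof.
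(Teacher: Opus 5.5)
The reduction to Theorem~\ref{main_uniqueness_thm}, the localization, and the use of local Carleman estimates away from the origin agree with the paper. One correction there: local estimates only give a rate like the one in Theorem~\ref{rate_smooth}, namely $\exp(\log^4|x|-c|x|^2/(t|\log|x||))$, never identical vanishing.

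\textbf{The gap is in the inner step.} The conjugate equation $(\partial_t+\Delta+|A|^2-H^2)u=0$ is \emph{backward} parabolic. Comparison with the subsolution $\tilde G$ of Lemma~\ref{lem:conjbarrier} on $\{x\in N_s,\ |x|<\delta,\ s\ge t\}$ therefore bounds $u(\cdot,t)$ by its values on the lateral boundary \emph{and on a later time slice} $s=b>t$. This is exactly how the paper uses $\tilde G$: to get the crude bound \eqref{w_init_est} from data at later times. Nothing is known about smallness of $u$ inside $B(0,\delta)$ at later times, so decay on $\partial B(0,\delta)$ cannot be pushed inside. If comparison ran forward from $t=0$, the theorem would follow at once from $u(\cdot,0)=0$ and the barrier, with no Carleman estimate needed.

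You cannot rescue this by using that the tips recede, so that $N_s\cap B(0,\delta)=\emptyset$ once $\sqrt{2s|\log s|}\gtrsim\delta$. The lateral boundary then passes through the tip region at times near the closing time, exactly where no decay is known. Even where your annular bound does apply at a later time $s$, it only gives $\exp(-c\delta^2/s)$, which is merely polynomial in $\delta$ for $s\sim \delta^2/(K^2|\log\delta|)$.

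So the whole region $|x|\lesssim t^{1/2-\beta/2}$ is left without a stretched-exponential bound. Moreover, at $|x|=t^{1/2-\beta/2}$ the regularity-scale estimate of Theorem~\ref{rate_smooth} already gives $\exp(-ct^{-\beta}/|\log t|)$. Your annular Carleman estimate and choice of $\delta$ therefore add nothing beyond that step.

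\textbf{What is missing} is a unique-continuation mechanism that moves smallness inward without relying on later times. The paper does this in two stages.
\begin{enumerate}
\item A conical Carleman estimate (Theorem~\ref{main_car}) on the rescalings $t_0^{-1/2}N_{t_0t}$, with weight $\exp(2\alpha(T-t)(|x|^{1+\delta}-R^{1+\delta}))$ and $R=K\sqrt{|\log t_0|}$. Positivity of the gradient term uses $|\langle\nu,x\rangle A|\le\epsilon$ from Lemma~\ref{lem:almostconical} together with \eqref{AlmostCon_bd}. Choosing $\alpha\sim t_0^{-(1-2\delta)/2}$ gives $|u|+|\nabla u|\le e^{-t^{-\beta}}$ uniformly on $|x|\ge K\sqrt{t|\log t|}$ (Theorem~\ref{con_cone}). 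This is far stronger than $\exp(-c|x|^2/t)$ at the inner edge.
\item In the tip region, the three-ball inequality of Theorem~\ref{Vessella}, which loses no time, iterated along a chain of balls at the curvature scale. The curvature decay away from the tip (Lemma~\ref{Decay_lemma_real}) keeps the chain length at $I\le D\sqrt{|\log t|}$ (Lemma~\ref{lem:pichoices}). Hence both the loss $\theta^{I}$ in the exponent and the prefactor $C^{I}e^{I|\log t|^{C}}$ coming from \eqref{w_init_est} are absorbed by $t^{-\beta}$.
\end{enumerate}
Your proposal has no replacement for either ingredient.
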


Throughout this section, we will always assume without loss of generality that $t_0=0$. Owing to Theorem \ref{main_uniqueness_thm}, in order to prove Theorem \ref{thm:backwardsunique},  it is sufficient to show that under the assumptions $u$ above $u$ has the fast rate of convergence to zero \eqref{u_asymptotics} as $t\to 0$.

\bigskip 

The basic strategy of the proof of \eqref{u_asymptotics} is similar to the one \cite[Section 4]{DH25}. We first derive a rate of convergence at the regularity scale which (naturally) is almost meaningless around the singularity. We then derive a Carleman estimate, similar to the conical Carleman estimate of \cite{Wang16} and \cite[Sec 4.2]{DH25}, which allows us to push \eqref{u_asymptotics} to points in $N_t$ of  distance at least $K\sqrt{t|\log t|}$ from the tip. Treatment of the flow as approximately conical (and not as cylindrical) after the non-degenerate singularity is facilitated by Lemma \ref{prop:curvxest}, which provides a conical curvature decay, up to a logarithmic factor, and Theorem \ref{lem:almostconical}, which shows that the deviation from  being conical is small, at the curvature scale, at points which are $K\sqrt{t|\log t|}$ away from from the tip.   Once this is done, an iterative `propagation of smallness' argument is used to push \eqref{u_asymptotics} all the way in. More details about each of the steps, including a discussion of the similarity and differences from \cite{DH25} could be found at the beginning of each of the relevant subsections.

\bigskip 

Before continuing, note that as $N_t$ is compact, it follows from Lemma \ref{lem:conjbarrier} that there exists some $D<\infty$ such that 
\begin{equation}
    |u(x,t)| \leq D\tilde{G}(x,t).
\end{equation}
for all $t>0$, where $\tilde{G}$ is in given in that theorem. By Proposition \ref{positionest} and the precise form of $\mathcal{G}$, we see that there exists some $D<\infty$ such that
\begin{equation}\label{w_init_est}
|u(x,t)|\leq D\exp\left(\log^4(|x|)\right),
\end{equation}
for all $t\in (0,t_1]$ and $x\in N_t$. We will assume that bound throughout. 

\subsection{Rate of convergence at the regularity scale}\label{sec:regularity}

In this section we show a decay rate of $u$ at the regularity scale. The main theorem of this section is the following decay estimate, particularly useful at points away from 0.

\begin{theorem}\label{rate_smooth}
Suppose $u$ and $N_t$ are as in Theorem \ref{thm:backwardsunique}. There exist constants $C, c > 0$ with the following significance.
For every point $p\in \mathbb{R}^{n+1}$ we set
\begin{equation}
\rho(p)=\frac{|p|}{\sqrt{|\log(|p|)|}}.
\end{equation}
Then, for every $p\in N_0\cap B(0,r_0)$, $t\in [0,c\rho^2(p)]$ and $x\in N_t\cap B(p,c\rho(p))$, we have the estimate
\begin{equation}
|u(x,t)|+\rho(x)|\nabla u(x,t)| \leq C\exp\left(\log(|x|)^{4}-\frac{c|x|^2}{t|\log |x||}\right).
\end{equation}

\end{theorem}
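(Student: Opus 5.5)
The plan is to work locally near each point $p\in N_0\cap B(0,r_0)$ at the natural regularity scale $\rho(p)$, and combine three ingredients: the a priori bound \eqref{w_init_est}, the vanishing of $u$ at $t=0$ away from the origin, and a Gaussian-type barrier (heat-kernel) argument. The latter converts "zero initial data plus a bounded boundary" into decay of order $\exp(-c\rho^2/t)$. Since $\rho(p)^2=|p|^2/|\log|p||$ and $|x|$ is comparable to $|p|$ on $B(p,c\rho(p))$, this is exactly $\exp(-c|x|^2/(t|\log|x||))$.

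\textbf{Step 1 (geometry at scale $\rho(p)$).} By Theorem~\ref{thm:SWXMain}, $N_0$ near $p$ is a cylinder of radius comparable to $\rho(p)$ over the axis point at distance $\approx|p|$. By Proposition~\ref{prop:curvxest} and Lemma~\ref{cylregioncurvdecay}, for $t\le c\rho(p)^2$ with $c$ small, $N_t\cap B(p,2c\rho(p))$ is smooth with $|A|\le C\rho(p)^{-1}$ and all derivatives controlled at that scale. Here we use that the tip only reaches distance $|p|$ at times $\sim\rho(p)^2$, by Proposition~\ref{positionest}. Now parabolically rescale by $\lambda=\rho(p)^{-1}$ around $(p,0)$. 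This produces a flow on $B(0,2c)\times[0,c]$ with bounded geometry, on which $\tilde u(\tilde x,s)=u(p+\rho\tilde x,\rho^2 s)$ solves $(\partial_s+\Delta+\rho^2(|A|^2-H^2))\tilde u=0$. This is a backward equation with bounded coefficients, and its terminal-type data at $s=0$ is zero. After the time reversal natural to conjugate equations, the data at $s=0$ becomes the quantity that is propagated forward as $s$ increases in the estimates.

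\textbf{Step 2 (barrier).} On $B(0,2c)\times[0,c]$ compare $\tilde u$ with a supersolution of the form
\[ \Theta(\tilde x,s)=M\exp\Bigl(Ks-\frac{(c-|\tilde x|^2/c)^2_-\ \text{-type term}}{s}\Bigr), \]
or more cleanly a standard Gaussian barrier $M e^{Ks}\exp(-a\,\mathrm{dist}(\tilde x,\partial B)^2/s)$, adapted to the conjugate operator. The zeroth order term $\rho^2(|A|^2-H^2)$ is bounded and is absorbed by $K$. The lateral boundary values are bounded by $M=\sup|u|$ on the region, which by \eqref{w_init_est} is at most $D\exp(\log^4|p|)$. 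The boundary value at $s=0$ vanishes by assumption. The maximum principle, applied in the direction of $s$ appropriate to the conjugate equation using the smooth convergence $u\to0$ at $t=0$ away from the origin, then yields $|\tilde u|\le CM e^{-a c^2/(4s)}$ on $B(0,c)$. Scaling back gives the $C^0$ bound.

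\textbf{Step 3 (gradient).} Interior parabolic (Schauder) estimates at scale $\sqrt t$ inside the ball, or at scale $\rho$ with zero data at $t=0$, give the bound for $\rho|\nabla u|$ with the same exponential factor, possibly after shrinking $c$.

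\textbf{Main obstacle.} I expect the main obstacle to be Step 2's maximum principle. The equation is backward-parabolic in $t$, so "zero at $t=0$" is not naturally initial data. The plan is to view the conjugate equation in reversed time $s'=c\rho^2-t$. The $t=0$ slice is then the final time, and the decay must come from unique continuation rather than a plain barrier.

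If that fails, I would instead use the known smooth quantitative backward-uniqueness and decay rates for parabolic equations with bounded coefficients vanishing at a time slice (\cite{LP}, as in \cite[Section 4]{DH25}). At unit scale these give $|\tilde u|\le CM e^{-c/s}$ near $s=0$, and rescaling gives the stated estimate. The a priori bound \eqref{w_init_est} supplies the $\exp(\log^4|x|)$ prefactor.
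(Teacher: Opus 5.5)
Your Step 2 cannot work, and this is the heart of the statement. The equation $(\partial_t+\Delta+|A|^2-H^2)u=0$ is backward parabolic. In reversed time $\sigma=c\rho^2-t$ it is a forward heat-type equation. Its parabolic boundary is the slice $t=c\rho^2$ together with the lateral boundary, and the slice $t=0$, where $u$ vanishes, is the \emph{final} time. A comparison or maximum-principle argument only uses data on the parabolic boundary, where all you know is $|u|\le M$. So no barrier can turn $u(\cdot,0)=0$ into the factor $\exp(-c\rho^2/t)$. The Gaussian bound you wrote is the heat-kernel estimate for a \emph{forward} equation with zero initial data, which is not the situation here. Your remark in Step 1 that the data at $s=0$ ``is propagated forward'' is exactly where the time direction goes wrong. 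You flag this yourself, correctly: the decay must come from a unique-continuation (Carleman) inequality.

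Your fallback is essentially the paper's proof, but it needs the right tool. The required result is \emph{local and quantitative}: $u$ vanishes at $t=0$ only on a ball, $|u|+|\nabla u|\le M$ on the cylinder, and you want $CMe^{-\delta/t}$ at the center. This is Lemma~\ref{EF_smallness}. The paper obtains it, as in \cite[Theorem 27]{DH25}, from the Escauriaza--Fern\'andez Carleman inequality with Gaussian weights \cite[Theorem 4]{EF03}. In the paper, \cite{LP} instead underlies the global conditional result Theorem~\ref{main_uniqueness_thm}, which needs a priori super-exponential decay on entire time slices; it does not give this local rate.

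Two smaller points. First, Lemma~\ref{EF_smallness} needs a bound on $|u|+|\nabla u|$, not just on $|u|$. You therefore first apply interior estimates on the rescaled cylinder $B(0,2)\times[0,2]$, combined with \eqref{w_init_est}, to get $\sup_{B(0,1)\times[0,1]}(|u|+|\nabla u|)\le D\exp(\log^4|p|)$. Second, the lemma already bounds $|u|+|\nabla u|$ at the center, so your Step 3 is unnecessary.

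The rest matches the paper: rescaling by $(c\rho(p))^{-1}$ about $(p,0)$, bounded geometry via pseudolocality and Theorem~\ref{thm:SWXMain}, and using $\rho(x)\approx\rho(p)$ and $|x|\approx|p|$ on $B(p,c\rho(p))$ when scaling back.
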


As in \cite[Theorem 27]{DH25}, this estimate follows from the Carleman inequality of \cite[Theorem 4]{EF03}, similarly to \cite[Lemma 2.1]{Nguyen2010} and \cite[Lemma 3.6]{Wang2014}. The basic PDE Lemma, which appears in various guises in all of the references above, is the following. 

\begin{lemma}\label{EF_smallness}
Given a constant $\Lambda$ there exist some $C<\infty$ and $\delta>0$ with the following significance.
Suppose that $u:B_1\times[0,1]\rightarrow  \mathbb{R}$ satisfies
\begin{equation}\label{assumed_v_bds_car_EF}
 \left|
 \partial_t u
 +\partial_i\!\left(a^{ij}(y,s)\partial_j u\right)
 \right|
 \leq \Lambda \bigl(|u|+|\nabla u|\bigr),
\end{equation}
where the $\Lambda^{-1} I \leq a^{ij} \leq \Lambda I$ and where the $\|a_{ij}\|_{C^4}+\|b_i\|_{C^4} \leq \Lambda$. 
Assume that 
\[
        u(\cdot,0)=0
\]
and that 
\begin{equation}
        \sup_{B_1\times[0,1]}
        \bigl(|u|+|\nabla u|\bigr)
        \le M.
\end{equation}
Then 
\begin{equation}\label{EF_outcome}
        |u(0,t)|+|\nabla u(0,t)|
        \leq 
        C M\exp\left(-\frac{\delta}{t}\right).
\end{equation}
for every $t\in [0,1]$.
\end{lemma}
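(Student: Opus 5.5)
The plan is to deduce Lemma~\ref{EF_smallness} from the Escauriaza--Fernández Carleman inequality \cite[Theorem 4]{EF03} with a Gaussian weight centred slightly in the past. The key observation is that the initial time $t=0$ is where the solution vanishes. So the estimate is a quantitative unique continuation from a zero initial slice, and the heat-kernel-type weight will turn vanishing at $t=0$ into Gaussian smallness $e^{-\delta/t}$ near $t=0$. First reverse time, setting $s=1-t$. Then $v(y,s)=u(y,1-s)$ satisfies a forward-type inequality $|\partial_s v-\partial_i(a^{ij}\partial_j v)|\le\Lambda(|v|+|\nabla v|)$ on $B_1\times[0,1]$, with $v(\cdot,1)=0$. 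This is exactly the backward-uniqueness configuration treated in \cite{EF03}, as in \cite[Lemma 2.1]{Nguyen2010} and \cite[Lemma 3.6]{Wang2014}. By an affine change of coordinates at the origin I would normalize $a^{ij}(0,0)=\delta^{ij}$, using the $C^4$ bounds to control the resulting errors.

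Fix a cutoff $\eta(y)$ equal to $1$ on $B_{1/2}$ and supported in $B_{3/4}$, and apply the Carleman inequality to $\eta u$. Use the weight $\sigma(t)^{-a}G_{\text{pole}}(y,t)$, where $G_{\text{pole}}$ is a Gaussian adapted to $a^{ij}$ with pole at $(0,-h)$ for small $h>0$. The inequality controls $a\|\sigma^{-a-1/2}\eta u\|+\|\sigma^{-a}\nabla(\eta u)\|$, in $L^2$ weighted by the Gaussian, by the weighted norm of $(\partial_t+\partial_i a^{ij}\partial_j)(\eta u)$. The boundary term at $t=0$ vanishes because $u(\cdot,0)=0$, and the term at $t=1$ carries the weight $\sigma(1)^{-a}$, which is tiny relative to the interior region. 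The lower-order terms $\Lambda(|u|+|\nabla u|)$ are absorbed into the left side once $a$ is large, and only a small time range $t\le t_*(\Lambda)$ is used, so the weight comparisons remain valid. The commutator terms involving $\nabla\eta$ are supported on $\{1/2\le |y|\le 3/4\}$, where the Gaussian weight is bounded by $e^{-c/(t+h)}$. They are therefore bounded by $M^2 e^{-c/(t+h)}$ times powers of $\sigma$.

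Comparing the weight on $B_{\sqrt{t}}\times[t/2,t]$ with the annulus contribution, I then optimize $a\sim 1/t$ and $h\sim t$. This gives an $L^2$ bound $\|u\|_{L^2(B_{\sqrt t}\times[t/2,t])}\le CMe^{-\delta/t}$, for $t\le t_*$. For $t\ge t_*$ the estimate is trivial after enlarging $C$, since $|u|+|\nabla u|\le M$.

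Finally, pointwise bounds on $u$ and $\nabla u$ at $(0,t)$ follow from local parabolic estimates for $u$ on the cylinder $B_{\sqrt t/2}(0)\times[3t/4,t]$. Since the equation is written as a differential inequality, I would treat $f=\partial_tu+\partial_i(a^{ij}\partial_ju)$ as a bounded right-hand side with $|f|\le\Lambda(|u|+|\nabla u|)$. Interior $W^{2,1}_p$ and Schauder-type estimates on scale $\sqrt t$ then lose only polynomial factors $t^{-N}$, which $e^{-\delta/t}$ absorbs after shrinking $\delta$. The $C^4$ bounds on $a^{ij}$ justify these interior estimates.

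I expect the main obstacle to be the Carleman step: choosing the pole shift $h$, the parameter $a$ and the time window so that the cutoff error $M^2e^{-c/(t+h)}$ genuinely beats the weight on the inner region. I would simply import \cite[Theorem 4]{EF03} in the form used in \cite[Lemma 3.6]{Wang2014} rather than rederive it.
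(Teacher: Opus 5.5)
Your proposal takes the same route as the paper, which proves the lemma by rerunning the Escauriaza--Fern\'andez Carleman argument of \cite[Theorem 27]{DH25} (as in \cite[Lemma 2.1]{Nguyen2010} and \cite[Lemma 3.6]{Wang2014}); the only changes there are that the differential inequality is assumed rather than derived, and that the cutoff errors now carry the factor $M^2$, just as you track it. One fix is needed in your last step: $\partial_t+\partial_i(a^{ij}\partial_j)$ is backward parabolic, so the interior estimate at $(0,t)$ must use a cylinder lying at later times, e.g.\ $B_{\sqrt{t}/2}\times[t,3t/2]$ (covered by your $L^2$ bound at time $2t$), not $B_{\sqrt{t}/2}\times[3t/4,t]$; also, $W^{2,1}_p$ estimates rather than Schauder estimates are the right tool, since the right-hand side is only known to satisfy the bound $\Lambda(|u|+|\nabla u|)$.
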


\begin{remark}
The proof is almost identical to \cite[Theorem 27]{DH25} with the following two small distinctions:
\begin{itemize}
\item  Here $u$ is assumed to  satisfy \eqref{assumed_v_bds_car_EF}, so one does not need to derive \cite[Eq (128)]{DH25}).
\item Here $|u|+|\nabla u| \leq M$ and  does not converge to zero as in \cite{DH25}. This results in a factor of $M^2$ which multiplies the right hand side of \cite[Eq. (136)]{DH25}.  This $M^2$ factor stays at the right hand side of \cite[Eq (139), (140)]{DH25}, from which \eqref{EF_outcome} follows.
\end{itemize}
\end{remark}

\begin{proof}[Proof of Theorem \ref{rate_smooth}]
Consider the parabolically scaled flow
\[
\tilde{N}_t=\lambda (N_{\lambda^{-2}t}-p),\qquad \lambda=c^{-1}\rho(p)^{-1}.
\]
If $c$ is small then, by pseudolocality and Theorem \ref{thm:SWXMain}, we can write the flow $\tilde{N}_t$ as a graph over $B(0,2)\times [0,2]$. Moreover, the function $w(x,t)=u(p+\lambda^{-1}x,\lambda^{-2}t)$ satisfies a (backwards) parabolic PDE with uniform (depending on dimension) parabolicity and coefficients derivative bounds. Standard interior estimates, together with \eqref{w_init_est} yield that on $B(0,1)\times [0,1]$ we have 
\[
|\nabla w| \leq D \exp\left(\log^{4}(|p|)\right).
\]
Applying Lemma \ref{EF_smallness} to $v$ therefore yields
\begin{equation}\label{EF_outcome_app}
        |w(0,t)|+|\nabla w(0,t)|
        \leq 
        C\exp\left(\log^{4}(|p|)\right) \exp\left(-\frac{\delta}{t}\right).
\end{equation}
Scaling this estimate back to $u$ and noticing that $\rho(x) \cong \rho(p)$ for  $x\in B(p,c\rho(p))$, the asserted estimate follows. 
\end{proof}

\subsection{A Carleman estimate at the almost conical neck region}
In this section, we derive a Carleman estimate which is suitable for pushing smallness information from physical distance $r>0$ to physical distance $K\sqrt{t|\log t|}$ from the origin. This Carleman estimate, Theorem \ref{main_car}, and its proof, are the post non-degenerate neck pinch analogue of Theorem \cite[Theorem 30]{DH25}, which was applicable for flows with an AC singularity. The similarity between these two seemingly unrelated regimes,  follows from Proposition \ref{prop:curvxest} and Lemma \ref{lem:almostconical} both providing an aspect of an approximate conical behavior in the domain of interest.

We recall the following result, which is essentially due to Wang~\cite{Wang2014}.

\begin{lemma}[Lemma 3.3 in \cite{Wang2014}]\label{gen_carl}
Let $(N_t)_{t\in [0,T]}$ be a smooth MCF on $\mathbb{R}^{n+1}-\{0\}$. Let $R>0$ and let $\mathcal{G}$ be a smooth positive function in 
\[
Q_{R,T}=\{(x,t)\;|x\in N_t-B(0,R),\;t\in [0,T]\}.
\]
Assume $v,\nabla_{N_t}v\in C^{0}_c(Q_{R,T})$ with $v(\cdot,0)=v(\cdot,T)=0$ smooth in the interior of $Q_{R,T}$. Setting 
\begin{equation}\label{F_def}
\mathcal{F}=\frac{\partial_t\mathcal{G}-\Delta_{N_t}\mathcal{G}}{\mathcal{G}},
\end{equation}
we have the inequality
\begin{align}
\int _0^{T}&\int_{N_t} \left(2\mathrm{Hess}\log \mathcal{G}+2HA-H^2g\right)(\nabla_{N_t}v,\nabla_{N_t}v)\mathcal{G} d\mu_tdt\\
&+\int _0^{T}\int_{N_t}\frac{1}{2}\left(\frac{d\mathcal{F}}{dt}+\Delta_{N_t}\mathcal{F}-H^2\mathcal{F}\right)v^2 \mathcal{G} d\mu_tdt\\
&\leq \int _0^{T}\int_{N_t}\left(\frac{dv}{dt}+\Delta_{N_t}v\right)^2\mathcal{G} d\mu_tdt.
\end{align}
\end{lemma}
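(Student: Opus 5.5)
This is a standard weighted energy identity, and I would prove it by the conjugation-and-integration-by-parts scheme already used in Lemma~\ref{first_lem}, now with a general weight $\mathcal{G}$. Write $Pv := \partial_t v + \Delta v$. I would expand
\[
\iint (Pv)^2\mathcal{G}\, d\mu_t dt \;\geq\; 2\iint \partial_t v\,\Delta v\,\mathcal{G} \;+\; (\text{terms from cross products with }\mathcal{G}),
\]
but organized differently. Rather than dropping $(\partial_t v)^2$ and $(\Delta v)^2$ immediately, I would compute the full expression
\[
\iint \big(Pv\big)^2\mathcal{G} \;=\; \iint (\partial_t v)^2\mathcal{G} + (\Delta v)^2 \mathcal{G} + 2\partial_t v\,\Delta v\,\mathcal{G}.
\]
I would then handle the cross term exactly as in Lemma~\ref{first_lem}. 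First, write $2\partial_t v\,\Delta v\,\mathcal{G} = 2\,\mathrm{div}(\mathcal{G}\partial_t v\nabla v) - 2\mathcal{G}\langle\nabla\partial_t v,\nabla v\rangle - 2\partial_t v\langle\nabla\mathcal{G},\nabla v\rangle$. Next, use $\partial_t g = -2HA$ to rewrite $2\langle\nabla\partial_t v,\nabla v\rangle = \partial_t|\nabla v|^2 - 2HA(\nabla v,\nabla v)$. Finally, integrate in time using $\partial_t d\mu_t = -H^2 d\mu_t$ together with the vanishing of $v$ at $t=0,T$ and the compact support in $Q_{R,T}$. This produces $\iint (2HA - H^2 g)(\nabla v,\nabla v)\mathcal{G}$, plus $\iint |\nabla v|^2\partial_t\mathcal{G}$, plus the mixed term $-2\iint \partial_t v\langle\nabla\mathcal{G},\nabla v\rangle$.

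The cleaner route, following Wang, is to split $Pv$ into symmetric and antisymmetric parts with respect to the weighted pairing. Set $\mathcal{S}v = \Delta v + \langle\nabla\log\mathcal{G},\nabla v\rangle - \tfrac12 \mathcal{F}v$ (roughly) and $\mathcal{A}v = \partial_t v - \langle\nabla\log\mathcal{G},\nabla v\rangle + \tfrac12\mathcal{F}v$. The choice of coefficients is dictated by requiring $\mathcal{S}$ to be symmetric in $L^2(\mathcal{G}\,d\mu_t)$ and $\mathcal{A}$ antisymmetric up to the zeroth order corrections coming from $\partial_t(\mathcal{G}d\mu_t) = (\mathcal{F}\mathcal{G} + \Delta\mathcal{G} - H^2\mathcal{G})d\mu_t$. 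One then has $\|Pv\|^2 \ge 2\langle \mathcal{S}v,\mathcal{A}v\rangle_{\mathcal{G}}$, since $\|Pv\|^2 = \|\mathcal{S}v\|^2+\|\mathcal{A}v\|^2+2\langle\mathcal{S}v,\mathcal{A}v\rangle$. The commutator $\langle[\mathcal{S},\mathcal{A}]v,v\rangle$ is what produces the left-hand side. In it, the gradient part of the commutator of $\Delta + \nabla\log\mathcal{G}$ with $\partial_t - \nabla\log\mathcal{G}$ yields $2\,\mathrm{Hess}\log\mathcal{G}$ via the Bochner-type identity $\iint \langle\nabla v,\nabla\langle\nabla \log\mathcal{G},\nabla v\rangle\rangle\mathcal{G}$. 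The time derivative of the metric yields $2HA$, and the evolving measure yields $-H^2 g$. The zeroth order part yields $\tfrac12(\partial_t\mathcal{F} + \Delta\mathcal{F} - H^2\mathcal{F})v^2$.

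I would carry out the steps in this order. First, verify the formal adjoint identities for $\mathcal{S}$ and $\mathcal{A}$ on a time slice, using the weighted divergence $\mathrm{div}_{\mathcal{G}}$. Second, compute $\frac{d}{dt}\int |\nabla v|^2\mathcal{G}\,d\mu_t$ and $\frac{d}{dt}\int \mathcal{F}v^2\mathcal{G}\,d\mu_t$ using $\partial_t g = -2HA$ and $\partial_t d\mu = -H^2 d\mu$. Third, integrate over $[0,T]$, discarding all boundary terms by the vanishing of $v$ at $t=0,T$ and its compact support away from the origin (this is why $v,\nabla v\in C^0_c$ is assumed). Fourth, collect terms and drop $\|\mathcal{S}v\|^2+\|\mathcal{A}v\|^2\ge 0$. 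The main obstacle is bookkeeping in the commutator: the time derivative does not commute with $\nabla$ or with $\Delta$ along MCF, since $\partial_t\Delta$ generates terms involving $HA$ and $\nabla H$. One must check that the $\nabla H$ terms cancel (they do, because only $\partial_t g$ enters once we work with $|\nabla v|^2$ rather than commuting $\partial_t$ past $\Delta$). To avoid error, I would first check the identity on a static flat metric, where it reduces to the classical Escauriaza–Fernández computation, and then add the MCF terms one at a time.
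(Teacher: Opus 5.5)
Your proposal is correct and is essentially the weighted integration-by-parts computation behind the cited Lemma~3.3 of Wang. The paper itself gives no proof beyond noting that only $\partial_t g=-2HA$ and $\partial_t d\mu_t=-H^2d\mu_t$ enter. One point to pin down: keep the coefficient exactly $\tfrac12\mathcal F$ rather than the exactly antisymmetric choice $\tfrac12(\mathcal F-H^2)$, and compute $2\langle\mathcal S v,\mathcal A v\rangle_{\mathcal G}$ directly, since it is a commutator only up to the term $\iint H^2 v\,\mathcal S v\,\mathcal G$. This gives precisely the stated left-hand side, i.e.\ $\iint(\partial_t v+\Delta v)^2\mathcal G=\|\mathcal S v\|^2+\|\mathcal A v\|^2+\mathrm{LHS}$, whereas the antisymmetric choice yields a different inequality with coefficients $2\,\mathrm{Hess}\log\mathcal G+2HA$ and $\tfrac12(\partial_t+\Delta)(\mathcal F-H^2)$.
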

\begin{remark}
As is pointed out in \cite[Lemma 29]{DH25}, while Wang had proved this in the case of self-shrinkers, only the mean curvature flow equation was used. Also note that some signs are different from \cite{Wang2014}. The sign of $H^2$ is different because we are dealing with the forward mean curvature flow, rather than the backwards one. The sign of the term $2HA$ is opposite to the sign of $H^2$, as \cite{Wang2014} had implicitly used a non-standard sign convention under which $H=-\sum A(e_i,e_i)$ (which is apparent from equation (3.14)).
\end{remark}

We apply Lemma \ref{gen_carl} on Type-I rescalings of $N_t$: For every time  $t_0\in [0,t_1)$ set
\begin{equation}\label{nTDef}
\tilde{N}_t=\tilde{N}^{t_0}_t:=\frac{1}{t_0^{1/2}}N_{t_0^{-1}t}.
\end{equation}
For parameters $\alpha,K,T,\delta$, set
\begin{equation}\label{G_def}
\mathcal{G}=\mathcal{G}_{\alpha,K,T,\delta,t_0}=e^{\varphi},
\end{equation}
and denote 
\begin{equation}\label{phi_def}
\varphi=\varphi_{\alpha,K,T,\delta,t_0}:= 2\alpha(T-t)\left(|x|^{1+\delta}-R^{1+\delta}\right),
\end{equation}
and 
\begin{equation}\label{R_def}
R=R_{K,t_0}:=K\sqrt{|\log t_0|}.
\end{equation}

\begin{theorem}\label{main_car}
For every $\delta\in (0,1/2)$ and $T\geq 1$ there exist constants $C,K<\infty$ and $r,\mathcal{T}>0$, depending on the original MCF, $N_t$, with the following significance: 
Suppose $t_0\in (0,\mathcal{T})$, suppose $\alpha\geq \log^6(t_0)$ and let $\tilde{N}_t=\tilde{N}_t^{t_0}$ be as in \eqref{nTDef}. Suppose  $v$ is a smooth function on $\tilde{N}_t$, satisfying $v(\cdot,0)=v(\cdot,T)=0$, and whose support is contained in the region 
\begin{equation}\label{omega_def}
\Omega_{R,K,T}=\left\{(x,t)\;|\;R \leq |x| \leq \frac{2r}{\sqrt{|t_0|}},\;\alpha(T-t)\geq KR^{1-\delta}\right\},
\end{equation}
where $R=R_{K,t_0}$ is given by \eqref{R_def}. Then for $\mathcal{G}=\mathcal{G}_{\alpha,K,T,\delta,t_0}$ as in \eqref{G_def} we have
\begin{equation}\label{spec_car}
\begin{aligned}
\iint &\left(\alpha (T-t) |x|^{\delta-1} |\nabla v|^2+\alpha^2(T-t)|x|^{2\delta}v^2\right)\mathcal{G}d\mu_tdt \\
&\qquad \leq C \iint \left( \partial_t v+\Delta v+(|A|^2-H^2)v\right)^2\mathcal{G}d\mu_tdt.
\end{aligned}
\end{equation}
\end{theorem}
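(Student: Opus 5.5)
We apply Lemma~\ref{gen_carl} to $\tilde N_t$ with the weight $\mathcal{G}=e^{\varphi}$ and lower-bound each term on its left-hand side over $\Omega_{R,K,T}$. The zeroth-order term $(|A|^2-H^2)v$ is moved to the right using $(a+b)^2\ge \tfrac12 a^2-b^2$. It is then absorbed at the end, since Proposition~\ref{prop:curvxest} gives $|A|^2\le C\log^2(|x|\sqrt{t_0})/|x|^2\le C\log^2 t_0/|x|^2$ on $\tilde N_t$, while the main zeroth-order gain is of size $\alpha^2(T-t)|x|^{2\delta}$. Because $\alpha\ge \log^6 t_0$, this gain dominates.

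The first step is the Hessian term. We have $\nabla\varphi=2\alpha(T-t)(1+\delta)|x|^{\delta-1}x^T$, and
\[
\mathrm{Hess}\,\varphi=2\alpha(T-t)(1+\delta)\Big(|x|^{\delta-1}\big(g-\langle x,\nu\rangle A\big)+(\delta-1)|x|^{\delta-3}x^T\otimes x^T\Big).
\]
Since $\delta<1$, the bracket is bounded below by $|x|^{\delta-1}\big(\delta-|\langle x,\nu\rangle A|\big)g$. In original scale, $|x|\ge R$ corresponds to $|x|\ge K\sqrt{t|\log t|}$ for $t\approx t_0$. This is the place to use Lemma~\ref{lem:almostconical}, which is scale invariant: choosing $K$ large makes $|\langle x,\nu\rangle A|\le \delta/2$. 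Hence $2\,\mathrm{Hess}\log\mathcal{G}\ge c\,\alpha(T-t)|x|^{\delta-1}g$.

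The metric-change terms $2HA-H^2g$ are controlled by $|H|^2+|A|^2\le C\log^2 t_0/|x|^2$. On the support, $\alpha(T-t)|x|^{\delta-1}\ge KR^{1-\delta}|x|^{\delta-1}$, and this dominates $\log^2 t_0\,|x|^{-2}$ once $|x|\ge R=K\sqrt{|\log t_0|}$, because $KR^{1-\delta}|x|^{1+\delta}\ge KR^2\ge K^3|\log t_0|\gg\log^2t_0$. Note that this comparison needs care: the log-power looks wrong, so one should instead use Lemma~\ref{cylregioncurvdecay}. On $\tilde N_t$ that lemma gives $|A|\lesssim\sqrt{|\log t_0|}/|x|$, hence $|A|^2\lesssim|\log t_0|/|x|^2$, which is fine. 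This yields the gradient term in \eqref{spec_car}.

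The second step is the $\mathcal{F}$ term. We have $\mathcal{F}=\partial_t\varphi-\Delta\varphi-|\nabla\varphi|^2$. The leading part is $-|\nabla\varphi|^2\approx-4\alpha^2(T-t)^2(1+\delta)^2|x|^{2\delta-2}|x^T|^2$, which involves the time-dependent factor. Computing $\partial_t\mathcal{F}+\Delta\mathcal{F}$, the dominant contribution comes from $\partial_t(-|\nabla\varphi|^2)$, namely $8\alpha^2(T-t)(1+\delta)^2|x|^{2\delta}\,|x^T|^2/|x|^2$, together with a cross term $-2\langle\nabla\varphi,\nabla\mathcal{F}\rangle$-type contribution. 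Exactly as in the conical case of \cite[Theorem 30]{DH25}, these combine into a positive multiple of $\alpha^2(T-t)|x|^{2\delta}$ plus errors. Here we use $|x^T|^2\ge(1-\epsilon)|x|^2$, which follows from the almost-conical estimate and $|\langle x,\nu\rangle|\le\epsilon|x|$ (Lemma~\ref{very_cyl_away_lemma}). The $\alpha^3$ terms carry the factor $(T-t)^3|x|^{3\delta-2}\cdot(\text{second fundamental form quantities})$; they cancel at leading order because $\mathrm{Hess}\,|x|^{1+\delta}$ is nearly that of a cone. Their remainders involve $\langle x,\nu\rangle A$ and $\nabla A$, both small by Lemma~\ref{lem:almostconical} and Proposition~\ref{prop:HK16}. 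The remaining terms of order $\alpha$ and $\alpha^2(T-t)^2\cdot$curvature are absorbed using the support condition $\alpha(T-t)\ge KR^{1-\delta}$ and the curvature bounds.

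The main obstacle is this $\mathcal{F}$ computation. Specifically, we must verify that the $\alpha^3(T-t)^3$ terms have a favorable sign or are controlled by $\alpha^2(T-t)|x|^{2\delta}$ using only the approximate conicality $|\langle x,\nu\rangle A|\le\epsilon$, rather than exact conicality or Type-I decay. The plan here is to mimic \cite[Theorem 30]{DH25} line by line, with the sole difference of replacing its Type-I bounds by Proposition~\ref{prop:curvxest} and Lemma~\ref{lem:almostconical}.
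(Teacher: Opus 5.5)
Most of your plan matches the paper's proof.
\begin{itemize}
\item You apply Lemma~\ref{gen_carl}.
\item You get the gradient term from $\mathrm{Hess}\,\varphi\ge c\,\alpha(T-t)|x|^{\delta-1}g$ via Lemma~\ref{lem:almostconical}; your Hessian computation is correct.
\item You control the metric-change terms by a bound $|A|\lesssim\sqrt{|\log t_0|}/|x|$ on $\tilde N_t$, together with $|x|\ge R$ and $\alpha(T-t)\ge KR^{1-\delta}$.
\item You absorb $(|A|^2-H^2)v$ using $\alpha\ge\log^6 t_0$.
\end{itemize}
Taking the curvature bound directly from Lemma~\ref{cylregioncurvdecay} and Proposition~\ref{prop:HK16} is a legitimate shortcut for the paper's derivation of \eqref{AlmostCon_bd} via Propositions~\ref{positionest} and~\ref{curvdecay}. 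You are also right that the $\log^2 t_0$ bound of Proposition~\ref{prop:curvxest} is too weak for the metric-change terms, though it suffices for the zeroth-order absorption. These lemmas must hold for all rescaled times $t\in[0,T]$, i.e.\ original times up to $Tt_0$. That requires $K\gg\sqrt{T}$, which is where the $T$-dependence of $K$ comes from.

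The gap is the $\mathcal F$ step: you misanalyze its structure, and the ``main obstacle'' you defer to \cite{DH25} does not exist in this framework. In Lemma~\ref{gen_carl} the zeroth-order coefficient is $\frac12(\partial_t\mathcal F+\Delta\mathcal F-H^2\mathcal F)$. This is linear in $\mathcal F=\partial_t\varphi-\Delta\varphi-|\nabla\varphi|^2$, hence at most quadratic in $\alpha$. There is no $\langle\nabla\varphi,\nabla\mathcal F\rangle$ cross term and no $\alpha^3$ term.

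Such terms belong to the conjugated form of the Carleman identity. There the $\alpha^3$ term $\mathrm{Hess}\,\varphi(\nabla\varphi,\nabla\varphi)$ is nonnegative by your Hessian bound; it does not ``cancel at leading order''. Moreover, the mechanism you describe would not close. A surviving remainder $\epsilon\,\alpha^3(T-t)^3|x|^{3\delta-2}$ cannot be absorbed by $\alpha^2(T-t)|x|^{2\delta}$ once $\alpha$ is large. Theorem~\ref{con_cone} uses $\alpha\sim t_0^{-(1-2\delta)/2}$, so this case does arise.

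What actually has to be checked is bookkeeping.
\begin{itemize}
\item The only positive term comes from $\partial_t$ hitting the explicit $(T-t)^2$ in $-|\nabla\varphi|^2$. It gives $8(1+\delta)^2\alpha^2(T-t)|x|^{2\delta-2}|x^T|^2\ge 2\alpha^2(T-t)|x|^{2\delta}$.
\item Everything else is error: the contributions of $\Delta\mathcal F$ and $H^2\mathcal F$, and of $\partial_t$ acting on the geometric factors.
\item Using $|A|\lesssim\sqrt{|\log t_0|}/|x|$, $|\langle x,\nu\rangle A|\le\epsilon$ and the Haslhofer--Kleiner derivative bounds, these errors are at most $D|\log t_0|\big(\alpha^2(T-t)^2|x|^{2\delta-2}+(T+1-t)\alpha|x|^{\delta-1}\big)$.
\item The first error is smaller than the main term by a factor $\lesssim T|\log t_0|/R^2=T/K^2$, using $|x|\ge R$.
\item The second is smaller by $\lesssim (T+1)|\log t_0|/\big(\alpha(T-t)|x|^{1+\delta}\big)\le (T+1)/K^3$, using the support condition.
\item Choosing $K=K(T)$ large therefore gives $\partial_t\mathcal F+\Delta\mathcal F-H^2\mathcal F\ge\alpha^2(T-t)|x|^{2\delta}$.
\end{itemize}
You should carry out this estimate rather than leave it as an unresolved cancellation.
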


\begin{remark}
Both the weight $\mathcal{G}$ and the assumed support of $u$ appearing here are similar, but not identical to the one in \cite[Theorem 30]{DH25} (see also \cite{Wang16}). The biggest difference (apparent after tracing the constants) is that $R$ is not fixed in $t_0$. The time support restriction requiring $\alpha(T-t)\geq KR^{1-\delta}$ comes from a cosmetic tradeoff of not including an extra  free $|x|^2$ term in $\varphi$, as in \cite{DH25}.
\end{remark}

\begin{proof}
We will first apply Lemma \ref{gen_carl} on $\mathcal{G}$ and in particular estimate the coefficient on the left hand side. We will then show that the additional terms in the right hand side of \eqref{spec_car} are absorbable.

The first step is very similar to the proof of Theorem \cite[Theorem 30]{DH25}, which is itself an adaptation of \cite{Wang2014}. While our singularity is cylindrical, rather than asymptotically conical, we will exploit the fact that on the support of $u$, the evolving hypersurface is ``almost conical'' in the sense that $|\langle \nu ,x \rangle A| \leq \varepsilon$, and in that $|A|$ itself (as well as higher derivative estimates) decays conically in this domain. 

\bigskip

First observe that by Lemma~\ref{lem:almostconical}, taking $K$ sufficiently large, we have that 
\begin{equation}\label{Gabor_bd}
|\langle \nu ,x \rangle A| \leq \varepsilon. 
\end{equation}
comparing the Euclidean and hypersurface Hessians, this implies that 
\begin{equation}
\mathrm{Hess}_{N_t}|x|^2\geq 2g-\varepsilon.
\end{equation}
As
\begin{equation}
\mathrm{Hess}_{N_t} |x|^{1+\delta}=\frac{1+\delta}{2} |x|^{\delta-1} \mathrm{Hess}_{N_t}|x|^2 +\frac{1+\delta}{2}\frac{\delta-1}{2}|x|^{\delta-3}d|x|^2\otimes d|x|^2.
\end{equation}
and
\begin{equation}
\frac{1+\delta}{2}\frac{\delta-1}{2}|x|^{\delta-3}d|x|^2\otimes d|x|^2 \geq -|x|^{\delta-1}g,
\end{equation}
if follows that there exists some $c = c(\delta)>0$ such that
\begin{equation}
\mathrm{Hess}_{N_t} |x|^{1+\delta} \geq c|x|^{\delta-1}g \geq 0,
\end{equation}
so
\begin{equation}\label{Hess_bd_car}
\mathrm{Hess}\, \varphi \geq c\alpha(T-t)|x|^{\delta-1}g. 
\end{equation}

\bigskip

We claim that there exist some $C<\infty$ such that for $|x|\geq K\sqrt{t_0|\log(t_0)|}$ and for every $t\in (0,Tt_0]$ we have 
\begin{equation}\label{AlmostCon_bd}
|A_{N_t}(x)|\leq \frac{\sqrt{|\log t_0|}}{|x|}.
\end{equation}
Indeed, by the tip location estimate of Proposition \ref{positionest} we know that if $K$ is large enough there exists some time $t_1>t_0$ such that $|p_{t_1}(v)|=|x|/2$. This and the curvature decay estimate of Proposition \ref{curvdecay} imply
\begin{align}
|A_{N_t}(x,t_1)| \leq &C\sqrt{\frac{-\log t_1}{t_1}}\left(|x|\sqrt{\frac{-\log t_1}{t_1}}\right)^{-1/2}\\ 
&=  C\sqrt{|x|\sqrt{\frac{-\log t_1}{t_1}}}\frac{1}{|x|} \leq \frac{C\sqrt{|\log t_1|}}{|x|} \leq \frac{C\sqrt{|\log t_0|}}{|x|}.
\end{align}
As the flow for times $t\in [0,t_1]$ is cylindrical around $x$ (with smaller curvature at previous times), the very same estimate holds for all time $t\in[0,t_1]$. Finally, as $K$ can be taken much larger than $T$, the flow continues to evolve as cylinder around $x$ up to time $Tt_0$, so \eqref{AlmostCon_bd} indeed holds (possibly giving away a multiplicative constant).

Scaling to $\tilde{N}_t$ and recalling the definition of $R$ and the fact that $u$ is supported on $\Omega_{R,K,T}$, we have that
\begin{equation}\label{A_bd_car}
|A_{\tilde{N}_t}(x)|\leq  C\frac{R}{|x|},\qquad \alpha(T-t) \geq KR^{1-\delta}, \qquad |x|\geq R.
\end{equation}
on the support of $v$. 

Combining \eqref{A_bd_car}, \eqref{Hess_bd_car}, we get that if $K$ is large enough, then  over the support of $u$
\begin{equation}
2\mathrm{Hess}\log \mathcal{G}+2HA-H^2g \geq  c\alpha(T-t)|x|^{\delta-1}g.
\end{equation}

For the second estimate, calculating as in \cite[Theorem 30]{DH25} we have 
\begin{align}\label{F_form}
\mathcal{F}&=-2\alpha(|x|^{1+\delta}-R^{1+\delta})+2\alpha(1+\delta)(T-t)|x|^{\delta-1}H\langle x ,\nu \rangle\nonumber \\
&-2\alpha(T-t)\left[(\delta+1)(\delta-1)|x|^{\delta-3}|x^{\top}|^2+(1+\delta)|x|^{\delta-1}(n-\langle x,\nu \rangle H)\right]\\
&-4\alpha^2(1+\delta)^2(T-t)^2|x|^{2\delta-2}|x^{\top}|^2 \nonumber
\end{align}

In light of \eqref{Gabor_bd} and  \eqref{AlmostCon_bd}, and as $|x^{\top}|$ is almost $|x|$, then exactly is in \cite[Theorem 30]{DH25} we have
\begin{equation}
\mathcal{F} \geq -16(T-t)^2\alpha^2|x|^{2\delta}-20(T-t)\alpha|x|^{1+\delta},
\end{equation} 
and so 
\begin{equation}\label{HsqF}
H^2\mathcal{F} \geq  -D|\log(t_0)|\left((T-t)^2\alpha^2|x|^{2\delta-2}+(T-t)\alpha|x|^{\delta-1}\right)
\end{equation}
Similarly, by the structure of $\mathcal{F}$ and by \eqref{AlmostCon_bd} and by the Haslhofer-Kleiner gradient estimate \cite{HK17}, each space derivative of $\mathcal{F}$ contributes at most a factor of $\frac{\sqrt{|\log t_0|}}{|x|}$, yielding
\begin{equation}\label{DeltaF}
\Delta_{\tilde{N}_t} \mathcal{F} \geq -D|\log(t_0)|\left((T-t)^2\alpha^2|x|^{2\delta-2}+(T-t)\alpha|x|^{\delta-1}\right)
\end{equation}
Similarly, each time derivative of a geometric quantity in $\mathcal{F}$ yields a factor of, at most, $\frac{|\log t_0|}{|x|^2}$, but the derivative of the explicit $t$ appearing therein also contributes. Therefore we obtain
\begin{equation}
\frac{d}{dt} \mathcal{F} \geq 2\alpha^2(T-t)|x|^{2\delta}-D|\log(t_0)|\left((T-t)^2\alpha^2|x|^{2\delta-2}+(T+1-t)\alpha|x|^{\delta-1}\right),
\end{equation}
which combined with \eqref{HsqF} and \eqref{DeltaF} yields
\begin{align}
\frac{d}{dt} \mathcal{F}+\Delta_{\tilde{N}_t} \mathcal{F} -H^2\mathcal{F}  \geq &2\alpha^2(T-t)|x|^{2\delta} -D|\log(t_0)|(T-t)^2\alpha^2|x|^{2\delta-2}\\
&-D|\log(t_0)|(T+1-t)\alpha|x|^{\delta-1}.
\end{align}

Recall that
\begin{equation}\label{struct_ass}
|x|\geq R=K\sqrt{|\log t_0|}, \qquad \alpha(T-t) \geq KR^{1-\delta}\qquad \textrm{over the support of $u$}. 
\end{equation}
Taking $K$ sufficiently large  the first term on the right hand side dominates, so we obtain
\begin{equation}
\frac{d}{dt} \mathcal{F}+\Delta_{\tilde{N}_t} \mathcal{F} -H^2\mathcal{F} \geq \alpha^2(T-t)|x|^{2\delta}
\end{equation}
on the support of $u$. Applying Theorem \ref{gen_carl} we get 
\begin{equation}\label{almost_car}
\iint \left(\alpha (T-t)|x|^{\delta-1}|\nabla v|^2+\alpha^2(T-t)|x|^{2\delta}v^2\right)\mathcal{G}d\mu_tdt \\ \leq C \iint \left( \partial_t v+\Delta v\right)^2\mathcal{G}d\mu_tdt.
\end{equation}
\bigskip

Finally,  by \eqref{struct_ass} and \eqref{AlmostCon_bd}, and the assumption that $\alpha \geq \log^6(t_0)$, the left hand side of \eqref{almost_car} already dominates  $|A|^4v^2$, so the desired result follows.
\end{proof}

\subsection{Decay rate at the almost conical neck region}
In this section, we will get improved decay of $u$ at at points of distance at least $K\sqrt{t|\log(|t|)}$ for $K$ sufficiently large, by employing the Carleman estimate of the last section. This is very similar to \cite[Sec 4.3]{DH25}.
We assume that $u$ solves the equation
\[\label{ev_eq} \partial_t u + \Delta u + (|A|^2 - H^2)u = 0, \]
with initial condition $u(x, 0) = 0$ on $N_0\setminus \{0\}$, and the assumptions of Theorem~\ref{rate_smooth}. 

\begin{theorem}\label{con_cone}
There exist some $K<\infty$,$\mathcal{T}>0$ and $\beta\in (0,1/2)$  with the following significance: For every  $t<\mathcal{T}$ and every point in $N_t-B(0,K\sqrt{t|\log t|})$ we have
\begin{equation}\label{rate_outside_C_ball}
|u(x,t)|+|\nabla u(x,t)| \leq e^{-1/t^{\beta}} 
\end{equation} 
\end{theorem}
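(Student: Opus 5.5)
Fix $t_0$ small and work on the Type-I rescaling $\tilde N_t=\tilde N^{t_0}_t$ of \eqref{nTDef}, with the Carleman estimate of Theorem~\ref{main_car}. The plan follows \cite[Sec 4.3]{DH25} closely. On $\tilde N_t$, physical time $t_0$ corresponds to $t=1$. We take $T$ somewhat larger than $1$ (say $T=2$), $R=K\sqrt{|\log t_0|}$, and $\alpha$ a large power of $|\log t_0|$ or a small negative power of $t_0$, to be optimized at the end. Let $\tilde u$ be the rescaled solution; it satisfies the same conjugate equation \eqref{eq:b13} with the rescaled $A,H$. Set $v=\eta\,\tilde u$, where $\eta$ is a product of cutoffs:
\begin{itemize}
\item a spatial cutoff, equal to $1$ on $2R\le|x|\le r/\sqrt{t_0}$ and vanishing near $|x|=R$ and near $|x|=2r/\sqrt{t_0}$;
\item a temporal cutoff, equal to $1$ for $t\le T-2KR^{1-\delta}/\alpha$ and vanishing where $\alpha(T-t)<KR^{1-\delta}$.
\end{itemize}
Since $\tilde u(\cdot,0)=0$ away from the origin (and, by Theorem~\ref{rate_smooth}, $\tilde u$ vanishes to infinite order at $t=0$ in this region), $v$ is admissible in Theorem~\ref{main_car}.

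Applying \eqref{spec_car}, the right-hand side reduces to cutoff error terms of the form $(|\nabla\eta||\nabla\tilde u|+|\partial_t\eta|+|\Delta\eta|)|\tilde u|$, which live in three regions.
\begin{itemize}
\item \emph{Inner region} $R\le|x|\le2R$. Here $\varphi\le 2\alpha T(2R)^{1+\delta}$, and $\tilde u,\nabla\tilde u$ are bounded by the crude bound \eqref{w_init_est} together with interior estimates. Since $|\log|x||\lesssim|\log t_0|$ there, this gives a factor $\exp(C|\log t_0|^4)$.
\item \emph{Outer region} $|x|\sim r/\sqrt{t_0}$. In physical terms this is $|x|\sim r$, at physical times $t\lesssim t_0$, so Theorem~\ref{rate_smooth} gives $|\tilde u|+|\nabla\tilde u|\lesssim\exp(-c r^2/(t_0|\log r|))$. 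This super-exponential smallness beats the weight there, which is at most $\exp(C\alpha r^{1+\delta}t_0^{-(1+\delta)/2})$, provided $\alpha\ll t_0^{-(1-\delta)/2}$. This is possible because $\delta<1/2$ leaves room to choose $\alpha\approx t_0^{-\gamma}$ with $\gamma$ small.
\item \emph{Late-time region} $T-t\lesssim KR^{1-\delta}/\alpha$. Here $\varphi\le 2KR^{1-\delta}(|x|^{1+\delta}-R^{1+\delta})$, while the left-hand side at $t=1$ carries weight $\exp(2\alpha(T-1)(|x|^{1+\delta}-R^{1+\delta}))$, which is vastly larger once $\alpha(T-1)\gg KR^{1-\delta}$. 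So these errors are dominated on the annulus $|x|\ge 3R$.
\end{itemize}

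Restricting the left-hand side to the region $t\in[1/2,1]$, $3R\le|x|\le 4R$, and using $\varphi\ge 2\alpha(T-1)((3R)^{1+\delta}-(2R)^{1+\delta})\gtrsim\alpha R^{1+\delta}$ there, we obtain
\[
\iint_{\{3R\le|x|\le 4R\}}\tilde u^2\lesssim \exp\!\big(C|\log t_0|^4-c\,\alpha R^{1+\delta}\big)+(\text{exponentially small outer terms}).
\]
With $\alpha=t_0^{-\gamma}$ this is bounded by $\exp(-c\,t_0^{-\gamma})$. Interior parabolic estimates at unit scale (the curvature is bounded there by \eqref{AlmostCon_bd}) upgrade this to a pointwise bound on $\tilde u$ and $\nabla\tilde u$ at $t=1$. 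Points with $|x|\ge 4R$ are covered the same way by shifting the annulus, or directly by Theorem~\ref{rate_smooth} when $|x|$ is of order $r$. Scaling back to the unrescaled flow (the polynomial factors in $t_0$ are harmless) gives \eqref{rate_outside_C_ball} with $\beta<\gamma$ at points $|x|\ge 3K\sqrt{t_0|\log t_0|}$; replacing $K$ by $3K$ gives the stated region.

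The main obstacle is the bookkeeping of exponents. We need $\alpha\ge\log^6 t_0$ for Theorem~\ref{main_car}. The gain $\alpha R^{1+\delta}$ must beat the crude bound $\exp(|\log t_0|^4)$ coming from \eqref{w_init_est}. And the outer cutoff term must remain small, which requires $\alpha t_0^{-(1+\delta)/2}\ll t_0^{-1}/|\log t_0|$. A polynomial choice $\alpha=t_0^{-\gamma}$ with $0<\gamma<(1-\delta)/2$ satisfies all three, and this is where the choice of $\beta$ is made.
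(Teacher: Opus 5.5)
\textbf{There is a genuine gap in the late-time region.} Your route is the paper's: the same Carleman estimate (Theorem~\ref{main_car}) applied to a cut-off rescaled solution, the same inner, outer and late-time error regions, and $\alpha$ a small negative power of $t_0$. The error from the late-time cutoff is
\[
\frac{C\alpha^2}{K^2R^{2-2\delta}}\int_{I_{R,\alpha,\delta}}\int_{\tilde N_t\cap\{R\le|x|\le 2r/\sqrt{t_0}\}}\tilde u^2\,\mathcal{G}\,d\mu_t\,dt .
\]
This term involves $\tilde u$ at times $t\approx T$, over the entire radial range. The left-hand side at $t\le 1$ says nothing about $\tilde u$ at those times. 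So the fact that $\mathcal{G}$ there is pointwise much smaller than at $(x,1)$ absorbs nothing. Nor can the term be absorbed into the left-hand side at the same late times, since there $\alpha(T-t)\sim KR^{1-\delta}$ and the cutoff $\psi\to 0$.

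What you need is an absolute bound on this term, essentially independent of $\alpha$. The crude bound \eqref{w_init_est} cannot supply it. On $I_{R,\alpha,\delta}$ the weight reaches $\exp(4KR^{1-\delta}|x|^{1+\delta})$. At $|x|\sim r/\sqrt{t_0}$ this is $\exp(c|\log t_0|^{(1-\delta)/2}t_0^{-(1+\delta)/2})$. That is far larger than the available gain $\exp(c\alpha R^{1+\delta})$ under your own outer-region constraint $\alpha\ll t_0^{-(1-\delta)/2}$. So your list of three exponent constraints is incomplete, and your final displayed inequality silently drops this term.

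\textbf{The fix.} The missing ingredient, which the paper uses, is the Gaussian spatial decay of Theorem~\ref{rate_smooth}, applied at the late times as well. For $t\in I_{R,\alpha,\delta}$ these are physical times $\approx Tt_0$, within that theorem's range once $K^2\gg T$. It gives $\tilde u^2\lesssim\exp(\log^6 t_0-\eta|x|^2/(T|\log t_0|))$. Since
\[
\sup_{s>0}\Big(C a^{(1-\delta)/2}s^{1+\delta}-\frac{c\,s^2}{a}\Big)\lesssim a^{2/(1-\delta)},\qquad a=|\log t_0|,
\]
the late-time error is at most $\exp(\log^8 t_0)$, uniformly in $\alpha$. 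The gain $\exp(c\alpha R^{1+\delta})$ then beats it, because $\alpha$ is a power of $t_0^{-1}$.

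\textbf{A smaller bookkeeping issue.} With $T=2$, your displayed bounds point the wrong way. The inner bound $2\alpha T(2R)^{1+\delta}$ exceeds your lower bound $2\alpha(T-1)((3R)^{1+\delta}-(2R)^{1+\delta})$ for the left-hand region. The exact margin is only $2\alpha(3^{1+\delta}-2^{2+\delta}+1)R^{1+\delta}$, which degenerates as $\delta\to 0$. The paper takes $T=10$, $t\le T/2$ and $|x|\ge 17R$ to get a comfortable factor.
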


\begin{proof}
Set $T=10$, take any $\delta\in (0,1/2)$ and let $K,C,r,\mathcal{T}$ be as in Theorem \ref{main_car}. Thus, choosing $t_0<\mathcal{T}$ and defining $\tilde{N}_t=\tilde{N}_t^{t_0}$ by \eqref{nTDef}, so that the Carleman estimate \eqref{spec_car} holds for $\tilde{N}_t$, any $\alpha \geq \log^6(t_0)$, and for functions supported on $\Omega_{R,K,T}$ (given by \eqref{omega_def}), where $R=K\sqrt{|\log(t_0)|}$.

Take a spatial, radial, cutoff function $\varphi:\mathbb{R}_{+}\rightarrow [0,1]$ which is $0$ for $|x|\leq R$ and for $|x|\geq 2r/\sqrt{t_0}$ and $1$ for $|x|\in [2R, r/\sqrt{t_0}]$ with derivatives estimated by 
\begin{equation}\label{cut_spa_bound}
R|D\varphi|+R^2|D^2\varphi| \leq C_1.
\end{equation}
Let us further choose a temporal cutoff function $\psi=\psi_{\alpha}$ which is $1$ for $t\in [2\varepsilon,T-2KR^{1-\delta}\alpha^{-1}]$ and $0$ for $t\leq \varepsilon$ and $t\geq T-KR^{1-\delta}\alpha^{-1}$ with 
\begin{equation}\label{cut_temp_bound}
|\psi'(t)| \leq 
\begin{cases}
2\varepsilon^{-1}, & \textrm{if } t\in [\varepsilon,2\varepsilon], \\
\frac{2\alpha}{KR^{1-\delta}}, & \textrm{if } t \in [T-2KR^{1-\delta}\alpha^{-1},T-KR^{1-\delta}\alpha^{-1}]:=I_{R,\alpha,\delta}.
\end{cases}
\end{equation}

Observe that if $\varphi=\varphi(|x|)$ satisfies an ambient Hessian bound as in \eqref{cut_spa_bound} then it also satisfies it intrinsically on $\tilde{N}_t$, as over its support 
\begin{equation}
\mathrm{Hess}_{\tilde{N}_t}\varphi(|x|)=\varphi''dr\otimes dr+\varphi'\mathrm{Hess}_{\mathrm{Euc}}|x| +\varphi'A\langle \nabla|x|, \nu \rangle \leq (2+\varepsilon)\frac{C_1}{R^2}g
\end{equation}
by \eqref{cut_spa_bound} and Lemma \ref{lem:almostconical}.

Considering the function 
\[
v(x,t)=u(t_0^{1/2}x,t_0t),
\]
on $\tilde{N}_t$, $v$ still satisfies \eqref{ev_eq}. Define

\begin{equation}
w=v\psi{\varphi}.
\end{equation}
Using \eqref{cut_spa_bound} and \eqref{cut_temp_bound}, and \eqref{ev_eq}, there exists some $C_2=C_2(R)<\infty$, such that 
\begin{equation}\label{w_err_est}
|\partial_t w+\Delta w+|A|^2w-H^2w|^2 \leq \frac{C_2}{R^2}(|v|^2+|\nabla v|^2)\chi_{E_1}+\frac{C_2v^2}{\varepsilon^2}\chi_{E_2}+\frac{C_2\alpha^2v^2}{R^{2-2\delta}}\chi_{E_3},
\end{equation}
where 
\begin{equation}\label{E1_def}
E_1 = \left(\left(B(0,2r/\sqrt{t_0})\setminus B(0,r/\sqrt{t_0})\right) \cup \left(B(0,2R) \setminus B(0,R)\right)\right)\times [0,T],
\end{equation}
\begin{equation}\label{E2_def}
E_2= \left(B(0,2r/\sqrt{t_0})\setminus B(0,R)\right)\times [\varepsilon,2\varepsilon],
\end{equation}
and 
\begin{equation}\label{E3_def}
E_3= \left(B(0,2r/\sqrt{t_0})\setminus B(0,R)\right)\times I_{R,\alpha,\delta}.
\end{equation}

Note that by Theorem \ref{rate_smooth} and the dominated convergence theorem
 \[
 \frac{C_3}{\varepsilon^2}\int_{\varepsilon}^{2\varepsilon} \int_{\tilde{N}_t\cap  \left(B(0,2r/\sqrt{t_0})\setminus B(0,R)\right)}v^2\mathcal{G}d\mu_tdt\rightarrow 0,\qquad\varepsilon\rightarrow 0
 \]
Therefore, applying Theorem \ref{main_car} on $w$,  passing to the limit as $\varepsilon\rightarrow 0$ and redefining $w=v\phi\psi$ without the cutoff around time $0$ (so $\psi=1$ for $t\in [0,1]$) we have

\begin{align}\label{Lu_Carl_app}
\iint& \left(\alpha (T-t) |x|^{\delta-1}|\nabla w|^2+\alpha^2(T-t)|x|^{2\delta}w^2\right)\mathcal{G}d\mu_tdt \\
&\quad \leq \frac{C_3}{R^2}\int_{0}^{T}\int_{\tilde{N}_t\cap  \left(B(0,2R) \setminus B(0,R)\right)}(v^2+|\nabla v|^2)\mathcal{G}d\mu_tdt  \nonumber \\ 
&\qquad +\frac{C_3}{R^2}\int_{0}^{T}\int_{\tilde{N}_t\cap  \left(B(0,2r/\sqrt{t_0})\setminus B(0,r/\sqrt{t_0})\right)}(v^2+|\nabla v|^2)\mathcal{G}d\mu_tdt \\
&\qquad + \frac{C_3\alpha^2}{R^{2-2\delta}}\int_{I_{R,\alpha,\delta}}\int_{\tilde{N}_{t}}v^2 \mathcal{G}d\mu_tdt.
\end{align}
for some $C_3=C_3(C_2)$. 

\bigskip

Observe that for $(x,t)\in \left(\mathbb{R}^{n+1}\setminus B(0,17R)\right) \times (0,T/2]$ we have  
\begin{equation}\label{G_estimate_T/2}
\mathcal{G}\geq  \exp\left(\frac{\alpha T}{2}(17^{1+\delta}-1)R^{1+\delta}\right),
\end{equation}
while for $(x,t)\in \left(B(0,2R)\setminus B(0,R)\right)\times (0,T]$ we similarly have
\begin{equation}\label{G_estimate_anu}
\mathcal{G}\leq  \exp\left(2\alpha T(2^{1+\delta}-1)R^{1+\delta}\right),
\end{equation}
and for $(x,t)\in B(0,2r/\sqrt{t_0})\setminus B(0,r/\sqrt{t_0})\times (0,T]$ we have 
\begin{equation}\label{G_estimate_away}
\mathcal{G}\leq  \exp\left(2\alpha T (2r/\sqrt{t_0})^{1+\delta}\right).
\end{equation}
For the last weight estimate, for $t\in I_{R,\alpha,\delta}$ we have 
\begin{equation}\label{G_estimate_T}
\mathcal{G} \leq \exp\left(4KR^{1-\delta}|x|^{1+\delta} \right).
\end{equation}

\bigskip

By Theorem \ref{rate_smooth}, there exist $\eta>0$, independent of $t_0$,  such that in $\Omega_{R,K,T}$ we have 
\begin{equation}\label{vw_estimate_smooth}
|v|^2+|\nabla v|^2+|w|^2+|\nabla w|^2 \leq D\exp\left(\log^6(t_0)-\frac{\eta|x|^2}{t|\log(t_0)|}\right),
\end{equation}
for some $D<\infty$ independent of $t_0$.

Combining \eqref{vw_estimate_smooth} and \eqref{G_estimate_T}, and noticing that for $a>0$ the function $f(s)=Ca^{1/2-\delta/2}s^{1+\delta}-cs^2/a$ is bounded by a quantity comparable to $a^{2/(1-\delta)}$, we obtain
\begin{align}\label{all_estimate_T}
\int_{I_{R,\alpha,\delta}}\int_{\tilde{N}_{t}}|v|^2 \mathcal{G}d\mu_tdt \leq \int_{I_{R,\alpha,\delta}}\int_{\tilde{N}_t} \exp\left( \log^6(t_0)+D|\log(t_0)|^{2/(1-\delta)}\right)d\mu_tdt \\
\leq \exp\left(\log^8(t_0)\right) . 
\end{align}

Similarly, combining \eqref{vw_estimate_smooth} with \eqref{G_estimate_away}, and recalling $\alpha\geq \log^6(t_0)$, we have 
\begin{align}\label{all_estimate_away}
\int_{0}^{T}\int_{\tilde{N}_t\cap  \left(B(0,2r/\sqrt{t_0})\setminus B(0,r/\sqrt{t_0})\right)}&(v^2+|\nabla v|^2)\mathcal{G}d\mu_tdt \\
&\leq D\exp\left(4\alpha T \left(\frac{2r}{\sqrt{t_0}}\right)^{1+\delta}-\frac{cr^2}{t_0 |\log(t_0)|}\right).
\end{align}
\bigskip

Substituting \eqref{all_estimate_T} and \eqref{all_estimate_away} into the last two terms on  the right-hand side of \eqref{Lu_Carl_app}, plugging the $\mathcal{G}$ bounds  \eqref{G_estimate_T/2} and \eqref{G_estimate_anu} for the remaining two terms and using \eqref{vw_estimate_smooth} to estimate $v^2+|\nabla v|^2\leq D\exp(\log^6(t_0))$ on $B(0,2R)\setminus B(0,R)$ we thus get 
\begin{align}\label{exp_both_sides_calc}
&\alpha^2R^{2\delta} \exp\left(\frac{\alpha T}{2}(17^{1+\delta}-1)R^{1+\delta}\right) \int_{0}^{T/2}\int_{\tilde{N}_t \cap \left(B(0,r/\sqrt{t_0})\setminus B(0,17R)\right)}|v|^2d\mu_tdt \\
&\qquad \leq \frac{C_4}{R^2} \exp\left(4\alpha T(2^{1+\delta}-1)R^{1+\delta}\right)\\
&\qquad+\frac{C_4\alpha^2}{R^{2-2\delta}}\exp\left(\log^6(t_0)\right)+\frac{C_4}{R^2}\exp\left(4\alpha T \left(\frac{2r}{\sqrt{t_0}}\right)^{1+\delta}-\frac{cr^2}{t_0 |\log(t_0)|}\right).
\end{align}
Note that the exponent on the left-hand side of \eqref{exp_both_sides_calc} is larger than the first exponent in the right-hand side by a factor of more than $2$, so we obtain 
\begin{align}
& \int_{0}^{T/2}\int_{\tilde{N}_t \cap \left(B(0,r/\sqrt{t_0})-B(0,17R)\right)}|v|^2d\mu_tdt\\
&\qquad\leq C_4\exp\left(-\alpha TR^{1+\delta}\right) \Bigg(1+ \exp\left(\alpha T \left(\frac{2r}{\sqrt{t_0}}\right)^{1+\delta}-\frac{cr^2}{t_0 |\log t_0|}\right)\\
&\qquad\qquad\qquad\qquad\qquad\qquad\qquad+\exp\left(\log^8(t_0)\right)\Bigg).\nonumber
\end{align}
Finally, set
\[
\alpha=\frac{1}{2}\left(\frac{2r}{\sqrt{t_0}}\right)^{1-2\delta}.
\]
Note that with this choice we indeed have $\alpha\geq \log^6(t_0)$ so the above calculation are justified.  We also have
\[ \alpha T \left(\frac{2r}{\sqrt{t_0}}\right)^{1+\delta}-\frac{cr^2}{t_0 |\log t_0|} < 0,\]
and 
\[- \alpha TR^{1+\delta} + \log^8 t_0 < -\frac{1}{2}\alpha TR^{1+\delta} < -\frac{\theta}{t_0^{(1-2\delta)/2}},\]
for some $\theta > 0$ if $t_0$ is small enough. 
We  obtain that 
\begin{equation}
\int_{0}^{T/2}\int_{\tilde{N}_t \cap \left(B(0,r/\sqrt{t_0})\setminus B(0,17R)\right)}|v|^2d\mu_tdt \leq C_5\exp\left(-\theta/{t_0^{(1-2\delta)/2}}\right),
\end{equation}
for some $\theta>0$ and some $C_5<\infty$. Converting this integral estimate into pointwise $C^1$ estimate follows again from standard parabolic estimates and scaling back to $N_t$
\end{proof}

\subsection{Decay rate around neck-pinch center}

In this section, we will push the estimate from the previous section, valid on point on $\Omega_t=N_t \cap \{|x|\geq K\sqrt{t|\log t|}\}$ for some $K<\infty$ into the ``inner points'', themselves of distance $\geq c_1 \sqrt{t|\log t|}$ from the origin with $c_1 \ll K$. The basic idea is similar to the one in \cite[Sec 4.4]{DH25}, where the smallness from outside the shrinker core is pulled inside.  

An important difference is that while in \cite{DH25} a single application of a ``propagation of smallness'' result was used, in our case one would have to iterate it many times to get from $\Omega_t$ to the tips, and one needs to make sure that these iterations do not overpower the initial decay. This requires keeping track of how the curvature bound improves as we move away from the tip.  Relatedly, we would need a propagation of smallness result that is not wasteful in time, i.e., which spits out smallness on the same time interval as its input.  

\subsubsection{Finding good balls for the iteration mechanism}

Our first task is to find good balls over which we will iterate the propagation of smallness. To that end, we first record the following estimate, which follows from Proposition \ref{curvdecay} and the Haslhofer-Kleiner curvature estimates, Proposition~\ref{prop:HK16}. Recall from \cite[Theorem 1.1 (vii)]{SWX25} that if the flow encounters a non-degenerate $S^{n-k}\times \mathbb{R}^k$ singularity, then after the singular time, in a neighborhood of the singularity the flow is graphical over the dual cylinder $R^{n-k+1}\times S^{k-1}$. For each direction $v\in S^{k-1}(1)$, we have the ``tip'' 
\[ 
p_t(v):=\mathrm{Argmin}_{\{x\in M_t \;|\; \frac{\pi(x)}{|\pi(x)|} =v\}}|x|,
\]
where $\pi:\mathbb{R}^{n-k+1}\times (\mathbb{R}^k-\{0\})\to S^{k-1}(1)$ denotes the normalized projection map to the last $k$ co-ordinates.

\begin{lemma}\label{Decay_lemma_real}
Suppose the non-degenerate neck pinch at the origin has tangent flow $S^{n-k}\times \mathbb{R}^k$. 
For every $K<\infty$ there exist constants $D_1,D_2<\infty$ such that for ever $x\in N_t \cap B(0,K\sqrt{t |\log t|})$ we have the estimate
\begin{equation}
|A_{N_t}(x)| \leq D_1\frac{\sqrt{|\log(t)|}}{\sqrt{t}}{\left[\sqrt{1+D_2\frac{\sqrt{|\log t|}}{\sqrt{t}}\left\|x-p_t\left(\frac{\pi(x)}{|\pi(x)|}\right)\right\|}\right]^{-1}}.
\end{equation} 
\end{lemma}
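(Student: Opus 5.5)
The plan is to obtain the estimate directly from Proposition \ref{curvdecay}, which bounds $H$ rather than $|A|$, and then pass from $H$ to $|A|$ using the Haslhofer--Kleiner bound $|A|\leq C|H|$ of Proposition~\ref{prop:HK16}. That bound is valid on a fixed parabolic neighborhood of the non-degenerate singularity, where the flow is mean convex and non-collapsed by \cite{SWX25}. So the first step is to fix $K$ and choose $t$ small enough, say $t<t_K$, that $B(0,K\sqrt{t|\log t|})\times\{t\}$ lies inside this neighborhood and inside the region where Proposition \ref{curvdecay} applies. For $t$ bounded away from $0$ the estimate is trivial, since $|A|$ is bounded there and the right-hand side is bounded below on a compact set; this only costs an enlargement of $D_1$.

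The second step converts the quantity $|\langle x-p_t(v),v\rangle|$ appearing in Proposition~\ref{curvdecay}, with $v=\pi(x)/|\pi(x)|$, into the full distance $\|x-p_t(v)\|$ used in the lemma. I would use the reduction already made at the start of the proof of Proposition \ref{curvdecay}. If $\|x-p_t(v)\|\leq C\sqrt{t/|\log t|}$, i.e.\ within a bounded number of tip curvature scales, then the bracket in the lemma is bounded, and the claim reduces to Corollary~\ref{initialcurvatureest} combined with $|A|\leq C|H|$. If instead $\|x-p_t(v)\|$ is much larger than the tip scale, the bowl/cylinder blow-up classification of \cite[Theorem 4.2]{SWX25} gives that $x$ lies in the cylindrical (graphical over the dual cylinder) region along the ray in direction $v$. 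In that region the displacement is essentially along $v$, so $\tfrac12\|x-p_t(v)\|\leq|\langle x-p_t(v),v\rangle|$. Substituting this into Proposition~\ref{curvdecay} changes only the constant $C_2$ by a factor of $2$.

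The last step is bookkeeping: multiply by the Haslhofer--Kleiner constant and set $D_1=C\,C_1$ and $D_2=C_2/2$, with $D_1$ enlarged to cover the trivial regimes above.

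The main obstacle is the comparability of the full distance with its $v$-component uniformly in $K$. Near the transition between the bowl and cylinder regimes, the tip $p_t(v)$ and the point $x$ could a priori differ in the transverse $\mathbb{R}^{n-k+1}$ directions by an amount comparable to the neck radius $\sim\sqrt{t}$. I would handle this by noting that such transverse offsets are at most of size $C\sqrt{t}$, which is below the tip scale $\sqrt{t/|\log t|}$ only up to a $\sqrt{|\log t|}$ factor. More precisely, I would argue that the transverse offset is bounded by a constant multiple of the neck radius at $x$. This radius is itself $\lesssim \sqrt{t}\lesssim \sqrt{t|\log t|}$, while $\langle x-p_t(v),v\rangle$ in the far regime exceeds a large multiple of the tip scale. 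If needed, I would simply enlarge the threshold constant $C$ in the case split, and accept an extra constant factor inside $D_2$, since the square-root form of the bound tolerates comparable distances.
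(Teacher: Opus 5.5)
Your route is the paper's. There the lemma is simply recorded as a consequence of Proposition~\ref{curvdecay} and the bound $|A|\le C|H|$ of Proposition~\ref{prop:HK16}. The proof of Proposition~\ref{curvdecay} already works with $d=\|x-p_t(v)\|$ in place of $|\langle x-p_t(v),v\rangle|$, after the same near-tip/far-from-tip split you describe. So your bookkeeping is fine: $D_2=C_2/2$, and $D_1$ enlarged via Corollary~\ref{initialcurvatureest} near the tip.

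What does not hold up is your own justification, in the last paragraph, of $d\le 2|\langle x-p_t(v),v\rangle|$ in the far regime. Write $\ell=\sqrt{t/|\log t|}$ for the tip scale.
\begin{itemize}
\item A transverse offset of size $C\sqrt t=C\ell\sqrt{|\log t|}$ is not dominated by any fixed multiple of $\ell$. So enlarging a $t$-independent threshold cannot absorb it.
\item Raising the threshold to $C\sqrt t$ instead would leave the range $\ell\ll d\ll\sqrt t$ covered only by the uniform bound of Corollary~\ref{initialcurvatureest}. That bound misses the claimed estimate there by up to a factor $|\log t|^{1/4}$.
\item Knowing that $x$ is cylindrical at its own curvature scale also does not locate $x$ relative to the tip.
\end{itemize}

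The correct input is quantitative: the hypersurface opens parabolically at scale $\ell$ from the tip, i.e.\ the transverse offset of $x$ is $\lesssim\sqrt{\ell\,|\langle x-p_t(v),v\rangle|}$.
\begin{itemize}
\item Within a bounded number of tip scales, this is the bowl profile.
\item Further out, it follows as in the proof of Proposition~\ref{curvdecay}. The neck at $x$ stays cylindrical until the tip reaches it, at a time $t_1$ with $t_1-t\lesssim\ell\,|\langle x-p_t(v),v\rangle|$, because the tip advances at speed comparable to its curvature (Corollary~\ref{tipcurvest}). Hence the squared neck radius at $x$ is $\lesssim t_1-t$.
\end{itemize}
This offset is at most $|\langle x-p_t(v),v\rangle|$ once the latter exceeds a fixed multiple of $\ell$, which is exactly what your case split needs.
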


\begin{lemma}\label{lem:pichoices}
For every $1 \ll \Lambda<\infty$ there exist $D<\infty$ and $t_0>0$ such that for every $t\in (0,t_0)$ and $p\in N_t\cap B(0,K\sqrt{t|\log t|})$, there exists a sequence of of points $\{p_i\}_{i=0}^{I(p)}\subseteq M_t$ with the following significance:
\begin{enumerate}
\item $p_0=p$
\item $p_{I(p)} \in N_t - B(0,(K\sqrt{t|\log t|})$.
\item $p_{i-1}\in B(p_i, r_i)$, where 
\begin{equation}\label{radii_def}
r_i= \frac{1}{100\Lambda|A_{M_t}(p_i)|}, 
\end{equation}
\item $I(p) \leq D \sqrt{|\log(t)|}$.
\end{enumerate}
\end{lemma}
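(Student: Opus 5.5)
The plan is to build the chain greedily, moving radially outward along the axis direction $v=\pi(p)/|\pi(p)|$, and to count steps by using the curvature decay of Lemma~\ref{Decay_lemma_real}. Set $\ell_t:=\sqrt{t/|\log t|}$, which is the curvature scale at the tip: by Corollary~\ref{tipcurvest}, $|A|\sim \ell_t^{-1}$ near $p_t(v)$. Lemma~\ref{Decay_lemma_real} gives
\[
|A(x)|\le D_1\ell_t^{-1}\bigl(1+D_2 d(x)/\ell_t\bigr)^{-1/2},\qquad d(x)=\|x-p_t(v)\|.
\]
Hence the admissible step radius at $x$ satisfies
\[
r(x)=\frac{1}{100\Lambda|A(x)|}\ge c_\Lambda\,\ell_t\,\bigl(1+D_2 d(x)/\ell_t\bigr)^{1/2}.
\]
So the allowed step grows like the square root of the distance from the tip.

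We construct the chain inductively. Start with $p_0=p$. Given $p_{i-1}$, we choose $p_i\in N_t$ on the same ``ray'': $\pi(p_i)/|\pi(p_i)|=v$, and $p_i$ is further from the tip by roughly $\tfrac12 r(p_{i-1})$. We need $p_{i-1}\in B(p_i,r_i)$, where $r_i$ is computed at $p_i$, not at $p_{i-1}$. Since $p_i$ is further out, $|A(p_i)|$ should not be much larger than $|A(p_{i-1})|$. To make this rigorous, we use the gradient estimate $|\nabla A|\le C|A|^2$ from Proposition~\ref{prop:HK16}. It implies that $|A|$ changes by at most a factor $2$ on intrinsic balls of radius $\sim 1/(C|A|)\gg 1/(100\Lambda|A|)$ when $\Lambda$ is large. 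Consequently $r_i\ge \tfrac12 r(p_{i-1})$, which gives the containment.

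For existence of $p_i$ on the surface, we rely on the graphical description after the singular time. By \cite[Theorem 1.1 (vii)]{SWX25}, the flow near the origin is graphical over the dual cylinder $\mathbb{R}^{n-k+1}\times S^{k-1}$, so for each $v$ the ray direction gives a curve $\gamma_v(s)$ in $N_t$ starting at the tip and parametrized by $s=|\pi(x)|$. Because the graph has bounded slope, as is visible in the bowl/cylinder blowups and in Theorem~\ref{thm:SWXMain}, intrinsic and extrinsic distances along $\gamma_v$ are comparable. We therefore move along $\gamma_v$, and the bounded slope keeps the steps inside the required balls.

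The main step is counting. Write $d_i=d(p_i)$. The recursion is $d_i\ge d_{i-1}+c\,\ell_t(1+d_{i-1}/\ell_t)^{1/2}$. In the variable $s=d/\ell_t$ this reads $s_i\ge s_{i-1}+c\sqrt{1+s_{i-1}}$, so $\sqrt{1+s_i}$ increases by at least $c/2$ at each step. We stop once $p_i$ leaves $B(0,K\sqrt{t|\log t|})$. At that point
\[
d\lesssim K\sqrt{t|\log t|}+|p_t(v)|\lesssim K\sqrt{t|\log t|},
\]
using Proposition~\ref{positionest}, so $s\lesssim K|\log t|$. The number of steps is then at most a constant times $\sqrt{K|\log t|}$, which gives $I(p)\le D\sqrt{|\log t|}$.

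The expected obstacle is the geometric step, not the counting. One point is justifying that $|A|$ at the next point is controlled by the value at the previous one. Near the tip, the bound of Lemma~\ref{Decay_lemma_real} is only an upper bound, while $|A|$ may be smaller there, and a smaller $|A|$ only helps. The other point is handling $p$ that lie far from the tip in the spherical directions. For those we first use the $\mathbb{R}^{n-k+1}$ factor to reach the ray, or note that such points are already cylindrical, where $r$ is large and the count is trivially small.
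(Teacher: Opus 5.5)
Your proposal is correct and follows essentially the same route as the paper: a greedy outward chain among points with fixed axis direction $v$, step radii $\sim 1/|A|$, and Lemma~\ref{Decay_lemma_real} forcing those radii to grow like the square root of the distance to $p_t(v)$, so that about $\sqrt{|\log t|}$ steps suffice. Your $\sqrt{1+s_i}$ bookkeeping is equivalent to the paper's recursion $r_{n+1}\ge\sqrt{\beta\sum_{i\le n} r_i}$, and using $|\nabla A|\le C|A|^2$ to compare consecutive radii is a harmless substitute for the paper's appeal to \cite[Theorem 4.2]{SWX25}; the one correction needed is geometric: $\{x\in N_t:\pi(x)/|\pi(x)|=v\}$ is $(n-k+1)$-dimensional rather than a curve from the tip, and the graph over the dual cylinder does not have uniformly bounded slope, so it is cleaner to pick $p_i$ as the paper does (maximizing $|x|$ over that set in a ball of radius $\tfrac{7}{16}r_{i-1}$, using only flatness at scales $\ll 1/|A|$), and then your worry about points lying in other directions disappears, since every $p$ already lies in the set for its own $v=\pi(p)/|\pi(p)|$.
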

\begin{proof}
Take $p_0=p$, set $v=\pi(p_0)$ and define $r_0$ by \eqref{radii_def}. Now, define $p_{i+1}$ inductively by
\begin{equation}\label{next_point_choice}
p_{i+1}=\mathrm{Argmax}_{\{x\in N_t\cap B(p_i,7r_i/16) \;|\; \frac{\pi(x)}{|\pi(x)|}=v\}}|x|
\end{equation}
Setting $r_{i+1}$ by \eqref{radii_def}, \cite[Theorem 4.2]{SWX25} implies that 
\begin{equation}\label{radi_non_dec}
2r_i \geq r_{i+1}\geq r_i/2,
\end{equation} 
 if $\Lambda$ is large enough, as we are are far below the curvature scale,
as well as the existence of some $\eta>0$ such that 
\begin{equation}\label{points_far}
|p_{i+1}-p_t(v)|\geq |p_{i}-p_t(v)|+\eta r_{i+1}
\end{equation}

As 
\begin{equation}
|A_{N_{t}}| \leq \frac{C\sqrt{|\log(t)|}}{\sqrt{t}},
\end{equation}
\eqref{points_far} implies that this process will depart from $B(0,K\sqrt{t |\log(t)|})$ after some $I(p)$ steps, at which point we stop. By \eqref{next_point_choice} and \eqref{radi_non_dec}, the above constructed points $\{p_i\}_{i=0}^{I(p)}$ satisfy conditions (1)-(3).

\bigskip

Note that by \eqref{points_far} and Lemma \ref{Decay_lemma_real}, we have the recursion inequality
\begin{equation}\label{ind_step_ineq}
r_{n+1} \geq \sqrt{\beta\sum_{i=0}^nr_i},
\end{equation}
with 
\[
\beta=\varepsilon \sqrt{\frac{t}{|\log t |}},
\]
for some $\varepsilon>0$  sufficiently small. Note that, if $\varepsilon>0$ is taken sufficiently small, then $r_0\geq \beta/4$. From \eqref{ind_step_ineq}, we have
\[
r_n\geq \frac{\beta n}{4},
\]
so
\begin{equation}
\sum_{i=0}^n r_i \geq \frac{\beta n^2}{8}.
\end{equation}
Thus
\begin{equation}
\frac{1}{8} I(p)^2 \eta \varepsilon \sqrt{\frac{t}{|\log t |}} \leq |p_{I(p)}-p_t(v)| \leq 2K\sqrt{t|\log t|}. 
\end{equation}
This implies assertion (iv) and completes the proof of the theorem.
\end{proof}

\begin{remark}
    It is worth noting the effect of the improved curvature bound of Lemma~\ref{Decay_lemma_real}, which decays as the point moves away from the tip. If instead we only used a uniform curvature bound of order $t^{-1/2} |\log t|^{1/2}$, then in Lemma~\ref{lem:pichoices} the radii of the balls would all be comparable to $t^{1/2} |\log t|^{-1/2}$, and so we would need $\sim |\log t|$ balls to reach distance $K t^{1/2} |\log t|^{1/2}$, instead of the $|\log t|^{1/2}$ we got above. In the following `propagation of smallness' argument, below, the accumulation of errors would be too great for the argument to close. 
\end{remark}

The following is an immediate corollary from the above lemma, and its proof.
\begin{lemma}
Let  $p\in N_t$ and  $\{p_i\}_{i=1}^{I(p)}$ as in the previous lemma. for every $i\in \{1,\ldots I(p)\}$ there exists a rigid motion $O_i$ of $\mathbb{R}^{n+1}$ mapping $p_i$ to $0$  such that for every $s\in  [t,t+\Lambda^2 r_i^2]$:
\begin{enumerate}
\item The image set $O_i(N_s\cap B(p_i,8r_i))$ contains a graph over $B^n(0,4r_i)$, and 
\item Denoting by $\pi$ the projection of $\mathbb{R}^{n+1}$ to its first $n$ coordinates,
\begin{equation}
\pi\left(O_i(N_s\cap B(p_{i-1},r_{i-1})\right)) \subseteq B^n(0,r_i). 
\end{equation}
\end{enumerate}
\end{lemma}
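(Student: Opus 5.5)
The plan is to derive both assertions from one geometric fact: at scale $r_i$ the flow near $p_i$ is an almost flat graph on the whole time window. I will work in the parabolic rescaling of $N_s$ around $(p_i,t)$ by $r_i^{-1}$. By \eqref{radii_def}, $r_i=(100\Lambda|A_{N_t}(p_i)|)^{-1}$, so the rescaled flow at time $t$ has curvature at most $(100\Lambda)^{-1}$ at the origin. By the blow-up classification \cite[Theorem 4.2]{SWX25} (cylinders or bowl$\times\mathbb{R}^{k-1}$), together with the Haslhofer--Kleiner estimates of Proposition~\ref{prop:HK16}, this bound holds on $B(0,8)$. The argument is by contradiction and compactness, exactly as in the proof of Lemma~\ref{lem:31}: the curvature of the limit model is comparable on unit scales, so it stays $\lesssim\Lambda^{-1}$ on $B(0,8)$.

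Next I control the time window $[t,t+\Lambda^2r_i^2]$, which is $[0,\Lambda^2]$ after rescaling. Here I use that the flow is mean convex near the singularity and that the tip moves away monotonically (Proposition~\ref{positionest} and Corollary~\ref{tipcurvest}). Moreover, $p_i$ lies at least at distance $|p_0-p_t(v)|$ from the tip, on the outward side, by \eqref{points_far}. Hence, over a time $\Lambda^2r_i^2\ll|A(p_i)|^{-2}$, the curvature in $B(p_i,8r_i)$ changes by at most a bounded factor; I would check this with the cylindrical curvature estimate \eqref{eqn:cylindricalest} and with the bowl asymptotics when $p_i$ is near the tip. Consequently $|A|\le C\Lambda^{-1}r_i^{-1}$ on $B(p_i,8r_i)\times[t,t+\Lambda^2r_i^2]$, and $r_i|\nabla A|\le C\Lambda^{-2}$ there.

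A standard graphical-radius argument then gives assertion (1). I choose $O_i$ to be the rigid motion taking $p_i$ to $0$ and $T_{p_i}N_t$ to $\mathbb{R}^n\times\{0\}$. The normal tilts by at most $\int|\partial_s\nu|\,ds\le\int|\nabla H|\,ds\le C\Lambda^2r_i^2\cdot\Lambda^{-2}r_i^{-2}$, and the bound on $|A|$ controls the spatial tilt. Together these show that $O_i(N_s\cap B(p_i,8r_i))$ contains a graph over $B^n(0,4r_i)$ with small gradient, after enlarging $\Lambda$.

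Assertion (2) is the step I expect to be the main obstacle, because it is a sharp inclusion rather than an order-of-magnitude statement. The plan is a triangle inequality inside the graph chart. Since the graph is $C^1$-close to flat, $\pi$ is nearly an isometry on it, so
\[
\pi\bigl(O_i(N_s\cap B(p_{i-1},r_{i-1}))\bigr)\subseteq B^n\bigl(0,(1+o_\Lambda(1))(|p_i-p_{i-1}|+r_{i-1})\bigr).
\]
I would then insert $|p_i-p_{i-1}|\le 7r_{i-1}/16$ from \eqref{next_point_choice} and $r_{i-1}\le(1+O(\Lambda^{-1}))r_i$, which comes from \eqref{radi_non_dec} together with slow variation of $|A|$ at scale $r_i$. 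With these numbers the right-hand side is about $\tfrac{23}{16}r_i$, not $r_i$. So, to obtain the statement exactly as worded, I would first try to sharpen the step in \eqref{next_point_choice}: replace $7r_{i}/16$ by a step $\eta' r_i$ and use $r_i\ge r_{i-1}$, which holds because $|A|$ decreases away from the tip by Lemma~\ref{Decay_lemma_real}. I expect, however, that closing the margin $|p_i-p_{i-1}|+r_{i-1}\le r_i$ will fail, because $r_i/r_{i-1}=1+O(\Lambda^{-1})$ while the step is a fixed fraction of $r_i$. This is the precise point where the argument must be checked before continuing, and where the radius in the inclusion may need to be renormalized.
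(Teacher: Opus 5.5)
Your route is the paper's own two-line argument: a tangent-plane chart at $p_i$, flatness far below the curvature scale, and the facts $|p_{i-1}-p_i|=\tfrac{7}{16}r_{i-1}$ and $r_{i-1}\le 2r_i$ from \eqref{radi_non_dec}. Your suspicion about (2) is justified, and more is true: as worded, the inclusion into $B^n(0,r_i)$ is false, so no sharpening of the step can close it.

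\textbf{Part (2).} Proposition~\ref{prop:HK16} gives $|\nabla |A||\le C|A|^2$. Hence $r=(100\Lambda|A|)^{-1}$ is $C\Lambda^{-1}$-Lipschitz along the sheet, and $r_i\le r_{i-1}+C\Lambda^{-1}|p_i-p_{i-1}|$. On the other hand, the sheet is $O(\Lambda^{-1})$-flat at scale $r_i$. So $N_t\cap B(p_{i-1},r_{i-1})$ contains points with $|\pi(O_ix)|\ge(1-C\Lambda^{-1})(r_{i-1}+|p_i-p_{i-1}|)$, and this exceeds $r_i$ for large $\Lambda$, already at $s=t$.

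What the paper's two facts actually give is the radius $\tfrac{23}{8}r_i$. What the iteration needs is $B(p_{i-1},r_{i-1})\subset B(p_i,4r_i)$ in the chart, i.e.\ the $B(0,4)$ on the left of Theorem~\ref{Vessella} in $r_i$-units. So the intended statement has $B^n(0,4r_i)$. That version is elementary and needs neither graphicality nor your $(1+o_\Lambda(1))$ factor. Since $\pi$ is an orthogonal projection and $O_i$ is an isometry with $O_i(p_i)=0$, every $x\in B(p_{i-1},r_{i-1})$ satisfies $|\pi(O_ix)|\le|x-p_i|<r_{i-1}+|p_{i-1}-p_i|=\tfrac{23}{16}r_{i-1}\le\tfrac{23}{8}r_i<4r_i$.

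\textbf{Part (1).} Your argument here has a real gap, which again reflects a defect of the statement rather than of your method. You control curvature and tilt on the window, but not the normal displacement of the sheet, and that is what decides whether $N_s$ still meets $B(p_i,8r_i)$.

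The window has length $\Lambda^2r_i^2=10^{-4}|A(p_i)|^{-2}$, independent of $\Lambda$. The ball has radius $8r_i=0.08\,\Lambda^{-1}|A(p_i)|^{-1}$, which shrinks with $\Lambda$. Since $|H|\ge C^{-1}|A|$ there (and $|H|=\sqrt{n-k}\,|A|$ on the cylindrical part), the sheet through $p_i$ moves a distance of order $C^{-1}10^{-4}|A(p_i)|^{-1}=C^{-1}\Lambda r_i/100$ during the window. This exceeds $8r_i$ once $\Lambda\gtrsim 800C$, and from then on $N_s\cap B(p_i,8r_i)=\emptyset$. So enlarging $\Lambda$ makes things worse, not better. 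Your tilt bound $C\Lambda^2r_i^2\cdot\Lambda^{-2}r_i^{-2}=C$ does not improve with $\Lambda$ either.

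The consistent version is the one the proof of Theorem~\ref{thm:linearbackwarduniqueness} actually uses: graphicality for $s\in[0,\Lambda]$ in $r_i$-units (reading the rescaling there as $N_{t+r_i^2 s}$), i.e.\ the window $[t,t+\Lambda r_i^2]$. On that window:
\begin{itemize}
\item the displacement is $\lesssim r_i/100$;
\item the tilt is $O(\Lambda^{-1})$;
\item the curvature stays $\lesssim(100\Lambda r_i)^{-1}$ by the blow-up classification.
\end{itemize}
Your graphical-radius argument then goes through. With these two corrections, your proposal is a complete and more detailed version of the paper's proof.
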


\begin{proof}
Write $N_t$ as graph over its tangent space around $p_i$, take ${O}_i$ to be the transformation that sends it to the plane $\mathbb{R}^n\times \{0\}$ at $0$.
Since $r_{i+1} \geq r_i/2$, both are far below the regularity scale, even at times and distance of scales $\Lambda$, and as 
\[
|p_i-p_{i+1}|=\frac{7}{16}r_i,
\]
the result holds. 
\end{proof}

\subsubsection{Iterative propagation of smallness}

We record the following theorem from \cite{Vessella2008}. 

\begin{theorem}[Theorem 3.3.3 in \cite{Vessella2008}]\label{Vessella}
For every $\lambda<\infty$ there exist constants $C$ and $R<\infty$  and $\theta\in (0,1)$ with the following significance:
Suppose that $a^{ij}$ are smooth functions on $B^n(0,R)\times [0,R^2]$ with
\begin{equation}\label{coeff_bds}
\|a^{ij}\|_{C^2(B(0,R)\times [0,R^2])} \leq \lambda, \qquad \lambda^{-1}I \leq a^{ij} \leq \lambda I,
\end{equation}
and that $u$ is a smooth function on $B^n(0,R)\times [0,R^2]$ satisfying

\begin{equation}\label{eq_bd}
\left|\partial_tu+\partial_i\left(a^{ij}\partial_j u\right)\right|\leq \lambda (|u(x,t)|+|\nabla u(x,t)|).
\end{equation}
Then
\begin{equation}
\int_{B(0,4)} u^2(\cdot ,0) \leq C\left(\iint_{B(0,R)\times [0,R^2]} u^2\right)^{1-\theta}\left(\int_{B(0,1)}u^2(\cdot,0)\right)^{\theta}.
\end{equation}
\end{theorem}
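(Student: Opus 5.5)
The plan is to reduce the statement to a two-sphere one-cylinder inequality at the final time for a forward parabolic equation, and to prove that inequality with an Escauriaza--Fern\'andez type Carleman estimate (this is essentially Vessella's route). I would set $s=-t$ and $\tilde u(x,s)=u(x,-s)$ on $B(0,R)\times[-R^2,0]$. Then \eqref{eq_bd} becomes
\[
\bigl|\partial_s\tilde u-\partial_i(a^{ij}\partial_j\tilde u)\bigr|\le\lambda(|\tilde u|+|\nabla\tilde u|),
\]
a forward parabolic inequality, and the slice $t=0$ is its \emph{final} time $s=0$. So the claim says that at the final time, smallness on $B(0,1)$ propagates to $B(0,4)$, at the price of a H\"older interpolation against the bulk norm. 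This is the only direction in which it can hold, since smoothing of the forward flow destroys such control at earlier times. After an affine change of variables and rescaling I would also normalize $a^{ij}(0,0)=\delta^{ij}$, which is harmless by \eqref{coeff_bds}.

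\textbf{Step 1 (Carleman estimate).} For $a>0$ small and $\sigma=s+a$, I would use a weight of the form $\sigma^{-\alpha}e^{-\phi(x)/(8\sigma)}$, with $\phi$ comparable to $|x|^2$ and adapted to $a^{ij}$, as in Escauriaza--Fern\'andez \cite{EF03} and Vessella. The estimate I aim for is
\[
\alpha\iint \sigma^{-2\alpha}e^{-\phi/4\sigma}v^2+\iint\sigma^{1-2\alpha}e^{-\phi/4\sigma}|\nabla v|^2
+\int a^{-2\alpha}e^{-\phi/4a}\,|\nabla v|^2\big|_{s=0}
\le C\iint \sigma^{1-2\alpha}e^{-\phi/4\sigma}\,|\partial_s v-\partial_i(a^{ij}\partial_jv)|^2
+ C\alpha\, a^{-2\alpha}\!\int e^{-\phi/4a}\,v^2\big|_{s=0}
\]
for $v$ compactly supported in $B(0,R)\times(-R^2,0]$ and $\alpha\ge\alpha_0(\lambda)$. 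It is proved by conjugating by the weight, splitting the operator into symmetric and antisymmetric parts, and integrating by parts. The $C^2$ bounds on $a^{ij}$ control the commutator errors. The lower-order right-hand side $\lambda(|v|+|\nabla v|)$ is absorbed into the left-hand side once $\alpha$ is large.

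\textbf{Step 2 (cutoffs).} I would apply Step 1 to $v=\chi\tilde u$, with $\chi$ a cutoff equal to $1$ on $B(0,R/2)\times[-R^2/4,0]$. The terms created by $\chi$ live where $|x|\ge R/2$ or $s\le -R^2/4$. There the weight is at most $e^{-cR^2/\sigma}\sigma^{-2\alpha}$, so using interior estimates for $\nabla\tilde u$ they are bounded by a factor of that size times $\iint_{B_R\times[0,R^2]}u^2$. On the good side, the final-time term over $B(0,4)$ is bounded below by $a^{-2\alpha}e^{-C/a}\int_{B_4}|\nabla\tilde u(\cdot,0)|^2$. Combined with the bulk term near $s=0$ and a Poincar\'e/energy argument, this gives $\int_{B_4}u^2(\cdot,0)$. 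The final-time term on the right-hand side is split into $B(0,1)$, where I use the small quantity $\int_{B_1}u^2(\cdot,0)$, and its complement. On the complement the factor $e^{-\phi/4a}$ with $|x|\ge1$ is small compared with the $B_4$-side gain, provided $R$ is large and $a$ is tuned.

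\textbf{Step 3 (optimization) and the main obstacle.} After Step 2 I expect an inequality of the form
\[
\int_{B_4}u^2(\cdot,0)\le C\bigl(e^{C\alpha}\textstyle\int_{B_1}u^2(\cdot,0)+e^{-c\alpha}\iint u^2\bigr)
\]
for all $\alpha\ge\alpha_0$. Choosing $\alpha$ to balance the two terms (the standard three-sphere trick) yields the H\"older form with exponent $\theta=c/(c+C)$. The delicate point is Step 2 at the final time: the weight is singular like $e^{-|x|^2/8a}$ at $s=0$. So I must choose $a$ (possibly $a\sim 1/\alpha$) and $R=R(\lambda)$ so that the $B_4$ final-time term truly dominates both the $|x|\ge1$ part of the boundary term and the cutoff errors, while all constants stay uniform in $\alpha$. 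If this balance fails, I would fall back on Poon's log-convexity/frequency method for $\int \tilde u^2 G$ with a Gaussian $G$ centered at $(0,0)$. That method gives the same two-sphere estimate at the final time directly, again up to the variable-coefficient perturbation handled via \eqref{coeff_bds}.
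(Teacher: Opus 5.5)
The paper gives no proof of this statement; it quotes it from Vessella, whose argument is the Escauriaza--Fern\'andez Carleman route you outline. Your reduction to a final-time inequality for the forward equation is correct. The gap is the step that turns the Carleman estimate into the two-sphere inequality, which is exactly the step you flag and leave open.

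\textbf{The Step~2 mechanism fails.} It cannot work for any choice of $a$ or $R$. The gain on $B(0,4)$ is weighted by $e^{-4/a}$, the value of $e^{-|x|^2/4a}$ at $|x|=4$. On the annulus $1\le|x|\le4$, the right-hand final-time term carries the larger weight $e^{-|x|^2/4a}\ge e^{-4/a}$. So after dividing by the gain, the large quantity $\int_{B_4\setminus B_1}u(\cdot,0)^2$ appears with the growing factor $e^{(4-1/4)/a}$, and Poincar\'e on $B_4$ gives nothing.

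\textbf{The missing idea.} Use the final-time gradient term as a frequency bound that forces concentration, not as control of $\nabla u$ on $B_4$. Write $f=v(\cdot,0)$ for your cut-off function $v=\chi\tilde u$, so that $f=u(\cdot,0)$ on $B_4$, and set $g=fe^{-|x|^2/8a}$. Integrating by parts gives
\begin{equation*}
a\int|\nabla f|^2e^{-|x|^2/4a}=a\int|\nabla g|^2+\frac{1}{16a}\int|x|^2f^2e^{-|x|^2/4a}-\frac{n}{4}\int f^2e^{-|x|^2/4a}.
\end{equation*}
The correct boundary terms are $\frac{a}{N}a^{-2\alpha}\int e^{-|x|^2/4a}|\nabla f|^2$ on the left and $N\alpha a^{-2\alpha}\int e^{-|x|^2/4a}f^2$ on the right. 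Combined with the identity they yield
\begin{equation*}
\frac{1}{16a}\int|x|^2f^2e^{-|x|^2/4a}\le N'\alpha\int f^2e^{-|x|^2/4a}+E .
\end{equation*}
Now choose $a=c/\alpha$ with $c$ small. This forces at least half of the Gaussian mass into $B_1$, hence
\begin{equation*}
\int_{B_4}f^2\le e^{4/a}\Bigl(2\int_{B_1}f^2+2E\Bigr).
\end{equation*}
The loss $e^{4\alpha/c}$ is beaten by the cutoff errors, which relative to $a^{-2\alpha}$ are at most $(C/R^2)^{2\alpha}$ up to harmless factors. This works only because $R=R(\lambda)$ may be taken large; that is where the freedom in $R$ in the statement is used, and it gives your Step~3 inequality. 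Your Poon fallback faces the same passage from Gaussian-weighted to ball integrals, and would also need a variable-coefficient monotonicity formula.

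\textbf{The displayed Carleman inequality has the wrong scaling.} This is not cosmetic.
\begin{itemize}
\item Relative to the gradient term on the left, the operator term must carry an extra factor $\sigma/\alpha$. In Escauriaza--Fern\'andez the left side is $\alpha^2\sigma^{-2\alpha}v^2+\alpha\sigma^{1-2\alpha}|\nabla v|^2$ and the right side is $\sigma^{2-2\alpha}|Pv|^2$. With your common weight $\sigma^{1-2\alpha}$ and no factor $\alpha$ on the gradient term, absorbing $\lambda^2|\nabla v|^2$ would require $C\lambda^2<1$, whatever $\alpha$ is.
\item The final-time gradient term must be $a^{1-2\alpha}\int e^{-\phi/4a}|\nabla v|^2$. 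Your $a^{-2\alpha}$ version is false: test $v=\chi x_1$ with a far-away cutoff when $a\alpha$ is small. That is exactly the regime the concentration step needs.
\item Finally, $\sigma=s+a$ should read $\sigma=a-s$.
\end{itemize}
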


\begin{remark}
The main difference, in terms of the applicability, between Theorem  \ref{Vessella} and \cite[Theorem 3.3.10]{Isakov1998}, which was used in \cite{DH25} is that while the latter translates smallness at some time interval to smallness elsewhere on a sub-interval, Theorem \ref{Vessella} smallness at a given final time to smallness elsewhere at the same time, provided one has additional boundedness at all times everywhere. This no time loss is crucial for our current application: as we will be iterating this result, losing time would  mean more iterations, overpowering the initial exterior decay.
\end{remark}

 Using the previously selected points and Theorem \ref{Vessella}, we will push the decay established in Theorem \ref{con_cone} into the remaining region, obtaining a decay which will allow for the use of the conditional uniqueness theorem, Theorem \ref{main_uniqueness_thm}. This will complete the proof of the backwards uniqueness theorem. 

\begin{proof}[Proof of Theorem \ref{thm:linearbackwarduniqueness}]
As always, we assume that $u$ is defined on a time interval $[0,t_1$]. By Theorem \ref{con_cone} we  know that there exist some $K<\infty$ and $\beta>0$ such that
\[
|u(x,t)|+|\nabla u(x,t)|<e^{-1/t^{\beta}}, \label{dec_out}
\]
for all $x\in N_t-B(0,(K-1)\sqrt{t\log t})$, and $t$ sufficiently small. In order to apply Theorem \ref{main_uniqueness_thm} we need to show that \eqref{dec_out} holds true, with a smaller $\beta$, inside $B(0,(K-1)\sqrt{t\log t})$ as well.

\bigskip

Considering the radii choice formula \eqref{radii_def}, for every $\varepsilon>0$ we can choose $\Gamma < \infty$ such that for every point $p$ and the corresponding points $\{p_i\}_{i=0}^{I(p)}$ and radii $r_i$ as in the previous section with that choice of $\Gamma$, we have that the flow
\begin{equation}
N^i_s:=\frac{1}{r_i}(N_{t+s/r_i^2}-p_i)
\end{equation}
is a graph of a function $h(x,t)$ over $T_{0}N^i_0$ inside $B(0,\sqrt{\Lambda})$ for times $s\in [0,\Lambda]$, with
\begin{equation}
\|h\|_{C^4(B(0,\sqrt{\Lambda})\times [0,\Lambda]}) \leq \varepsilon.
\end{equation}  
Therefore, if $\Gamma$ is chosen large enough, setting 
\[
u^i(x,s)=u(p_i+r_ix, t+r_i^2s)
\]
and thinking of $N^i_s$ in the graphical gauge, we have that $u^i$ satisfy the assumption of Theorem \ref{Vessella} with $\lambda=2$. Possibly enlarging $\Gamma$ we therefore get that
\begin{equation}
\int_{B(0,4)} u_i^2(\cdot ,0) \leq C\left(\iint_{B(0,R)\times [0,R^2]} u_i^2\right)^{1-\theta}\left(\int_{B(0,1)}u_i^2(\cdot,0)\right)^{\theta}.
\end{equation}
for some uniform $R$.

As $|u|\leq C\exp(|\log(t)|^{4})$ by \eqref{w_init_est} we therefore get that
\begin{equation}
\int_{B\left(p_{i-1},r_{i-1}\right)} u^2(x,t) \leq C\exp\left(|\log(t)|^{2A_0}\right)  \left(\int_{B\left(p_i,r_i\right)}u^2(x,t)\right)^{\theta},
\end{equation}
for some $A_0<\infty$. Denoting $I=I(p)$ and iterating this gives
\begin{equation}
\int_{B\left(p_{0},r_0\right)} u^2(x,t) \leq C^{I}\exp\left(I|\log(t)|^{2A_0}\right)\left(\int_{B\left(p_I,r_I\right)}u^2(x,t)\right)^{\theta^I}.
\end{equation}

Note that $B(p_I, r_I)$ is in the region where \eqref{dec_out} is valid, and also $I\leq D\sqrt{|\log(t)|}$. Therefore
\begin{equation}
\left(\int_{B\left(p_I,r_I\right)}u^2(x,t)\right)^{\theta^I} \leq \exp\left(-\frac{\theta^{D\sqrt{|\log(t)|}}}{t^{\beta}}\right)\leq \exp\left(-t^{-\beta/2}\right)
\end{equation}
for $t$ sufficiently small.  Similarly 
\[
C^{I}\exp\left(I|\log(t)|^{2A_0}\right) \leq \exp(t^{-\beta/4}),
\]
so we get 
\[
\int_{B\left(p_{0},r_0\right)} u^2(x,t) \leq \exp(-t^{-\beta/4}),
\]
for $t$ sufficiently small. Using standard parabolic estimates, we get that 
\[
|u(x,t)|+|\nabla u(x,t)|<e^{-1/t^{\beta/5}}, \label{dec_out2},
\]
for sufficiently small $t$. That $u\equiv 0$ now follows from Theorem \ref{main_uniqueness_thm}.
\end{proof}
\bibliography{mybib}{}
\bibliographystyle{alpha}

\end{document}